\DeclareSymbolFont{SY}{U}{psy}{m}{n}
\DeclareMathSymbol{\emptyset}{\mathord}{SY}{'306}
\theoremstyle{plain}
\newtheorem*{bigT}{Theorem}
\newtheorem{thm}{Theorem}[section]
\newtheorem{lem}[thm]{Lemma}
\newtheorem{prop}[thm]{Proposition}
\newtheorem{defn}[thm]{Definition}
\theoremstyle{definition}
\newtheorem{rem}[thm]{Remark}
\newtheorem{ex}[thm]{Example}
\newtheorem*{qn}{Question}
\numberwithin{equation}{section}
\def\beq{\begin{eqnarray}}
\def\eeq{\end{eqnarray}}
\def\beqa{\begin{eqnarray*}}
\def\eeqa{\end{eqnarray*}}
\newcommand\bigzero{\makebox(0,0){\text{\huge0}}}
\begin{document}
\title[Curvature, Second fundamental form and quasi-homogeneous operators]{Curvature and the Second fundamental form in classifying  quasi-homogeneous holomorphic curves and operators\\ in the Cowen-Douglas class}

\author{Chunlan Jiang, Kui Ji and Gadadhar Misra}
\curraddr{Department of Mathematics, Hebei Normal University,
Shijiazhuang, Hebei 050016, China} \email{cljiang@hebtu.edu.cn}
\curraddr{Department of Mathematics, Hebei Normal University,
Shijiazhuang, Hebei 050016, China} \email{jikuikui@gmail.com}
\curraddr{Department of Mathematics, Indian Institute of Science,
Bangalore, 560 012, India}\email{gm@math.iisc.ernet.in}
\thanks{
The first author was Supported by National Natural Science
Foundation of China (Grant No. A010602), the second author is Supported by the Foundation for the Author of National Excellent Doctoral Dissertation of China (Grant No. 201116) and third author was supported by the J C Bose National Fellowship and the UGC, SAP -- IV}

\subjclass[2000]{Primary 47C15, 47B37; Secondary 47B48, 47L40}



\keywords{curvature, second fundamental form, homogeneous and quasi homogeneous vector bundles, similarity, Halmos' question, topological and algebraic $K$- groups}

\begin{abstract}
In this paper we study quasi-homogeneous operators,  which include the homogeneous operators, in the Cowen-Douglas class. We give two separate theorems describing canonical models (with respect to equivalence under unitary and invertible operators, respectively) for these operators using techniques from complex geometry. This considerably extends the  similarity and unitary classification of homogeneous operators in the Cowen-Douglas class obtained recently by the last author and A. Kor\'{a}nyi.  Specifically, the complex geometric invariants used  for our classification are the curvature and the second fundamental forms inherent in the definition of a quasi-homogeneous operator. We show that these operators are irreducible and determine when they are strongly irreducible. Applications include the equality of the topological and algebraic $K$-group of a quasi-homogeneous operator and an affirmative answer
to a well-known question of Halmos on similarity for these  operators.
\end{abstract}
\maketitle

\section{Introduction}
Let $\mathcal H$ be a complex separable Hilbert space and let ${\mathcal L}({\mathcal H})$ be the algebra of bounded linear operators on $\mathcal H$.
For an  open connected subset $\Omega$  of the complex plane
$\mathbb{C},$ and $n\in \mathbb N,$ Cowen and Douglas introduced the class of operators $B_n(\Omega)$ in their very influential paper \cite{CD}. An operator $T$ acting on a Hilbert space $\mathcal H$ belongs to this class if each $w\in \Omega,$ is an eigenvalue of the operator $T$ of constant multiplicity $n$, these eigenvectors span the Hilbert space $\mathcal H$ and the operator $T-w,\; w\in \Omega,$ is surjective. They showed that for an operator $T$ in $B_n(\Omega),$  there exists a holomorphic choice of $n$ linearly independent eigenvectors, that is, the map
$w\rightarrow \ker(T-w)$ is holomorphic. Thus
$\pi : E_T\rightarrow \Omega,$ where
$$E_T=\{\ker(T-w):w \in \Omega,
\pi(\,\ker(T-w)\,)=w\}$$ defines a Hermitian
holomorphic vector bundle on $\Omega$.

We recall some of the basic definitions from \cite{CD} before stating one of its main results.  The Grassmannian $\mbox{Gr}(n,{\mathcal H}),$ is the set of all $n$-dimensional subspaces of the Hilbert space ${\mathcal H}.$
A map $t: \Omega \rightarrow \mbox{Gr}(n,{\mathcal H})$ is said to be a holomorphic curve, if there exist $n$ (point-wise linearly independent) holomorphic functions $ \gamma_1 ,\gamma_2 ,\cdots, \gamma_n$ on ${\Omega}$ taking values in a Hilbert space $\mathcal H$ such that $t(w)=\bigvee \{\gamma_1(w),\cdots,\gamma_n(w)\},$ $w\in \Omega.$   Any holomorphic curve $t:\Omega \rightarrow \mbox{Gr}(n,{\mathcal H})$
gives rise to a $n$-dimensional Hermitian holomorphic vector bundle $E_t$ over $\Omega$, namely,
$$E_{t}=\{(x,w)\in {\mathcal H}\times \Omega \mid x\in t(w)\}
\,\,\mbox{and} \,\,\pi:E_t\rightarrow \Omega,\,\,\mbox{where}
\,\,\pi(x,w)=w.$$
Given two holomorphic curves $t,\,\tilde{t}:
\Omega\rightarrow \mbox{Gr}(n,{\mathcal H})$, if there exists a
unitary operator $U$ on ${\mathcal H}$ such that
$\tilde{t}=Ut$, that is, the restriction $U(w):=U_{|{E_t}(w)}$ of the unitary operator $U$ to the fiber $E_t(w)$ of $E$ at $w$ maps it to the fiber of $E_{\tilde{t}}(w)$, then $t$ and
$\tilde{t}$ are said to be congruent. If $t$ and $\tilde{t}$ are congruent, then clearly the vector bundles  $E_t$ and $E_{\tilde{t}}$ are equivalent via the holomorphic bundle map  induced by the unitary operator $U.$  Furthermore,  $t$ and
$\tilde{t}$ are said to be similar if there
exists an invertible operator $X\in {\mathcal L(H)}$ such that
${\tilde{t}}=Xt$, that is, $X(w):= X_{|{E_t}(w)}$ is an isomorphism except that $X(w)$ is no longer an isometry. In this case, we say that the vector bundles $E_t$ and $E_{\tilde{t}}$ are similar.

An operator $T$ in the class $B_n(\Omega)$ determines a holomorphic
curve $t:\Omega \to \mbox{Gr}(n,{\mathcal H})$, namely,  $t(w)=
\ker(T-w), w\in \Omega.$ However, if $t$ is a holomorphic curve,
setting $T t(w)= w t(w),$ defines a linear transformation on a dense
subspace of the Hilbert space $\mathcal H.$ In general, we have to
impose additional conditions to ensure that the operator $T$ is
bounded. Assuming that $t$ defines a bounded linear operator $T$,
unitary and similarity invariants for the operator $T$ are then
obtained from those of the vector bundle $E_t.$ To describe these
invariants, we need the curvature of the vector bundle $E_t$ along
with its covariant derivatives. Let us recall some of these notions
following \cite{CD}.

The Hermitian structure of the holomorphic bundle $E_t,$ with respect to a holomorphic frame $\gamma$ is given by the Grammian
$$h_\gamma(w)=\big (\!\!\big (\langle \gamma_j(w),\gamma_i(w)\rangle \big ) \!\!\big )_{i,j=1}^n,\;
w\in \Omega.$$
If we let $\bar{\partial}$ denote the complex structure of the vector bundle $E_t,$ then the connection compatible with both the complex structure $\partial$ and the metric $h$ is canonically determined and is given by the formula $h^{-1}\partial h dz.$ The curvature of the holomorphic Hermitian  tor bundle $E_f$ is then the $(1,1)$ form
$${\mathsf K}(w)
=-\partial\big (h_\gamma^{-1}\bar{\partial}
h_\gamma\big )dw \wedge d\bar{w}.$$ We let $\mathcal K(w)$ denote the coefficient of this $(1,1)$ form, that is, $\mathcal K(w):= -\frac{\partial}{\partial \overline{w}}\big (h_\gamma^{-1}(w)\frac{\partial}{\partial w}
h_\gamma(w)\big ).$ Thus it is an endomorphism of the fiber $E_t(w).$

Let $E$ be a $C^{\infty}$ vector bundle with a Hermitian structure.
We are not  assumed to be holomorphic. The derivatives of a bundle
map $\phi:E \to E$ with respect to a frame $\gamma$ is defined to be
\begin{enumerate}
\item
$ (\phi_\gamma  )_{\overline{w}}=\frac{\partial}{\overline{\partial}w}(\phi_\gamma);$
\item $(\phi_\gamma  )_w =\frac{\partial}{\partial w}(\phi_\gamma)+[h_\gamma^{-1}\frac{\partial}{w}h_\gamma,\phi_\gamma],  w\in \Omega.$
\end{enumerate}
Since the curvature $\mathcal K$ may be thought of  as a bundle map,
its  partial derivatives $\mathcal K_{w^i\overline{w}^j}$, $i,j\in
\mathbb{N}\cup \{0\},$ may be defined inductively. The curvature and
it's derivatives  are unitarily invariants of the holomorphic
Hermitian vector bundle $E_t,$ what is more, a finite subset of
these form a complete set of invariants as was shown in \cite{CD}.
For this and other deep connections between operator theory and
complex geometry, we refer the reader to \cite{CD}.


\begin{bigT}[Proposition 2.8, \cite{CD}]
Two holomorphic Hermitian bundles $E_t$ and $E_{\tilde{t}}$ are  equivalent if
and only if there exists an isometric (holomorphic) bundle map $V: E_{t}\rightarrow E_{\tilde{t}}$
such that
$$V \big ( (\mathcal K_t)_{w^i\overline{w}^j} \big )=\big ( (\mathcal K_{\tilde{t}})_{w^i\overline{w}^j}\big ) V,\;
i,j=0,1,\cdots,n-1.$$
\end{bigT}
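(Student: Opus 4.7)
The plan is to establish the two implications separately. The forward direction is essentially formal: if $V: E_t \to E_{\tilde t}$ is a holomorphic isometric bundle isomorphism, then because the Chern connection $\theta = h_\gamma^{-1}\partial h_\gamma$ is canonically associated to the holomorphic and Hermitian structures, $V$ intertwines the Chern connections of the two bundles. Hence it intertwines the curvature $\mathcal K = -\bar\partial \theta$ and, by the formulas recorded just above for differentiating a bundle map relative to a frame, each of the covariant derivatives $\mathcal K_{w^i \bar w^j}$.

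For the reverse direction, the natural reading (matching how the theorem is used in the sequel) is that $V$ need only be given as a fiberwise linear isometry at a single basepoint $w_0 \in \Omega$ satisfying the listed intertwining relations, and one has to upgrade this pointwise datum to a global holomorphic isometric bundle isomorphism. First I would fix $w_0$ and choose holomorphic frames $\gamma, \tilde\gamma$ of $E_t$ and $E_{\tilde t}$ near $w_0$, normalized so that $h_\gamma(w_0) = I = h_{\tilde\gamma}(w_0)$ and the pure holomorphic parts of the Taylor expansions of both metrics vanish (the analogue of K\"ahler normal coordinates for a Hermitian holomorphic bundle). In such frames every Taylor coefficient of $h_\gamma$ at $w_0$ is a universal polynomial in $\mathcal K_t(w_0)$ and its covariant derivatives, obtained recursively from the defining identity $\mathcal K_t = -\bar\partial(h_\gamma^{-1}\partial h_\gamma)$, and similarly for $h_{\tilde\gamma}$.

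The main obstacle is proving that the full tower $\{\mathcal K_{w^i \bar w^j}(w_0)\}_{i,j \geq 0}$ is already encoded by the finite truncation $0 \leq i, j \leq n-1$, so that matching this finite subset via $V(w_0)$ forces matching of the entire normalized Taylor expansion. The finiteness rests on the fact that every $\mathcal K_{w^i \bar w^j}(w_0)$ lives in the $n^2$-dimensional algebra $\mathrm{End}(E_t(w_0))$, so Bianchi-type identities together with the Cayley--Hamilton theorem allow one to express the higher derivatives as polynomials in the listed finite subset; carrying out this algebraic reduction carefully is where the bulk of the work lies. Once that reduction is in hand, the hypothesized intertwining at $w_0$ forces $h_\gamma$ and $h_{\tilde\gamma}$ to agree in a neighborhood of $w_0$, producing a local holomorphic isometric isomorphism extending $V(w_0)$. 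A standard analytic-continuation argument on the connected domain $\Omega$ then globalizes this local isomorphism to the required $V$, completing the proof.
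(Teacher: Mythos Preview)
The paper does not give its own proof of this statement: it is quoted as Proposition~2.8 of \cite{CD} and used only as background in the Introduction. So there is nothing in this paper to compare your proposal against; any comparison has to be with the original Cowen--Douglas argument.

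Your outline is broadly in the right spirit. The forward direction is correct and formal, as you say. For the converse, your reading of the hypothesis (an isometry $V$ given only at a single point $w_0$, together with the intertwining of curvature derivatives there) is the intended one, and your plan of passing to a normalized frame and showing that the Taylor coefficients of the metric at $w_0$ are recoverable from the $\mathcal K_{w^i\bar w^j}(w_0)$ is exactly the strategy in \cite{CD}. The point that deserves a caution is the mechanism you invoke for finiteness. The bound $0\le i,j\le n-1$ does not come from Cayley--Hamilton or from Bianchi-type identities in $\mathrm{End}(E_t(w_0))$; rather, it comes from the specific way in which the normalized metric is rebuilt from the derivatives of a holomorphic frame, together with the fact that for a rank-$n$ bundle arising from a holomorphic curve in $\mathrm{Gr}(n,\mathcal H)$ the first $n$ jets of a section span. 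If you try to carry out the reduction using Cayley--Hamilton alone you will not see why exactly the range $0\le i,j\le n-1$ suffices, and the argument will stall. So the global architecture of your sketch is sound, but the ``main obstacle'' paragraph needs to be reworked around the actual combinatorics of the normalized kernel rather than around generic endomorphism-algebra identities.
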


It was observed in \cite{CD} that the local nature of the Complex geometric invariants limits their use in the study of the equivalence under an invertible linear transformation.
The global nature of such an equivalence is not easily detected by local invariants like the curvature and its derivatives.
However,  many interesting results were obtained in \cite{CD} involving the question of similarity. In the absence of a characterization of the equivalence classes under an invertible linear transformation, a conjecture was made for two operators in $B_1(\mathbb D)$ to be similar. Unfortunately, this conjecture turned out to be false (cf. \cite{ CM_1, CM}).  More recently, Jiang and Ji obtained the following result on similarity, which is best described in terms of the commutant $$\mathcal A^\prime(t\oplus \tilde{t})=\{T\in {\mathcal L}({\mathcal
H})|T(t(w)\oplus \tilde{t}(w))\subseteq t(w)\oplus
\tilde{t}(w),\; w\in \Omega\}$$
of two holomorphic curves $t$ and $\tilde{t}.$
\begin{bigT}[Theorem 3.1,\cite{Jia}] Suppose $t,\tilde{t}
:\Omega \rightarrow Gr(n,{\mathcal H})$ are two holomorphic curves. Then the ordered
$K_0$ group of ${\mathcal A}^{\prime}(t\oplus \tilde{t})$ is a complete similarity invariant of $t$ and $\tilde{t}$.
\end{bigT}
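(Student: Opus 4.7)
The plan is to translate similarity of the pair $(t, \tilde{t})$ into Murray--von Neumann equivalence of the two canonical idempotents
$$P = \begin{pmatrix} I & 0 \\ 0 & 0 \end{pmatrix}, \qquad Q = \begin{pmatrix} 0 & 0 \\ 0 & I \end{pmatrix}$$
sitting inside $\mathcal A'(t \oplus \tilde{t})$, and then to read off the classification at the level of the ordered $K_0$-group with these distinguished classes.

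For the ``easy'' direction I would begin with an invertible $X \in \mathcal L(\mathcal H)$ satisfying $\tilde{t} = Xt$ and write down
$$a = \begin{pmatrix} 0 & 0 \\ X & 0 \end{pmatrix}, \qquad b = \begin{pmatrix} 0 & X^{-1} \\ 0 & 0 \end{pmatrix}.$$
Since $X$ carries $E_t(w)$ onto $E_{\tilde{t}}(w)$ fiberwise, both $a$ and $b$ preserve $E_t \oplus E_{\tilde{t}}$ and therefore lie in $\mathcal A'(t \oplus \tilde{t})$; a direct block computation gives $ab = Q$ and $ba = P$, so $P$ and $Q$ are Murray--von Neumann equivalent in $\mathcal A'(t \oplus \tilde{t})$ and in particular $[P] = [Q]$ in the ordered $K_0$. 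Conversely, given $a,b \in \mathcal A'(t \oplus \tilde{t})$ with $ab = Q$ and $ba = P$, the identity $aP = aba = Qa$ forces $a$ to be strictly off-diagonal, $a = \begin{pmatrix} 0 & a_{12} \\ a_{21} & 0 \end{pmatrix}$, and similarly for $b$. Multiplying the blocks then shows that $a_{21}$ is a bounded invertible operator with inverse $b_{12}$ which, by the intertwining requirement built into $\mathcal A'(t \oplus \tilde{t})$, restricts to an isomorphism between the fibers of $E_t$ and $E_{\tilde{t}}$; this is the required similarity.

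The main obstacle is promoting an equality $[P] = [Q]$ in $K_0$ to an \emph{actual} Murray--von Neumann equivalence of $P$ and $Q$ inside $\mathcal A'(t \oplus \tilde{t})$ rather than merely inside a matrix amplification. This is a cancellation problem, and it is precisely here that the order structure becomes essential: it forces any witness of the equality to live in the scale of idempotents dominated by $P + Q$, and not in an arbitrarily large stabilization. To execute this I would decompose $E_t \oplus E_{\tilde{t}}$ into its strongly irreducible holomorphic summands, analyze the resulting block structure of $\mathcal A'(t \oplus \tilde{t})$, and verify cancellation on the finitely generated projective modules that arise, exploiting the ``finite type I'' character of the commutant of a Cowen--Douglas bundle.

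Combining both directions yields the complete invariant statement: $t$ and $\tilde{t}$ are similar if and only if $[P] = [Q]$ in the ordered $K_0(\mathcal A'(t \oplus \tilde{t}))$, so the isomorphism type of the pointed ordered group together with the two distinguished classes determines the pair $(t, \tilde{t})$ up to similarity.
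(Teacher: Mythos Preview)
This theorem is quoted in the paper's Introduction as a known result from \cite{Jia} and is not proved in the paper itself, so there is no ``paper's own proof'' to compare against directly. The paper does, however, reproduce in Section~4 much of the machinery behind the result (Lemmas~4.3, 4.4, 4.8, 4.9, 4.11 and Proposition~4.10, several of which are themselves imported from \cite{Jia}), and Proposition~4.10 proves the analogous statement in the quasi-homogeneous case, so that is the relevant comparison point.

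Your reduction of similarity of $t,\tilde t$ to Murray--von~Neumann equivalence of the corner idempotents $P,Q$ inside $\mathcal A'(t\oplus\tilde t)$ is correct and cleanly argued, and you are right that cancellation is the crux. The genuine gap is that your sketch of the cancellation step (``decompose into strongly irreducible summands, analyze the block structure, verify cancellation'') is only a slogan; all the content is hidden there. In the \cite{Jia} framework reflected in Section~4, this step is carried out by showing that $\mathcal A'(E)/\mathrm{Rad}\,\mathcal A'(E)$ is commutative (Lemma~4.5 in the quasi-homogeneous case), identifying $\mathrm{Vect}(\mathcal A'(E))$ with $\mathrm{Vect}$ of that quotient (Lemma~4.7), and then invoking the equivalence in Lemma~4.8 (= \cite[Lemma~2.10]{Jia}) between ``$\mathrm{Vect}\cong\mathbb N^{(k)}$ with prescribed image of $[I]$'' and ``unique finite strongly irreducible decomposition up to similarity''; cancellation is then a consequence of this uniqueness via Lemma~4.11 (= \cite[Lemma~2.6]{Jia}). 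Your proposal does not supply any of these ingredients, and without them the passage from $[P]=[Q]$ in $K_0$ back to an actual equivalence inside $\mathcal A'(t\oplus\tilde t)$ is unjustified. A secondary point: the theorem as stated records the \emph{ordered} $K_0$ group (with its scale $[I]$) as the invariant---compare the formulation of Proposition~4.10 as an isomorphism with $\mathbb Z$ or $\mathbb Z^{n}$---rather than the equality of two distinguished classes; the two formulations become interchangeable only after the structure theory above is in place, so you should make that translation explicit.
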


Describing similarity invariants in terms of the curvature and its derivatives has been somewhat more elusive except for the very recent results of R. G. Douglas, H. Kwon and S. Treil \cite{KT, DKT}.

The motivation for this work comes from three very different directions.  The attempt is to describe a canonical model and obtain invariants for operators in the Cowen-Douglas class with respect to equivalence under conjugation under a unitary or  invertible linear transformation. These questions  have been successfully addressed using ideas from $K$-theory and representation theory of Lie groups.

First, the detailed study of the Cowen-Douglas class of operators, reported in the book \cite{JW}, begins with the following basic structure theorem for these operators.
\begin{bigT}[Theorem 1.49, \cite{JW}]\label{ut}
If $T$ is an operator in the Cowen-Douglas class $B_n(\Omega),$ then there exists operators $T_{0},T_1,\ldots ,T_{n-1}$ in $B_1(\Omega)$ such that
\renewcommand\arraystretch{0.875}
\begin{equation} \label{1.1T}
T=\left ( \begin{smallmatrix}T_{0} & S_{0,1}& *& \cdots & *\\
0&T_{1}&S_{1,2}&\cdots&*\\
\vdots&\ddots&\ddots&\ddots&\vdots\\
0&\cdots&0&T_{n-2}&S_{n-2,n-1}\\
0&\cdots&\cdots&0&T_{n-1}
\end{smallmatrix}\right ).
\end{equation}
\end{bigT}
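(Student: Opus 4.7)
The plan is to build the block form (\ref{1.1T}) inductively by producing a flag
$\{0\}=\mathcal{M}_0\subset \mathcal{M}_1\subset\cdots\subset\mathcal{M}_n=\mathcal{H}$
of $T$-invariant closed subspaces whose successive quotients carry operators induced by $T$ that lie in $B_1(\Omega)$. Once such a flag is in hand, the orthogonal decomposition
$\mathcal{H}=\mathcal{M}_1\oplus(\mathcal{M}_2\ominus\mathcal{M}_1)\oplus\cdots\oplus(\mathcal{M}_n\ominus\mathcal{M}_{n-1})$
automatically forces the upper-triangular shape on $T$; the $k$-th diagonal block, the compression
$P_{\mathcal{M}_{k+1}\ominus\mathcal{M}_k}T|_{\mathcal{M}_{k+1}\ominus\mathcal{M}_k}$,
is unitarily equivalent, via the canonical identification of an orthogonal complement with a quotient, to the $B_1(\Omega)$-operator on $\mathcal{M}_{k+1}/\mathcal{M}_k$, delivering the desired $T_k\in B_1(\Omega)$.

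Given $T\in B_n(\Omega)$, the inductive step proceeds as follows. Pick a holomorphic cross-section $\gamma_1\colon\Omega\to\mathcal{H}$ of the eigenbundle $E_T$, which is guaranteed by the Cowen--Douglas theorem. Set $\mathcal{M}_1=\overline{\mathrm{span}}\{\gamma_1(w):w\in\Omega\}$; this subspace is $T$-invariant because $T\gamma_1(w)=w\gamma_1(w)$. The induction then rests on two claims: (i) $T_0:=T|_{\mathcal{M}_1}$ lies in $B_1(\Omega)$; and (ii) the quotient operator $\widetilde{T}$ on $\mathcal{H}/\mathcal{M}_1$ -- equivalently, the compression $P_{\mathcal{M}_1^\perp}T|_{\mathcal{M}_1^\perp}$ -- lies in $B_{n-1}(\Omega)$. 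Claim (ii) is essentially formal: the quotient eigenspace at $w$ is $\ker(T-w)/\mathbb{C}\gamma_1(w)$ and hence has dimension $n-1$, the surjectivity of $\widetilde{T}-w$ descends from that of $T-w$, and the images of $\gamma_2(w),\ldots,\gamma_n(w)$ span $\mathcal{H}/\mathcal{M}_1$ as $w$ varies over $\Omega$. With (i) and (ii) in place, iterating on $n$ produces the entire flag and therefore the decomposition.

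The main obstacle is claim (i). The eigenvectors $\gamma_1(w)$ visibly span $\mathcal{M}_1$ and lie in $\ker(T_0-w)$, so the question is to verify that $\ker(T_0-w)$ is \emph{exactly} one-dimensional and that $T_0-w$ is surjective on $\mathcal{M}_1$. A priori an element of $\mathcal{M}_1\cap\ker(T-w)$ could be any combination $\sum_{i=1}^n c_i\gamma_i(w)$ inside the $n$-dimensional ambient fiber, and one must rule out contributions from $\gamma_2(w),\ldots,\gamma_n(w)$. I would handle this by a holomorphic Fredholm argument: $(T-w)|_{\mathcal{M}_1}$ is semi-Fredholm of index constant in $w$ throughout $\Omega$, and at any $w_0\in\Omega$ the nonvanishing of $\gamma_1(w_0)$, combined with a local right inverse for $T-w$ constructed in reproducing-kernel style from the $\gamma_1$-orbit, pins the kernel dimension down to $1$ and simultaneously yields surjectivity. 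An equivalent, perhaps cleaner, route is to invoke the general structural principle that the cyclic-type subspace generated by a single holomorphic cross-section of a Cowen--Douglas eigenbundle is itself a Cowen--Douglas space of rank one, which is the content of the standard structure theorem for such operators and reduces (i) to an essentially bookkeeping verification.
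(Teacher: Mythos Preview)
The paper does not supply its own proof of this statement; it is quoted as Theorem~1.49 of \cite{JW} and used as background. So there is no paper-side argument to compare against, and your proposal must stand on its own.

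Your outline---produce an invariant flag by peeling off the span of a single holomorphic section and induct on $n$---is indeed the standard route. But you have not closed the step you yourself flag as the main obstacle. The ``holomorphic Fredholm argument'' you sketch presupposes that $(T_0-w)|_{\mathcal{M}_1}$ is semi-Fredholm, i.e.\ has closed range, for all $w\in\Omega$; the restriction of a surjective operator to an invariant subspace need not have closed range, and you give no mechanism for this. The alternative you offer---``invoke the general structural principle that the span of a single holomorphic cross-section is itself a rank-one Cowen--Douglas space''---is precisely the content of the lemma that drives the proof in \cite{JW}, so citing it as a known black box is circular in this context. Moreover, your claim~(ii) is not ``essentially formal'': while surjectivity of $\widetilde{T}-w$ does descend, the equality $\dim\ker(\widetilde{T}-w)=n-1$ requires both that $\ker(T_0-w)$ is exactly one-dimensional and that $(T_0-w)|_{\mathcal{M}_1}$ is onto, since otherwise $(T-w)^{-1}(\mathcal{M}_1)/\mathcal{M}_1$ can be strictly larger than the image of $\ker(T-w)$ in the quotient. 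So~(ii) leans entirely on the unfinished part of~(i).

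To make the argument go through you need an honest proof that $T_0-w$ is onto $\mathcal{M}_1$ with one-dimensional kernel. One way, following the spirit of \cite{JW} and the Cowen--Douglas analysis, is to realize $\mathcal{M}_1$ concretely as a reproducing-kernel Hilbert space via the map $x\mapsto\langle x,\gamma_1(\bar z)\rangle$ (as in the paper's Section~2.1) and identify $T_0$ with the adjoint of multiplication by $z$; surjectivity and the kernel computation then follow from standard reproducing-kernel calculations, and this is where the actual work lies.
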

\renewcommand\arraystretch{1}
A slight paraphrasing clearly implies that if
$\{\gamma_0,\gamma_1,\cdots,\gamma_{n-1}\}$ is a holomorphic frame
for the vector bundle $E_t,$ and ${\mathcal H}=
\bigvee\{\gamma_i(w),\;w\in \Omega,\;0\leq i \leq n-1\},$ then there
exists non-vanishing holomorphic curves $t_i:\Omega \to
\mbox{Gr}(1,{\mathcal H}_i),$ $0\leq i \leq n-1,$ such that
\begin{equation}\label{1.1}
\gamma_{j} = \phi_{0,j}(t_0)+\cdots+\phi_{i,j}
(t_i)+\cdots+\phi_{j-1,j}(t_{j-1})+t_{j},\; 0\leq j \leq n-1,
\end{equation}
where $\phi_{i,j}$ are certain holomorphic bundle maps. One would expect these bundle maps to reflect the properties of the operator $T$. However the tenuous relationship between the operator $T$ and the bundle maps $\phi_{i,j}$ becomes a little more transparent \emph{only} after we impose a natural set of constraints.

Here we have chosen not to distinguish between the holomorphic curve $t$ of rank $1$ in the projective space $\mbox{\rm Gr}(1,\mathcal H)$ and a non-vanishing section of the line bundle $E_t,$ that is, we have let $t$ represent them both. This will be our convention through out the paper.

Secondly, to a large extent, these constraints were anticipated in the recent paper \cite{JJKMCR,JJKMPLMS}. In that paper, a class of operators $\mathcal FB_n(\Omega)$ in $B_n(\Omega)$  possessing, what we called, a flag structure were isolated.
The flag structure was shown to be rigid. It was then shown that the complex geometric invariants like the curvature and the second fundamental form of the vector bundle $E_T$ are indeed  unitary invariants of the operator $T$. However, to show that these form a complete set of unitary invariants, we had to impose additional constraints and introduce an even smaller class $\tilde{\mathcal F}B_n(\Omega).$  For the operators $\mathcal F B_n(\Omega)$, it turned out that the bundle maps $\phi_{j,j+1},$ $0\leq j\leq n-1,$ were constant while for those in the smaller class $\tilde{\mathcal F}B_n(\Omega),$ the remaining maps  $\phi_{j,    k}$ were all zero.

Finally, recall that an operators $T$ in $B_n(\mathbb D)$ is said to be homogeneous if the unitary orbit of $T$ under the action of the M\"{o}bius group is itself, that is, $\varphi(T)$ is unitarily  equivalent to $T$ for $\varphi$ in some open neighbourhood of the identity in the M\"{o}bius group (cf. \cite{BMShift}). A canonical element $T^{(\lambda, \boldsymbol \mu)}$ in each  unitary equivalence class of the homogeneous  operators in $B_n(\mathbb D)$ was constructed in \cite{KM}. It was then shown that two operators $T^{(\lambda, \boldsymbol \mu)}$ and $T^{(\lambda^\prime, \boldsymbol \mu^\prime)}$ are similar if and only if $\lambda = \lambda^\prime.$  In particular choosing $\boldsymbol \mu=0,$ one verifies that a homogeneous operator in $B_n(\mathbb D)$ is similar to the $n$-fold direct sum $T_0\oplus \cdots \oplus T_n,$ where $T_i$ is the adjoint of the multiplication operator $M^{(\lambda_i)}$ acting on the weighted Bergman space $\mathbb A^{(\lambda_i)}(\mathbb D)$ determined by the positive definite kernel $\frac{1}{(1-z\bar{w})^{\lambda_i}}$ defined the unit disc $\mathbb D,$  $0\leq i \leq n-1,$ $\lambda_i >0.$

The homogeneous operators are easily seen to be in the class $\mathcal F B_n(\mathbb D)$ and the operator corresponding to the Hilbert module $\mathcal M_{\rm loc}$ is in $\tilde{\mathcal F}B_n(\Omega),$ (cf. \cite{JJKMPLMS})

In this paper we study a class of operators, to be called quasi-homogeneous, for which we can prove results very similar to those for the homogeneous operators building on the techniques developed in \cite{JJKMPLMS}. This class of operators, as one may expect, contains the homogeneous operators and is characterized by the requirement that all the bundle maps of \eqref{1.1} take their values in a certain (full) jet bundle $\mathcal J_i(t)$
of the holomorphic curve $t.$  For a detailed account of the jet bundles, we refer the reader to \cite{S}.

\begin{defn}
If $t$ is a holomorphic curve in the Grassmannian of rank $1,$ that is, $t:\Omega \to \mbox{Gr}(1,{\mathcal H}).$ Let $\gamma(w)$ be a non-vanishing holomorphic section for the line bundle $E_t.$
The derivatives $\gamma^{(j)},$ $j \in \mathbb N,$ taking values again in the Hilbert space $\mathcal H$ are holomorphic. (It can be shown that they are linearly independent.)
The jet bundle $\mathcal J_{n}E_t(\gamma)$ is defined by the holomorphic frame $\{\gamma^{(0)}(:=\gamma),\gamma^{(1)},\cdots,\gamma^{(n)}\}.$
Since $t$ is a holomorphic curve, the vectors $\gamma^{(i)}(w)$ and $\gamma^{(j)}(w)$ are in the Hilbert space $\mathcal H.$ Therefore the inner product of these two vectors is defined using that of the Hilbert space $\mathcal H.$
\end{defn}
In the following definition we assume, implicitly, that  the bundle map $\phi_{i,j}$ of \eqref{1.1} are from the holomorphic line bundles
$E_{i}$ to a jet bundle $\mathcal J_{j}E_{i},$ where for brevity of notation and when there is no possibility of confusion, we will let $E_i$  denote the vector bundle induced by the holomorphic curve $t_i,$ $0\leq i \leq n-1.$
\begin{defn}\label{defq}
Let $t$ be a holomorphic curve with a holomorphic frame
$\{\gamma_0,\gamma_1,\cdots, \gamma_{n-1}\}$ in the Grassmannian $\mbox{Gr}(n,{\mathcal H})$ of a complex separable Hilbert space $\mathcal H$.  We say that $t$ has an atomic decomposition if there exists holomorphic curves $t_i:\Omega \to \mbox{Gr}(1,{\mathcal H}_i),$ to be called the atoms of $t,$ corresponding to operators $T_i:\mathcal H_i\to \mathcal H_i$ in $B_1(\mathbb D)$ and complex numbers $\mu_{i,\,j}\in\mathbb C,$ $0\leq j \leq i \leq n-1,$
such that $\mathcal H = \mathcal H_0 \oplus \cdots \oplus \mathcal H_{n-1}$ and
$$\begin{array}{llll}\gamma_0&=&\mu_{0,0}t_0\\
                     \gamma_{1}&=&\mu_{0,1}t_0^{(1)}+\mu_{1,1}t_{1}\\
                     \gamma_{2}&=&\mu_{0,2}t_{0}^{(2)}+\mu_{1,2}t_{1}^{(1)}+\mu_{2,2}t_{2}\\
                     \vdots &\vdots&  \\
                     \gamma_{j}&=&\mu_{0,j}t_{0}^{(k)}+\cdots+\mu_{i,j}t_{i}^{(j-i)}+\cdots+\mu_{j,j}t_{j}\\
                       \vdots &\vdots&  \\
                     \gamma_{n-1}&=&\mu_{0,n-1}t_{0}^{(n-1)}+\cdots+\mu_{i,n-1}t_{i}^{(n-1-i)}+\cdots+\mu_{n-1,n-1}t_{n-1}.\\
\end{array}$$

Fix $i$ in $\{0, \ldots ,n-1\}.$ We say that the holomorphic curve $t_i$ is homogeneous if for $w\in \mathbb D,$ $\mathbb C[t_i(w)] =\ker(T_i-w)$  for some  homogeneous operator $T_i$ in $B_1(\mathbb D).$  We realize, up to unitary equivalence, such a homogeneous operator $T_i$ in $B_1(\mathbb D)$  as the adjoint of the multiplication operator ${M^{(\lambda_i)}}$  on the weighted Bergman spaces $\mathbb A^{(\lambda_i)}(\mathbb D).$  Thus for a fixed $w\in \mathbb D,$ there exists a canonical (holomorphic) choice of eigenvectors $t_i(w),$ namely, $(1-z\bar{w})^{-\lambda_i}.$

We say that $t$ is quasi-homogeneous if it admits an atomic decomposition, where each of the atoms $t_i$ is homogeneous,
$\lambda_0 \leq \lambda_1 \leq \cdots \leq \lambda_{n-1}$ and the difference  $\lambda_{i+1}-\lambda_i,$ $0\leq i \leq n-2,$ is a
fixed positive real number, say, $\Lambda(t).$

\end{defn}
When the holomorphic curve defines a bounded linear operator, we shall use the terms quasi-homogeneous holomorphic curve $t$, quasi-homogeneous operator $T$ and quasi-homogeneous holomorphic vector bundle $E_t$ (or, even $E_T$) interchangeably.

If $T$ is a quasi-homogeneous operator and $\big (\!\! \big (
S_{i,j}\big )\!\!\big )$ is its   upper triangular decomposition
given in Theorem \ref{ut}, then we show that
\begin{equation} \label{int}
T_{i} S_{i, i+1} = S_{i, i+1} T_{i+1}, \;0\leq i \leq n-2.
\end{equation}
In consequence, all quasi-homogeneous operators belong to the class $\mathcal FB_n(\mathbb D)$ introduced recently in the paper \cite{JJKMCR, JJKMPLMS}.

One of the  points of this definition is that a quasi-homogeneous vector bundle $E_t$ is indeed homogeneous if  $\Lambda(t)=2$ and the constants $\mu_{i,j}$ are certain explicit functions of $\lambda$ as we point out at the end of the following section.
However, a quasi-homogeneous vector bundle need not be homogeneous as the following example shows.
\begin{ex}\label{1.6}
Let $S$ be the adjoint of the multiplication operator on arbitrary
weighted Bergmann space ${\mathbb A}^{(\lambda)}(\mathbb{D})$ and let $T$ be the operator
\renewcommand\arraystretch{1}
$$T=\left (\begin{smallmatrix}S&&\mu_{1}\, I&&0&&\cdots&&0\\
0&&S&&\mu_{2}\,I&&\cdots&&0\\
\vdots&&\ddots&&\ddots&&\ddots&&\vdots\\
0&&\cdots&&0&&S&&\mu_{n}\,I\\
0&&\cdots&&\cdots&&0&&S
\end{smallmatrix} \right ),\;\mu_{i}\in \mathbb C,
$$
\renewcommand\arraystretch{1}\noindent
defined on the $n+1$ fold direct sum $\bigoplus\limits^{n+1}{\mathbb
A}^{(\lambda)}(\mathbb{D}).$  Then $T$ is in $\tilde{\mathcal F}
B_{n+1}(\mathbb D)$ and therefore belongs to $B_{n+1}(\mathbb D)$
and the corresponding holomorphic curve $t(w) = \ker(T-w),\;w\in
\mathbb{D},$ is quasi homogeneous with $\Lambda(t)=0$. In fact, in
this Example, if we replace $S$ with an arbitrary operator, say $R,$
from $B_1(\mathbb{D}),$ then the resulting operator $T$ while no
longer quasi-homogeneous, remains a member of $\tilde{\mathcal F}
B_{n+1}(\mathbb D).$ Indeed, it has already appeared, via module
tensor products, in our earlier work \cite[Section 3.1]{JJKMPLMS}.
\end{ex}
The class of quasi-homogeneous operators, contrary to what might appear to be a rather small class of operators, contains apart from the homogeneous operators, many other operators. Indeed, in rank $2,$ for instance, it is parametrized by the multiplier algebra of two homogeneous operators.
In the definition of the quasi-homogeneous operators given above, if we let the atoms occur with some multiplicity rather than being multiplicity-free, it will make it even larger.  This would cause additional complications, which we are not able to resolve at this time. In another direction, we need not assume that the atoms themselves are homogeneous. Most of our results would appear to go through if we merely assume that the kernel function $K^{(\lambda)}(w,w)\sim \tfrac{1}{(1-|w|^2)^\lambda},\;|w| < 1.$ Deep results about such functions were obtained by
Hardy and Littlewood (cf. \cite{HL}) and have already appeared in the context of similarity, see \cite{CM}.

A Hermitian holomorphic  bundle $E$ is said to be irreducible,
if $E$ can not be written as orthogonal direct sum of two
holomorphic sub-bundles of $E$. In the paper \cite{CD}, among other things, it is shown that an operator $T$ on $B_n(\Omega)$ is irreducible if and only if the holomorphic Hermitian vector bundle $E_T$ is irreducible.

Let $X:\mathcal H \to \mathcal H$ be an invertible bounded linear
operator and let $XE_T$ be the holomorphic Hermitian vector bundle
obtained by prescribing the fiber at $w\in \Omega$ to be $X\big
(E_T(w)\big )$. A Hermitian holomorphic  bundle $E_T$ is said to be
strongly irreducible, if $XE_T$ cannot  be written as orthogonal
direct sum of two holomorphic sub-bundles for any invertible linear
operator $X,$ again, the vector bundle $E_T$ is strongly irreducible
if and only if the operator $T$ is strongly irreducible (cf.
\cite{Jia, JW, JW1}). It was proved in \cite{Jia} that a
holomorphic curve is strongly irreducible if and only if there is no
non-trivial idempotent in the commutant $\mathcal A^\prime(t).$ We
determine which of the quasi-homogeneous operators is strongly
irreducible and use this information to give a canonical model for
the equivalence (under unitary as well as invertible
transformations) class of quasi-homogeneous operators. We recall one
more notion from complex geometry which will be necessary to
describe the main results of this paper.

If $E$ is a holomorphic Hermitian vector bundle and $E_0$ is a holomorphic sub-bundle of $E,$ then $E=E_0 \oplus E_0^\perp,$ where $E_0^\perp(w)$ is  orthogonal compliment of $E_0(w),$ $w\in\Omega.$
However, as is well-known \cite{Kob}, this decomposition is holomorphic if and only if the second fundamental form of $E_0$ in $E$ is zero.
The quasi-homogeneous holomorphic Hermitian vector bundles  admit an atomic decomposition and each of these atoms are holomorphic line bundles each of which is nested in the next one via a bundle map. Therefore, it turns out, the second fundamental form of a neighboring pair of atoms is all that matters. Fortunately, this can be explicitly described as follows. The $2\times 2$ block $\Big (\begin{smallmatrix} S_{i,i} & S_{i, i+1}\\ 0 & S_{i+1,i+1}\end{smallmatrix}\Big)$ in the decomposition of the operator $T$ given in Theorem \ref{ut} is in $\mathcal F B_2(\mathbb D)$ because of the intertwining property \eqref{int}. Hence the corresponding second fundamental form $\theta_i$ of the inclusion $E_{\gamma_i}$ in $E_{\{\gamma_i, \gamma_{i+1}\}}$ (cf. \cite[Section 2.5]{JJKMPLMS} and \cite[Section 5.1]{DM}) is given by the formula
\begin{equation} \label{sf}
\theta_{i}(z) =  \frac{\mu_{i,i+1} \mathcal K_i(z) \,d\bar{z}}
{\big (\frac{\|t_{i+1}(z)\|^2}{\|t_{i}(z)\|^2} - |\mu_{i,i+1}|^2\mathcal K_i(z)\big )^{1/2}}.\end{equation}

We now describe, without going in to too many details, the main results of this paper. Fix a quasi-homogeneous operator $T$ (respectively $\tilde{T}$), or equivalently, a holomorphic curve $t$ (respectively $\tilde{t}$).
\begin{enumerate}
\item A quasi-homogeneous operator admits an upper triangular representation in terms of its atoms and it belongs to the class $\mathcal FB_n(\mathbb D)$ introduced recently in \cite{JJKMPLMS}.
\item A quasi-homogeneous operator, or equivalently, a quasi-homogeneous holomorphic vector bundle $E_T$ is irreducible;
\item If the operator $T$ is quasi-homogeneous and $\Lambda(t)<  2,$ then $T$ is strongly irreducible, and if $\Lambda(t)\geq 2$ then $T$ is strongly reducible.
\item If $t$ and $\tilde{t}$ are quasi-homogeneous holomorphic curves, which are unitarily equivalent, then we have
\begin{enumerate}
\item $\mathcal K_{t_i}=\mathcal K_{\tilde{t}_i},$
$i=0,1,\cdots, n-1,$
\item $\theta_{i,i+1}=\widetilde{\theta}_{i,i+1},\;i=0,1,\cdots,n-2,$
where  $\theta_{i,i+1}$  (respectively, $\tilde{\theta}_{i,i+1}$)  are the second fundamental forms of the inclusion $E_{\gamma_{i}}$ in $E_{\{\gamma_i,\gamma_{i+1}\}}$ (respectively, $E_{\tilde{\gamma}_{i}}$ in $E_{\{\tilde{\gamma}_i, \tilde{\gamma}_{i+1}\}}$), $0\leq i \leq n-2.$
\end{enumerate}
This was proved in \cite{JJKMPLMS} for operators in $\mathcal FB_n(\Omega)$. However, here we describe a canonical element in each unitary equivalence class of a quasi-homogeneous operator and compare these canonical elements to decide if two such operators  are unitarily equivalent. This appears to be a surprising rigidity property of quasi-homogeneous operators.
\item  Assume that  $E_t$ is a quasi-homogeneous vector bundle with atoms $t_i,$ $0\leq i \leq n-1.$ If $\Lambda(t) \geq 2,$  then $E_t$  is similar to the $n$-fold direct sum of the line bundles
$E_{t_0}, E_{t_1},\ldots , E_{t_{n-1}}.$
If $\Lambda(t) < 2,$ then the description is more complicated. However, we determine exactly when two quasi-homogeneous vector bundles are similar, even in this case.
\item We show that the  Halmos question, namely, if a bounded homomorphism of the disc algebra must be similar to a contraction, has an affirmative answer for quasi-homogeneous operators.
\end{enumerate}

The paper is organized as follows. In Section 2, we describe several properties of quasi-homogeneous operators. In particular, we determine when a quasi-homogeneous holomorphic curve defines a bounded linear operator. We show that the operators appearing in the atomic decomposition of a quasi-homogeneous operator possess an important intertwining property, which is the key to much of our study. In Section 3, we find conditions which ensure  two quasi-homogeneous operators are similar. It turns out that the answer depend only on the operators that appear on the diagonal and the first super diagonal of the decomposition given in Theorem \ref{ut}.
Section 4 contains several applications of our results on unitary equivalence and similarity of quasi-homogeneous operators.
We show that if the homomorphism of the polynomial ring induced by a quasi-homogeneous operator $T$ is bounded (for any polynomial $p$, there exists a constant $K$ independent of $p$ such that $\|p(T)\| \leq K \|p\|_{\infty, \mathbb D}$), then $T$ is similar to a contraction. This gives an affirmative answer to the well-known Halmos question. We define a topological $K^0$ group using equivalence classes of quasi-homogeneous operators under invertible linear transformations. As a second application of our results, we  show that the group $K^0$ is equal to the algebraic $K_0$ group consisting of equivalence classes of idempotents in the commutant of a quasi-homogeneous operator. In the context of the usual (topological) $K^0$ and (algebraic) $K_0$ groups, this is a consequence of the well-known theorem of R. G. Swan.

We finish this Introduction with a list of notations and conventions that we will use through out this paper.

\begin{enumerate}
\item[(a)] $t:\mathbb D\rightarrow Gr(n, {\mathcal H})$ is a fixed but arbitrary holomorphic curve in the Grassmannian of rank $n$ in some complex separable Hilbert space $\mathcal H$.
\end{enumerate}
\begin{enumerate}
\item[(b)] $E_t$ is the rank $n$ holomorphic Hermitian vector bundle obtained by setting $E_{t}(w):=t(w),\; w\in \Omega.$
If $n=1,$ we let $t$ denote a non-vanishing holomorphic section of the line bundle $E_t$ as well. In general, we will let $\gamma:=\{\gamma_0, \ldots,\gamma_{n-1}\}$ be a holomorphic frame of the vector bundle $E_t.$
\item[(c)] There exists constants $\mu_{i,j} \in \mathbb C,$ and a holomorphic frame
$\gamma:=\{\gamma_0,\gamma_1,\cdots,\gamma_{n-1}\}$ of the vector bundle  $E_t$ of the form $\gamma_{j} = \mu_{0,j}t_{0}^{(k)}+\cdots+\mu_{i,j}t_{i}^{(j-i)}+\cdots+\mu_{j,j-1}t_{j-1} + t_{j},$ $j=0,1,\ldots ,n-1,$
where $t_i:\mathbb D \to \mbox{\rm Gr}(1,\mathcal H_i),$
$\mathcal H= \mathcal H_0 \oplus \mathcal H_1 \cdots \oplus \mathcal H_{n-1},$ are holomorphic curves of rank $1$ desigmated as the atoms of $t$.

\item[(d)] $T$ is the linear transformation defined on the dense subset $\bigvee\{t(w):w\in \Omega\}$ by the rule $T(t(w))=wt(w), w\in \Omega.$
\item[(e)] In this paper, we will only consider
those holomorphic curves $t$ for which the linear transformation $T$ extends to a bounded linear operator on ${\mathcal H}$ and is in $B_n(\mathbb D)$. We will let  $E_T$ and $E_t$ denote the same holomorphic Hermitian vector bundle.
\item[(f)] A decomposition of the operator $T: \mathcal H_0 \oplus \cdots \oplus \mathcal H_{n-1} \to  \mathcal H_0 \oplus \cdots \oplus \mathcal H_{n-1}$ of the form
\renewcommand{\arraystretch}{0.875}
$$T=\left (\begin{smallmatrix}
T_0 & S_{0,1} & S_{0,2}&\cdots&S_{0,n-1}\\
0&T_{1}&S_{1,2}&\cdots&S_{1,n-1} \\
\vdots&\ddots&\ddots&\ddots&\vdots\\
0&\cdots&0&T_{n-2}&S_{n-2,n-1}\\
0&\cdots&\cdots&0&T_{n-1}\\
\end{smallmatrix} \right )$$
is said to be {\em atomic} with atoms $T_i:\mathcal H_i \to \mathcal H_i,$
which are assumed to be in $B_1(\mathbb D)$ and required to intertwine $S_{i,i+1}$, that is, $T_i S_{i,i+1} = S_{i,i+1} T_{i+1},\;0\leq i \leq n-2.$
\item[(g)] The operators $S_{i,j}$ define certain holomorphic bundle maps $s_{i,j}$ given by the rule $$s_{i,j}(t_j(w) ) = m_{i,j} t_i^{(j-i-1)}(w),\;w\in \mathbb D,$$  where the constants $m_{i,j}$ and $\mu_{i,j}$ determine each other recursively.
\item[(h)] The atoms  $T_0,T_1,\ldots ,T_{n-1}$ of the operator  $T$ and the atoms $t_0,t_1, \ldots, t_{n-1}$ of the holomorphic curve $t$ determine each other.
\item[(i)] The atoms are homogeneous, that is, for $i=0,1,\ldots n-1,$  the operator $T_i$ is the adjoint of the multiplication operator on the weighted Bergman space $\mathbb A^{(\lambda_i)}$ (cf. \cite{M}.) The weights are assumed to be increasing, the difference $\lambda_{i+1} - \lambda_i$ is assumed to be constant, say $\Lambda(t),$ which is called the {\em valency} of the operator $T$.
\item[(j)] If $T$ admits an atomic decomposition, the atoms are homogeneous and the valency  $\Lambda(t)$ is constant, then the operator $T$ (respectively, the holomorphic curve $t$ and the vector bundle $E_t$) is said to be {\em quasi-homogeneous.}

In this case, the atoms $T_i$ are assumed, without loss of generality,  to have been  realized as the adjoint of the multiplication operators on weighted Bergman spaces $\mathbb A^{(\lambda_i)}(\mathbb D).$
\end{enumerate}
\subsection*{Acknowledgement} The research we report here was initiated during a post-doctoral visit of the second author to the Indian Institute of Science in 2012 and was completed during a visit of the third author to the Hebai Normal University in the month of May 2014. We thank the Indian Institute of Science and the Hebei Normal University for providing excellent environment for our collaborative research.


\section{Canonical model under unitary equivalence}

An operator $T$ in the Cowen and Douglas class $B_n(\Omega)$ is determined, modulo unitary equivalence, by the curvature (of the vector bundle $E_T$) together with a finite number of its partial derivatives. However, if the rank $n$ of this vector bundle is $>1,$ then the computation of the curvature and its derivatives is
somewhat impractical. Here we show that if the operator is quasi-homogeneous, it is enough to restrict ourselves to the computation of the curvature of the atoms and a $n-1$ second fundamental forms of pair-wise neighbouring vector bundles. We first recall, following \cite{CD,CS}, that an operator $T$ in $B_n(\Omega)$ may be realized as the adjoint of a multiplication operator on a Hilbert space of holomorphic functions on $\Omega^*:=\{w:\bar{w} \in \Omega\}$ possessing a reproducing kernel.

\subsection{\sf Holomorphic Curve and Reproducing Kernel} For any operator $T$ in $B_n(\Omega),$ let $E_{T}$ denote the Hermitian holomorphic vector bundle with $E_T(w)=\ker (T-w),$ $w \in \Omega.$ Let $\{\gamma_0,\ldots,\gamma_{n-1}\}$ be a holomorphic
frame for $E_{T}$.  Define the map $\Gamma:\mathcal{H}\to
\mathcal{O}(\Omega^*,\mathbb{C}^n)$ by the rule
$$ \Gamma(x)(z)=\big( \langle x,\gamma_0(\bar z)\rangle ,
\ldots,\langle x,\gamma_{n-1}(\bar z)\rangle\big)^{\rm
tr},\; z\in\Omega^*,\;x\in\mathcal{H},
$$
where $\mathcal{O}(\Omega^*,\mathbb{C}^n)$ is the space of
holomorphic functions defined on $\Omega^*$ taking values in
$\mathbb{C}^n$. Since the map $\Gamma$ is evidently injective,  we transplant the inner product from $\mathcal{H}$ on the range of $\Gamma,$ making it a Hilbert space, say $\mathcal H_\Gamma.$  Thus $\Gamma$ is now unitary by definition.  Define
$K_{\Gamma}$ to be the function on $\Omega^*\times \Omega^*$ taking
values in the $n\times n$ matrices $\mathcal{M}_n(\mathbb{C})$:
\begin{eqnarray*}
K_{\Gamma}(z,w)= \big(\!\big(\langle \gamma_j(\bar w),\gamma_i(\bar
z)\rangle\big)\!\big)_{i,j=0}^{n-1},\;z,w\in\Omega^*.
\end{eqnarray*}
Setting $(K_{\Gamma})_w(\cdot)=K_{\Gamma}(\cdot,w),$ we verify that
\begin{eqnarray*}
\langle\Gamma(x)(\cdot),(K_{\Gamma})_w(\cdot)\eta\rangle_{\rm{ran}\:\Gamma}
&=&\langle
\Gamma(x)(w),\eta\rangle_{\mathbb{C}^n},\;\;x\in\mathcal{H},\eta\in\mathbb{C}^n,w\in\Omega^*.
\end{eqnarray*}
Thus $\big (K_\Gamma\big )_w$ has the reproducing property. The unitary operator $\Gamma$ intertwines the operator $T$ with the adjoint of the multiplication operator $M$ on the Hilbert space $(\mathcal H_\Gamma, K_\Gamma).$ We describe how this works for quasi-homogeneous operators. For such an operator $T$ acting on a Hilbert space $\mathcal H,$ there is a holomorphic frame $\{\gamma_0, \gamma_1, \cdots, \gamma_{n-1}\}$ and atoms $t_0, \ldots, t_{n-1},$ for which
we have
%
$$\gamma_{i} = \mu_{0,i}t_{0}^{(i)}+\cdots+\mu_{j,i}t_{i}^{(i-j)}+\cdots+\mu_{i,i}t_{i},\; \mu_{ij}\in \mathbb C.$$
%
%
At this point, assuming that the operator is quasi-homogeneous makes the atoms $T_0, T_1, \ldots ,T_{n-1}$ homogeneous. Conjugating with a diagonal unitary, if necessary, we assume without loss of generality that $t_i$ is the holomorphic curve
defined by
$$
t_i(w):= (1-\bar{w}z)^{-\lambda_i},\; \lambda_i = \lambda_0 + i\; \Lambda(t),\;0\leq i \leq n-1,\; \lambda_0 > 0,
$$
in the weighted Bergman space $\mathbb A^{(\lambda_i)}(\mathbb D).$
 Let $\Gamma:\mathcal{H}\to \mathcal{O}(\Omega^*,\mathbb{C}^n)$
be the map intertwining $T$ with the adjoint of the multiplication operator on the Hilbert space of holomorphic functions on $\Omega^*=\{w:\bar{w}\in \Omega\}$ determined by the positive definite kernel
\begin{eqnarray*}
\big(\!\! \big (K_{\Gamma}(z,w)\big )\!\!\big )_{ij}= \big(\!\!\big(\langle \gamma_j(\bar w),\gamma_i(\bar
z)\rangle\big)\!\!\big),\;\,z,w\in \Omega^*
\end{eqnarray*}


\noindent
For $1\leq i \leq n-1,$ recalling that $\Lambda(t)= \lambda_{i+1}-\lambda_i$ is a constant, the remaining atoms $t_i$ are determined, upto a constant, say, $\mu_{ii}$.
As a consequence, setting $K_i(z,w)=<t_i(\overline{w}), t_i(\overline{z})>$  and $D_i=\mbox{diag}(0,\cdots,0,\mu_{i,i}, \mu_{i,i+1}, \cdots,
\mu_{i,n-1})$, $0\leq i\leq n-1,$ then we have that
$$\big (\!\!\big (K_\Gamma \big )\!\!\big )_{kl}=\sum\limits_{i=0}^{n-1}D_i^* \big(\!\! \big ( \partial^{k}\overline{\partial}^lK_i \big )\!\!\big ) D_i.$$
Since the unitary equivalence class of the $M^*$ on $\mathcal H_\Gamma$ remains unchanged when it is conjugated by a holomorphic function, we may replace $K_\Gamma$ with
$$D^{*-1}K_\Gamma D^{-1} = \sum\limits_{i=0}^{n-1}D^{*-1}D_i^*\big(\!\! \big (\partial^{l}\overline{\partial}^kK_i\big )\!\!\big )D_iD^{-1},$$
where $D$ is the $n\times n$ diagonal matrix with $\mu_{0,0}, \mu_{1,1}, \ldots , \mu_{n-1,n-1}$ on its diagonal.
Having done this, we assume through out this paper that $\mu_{i,i}=1,\; 0\leq i\leq n-1.$

\subsection{\sf Atomic decomposition}
Let $t$ be a quasi-homogeneous holomorphic curve in $\mbox{\rm Gr}(n,\mathcal H).$ Assume that it defines a bounded linear operator $T$ on the Hilbert space $\mathcal H.$ An appeal to Theorem \ref{ut} provides, what we would {\em now} call an atomic decomposition for the operator $T.$ This decomposition has several additional properties arising out of our assumption f quasi-homogeneity.

\begin{lem} \label{atomic}
Let $t$ be a holomorphic quasi-homogeneous curve,
$\{t_0,\ldots , t_{n-1}\}$ be a set of its atoms and $\{\gamma_0, \ldots , \gamma_{n-1}\}$ be a holomorphic frame for $E_t.$ Let $\mathcal H$ be the closed linear span of the set of vectors $\{\gamma_0(w), \ldots, \gamma_{n-1}(w):w\in \mathbb D\}$ and  $\mathcal H_{i}$ be the closed linear span of the set of vectors $\{t_i(w), w\in \mathbb D\},$ $0\leq i \leq n-1.$ We have
\begin{enumerate}
\item ${\mathcal H}={\mathcal H}_{0}\oplus {\mathcal
H}_{1}\oplus \cdots \oplus {\mathcal H}_{{n-1}};$
\item There exists an operator $T,$ defined on a dense subset of vectors in $\mathcal H_t,$
which is  upper triangular with respect to the direct sum decomposition $\mathcal H_t = \mathcal H_{t_0} \oplus \cdots \oplus \mathcal H_{t_{n-1}}:$
\renewcommand\arraystretch{0.875}
$$T=\left (\begin{smallmatrix}
T_0 & S_{0,1} & S_{0,2}&\cdots&S_{0,n-1}\\
0&T_1&S_{1,2}&\cdots&S_{1,n-1} \\
\vdots &\ddots&\ddots&\ddots&\vdots\\
0&\ldots&0&T_{n-2}&S_{n-2,n-1}\\
0&0&\ldots&0&T_{n-1}\\
\end{smallmatrix}\right ),$$
where $S_{i,j}(t_j(w))=m_{i,j}t_i^{(j-i-1)}(w), T_{i}(t_{i}(w)))=w\,
t_{i}(w),\; w\in \mathbb D,\;i,j=0,1,\cdots, n-1,$ for some choice
of complex constants $m_{i,j}$ depending on the $\mu_{ij}.$ In this
case, we have $S_{i,i}S_{i,i+1}=S_{i,i+1}S_{i+1,i+1},
i=0,1,\cdots,n-2;$
\item The constants $m_{i,j}$ and $\mu_{i,j}$ determine each other.
\renewcommand\arraystretch{1}
\end{enumerate}
\end{lem}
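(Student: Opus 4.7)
I begin by observing that part (i) is immediate from Definition~\ref{defq}: a quasi-homogeneous holomorphic curve $t$ comes, by hypothesis, with an atomic decomposition in which the ambient Hilbert space already splits as $\mathcal H = \mathcal H_0 \oplus \cdots \oplus \mathcal H_{n-1}$, with $\mathcal H_i$ the closed span of $\{t_i(w):w\in\mathbb D\}$. Nothing further is needed.

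For parts (ii) and (iii), my plan is to define the diagonal and off-diagonal blocks of $T$ by the stated formulas with as-yet-undetermined constants $m_{i,j}$, and then to force the eigenvalue identity $T\gamma_j(w)=w\gamma_j(w)$ to hold by choosing these constants recursively. Explicitly, I would declare $T_i\,t_i(w):=w\,t_i(w)$ (so $T_i\in B_1(\mathbb D)$ is the adjoint of multiplication on $\mathbb A^{(\lambda_i)}$), and $S_{k,i}(t_i(w)):=m_{k,i}\,t_k^{(i-k-1)}(w)$ for $k<i$. Reading off the resulting upper triangular action and differentiating $(j-i)$ times in $w$ (Leibniz produces an extra $(j-i)\,t_i^{(j-i-1)}(w)$ term), one obtains
\[
T\,t_i^{(j-i)}(w) \;=\; w\,t_i^{(j-i)}(w) \;+\; (j-i)\,t_i^{(j-i-1)}(w) \;+\; \sum_{k=0}^{i-1} m_{k,i}\,t_k^{(j-k-1)}(w).
\]
Multiplying by $\mu_{i,j}$, summing over $0\leq i\leq j$, and equating the result to $w\gamma_j(w)$, the orthogonality of the summands $\mathcal H_k$ forces each $\mathcal H_k$-component to vanish separately. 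Matching the coefficient of $t_k^{(j-k-1)}(w)$ yields, for every $j$ and every $0\leq k\leq j-1$, the linear relation
\[
(j-k)\,\mu_{k,j} \;+\; \sum_{i=k+1}^{j} \mu_{i,j}\,m_{k,i} \;=\; 0.
\]

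I would then solve this triangular system from $k=j-1$ downward, using the normalization $\mu_{i,i}=1$ justified in Section~2.1, to determine the $m_{k,j}$ uniquely in terms of the $\mu$'s; inverting the same triangular system recovers the $\mu$'s from the $m$'s, which is precisely part (iii). The intertwining identity $T_i S_{i,i+1} = S_{i,i+1} T_{i+1}$ asserted at the end of (ii) (cf.\ \eqref{int}) is then an immediate check: both sides applied to $t_{i+1}(w)$ evaluate to $w\,m_{i,i+1}\,t_i(w)$. The only step requiring genuine care is the Leibniz computation and the ensuing coefficient-matching bookkeeping; the rest is a clean triangular elimination, and the boundedness of each $T_i$ and $S_{i,j}$ follows from the assumed boundedness of $T$ together with the orthogonal decomposition of (i).
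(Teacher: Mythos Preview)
Your proposal is correct and follows essentially the same route as the paper: both arguments reduce the eigenvalue identity $T\gamma_j(w)=w\gamma_j(w)$, via the Leibniz relation $(T_i-w)t_i^{(r)}(w)=r\,t_i^{(r-1)}(w)$, to the same triangular recursion $(j-k)\mu_{k,j}+\sum_{i=k+1}^{j}\mu_{i,j}m_{k,i}=0$ (the paper's equation~\eqref{2.1.1}), from which the $m_{k,j}$ and $\mu_{k,j}$ determine each other, and both verify the intertwining $T_iS_{i,i+1}=S_{i,i+1}T_{i+1}$ by direct evaluation on $t_{i+1}(w)$. Two minor remarks: the recursion is really an induction on $j$ (since $m_{k,j}$ depends on $m_{k,i}$ for $i<j$), not merely on $k$ downward for fixed $j$; and your closing sentence about boundedness is extraneous here, as the lemma only asserts a densely defined $T$ and the paper treats boundedness separately in Lemma~\ref{bdd}.
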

For convenience of notation, in the proof below, we set $S_{i,i}:=T_i,\; 0\leq i \leq n-1,$ in the proof. We will adopt this practice often and call $T_0, T_1, \ldots , T_{n-1},$ the atoms of $T$. Also, $S_{i,i+1}(t_{i+1}) = \mu_{i,i+1} t_i,$ with the assumption that $\mu_{i,i}=1,\; 0\leq i\leq n-2.$
\begin{proof}
Note that $\{\gamma_0,\gamma_1,\cdots,\gamma_{n-1}\}$ is a frame
for $E_t$ and the atoms $t_i,\;0\leq i \leq {n-1}$ are pairwise  orthogonal. From  Definition \ref{defq}, we have
\begin{equation} \label{qcond}
\begin{array}{lll}
\gamma_0&=&t_0;\\
\gamma_{k}&=&\mu_{0,k}t_{0}^{(k)}+\mu_{1,k}t^{(k-1)}_1+\cdots +\mu_{k,k}t_k,\;  0\leq k<n-1;\\
{\mathcal H}&=&\bigoplus\limits_{i=0}^{n-1}{\mathcal H}_i,\;\mbox{ \rm where}\;  {\mathcal H}_i=\bigvee\{t_i(w):w\in \Omega\},\; i\in\{0,1,\cdots,n-1\}.
\end{array}
\end{equation}
In particular, the first statement of the Lemma is included in the definition of a holomorphic quasi-homogeneous curve.

For $0\leq i \leq j \leq n-1,$ let
$S_{i,j}:\mathcal H_j\to \mathcal H$  be the linear transformation induced by bundle maps
$s_{i,j}:E_{t_j}\rightarrow  {\mathcal J}_{j-i-1}E_{t_{i}},$ namely,
 $$\sum\limits_{i\leq j}
s_{i,j}(\gamma_k(w))=w\gamma_k(w),\;w\in
\mathbb{D}.$$
Equivalently, for any $k=0,1,\cdots,n-1,$ we have the
following formulae:\renewcommand\arraystretch{0.875}
$$\begin{pmatrix}
s_{0,0}-w & s_{0,1} & s_{0,2}&\cdots&\cdots&s_{0,n-1}\\
&s_{1,1}-w&s_{1,2}&\cdots&\cdots&s_{1,n-1} \\
&&\ddots&&&\vdots\\
&&&s_{k-1,k-1}-w&\cdots&s_{k-1,n-1}\\
&&\bigzero&&\ddots&\vdots\\
&&&&&s_{n-1,n-1}-w\\
\end{pmatrix}\begin{pmatrix}\mu_{0,k}t_{0}^{(k)}(w)\\
\vdots \\
\mu_{k,k}t_k(w)\\0\\ \vdots \\0 \end{pmatrix}=0. $$
\renewcommand\arraystretch{1}
It follows that
\begin{equation}\label{gammat}
(s_{k,k}-w)(\mu_{k,k}t_k(w))=0,
(s_{k-1,k-1}-w)(\mu_{k-1,k}t^{(1)}_{k-1}(w))+s_{k-1,k}(\mu_{k,k}t_{k}(w))=0,\end{equation}
Thus $s_{k,k}$ induces an operator $S_{k,k}$ with
${\ker}(S_{k,k}-w)=\bigvee\{t_{k}(w)\}$ and $s_{k-1,k}$ is a
bundle map from $E_{t_k}(w)\,(:=\mathbb C[t_{k}(w)])$ to
$E_{t_{k-1}}(w)\,(:=\mathbb C[t_{k-1}(w)])$.

{\sf Claim 1:}\,\, For any $i\leq j\leq n-1,$ $s_{i,j}$ is a bundle
map from $E_{t_j}$ to ${\mathcal J}_{j-i-1}E_{t_{i}}$ and there
exists $m_{i,j}\in \mathbb{C}$ such that
$S_{i,j}(t_j(w))=m_{i,j}t_i^{(j-i-1)}(w), w\in \mathbb{D}$.

Since $(s_{0,0}-w)\gamma_1(w) =
(s_{0,0}-w)(\mu_{0,1}t^{(1)}_{0}(w))+s_{0,1}(\mu_{1,1}t_{1}(w))=0$,
we have $$s_{0,1}(t_1(w))=m_{0,1}t_0(w),$$ where
$m_{0,1}=-\frac{\mu_{0,1}}{\mu_{1,1}}$. Similarly,
$(s_{0,0}-w)(\mu_{0,2}t^{(2)}_0(w))+s_{0,1}(\mu_{1,2}t^{(1)}_{1}(w))+s_{0,2}(\mu_{2,2}t_2(w))=0$,
and  we have
$$s_{0,2}(t_2(w))=-\frac{2\mu_{0,2}+\mu_{1,2}m_{0,1}}{\mu_{2,2}}t^{(1)}_0(w)=m_{0,2}t^{(1)}_0(w).$$
Now assume that for any fixed $k$ and some $k<j\leq n-1$, there
exits $m_{k,i}\in \mathbb{C}$ such that
$$s_{k,i}(t_i(w))=m_{k,i}t^{(i-k-1)}_{k}(w), i<j.$$
Then from equation  \eqref{gammat}, we have
$$
(s_{k,k}-w)(\mu_{k,j}t_k^{(j-k)})(w)+s_{k,k+1}(\mu_{k+1,j}t_{k+1}^{(j-k-1)}(w))+\cdots+s_{k,j}(\mu_{j,j}t_j(w))=0$$
and from the induction hypothesis, we may rewrite this as
$$\mu_{k,j}(j-k)t_k^{(j-k-1)}(w)+\mu_{k+1,j}m_{k,k+1}t_{k}^{(j-k-1)}(w)+\cdots+\mu_{j,j}s_{k,j}(t_j(w))=0.$$
Thus
$$s_{k,j}(t_j(w))=m_{k,j}t_k^{(j-k-1)}(w),$$
or, equivalently
\begin{equation}
m_{k,j}=-\frac{\mu_{k,j}(j-k)+\sum\limits_{l=1}^{j-k-1}\mu_{k+l,j}m_{k,k+l}}{\mu_{j,j}} \label{2.1.1}\end{equation}
completing the proof of our claim.

Now that we have found constants $m_{i,j}\in \mathbb{C}$ such that
$$S_{i,j}(t_j)=m_{i,j}t^{(j-i-1)}_i, i<j=0,1,\cdots,n-1$$
and since $(S_{i,i}-w)(t_{i}(w))=0,\; w\in \Omega,$ it follows that
$$\begin{array}{llll}
S_{i,i}S_{i,i+1}(t_{i+1}(w))&=&S_{i,i}(m_{i,i+1}t_{i}(w))\\
                        &=&m_{i,i+1}S_{i,i}(t_i(w))\\
                        &=&m_{i,i+1}t_i(w)\\
                        &=&S_{i,i+1}S_{i+1,i+1}(t_{i+1}(w)).\\
                        \end{array}.$$
We have ${\mathcal H}_i=\mbox{Span}_{w\in \Omega}\{t_i(w)\},
i=0,1\cdots,n-1,$ therefore
$$S_{i,i}S_{i,i+1}=S_{i,i+1}S_{i+1,i+1},  i=0,1,\cdots,n-2.$$
This completes the proof of the second statement of the lemma.

{\sf Claim 2:}\,\,For any operator $T$ in $B_n(\Omega)$ with atomic decomposition exactly as in the second statement of the lemma,
there exists  $\mu_{i,j}$ satisfying the conditions set in the equations \eqref{qcond}, that is, there exists a holomorphic frame
for $E_T,$ which is a linear combination of the non-vanishing holomorphic sections of $E_{t_i}$ and a certain number of jets.

Indeed, the proof of Claim 1 already verifies Claim 2 for $n\leq 2.$ To prove Claim 2 by induction, let us assume that  it is valid for  $k \leq n-2.$
Note that the
operator  $\big (\!\!\big (S_{i,j}\big )\!\!\big )_{i,j\leq n-2}$ is in $B_{n-1}(\Omega).$
By the induction hypothesis,  we can find $m_{i,j}, i,j\leq n-2$ verifying Claim 2 for any operator  $\big (\!\!\big (S_{i,j}\big )\!\!\big )_{i,j\leq n-2}.$   If we consider the operator
$$\begin{pmatrix}
T_{n-2}&S_{n-2,n-1}\\
0&T_{n-1}\\
\end{pmatrix},
$$ then we have that $S_{n-2,n-1}(t_{n-1})=m_{n-2,n-1}t_{n-2}.$ Now,  setting
$\mu_{n-2,n-1}=-m_{n-2,n-1},$  we can define all the
coefficients $\mu_{n-k,n-1}, 2\leq k\leq n$ recursively.  In fact,
if we consider
\renewcommand\arraystretch{0.875}
$$\begin{pmatrix}
T_{n-k} & S_{n-k,n-k+1} & S_{n-k,n-k+2}&\cdots&S_{n-k,n-1}\\
&T_{n-k+1}&S_{n-k+1,n-k+2}&\cdots&S_{n-k+1,n-1} \\
&\ddots&&&\vdots\\
&&\ddots&&\vdots\\
&\bigzero&&&\\
&&&T_{n-2}&S_{n-2,n-1}\\
&&&&T_{n-1}\\
\end{pmatrix},$$ where $2\leq k\leq n$, and set
$$\mu_{n-k,n-1}=-\frac{\sum\limits_{i=1}^{k-2}m_{n-k,n-k+i}\mu_{n-k+i,n-1}+m_{n-k,n-1}}{k-1},$$
\renewcommand\arraystretch{1}\noindent
then $\mu_{n-k,n-1}$ is defined  involving only the coefficients
$\mu_{n-k+i,n-1}$ which exist by the induction hypothesis.
Thus, coefficients $\mu_{i,j}$ depends only on the $m_{i,j}, i,j\leq n-1$.
By a direct computation,
$\gamma_{k}=\mu_{0,k}t_{0}^{(k)}+\mu_{1,k}t^{(k-1)}_1+\cdots
+\mu_{k,k}t_k,  0\leq k<n-1 $  together defines a frame for $E_T$. This completes the proof of Claim 2 and the third statement of the lemma.
\end{proof}

\subsection{\sf Boundedness} Having shown that a holomorphic quasi-homogeneous curve $t$ defines a linear transformation on a dense subset of $\mathcal H_t,$ we determine when it extends to a bounded linear operator on all of $\mathcal H_t.$ We make the following conventions here which will be in force throughout this paper.
\subsubsection{\sf Conventions} \label{conventions}

The positive definite kernel $K^{(\lambda)}(z,w)$ is the function $(1-\bar{w}z)^{-\lambda}$ defined on $\mathbb D\times \mathbb D$ and is the reproducing kernel for the weighted Bergman space $\mathbb A^{(\lambda)}(\mathbb D).$
The coefficient $a_n(\lambda)$ of $\bar{w}^nz^n$ in the power series expansion for $K^{(\lambda)}$ (in powers of $z\bar{w}$) is of the form
\begin{eqnarray*}
a_n(\lambda) &=& \tfrac{\lambda(\lambda+1)\cdots (\lambda+n-1)}{n!}\\
&=&\tfrac{\Gamma(\lambda+n)}{\Gamma(\lambda)n\Gamma(n)}\\
&\sim &n^{\lambda-1},\,(a_0(\lambda) = 1),
\end{eqnarray*}
where we have used the well-known formula due to Stirling, namely, $\tfrac{\Gamma(\lambda+n)}{\Gamma(n)}\sim n^\lambda.$

The set of vectors $e^{(\lambda)}_n:= \sqrt{a_n(\lambda)}\, z^n,\; n \geq 0,$ is an orthonormal basis in $\mathbb A^{(\lambda)}(\mathbb D).$ The action of the multiplication operator on $\mathbb A^{(\lambda)}(\mathbb D)$ is easily determined:
\begin{eqnarray*}
M (e^{(\lambda)}_n)& = &z (\sqrt{a_n(\lambda)}\, z^n)\\
&=&  \tfrac{\sqrt{a_n(\lambda)}}{\sqrt{a_{n+1}(\lambda)}}    \sqrt{a_{n+1}(\lambda)} z^{n+1}\\
&=& \tfrac{\sqrt{a_n(\lambda)}}{\sqrt{a_{n+1}(\lambda)}} e_{n+1}^{(\lambda)}\\
&\sim &\Big (\frac{n}{n+1} \Big )^{\tfrac{\lambda-1}{2}} e_{n+1}^{(\lambda)}.
\end{eqnarray*}
Often, one sets $w_n^{(\lambda)}:= \tfrac{\sqrt{a_n(\lambda)}}{\sqrt{a_{n+1}(\lambda)}}$ and says that $M$ is a weighted shift with weights $w_n^{(\lambda)}$ since $M(e_n^{(\lambda)}) = w_n^{(\lambda)} e^{(\lambda)}_{n+1}.$ The other way round, $\prod\limits_{i=0}^n w_i^{(\lambda)} =\sqrt{ \tfrac{a_0(\lambda)}{a_{n+1}(\lambda)}}\sim (n+1)^{\tfrac{1-\lambda}{2}}.$ The adjoint of this operator is then given by the formula:
$$
M^*(e_n^{(\lambda)}) = w_{n-1}^{(\lambda)} e_{n-1}^{(\lambda)}\sim \Big ( \frac{n-1}{n} \Big )^{\tfrac{\lambda-1}{2}} e_{n-1}^{(\lambda)}.
$$

The following Lemma shows that if the valency $\Lambda(t)$ is less than $2,$ then every possible linear combination of the atoms and their jets need not define a bounded linear transformation. However, from the proof of this lemma, one may infer no such obstruction can occur if $\Lambda(t) \geq 2.$
\begin{lem}\label{bdd}
Fix a natural number $n\geq 2.$ Let $t$ be a quasi-homogeneous
holomorphic curve with atoms $t_i,\;i=0,1,\ldots , n-1.$ For $0\leq
i,j \leq n-1,$ let $s_{i,j}(t_j(w)) = m_{i,j} t_i^{(j-i-1)}(w)$ be
the bundle map from $E_{t_j}(w)$ to ${\mathcal J}_{j-i-1}E_{t_{i}}$
and $S_{i,j}:\mathcal H_j \to \mathcal H_i$ be the densely defined
linear transformation  induced by the maps $s_{i,j}.$
The linear transformation of the form
\renewcommand\arraystretch{0.875}
$$T=\begin{pmatrix}
T_0 & S_{0,1} & S_{0,2}&\cdots&S_{0,n-1}\\
0&T_1&S_{1,2}&\cdots&S_{1,n-1} \\
\vdots &\ddots&\ddots&\ddots&\vdots\\
0&\ldots&0&T_{n-2}&S_{n-2,n-1}\\
0&0&\ldots&0&T_{n-1}\\
\end{pmatrix}$$
\renewcommand\arraystretch{1}
is densely defined on the Hilbert space $\mathbb A^{(\lambda_0)}(\mathbb D)\oplus \cdots \oplus \mathbb A^{(\lambda_{n-1})}(\mathbb D).$ Suppose that $\Lambda(t) < 2.$
\begin{enumerate}
\item If $\Lambda(t)\in[1+\frac{n-3}{n-1}, 2)$, $n\geq 2,$ then
$T$ is bounded.
\item If $\Lambda(t)\in [1+\frac{n-k-4}{n-k-2},
1+\frac{n-k-3}{n-k-1})$, the operator $T$ is bounded only if we set
$m_{i,j}=0$ whenever  $j-i\geq n-k-2, n-1>k\geq
0,\;n\geq 4,$ that is, $T$ must be of the form
\renewcommand\arraystretch{0.875}
$$
\begin{pmatrix}
S_{0,0} & S_{0,1}&\cdots&S_{0,n-k-2}&0&\cdots&0&0\\
&S_{1,1}&\cdots&S_{1,n-k-2}&S_{1,n-k-1}&0&\cdots&0 \\
&&\ddots&\ddots&\ddots&\ddots&\ddots&\vdots\\
&&&\ddots&\ddots&\ddots&\ddots&0\\
&&\mbox{\huge$0$}&&S_{k+1,k+1}&S_{k+1,k+2}&\cdots&S_{k+1,n-1}\\
&&&&&\ddots&\ddots&\vdots\\
&&&&&&S_{n-2,n-2}&S_{n-2,n-1}\\
&&&&&&&S_{n-1,n-1}\\
\end{pmatrix}
$$
\renewcommand\arraystretch{1}
\item  If $\Lambda(t)\in (0,1)$, then the densely defined linear transformation $T$ is bounded only if we set $m_{i,j}=0, i< j+1,
i=0,1,\cdots, n-2, n\geq 3.$
\end{enumerate}
\end{lem}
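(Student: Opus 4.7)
We split $T=D+N$ where $D=\mathrm{diag}(T_0,\ldots,T_{n-1})$ and $N=\sum_{0\le i<j\le n-1}S_{i,j}$. Each atom $T_i$, being $M^{*}$ on the Bergman space $\mathbb A^{(\lambda_i)}(\mathbb D)$, is bounded, and $N$ is a finite sum, so $T$ is bounded if and only if each off-diagonal block $S_{i,j}$ is bounded. The plan is to exhibit $S_{i,j}$ as a one-term weighted shift in the orthonormal bases $\{e_n^{(\lambda_i)}\}$ of Section 2.1, read off the asymptotic growth of the weights via Stirling, and translate the resulting threshold into the three stated ranges of $\Lambda(t)$.

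\textbf{The weight computation.} Fix $i<j$ and put $k:=j-i-1$. From $t_j(w)=\sum_{n\ge 0}\sqrt{a_n(\lambda_j)}\,\bar w^{n}\,e_n^{(\lambda_j)}$, and differentiating the kernel $(1-\bar w z)^{-\lambda_i}$ in $\bar w$ a total of $k$ times, one obtains
\[
t_i^{(k)}(w)=k!\,a_k(\lambda_i)\sum_{m\ge 0}\frac{a_m(\lambda_i+k)}{\sqrt{a_{m+k}(\lambda_i)}}\,\bar w^{m}\,e_{m+k}^{(\lambda_i)}.
\]
Matching coefficients of $\bar w^{n}$ in the defining identity $S_{i,j}\,t_j(w)=m_{i,j}\,t_i^{(k)}(w)$ shows that $S_{i,j}$ shifts indices by $k$,
\[
S_{i,j}\,e_n^{(\lambda_j)}=c_n^{(i,j)}\,e_{n+k}^{(\lambda_i)},\qquad c_n^{(i,j)}=m_{i,j}\,k!\,a_k(\lambda_i)\,\frac{a_n(\lambda_i+k)}{\sqrt{a_n(\lambda_j)\,a_{n+k}(\lambda_i)}}.
\]
Applying the Stirling asymptotic $a_n(\lambda)\sim n^{\lambda-1}/\Gamma(\lambda)$ (recorded in Section 2.1) gives $c_n^{(i,j)}\sim m_{i,j}\,C_{i,j}\,n^{\alpha(i,j)}$ with $C_{i,j}=k!\,a_k(\lambda_i)\sqrt{\Gamma(\lambda_i)\Gamma(\lambda_j)}/\Gamma(\lambda_i+k)\neq 0$ and exponent
\[
\alpha(i,j)=(j-i-1)-\frac{(j-i)\Lambda(t)}{2}.
\]
Since a weighted shift is bounded if and only if its weight sequence is bounded, $S_{i,j}$ is bounded exactly when $m_{i,j}=0$ or $\alpha(i,j)\le 0$, i.e.\ $(j-i)\bigl(2-\Lambda(t)\bigr)\le 2$; since $\Lambda(t)<2$ in all three cases, this is $j-i\le 2/(2-\Lambda(t))$.

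\textbf{Reading off the three cases.} In case (1), $\Lambda(t)\ge 1+(n-3)/(n-1)$ rearranges to $2/(2-\Lambda(t))\ge n-1$, which dominates every admissible $j-i$; hence every $S_{i,j}$ is bounded and so is $T$. In case (2), the hypothesis places $2/(2-\Lambda(t))$ in the half-open interval $[n-k-2,\,n-k-1)$, so $\alpha(i,j)>0$ precisely for $j-i\ge n-k-1$; boundedness of $T$ therefore forces $m_{i,j}=0$ on that region, which is exactly the band pattern displayed in the statement. In case (3), $\Lambda(t)<1$ gives $2/(2-\Lambda(t))<2$, so $\alpha(i,j)>0$ whenever $j-i\ge 2$ and all entries off the main and first super-diagonal must vanish.

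\textbf{Main obstacle.} The only delicate step is the Stirling bookkeeping that distils the exponent $\alpha(i,j)$: the three Pochhammer-type factors $a_n(\lambda_i+k),\,a_n(\lambda_j),\,a_{n+k}(\lambda_i)$ must be combined with the jet coefficient $k!\,a_k(\lambda_i)$ so that the powers of $n$ collapse to the stated expression and the constant $C_{i,j}$ is seen to be non-zero. Once this is in hand, everything else is the elementary decomposition $T=D+N$ and the weighted-shift criterion for boundedness.
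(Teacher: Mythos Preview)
Your proposal is correct and follows essentially the same route as the paper: both identify each $S_{i,j}$ as a one-step weighted shift between the standard bases of $\mathbb A^{(\lambda_j)}$ and $\mathbb A^{(\lambda_i)}$, use the Stirling asymptotic $a_n(\lambda)\sim n^{\lambda-1}$ to extract the growth exponent $(j-i-1)-\tfrac{(j-i)\Lambda(t)}{2}$, and read off the three cases from the resulting threshold $\lambda_j-\lambda_i\ge 2(j-i)-2$. Your reformulation $j-i\le 2/(2-\Lambda(t))$ is a slightly cleaner bookkeeping device than the paper's, but the argument is otherwise identical (note that both your computation and the paper's proof give $m_{i,j}=0$ for $j-i\ge n-k-1$, matching the displayed band matrix; the text ``$j-i\ge n-k-2$'' in the statement appears to be a typo).
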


\begin{proof}

For $i=0,1,\cdots, n-1,$ the operators $S_{i,i}$ are homogeneous by definition. Thus the operator $S_{i,i}$, as we have said before, is realized as the adjoint of the multiplication operator on the weighted Bergman space $\mathbb A^{(\lambda_i)}(\mathbb D).$ The reproducing kernel $K^{(\lambda_i)}(z,w)$ for this Hilbert space is of the form $\tfrac{1}{(1-z\bar{w})^{\lambda_i}}.$  Consequently,
$$\ker\,(S_{i,i}-w)^*= \mathbb{C}[t_i(\bar{w})]=\mathbb{C}[K^{(\lambda_i)}(z,w)],w\in \mathbb{D}.$$
{\sf Claim :} If $\lambda_{j}-\lambda_i> 2(j-i)-2,
j>i=0,1,2,\cdots, n-2$, then each $s_{i,j}$ induces a non-zero
linear bounded operator $S_{i,j}$.

Without loss of generality, we set $s_{i,j}(t_j)=
m_{i,j}t_i^{(j-i-1)}, m_{i,j}\in \mathbb{C},
 i,j=0,1,\cdots,n-1$ and
$$t_i(w)=\frac{1}{(1-zw)^{\lambda_i}}, t_j(w)= \frac{1}{(1-zw
)^{\lambda_j}}.$$
Then the linear transformation $S_{i,j}:{\mathcal H}_j \rightarrow
{\mathcal H}_i$ induced by $s_{i,j}$ is densely defined by the rule
 $$S_{i,j}(t_j)= m_{i,j}t_i^{(j-i-1)},
 i,j=0,1,\cdots,n-1.$$
We have that
$$\begin{array}{lllll}||S_{i,j}||
&=&|m_{i,j}|\max\limits_{\ell}\{\frac{\sqrt{a_{\ell}(\lambda_i)}}{\sqrt{a_{\ell-(j-i-1)}(\lambda_j)}}\ell(\ell-1)\cdots
(\ell-(j-i)+2)\}\\
&=&|m_{i,j}|\max\limits_{\ell}\{\frac{\sqrt{\prod\limits_{l=0}^{\ell-(j-i)}w_l(\lambda_j)}}{\sqrt{\prod\limits_{l=0}^{\ell-1}w_l(\lambda_i)}}\ell(\ell-1)\cdots
(\ell-(j-i)+2)\}\\
\end{array}$$
By a direct computation,
$$\frac{\sqrt{\prod\limits_{l=0}^{\ell-(j-i-1)}w_l(\lambda_j)}}{\sqrt{\prod\limits_{l=1}^{\ell-1}w_l(\lambda_i)}}\ell(\ell-1)\cdots
(\ell-(j-i)+2)\sim \Big (\frac{1}{\ell^{\frac{\lambda_j-\lambda_i}{2}-(j-i-1)}}\Big ).$$ It
follows that each $S_{i,j}$ is a non-zero bounded linear operator if and only if
$$\frac{\lambda_j-\lambda_i}{2}\geq j-i-1,\;\mbox{\rm that is,}\; \lambda_j-\lambda_i\geq
2(j-i)-2.$$
Now recall that $T=\big (\!\!\big (S_{i,j}\big )\!\!\big)_{n\times n}$ is of the form:
$$T=\left ( \begin{smallmatrix}
S_{0,0} & S_{0,1} & S_{0,2}&\cdots&S_{0,n-1}\\
0&S_{1,1}&S_{1,2}&\cdots&S_{1,n-1} \\
\vdots&\ddots&\ddots&\ddots&\vdots\\
0&\cdots&0&S_{n-2,n-2}&S_{n-2,n-1}\\
0&\cdots&\cdots&0&S_{n-1,n-1}\\
\end{smallmatrix}\right ).$$
If $\Lambda(t)\geq 1+\frac{n-3}{n-1},$ then
$$\lambda_{n-1}-\lambda_0=(n-1)\Lambda(t)\geq 2(n-2).$$ By the argument given above, we obtain $S_{0,n-1}$ is non-zero and bounded.  If
$\Lambda(t)<1+\frac{n-3}{n-1}$, then we might deduce that
$m_{0,n-1}=0$ or $\mu_{0,n-1}=0$, i.e. $S_{0,n}=0.$ Thus the proof of the first statement is complete.

For the general case, if $\Lambda(t)\in [1+\frac{n-k-4}{n-k-2},
1+\frac{n-k-3}{n-k-1})$, $k\geq 0$,  then we have
$$(n-k-1)\Lambda(t)<2(n-k-1)-2.$$
And if $j-i\geq n-k-1,$ then we obtain
$$\begin{array}{lllll}
\lambda_j-\lambda_i&=&(j-i)\Lambda(t)\\
&\leq &(j-i)\frac{2(n-k-1)-2}{n-k-1}\\
&\leq &(j-i)\frac{2(j-i)-2}{j-i}\\
&=&2(j-i)-2.
\end{array}
$$
By the argument above, we have $S_{i,j}=0, j-i\geq n-k-1.$ And $S$
has the following matrix form:
\begin{equation} T=
\left (\begin{smallmatrix}
S_{0,0} & S_{0,1}&\cdots&S_{0,n-k-2}&0&\cdots&0&0\\
&S_{1,1}&\cdots&S_{1,n-k-2}&S_{1,n-k-1}&0&\cdots&0 \\
&&\ddots&\ddots&\ddots&\ddots&\ddots&\vdots\\
&&&\ddots&\ddots&\ddots&\ddots&0\\
&&\bigzero&&S_{k+1,k+1}&S_{k+1,k+2}&\cdots&S_{k+1,n-1}\\
&&&&&\ddots&\ddots&\vdots\\
&&&&&&S_{n-2,n-2}&S_{n-2,n-1}\\
&&&&&&&S_{n-1,n-1}\\
\end{smallmatrix} \right )\label{(2.3.2)}
\end{equation}
This completes the proof of the second statement.

In particular, if $0\leq
\Lambda(t)<1$ and $j-i\geq 2$, then we have
$$\begin{array}{lllll}
\lambda_j-\lambda_i&=&(j-i)\Lambda(t)\\
&<&(j-i)\\
 &\leq&2(j-i)-2,
\end{array}
$$
which implies
$$T=\left ( \begin{smallmatrix}
S_{0,0} & S_{0,1} &0&\cdots&0\\
0&S_{1,1}&S_{1,2}&\cdots&0 \\
\vdots&\ddots&\ddots&\ddots&\vdots\\
0&\cdots&0&S_{n-2,n-2}&S_{n-2,n-1}\\
0&\cdots&\cdots&0\phantom{Gadadh}&S_{n-1,n-1}\phantom{Ji}\\
\end{smallmatrix}\right ), \;\Lambda(t)\in [0,1).$$
The completes the proof of the third statement.
\end{proof}

\subsection{\sf Rigidity}
Let $T$  and ${\tilde{T}}$ be two quasi-homogeneous operators acting on the Hilbert space ${\mathcal H}=\mathcal H_0\oplus \cdots \oplus \mathcal H_{n-1}$ with atomic decompositions  $\big(\!\! \big (S_{i,j}\big )\!\!\big )$ and $\big(\!\! \big ( \widetilde{S}_{i,j}\big )\!\!\big ),$ respectively.
For a  bounded linear operators $X$ on the Hilbert space $\mathcal H,$ the following statements are
equivalent by definition.
\begin{enumerate}
\item $X \big(\!\! \big ( S_{i,j}\big )\!\!\big )_{n\times n}= \big(\!\! \big ( \widetilde{S}_{i,j}\big )\!\!\big )_{n\times n}X;$
\item $X \;t(w)\subseteq \tilde{t}\;(w),\;w\in \Omega.$
\end{enumerate}
It will be convenient to separately state as a lemma what we have already proved in the third statement of Lemma \ref{atomic}.
\begin{lem} \label{intS}
Suppose $T$ is a quasi-homogeneous  operator and
$\big(\!\! \big ( S_{i,j}\big )\!\!\big )_{n\times n}$ is its atomic decomposition. Then we have
$$S_{i,i}S_{i,i+1}=S_{i,i+1}S_{i+1,i+1}, i=0,1,\cdots,n-2.$$
\end{lem}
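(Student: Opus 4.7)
The plan is to unravel the lemma back to what has already been established in Lemma \ref{atomic}. Specifically, Claim 1 inside the proof of Lemma \ref{atomic} supplies the formula $S_{i,j}(t_j(w)) = m_{i,j}\,t_i^{(j-i-1)}(w)$, and in the special case $j=i+1$ this reduces to $S_{i,i+1}(t_{i+1}(w)) = m_{i,i+1}\,t_i(w)$. This is the only piece of information about the super-diagonal entries I really need. The diagonal atoms are characterized by $S_{j,j}(t_j(w)) = w\,t_j(w)$, since each $T_j = S_{j,j}$ is an element of $B_1(\mathbb D)$ realised so that $t_j(w)$ spans the eigenspace at $w$.

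Given this, I would verify the identity pointwise on the generating vectors $t_{i+1}(w)$, $w\in\mathbb D$, by a one-line computation:
\[
S_{i,i}S_{i,i+1}\bigl(t_{i+1}(w)\bigr) = S_{i,i}\bigl(m_{i,i+1}\,t_i(w)\bigr) = m_{i,i+1}\,w\,t_i(w),
\]
\[
S_{i,i+1}S_{i+1,i+1}\bigl(t_{i+1}(w)\bigr) = S_{i,i+1}\bigl(w\,t_{i+1}(w)\bigr) = w\,m_{i,i+1}\,t_i(w),
\]
so the two compositions agree on every $t_{i+1}(w)$.

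To close the argument I would invoke that $\mathcal H_{i+1}$ is, by construction, the closed linear span of $\{t_{i+1}(w):w\in\mathbb D\}$ (this is part (a) of Lemma \ref{atomic}). Since $S_{i,i}S_{i,i+1}$ and $S_{i,i+1}S_{i+1,i+1}$ are bounded linear operators from $\mathcal H_{i+1}$ into $\mathcal H_i$ (boundedness being part of our standing assumption that $T\in B_n(\mathbb D)$, cf.\ Lemma \ref{bdd}) and they agree on a total set, they are equal. Running this for each $i=0,1,\ldots,n-2$ gives the full conclusion.

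The only potentially delicate point is the boundedness step at the end: one must know that both compositions extend boundedly to all of $\mathcal H_{i+1}$, which is why it is essential to have already established the boundedness of $T$ (and hence of each block $S_{i,j}$) in the previous subsection. Beyond that, the lemma is really a restatement of Claim 1 of Lemma \ref{atomic} combined with the eigenvector equation for each atom, so no genuinely new obstacle appears.
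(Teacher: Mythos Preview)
Your proposal is correct and matches the paper's approach exactly: the paper does not give a separate proof of this lemma but simply records it as a restatement of what was already established inside the proof of Lemma \ref{atomic}, where the identity is verified on the generating vectors $t_{i+1}(w)$ using $S_{i,i+1}(t_{i+1}(w))=m_{i,i+1}t_i(w)$ and $S_{j,j}(t_j(w))=w\,t_j(w)$, and then extended by density. Your added remark about boundedness is a fair clarification of a step the paper leaves implicit.
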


%
%
%
%
%
%
%
%
%
%
%
%
%
Recall that if $A$ and $B$ are two operators in $\mathcal L(\mathcal H),$ then the Rosenblum operator $\tau_{A,B}$ is defined to be the operator $\tau_{A,B}(X) = AX - XB,$ $X\in \mathcal L(\mathcal H).$ If $A=B,$ then we set $\sigma_A:=\tau_{A,B}.$ An operator is said to be quasi-nilpotent if $\lim_{n\to \infty}\|T^n\|=0.$ We will make repeated use of the following lemma, which appears as problem 232 in \cite{H}.

\begin{lem}\label{nil}
Let $P, T_0\in {\mathcal L}({\mathcal H})$ and $\sigma_{T_0}$ denote
the Rosenblum's operator.  If $P\in
\mbox{\rm ran}\,\sigma_{T_0}$ and $P$ commutes with $T_0$, then $P$ is
quasi-nilpotent.
\end{lem}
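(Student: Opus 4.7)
The plan is to recognize this as the Kleinecke--Shirokov theorem and to sketch the standard derivation-theoretic proof. I would introduce the inner derivation $D: \mathcal L(\mathcal H) \to \mathcal L(\mathcal H)$ defined by $D(Y) = T_0 Y - Y T_0$, so that $D = \sigma_{T_0}$. Fix $X \in \mathcal L(\mathcal H)$ with $P = D(X)$; then the hypothesis that $P$ commutes with $T_0$ is precisely $D(P) = 0$, i.e.\ $D^2(X) = 0$. Thus the lemma reduces to the assertion: if $D$ is an inner derivation on $\mathcal L(\mathcal H)$ and $D^2(X) = 0$, then $D(X)$ is quasi-nilpotent.

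The heart of the argument is the identity
$$D^n(X^n) \;=\; n!\, D(X)^n \;=\; n!\, P^n, \qquad n \geq 1.$$
I would prove this by induction on $n$ using the Leibniz rule $D(AB) = D(A)B + A D(B)$ and its iterated form $D^n(AB) = \sum_{k=0}^n \binom{n}{k} D^k(A)\, D^{n-k}(B)$. Writing $X^n = X \cdot X^{n-1}$ and expanding, only the $k=0$ and $k=1$ terms can survive, because $D^k(X) = 0$ for $k \geq 2$. A pigeonhole argument applied to the multinomial expansion of $D^n(X^{n-1})$ (a sum over products of $n-1$ factors whose total derivative-order equals $n$, forcing at least one factor $D^j(X)$ with $j \geq 2$ to appear) shows that the $k=0$ contribution $X \cdot D^n(X^{n-1})$ vanishes, while the $k=1$ contribution is $n\, D(X) \cdot D^{n-1}(X^{n-1}) = n\, D(X) \cdot (n-1)!\, D(X)^{n-1}$ by the inductive hypothesis, giving $n!\, D(X)^n$ as claimed.

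Once this identity is in hand, the norm bound follows directly. Since $\|D(Y)\| \leq 2 \|T_0\|\, \|Y\|$, we have $\|D^n\| \leq (2\|T_0\|)^n$, and hence
$$\|P^n\|^{1/n} \;=\; \frac{1}{(n!)^{1/n}}\, \|D^n(X^n)\|^{1/n} \;\leq\; \frac{2\|T_0\|\, \|X\|}{(n!)^{1/n}} \;\longrightarrow\; 0$$
as $n \to \infty$, by Stirling's formula. Therefore the spectral radius of $P$ is zero, which forces $\|P^n\| \to 0$ and so $P$ is quasi-nilpotent in the sense adopted in the paper.

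The only step requiring genuine care is the combinatorial identity $D^n(X^n) = n!\, P^n$; the norm estimate and the appeal to $(n!)^{1/n} \to \infty$ are routine. Since the statement is recorded as Problem 232 in \cite{H}, I would present the above derivation concisely rather than deriving it from scratch.
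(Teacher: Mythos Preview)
Your proof is correct and is precisely the standard Kleinecke--Shirokov argument. The paper does not give its own proof of this lemma; it simply records it as Problem 232 of Halmos' problem book \cite{H}, which is exactly the reference you invoke, and the solution there is the derivation-theoretic computation you outline. Your handling of the paper's nonstandard definition of quasi-nilpotent ($\|P^n\|\to 0$ rather than $\|P^n\|^{1/n}\to 0$) is also appropriate: spectral radius zero implies the former.
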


%
%
\begin{lem}\label{zero}
Let $P$ be a bounded linear operator on a Hilbert space $\mathcal H$ and $t, \tilde{t}:\Omega\rightarrow Gr(1,{\mathcal H})$ be  holomorphic curves. If $Pt(w)=\tilde{t}(w),$
$w \in \Omega,$ then either $P$ is zero or range of $P$ is dense.
\end{lem}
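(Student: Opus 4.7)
The plan is a short, direct argument that rests on the standing convention in the paper (see item (e) at the end of the Introduction) that the rank-$1$ holomorphic curves treated here arise from operators in $B_1(\Omega)$; in particular, the closed linear span of $\{\tilde t(w):w\in\Omega\}$ is all of the ambient Hilbert space $\mathcal H$.

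First, I would unpack the hypothesis: the identity $Pt(w)=\tilde t(w)$, valid for every $w\in\Omega$, places each vector $\tilde t(w)$ in $\mathrm{ran}(P)$. Taking closed linear spans therefore yields
\[
\overline{\mathrm{ran}(P)}\;\supseteq\;\overline{\mathrm{span}\{\tilde t(w):w\in\Omega\}}.
\]
By the convention above, the right-hand side coincides with $\mathcal H$. Hence $\overline{\mathrm{ran}(P)}=\mathcal H$, that is, $\mathrm{ran}(P)$ is dense. Equivalently, one may phrase the same step in terms of the adjoint: if $y\in \ker P^{*}$, then $\langle \tilde t(w),y\rangle = \langle Pt(w),y\rangle =0$ for every $w\in\Omega$, so $y$ is orthogonal to a total set in $\mathcal H$ and therefore $y=0$.

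The alternative "$P$ is zero'' in the conclusion is included only as the degenerate case. Since, by the paper's convention, $\tilde t$ is a non-vanishing holomorphic section of the line bundle $E_{\tilde t}$, the inequality $\tilde t(w)\neq 0$ forces $Pt(w)\neq 0$, and therefore $P\neq 0$; under this convention only the dense-range alternative actually occurs. There is no serious technical obstacle in the argument; the only step worth highlighting is the invocation of the totality of $\{\tilde t(w):w\in\Omega\}$ in $\mathcal H$, which is not an a priori feature of an arbitrary holomorphic curve into a Grassmannian but is built into the Cowen--Douglas setting adopted throughout the paper.
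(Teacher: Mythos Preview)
Your argument is clean under your reading of the hypothesis, but that reading is not the one the paper intends, and it is too strong for the applications. You take $Pt(w)=\tilde t(w)$ as an equality of \emph{vectors} (via the convention that $t,\tilde t$ also denote non-vanishing sections), which forces $P\neq 0$ and places each $\tilde t(w)$ directly in $\operatorname{ran}P$. But the lemma is stated for curves into $\mathrm{Gr}(1,\mathcal H)$, and the paper's own proof writes $t(w)=\mathbb C[\gamma(w)]$, $\tilde t(w)=\mathbb C[\tilde\gamma(w)]$ and explicitly allows $P(\gamma(w_i))=0$ at some points. In other words, the intended hypothesis is that $P$ carries each \emph{line} $t(w)$ into the line $\tilde t(w)$, so that $P\gamma(w)=a(w)\tilde\gamma(w)$ for a scalar $a(w)$ that may vanish. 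This is exactly what one gets for an intertwiner between two $B_1$ operators, and it is how the lemma is used in Lemma~\ref{utXY} (applied to blocks like $X_{2,1},Y_{2,1}$ that might well be zero; indeed, concluding that they \emph{are} zero is the goal). Under your reading the dichotomy collapses and the lemma could not be applied to such operators, since a general intertwiner does not satisfy a vector identity $P\gamma(w)=\tilde\gamma(w)$ for any fixed choice of sections.

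The ingredient your argument is missing, and that the paper supplies, is a holomorphy step. The paper first shows that for any sequence $w_n\to w_0\in\Omega$ one already has $\bigvee_n\gamma(w_n)=\mathcal H$ (since $w\mapsto\langle\gamma(w),x\rangle$ is holomorphic, orthogonality on a set with an interior accumulation point forces $x=0$). Hence if $P\neq 0$ the set $\{w_n:P\gamma(w_n)=0\}$ must be finite, and on the remaining subsequence $P\gamma(w_{n_k})=a_{n_k}\tilde\gamma(w_{n_k})$ with $a_{n_k}\neq 0$, so $\bigvee_k P\gamma(w_{n_k})=\bigvee_k\tilde\gamma(w_{n_k})=\mathcal H$. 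Your $\ker P^{*}$ version can be repaired along these lines: from $y\in\ker P^{*}$ you only obtain $a(w)\langle\tilde\gamma(w),y\rangle=0$; you then need that $a$ is holomorphic and not identically zero (when $P\neq 0$), so that $\langle\tilde\gamma(w),y\rangle$ vanishes off a discrete set, and finally the spanning statement above to conclude $y=0$.
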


\begin{proof}
 Suppose that $P$ is non-zero. Let $t,\tilde{t}$ be two holomorphic curves of rank $1.$ Suppose that
 $t(w)=\mathbb C[\gamma(w)]$ and $\tilde{t}(w)=\mathbb C[\tilde{\gamma}(w)],$  $w \in \Omega$.
 Now, $\gamma,\tilde{\gamma}$ are holomorphic functions taking values in the Hilbert space $\mathcal H$ by definition. We claim that for any sequence $\{w_n\}\subseteq \Omega$, if $\lim\limits_{n\rightarrow \infty} w_n=w_0\in
\mbox{int}(\Omega)$, then we have
$$\bigvee\{\gamma(w_n):n\in \mathbb N\}={\mathcal H}.$$ In fact,
any $x\in {\mathcal H}$ which is orthogonal to
$\bigvee\{\gamma(w_n):n\in \mathbb N\}$ must be zero. This follows
since \mbox{$\langle\gamma(w),x\rangle$} is holomorphic. Pick a  a sequence $\{w_n\}^{\infty}_{n=1}$ with $\lim\limits_{n\rightarrow \infty}w_n=w_0.$ Then
$S:=\{w_i|P(\gamma(w_i))=0\}$ must be finite set. Otherwise, we have
$\{\gamma(w_n)|w_n\in S\}={\mathcal H}$ making  $P=0$, which is a
contradiction. So there exists  a subsequence
$\{w_{n_k}\}^{\infty}_{k=1}$ and $a_{n_k}\neq 0$ such that
$P(w_{n_k})=a_{n_k}\widetilde{\gamma}_{n_k}$. Then it follows
that $$\bigvee\limits_{k\in
\mathbb{N}}\{P(\gamma_{n_k})\}=\bigvee\limits_{k\in
\mathbb{N}}\{a_{n_k}\widetilde{\gamma}(w_{n_k})\}={\mathcal H}.$$
Thus $P$ has dense range.
\end{proof}

Let $t$ and ${\tilde{t}}$ be two holomorphic curves in the Grassmannian of rank $n$ in some Hilbert space $\mathcal H.$  If there exists  injective operators $X$ and $Y$ such that
$X t={\tilde{t}}$ and $Y{\tilde{t}}=t,$ then $t$ and ${\tilde{t}}$ are said to be quasi similar.
The following lemma shows that if two quasi-homogeneous holomorphic curves $t$ and ${\tilde{t}}$ are quasi similar via the operators $X$ and $Y,$ then these operators must be upper triangular with respect to the atomic decomposition of $t$ and ${\tilde{t}}$.

\begin{lem} \label{utXY}
Let $t$ and ${\tilde{t}}$ be two quasi-homogeneous holomorphic
curves with atomic decomposition $\{t_i:i=0,1,\ldots , n-1\}$ and
$\{\tilde{t}_i:i=0,1,\ldots , n-1\},$ respectively. If they are
quasi-similar via the intertwining operators $X$ and $Y$, that is,
$X t=\tilde{t}$ and $Y\tilde{t}=t$,  then for $i\leq n-1,$ we have
\begin{eqnarray*}
\lefteqn{X\big ({\bigvee}\{t_0(w),t_1(w),\cdots, t_i(w): w\in \Omega\}\big )\subseteq
\bigvee\{\tilde{t}_0(w),\tilde{t}_1(w),\cdots,
\tilde{t}_i(w):w\in \Omega\},\;\;\;}\\
&&Y\big ( \bigvee\{\tilde{t}_0(w),\tilde{t}_1(w),\cdots,
\tilde{t}_i(w)w\in\Omega\}\big )\subseteq \bigvee\{t_0(w),t_1(w),\cdots, t_i(w):w\in \Omega\}.
\end{eqnarray*}
\end{lem}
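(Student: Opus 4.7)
My plan is to reformulate the statement in block-matrix form and reduce, via a straightforward induction, to a vanishing problem for pure intertwiners between single atoms.

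\textbf{Setup and reformulation.} Write $X=(X_{ij})$ with $X_{ij}\colon\mathcal H_j\to\tilde{\mathcal H}_i$, and similarly $Y=(Y_{ij})$, in terms of the atomic decompositions. Since $\bigvee\{t_0(w),\dots,t_i(w):w\in\Omega\}=\mathcal H_0\oplus\cdots\oplus\mathcal H_i$ (and the analogous identity on the tilde side), the two containments to be proved are equivalent to the block vanishings $X_{ij}=0$ and $Y_{ij}=0$ for every $i>j$, i.e., that $X$ and $Y$ are strictly block upper triangular. Coupling the intertwining $\tilde T X=XT$ with the upper-triangular atomic decompositions $T=(S_{ij}),\ \tilde T=(\tilde S_{ij})$ supplied by Lemma~\ref{atomic}, the $(i,j)$ block reads
\[
\tilde T_i X_{ij}-X_{ij}T_j \;=\;\sum_{l<j}X_{il}S_{lj}-\sum_{k>i}\tilde S_{ik}X_{kj},
\]
with the symmetric system $T_iY_{ij}-Y_{ij}\tilde T_j=\sum_{l<j}Y_{il}\tilde S_{lj}-\sum_{k>i}S_{ik}Y_{kj}$ for $Y$.

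\textbf{Induction on distance below the diagonal.} I process sub-diagonal blocks $(i,j),\ i>j$, in order of decreasing distance $d:=i-j$, starting from $d=n-1$. At $d=n-1$ only the corner block $(n-1,0)$ is in play and both sums on the right vanish, giving the pure intertwining $\tilde T_{n-1}X_{n-1,0}=X_{n-1,0}T_0$. At a generic step, every block $X_{il}$ with $l<j$ on the right satisfies $i-l>d$ and every block $X_{kj}$ with $k>i$ satisfies $k-j>d$, so each has already been shown to vanish; hence the right side collapses and again $\tilde T_i X_{ij}=X_{ij}T_j$. Running the same argument in parallel for $Y$ yields $T_iY_{ij}=Y_{ij}\tilde T_j$.

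\textbf{Vanishing of pure intertwiners.} It remains to show: if $X_{ij}$ with $i>j$ intertwines the homogeneous atoms $T_j$ on $\mathbb A^{(\lambda_j)}(\mathbb D)$ and $\tilde T_i$ on $\mathbb A^{(\tilde\lambda_i)}(\mathbb D)$, then $X_{ij}=0$, and the symmetric conclusion for $Y_{ji}$. Applying Lemma~\ref{zero} to the rank-$1$ datum $X_{ij}t_j(w)\in\ker(\tilde T_i-w)=\mathbb C[\tilde t_i(w)]$ furnishes a holomorphic scalar $c$ with $X_{ij}t_j(w)=c(w)\tilde t_i(w)$, and the dichotomy says that either $c\equiv 0$ (giving $X_{ij}=0$) or $X_{ij}$ has dense range. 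The reproducing-kernel identity $\|t_k(w)\|^2=(1-|w|^2)^{-\lambda_k}$ combined with $\|X_{ij}t_j(w)\|\leq\|X_{ij}\|\,\|t_j(w)\|$ yields
\[
|c(w)|\;\leq\;\|X_{ij}\|\,(1-|w|^2)^{(\tilde\lambda_i-\lambda_j)/2},\qquad w\in\mathbb D.
\]
When $\tilde\lambda_i>\lambda_j$ the exponent is strictly positive, so $c$ is bounded on $\mathbb D$ with vanishing radial boundary values, whence $c\equiv 0$ by the classical uniqueness theorem for $H^\infty$; this gives $X_{ij}=0$. The analogous estimate from the $Y$-side delivers $Y_{ji}=0$ whenever $\lambda_j>\tilde\lambda_i$. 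In the residual borderline case $\tilde\lambda_i=\lambda_j$, the products $Y_{ji}X_{ij}\in\{T_j\}'$ and $X_{ij}Y_{ji}\in\{\tilde T_i\}'$ can (using the atomic intertwining $S_{i,i}S_{i,i+1}=S_{i,i+1}S_{i+1,i+1}$ of Lemma~\ref{intS}) be exhibited as commutators with $T_j$ and $\tilde T_i$ respectively, so Lemma~\ref{nil} forces them to be quasi-nilpotent; since the commutant of a homogeneous rank-$1$ operator contains no nonzero quasi-nilpotent element, the products vanish, and the injectivity of $X$ and $Y$ guaranteed by quasi-similarity propagates this to $X_{ij}=Y_{ji}=0$.

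\textbf{Main obstacle.} The most delicate step is the equality case $\tilde\lambda_i=\lambda_j$: the reproducing-kernel estimate alone is vacuous and one must bootstrap, simultaneously on the $X$-side and the $Y$-side, via the Kleinecke--Shirokov mechanism encoded in Lemma~\ref{nil}. Care is needed to verify that the composed operators $Y_{ji}X_{ij}$ and $X_{ij}Y_{ji}$ genuinely lie in the range of the relevant Rosenblum operators (once the further-below blocks are known to vanish), and to exploit the valency hypothesis on $t$ and $\tilde t$ to ensure the atomic intertwining of Lemma~\ref{intS} is available without circularity.
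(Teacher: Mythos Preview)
Your induction on the distance $d=i-j$ below the diagonal, reducing each sub-diagonal block to a pure intertwiner $\tilde T_i X_{ij}=X_{ij}T_j$, is a clean and correct manoeuvre. The trouble begins at the next step. The reproducing-kernel estimate only delivers $X_{ij}=0$ when $\tilde\lambda_i>\lambda_j$; but the lemma imposes no relation between the two weight sequences, and it is easy to arrange $\tilde\lambda_i<\lambda_j$ with $i>j$ (take $\tilde\lambda_0$ much smaller than $\lambda_0$). In that regime the pure intertwining $\tilde T_i X_{ij}=X_{ij}T_j$ does \emph{not} force $X_{ij}=0$: the adjoint $X_{ij}^*$ is then a multiplier $\mathbb A^{(\tilde\lambda_i)}\to\mathbb A^{(\lambda_j)}$, and since $\mathbb A^{(\tilde\lambda_i)}\hookrightarrow\mathbb A^{(\lambda_j)}$ continuously when $\tilde\lambda_i<\lambda_j$, there are plenty of nonzero bounded intertwiners (the inclusion itself, for one). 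So decoupling $X$ from $Y$ and passing to pure intertwiners discards exactly the information you need.

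Your attempt to recover via the $Y$-side is muddled. The block $Y_{ji}$ with $i>j$ is \emph{above} the diagonal and enjoys no simplified relation after your induction; if you meant $Y_{ij}$, the analogous estimate kills it when $\lambda_i>\tilde\lambda_j$, but that does not by itself give $X_{ij}=0$. The ``borderline'' sketch---exhibiting $Y_{ji}X_{ij}$ as a commutator via Lemma~\ref{intS}---is not substantiated (you have no clean relation for the above-diagonal $Y_{ji}$), and the claim that injectivity of $X,Y$ propagates vanishing of the products back to the factors is false in general. Compare the paper's route: it never decouples $X$ from $Y$, but forms the four-fold product $X_{2,1}S_{0,1}Y_{2,1}\tilde S_{0,1}$ and shows it lies simultaneously in $\mbox{ran}\,\sigma_{\tilde S_{1,1}}$ and in $\mathcal A'(\tilde S_{1,1})$, hence is quasi-nilpotent, hence zero (Lemmas~\ref{nil} and~\ref{zero}); this works uniformly, independent of any weight comparison, and only \emph{then} does a separate dense-range argument transfer vanishing from one of $X_{2,1},Y_{2,1}$ to the other.
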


\begin{proof}

We give the proof for the case of $n=2$.
By Lemma \ref{bdd}, we may assume that the holomorphic curves $t$ and $\tilde{t}$ define quasi-homogeneous bounded operators $T$ and $\tilde{T},$ respectively. Now, Lemma \ref{atomic} gives an atomic decomposition, say $\Big (\begin{smallmatrix}
S_{0,0}&S_{0,1}\\
0&S_{11}
\end{smallmatrix} \Big )$ and $\Big (\begin{smallmatrix}
\widetilde{S}_{0,0}&\widetilde{S}_{0,1}\\
0&\widetilde{S}_{1,1}
\end{smallmatrix}\Big )$ for these operators. Assume that $X$ and $Y$ are of the form
$$X=\begin{pmatrix}
X_{1,1}&X_{1,2}\\
X_{2,1}&X_{2,2}
\end{pmatrix},Y=\begin{pmatrix}
Y_{1,1}&Y_{1,2}\\
Y_{2,1}&Y_{2,2}
\end{pmatrix},$$
with respect to the atomic decomposition of $T$ and $\tilde{T},$ respectively.
We have to only show that $X$ and $Y$ are upper-triangular. By
hypothesis, we have that
$$\begin{pmatrix}
X_{1,1}&X_{1,2}\\
X_{2,1}&X_{22}
\end{pmatrix}\begin{pmatrix}
S_{0,0}&S_{0,1}\\
0&S_{11}
\end{pmatrix}=\begin{pmatrix}
\widetilde{S}_{0,0}&\widetilde{S}_{0,1}\\
0&\widetilde{S}_{11}
\end{pmatrix}\begin{pmatrix}
X_{1,1}&X_{1,2}\\
X_{2,1}&X_{2,2}
\end{pmatrix},$$
and
$$\begin{pmatrix}
S_{0,0}&S_{0,1}\\
0&S_{11}
\end{pmatrix}\begin{pmatrix}
Y_{1,1}&Y_{1,2}\\
Y_{2,1}&Y_{2,2}
\end{pmatrix}=\begin{pmatrix}
Y_{1,1}&Y_{1,2}\\
Y_{2,1}&Y_{2,2}
\end{pmatrix}\begin{pmatrix}
\widetilde{S}_{0,0}&\widetilde{S}_{0,1}\\
0&\widetilde{S}_{11}
\end{pmatrix}.$$
Consequently,
$$ X_{2,1}S_{0,0}=\widetilde{S}_{1,1}X_{2,1},$$
$$X_{2,1}S_{0,1}+X_{2,2}S_{1,1}=\widetilde{S}_{1,1}X_{2,2},$$
and $$S_{1,1}Y_{2,1}=Y_{2,1}\widetilde{S}_{0,0}.$$ From the
intertwining relationships guaranteed by Lemma \ref{atomic},
$S_{0,0}S_{0,1}=S_{0,1}S_{1,1},$ and
$\widetilde{S}_{0,0}\widetilde{S}_{0,1}=\widetilde{S}_{0,1}\widetilde{S}_{1,1},$
it follows that
$$ X_{2,1}S_{0,1}Y_{2,1}+X_{2,2}S_{1,1}Y_{2,1}=\widetilde{S}_{1,1}X_{2,2}Y_{2,1},$$
$$ X_{2,1}S_{0,1}Y_{2,1}+X_{2,2}Y_{2,1}\widetilde{S}_{0,0}=\widetilde{S}_{1,1}X_{2,2}Y_{2,1}.$$
Furthermore,
$$ X_{2,1}S_{0,1}Y_{2,1}\widetilde{S}_{0,1}+X_{2,2}Y_{2,1}\widetilde{S}_{0,0}\widetilde{S}_{0,1}=\widetilde{S}_{1,1}X_{2,2}Y_{2,1}\widetilde{S}_{0,1},$$
$$ X_{2,1}S_{0,1}Y_{2,1}\widetilde{S}_{0,1}+X_{2,2}Y_{2,1}\widetilde{S}_{0,1}\widetilde{S}_{1,1}=\widetilde{S}_{1,1}X_{2,2}Y_{2,1}\widetilde{S}_{0,1},$$
$$ X_{2,1}S_{0,1}Y_{2,1}\widetilde{S}_{0,1}=\widetilde{S}_{1,1}X_{2,2}Y_{2,1}\widetilde{S}_{0,1}-X_{2,2}Y_{2,1}\widetilde{S}_{0,1}\widetilde{S}_{1,1}.$$
Thus
$$X_{2,1}S_{0,1}Y_{2,1}\widetilde{S}_{0,1}\in
\mbox{\rm ran}\;{\sigma_{\widetilde{S}_{1,1}}}.$$
We also have
$$\begin{array}{llll}X_{2,1}S_{0,1}Y_{2,1}\widetilde{S}_{0,1}\widetilde{S}_{1,1}&=&X_{2,1}S_{0,1}Y_{2,1}\widetilde{S}_{0,0} \widetilde{S}_{0,1}\\
&=&X_{2,1}S_{0,1}S_{1,1}Y_{2,1} \widetilde{S}_{0,1}\\
&=&X_{2,1}S_{0,0}S_{0,1}Y_{2,1} \widetilde{S}_{0,1}\\
&=&\widetilde{S}_{1,1}X_{2,1}S_{0,1}Y_{2,1} \widetilde{S}_{0,1}\\
\end{array}$$
showing that
$$X_{2,1}S_{0,1}Y_{2,1}\widetilde{S}_{0,1}\in
{\mathcal A}^{\prime}(\widetilde{S}_{1,1}).$$
We conclude, using Lemma \ref{nil}, that the operator $X_{2,1}S_{0,1}Y_{2,1}\widetilde{S}_{0,1}$ is quasi-nilpotent. Note that $\widetilde{S}_{1,1}$ is a homogeneous operator, which therefore must be unitarily equivalent to the adjoint of  the multiplication operator on the weighted Bergman space with reproducing kernel
$$K^{(\lambda_1)}(z,w)=\frac{1}{(1-z\overline{w} )^{\lambda_1}},\; \lambda_1>0.$$
Since the operator $X_{2,1}S_{0,1}Y_{2,1} \widetilde{S}_{0,1}$ commutes with $\widetilde{S}_{1,1}$, applying Lemma \ref{zero}, we conclude  that
$X_{2,1}S_{0,1}Y_{2,1}
\widetilde{S}_{0,1}=0.$
Note that each $S_{0,1}, \widetilde{S}_{0,1}, X_{0,1}$ and $Y_{0,1}$ is an intertwining operator between two holomorphic curves of rank one. Therefore, Lemma \ref{zero} shows that if any one of these operators  is non-zero, then it must has dense range. Since $S_{0,1}$ and
$\widetilde{S}_{0,1}$ are both non-zero operators, we have that
$X_{2,1}=0,$ or $Y_{2,1}=0.$
Without loss of generality, we suppose that $X_{2,1}=0$. Given that $XS=\widetilde{S}X$ and
$SY=Y\widetilde{S}$,  we have $$SYX=Y\widetilde{S}X,\;\text{ and }
XSY=\widetilde{S}XY.$$ Then we also have $$ SYX=YXS,\;
\text{ and }XY\widetilde{S}=\widetilde{S}XY.$$ So we conclude that both $XY$ and $YX$ are upper triangular.

Since $X$ is upper triangular, we have
$X_{2,2}S_{1,1}=\widetilde{S}_{1,1}X_{2,2}$. Therefore   $X_{2,2}$ has dense range. Since $XY$ and  $YX$ are both upper triangular, we see that $X_{2,2}Y_{2,1}=0.$ Since $X_{2,2}$  has
dense range, it follows that $Y_{2,1}=0$.

The proof is now completed by induction on the rank $n$ in pretty much the same way as in the proof of Proposition 3.2 given  in \cite{JJKMPLMS}.
\end{proof}

Repeating the proof of Lemma \ref{utXY}, we can obtain the following
lemma.

\begin{lem} \label{commutant}

Let $E_t$ be a quasi-homogeneous  bundle. If $X\in {\mathcal
A}^{\prime}(E_t)$, then $X$ is upper-triangular.

\end{lem}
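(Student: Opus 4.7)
The plan is to mimic the proof of Lemma \ref{utXY}, specializing to the commutant setting by taking $\tilde{t}=t$ and $Y=X$, so that the double intertwining condition collapses to the single relation $XT=TX$. Write $X=\big(\!\!\big(X_{i,j}\big)\!\!\big)_{i,j=0}^{n-1}$ as a block operator with respect to the orthogonal decomposition $\mathcal{H}=\mathcal{H}_0\oplus\cdots\oplus\mathcal{H}_{n-1}$ supplied by Lemma \ref{atomic}. The goal is to show that $X_{i,j}=0$ whenever $i>j$.

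Begin with the case $n=2$, where $T=\left(\begin{smallmatrix}S_{0,0}&S_{0,1}\\0&S_{1,1}\end{smallmatrix}\right)$. Expanding $XT=TX$ block by block produces
\begin{equation*}
X_{1,0}S_{0,0}=S_{1,1}X_{1,0},\qquad X_{1,0}S_{0,1}=S_{1,1}X_{1,1}-X_{1,1}S_{1,1}.
\end{equation*}
The second of these says $X_{1,0}S_{0,1}\in\mathrm{ran}\,\sigma_{S_{1,1}}$. Combining the first with the intertwining relation $S_{0,0}S_{0,1}=S_{0,1}S_{1,1}$ supplied by Lemma \ref{intS} shows that $X_{1,0}S_{0,1}$ also commutes with $S_{1,1}$, because
\begin{equation*}
S_{1,1}(X_{1,0}S_{0,1})=X_{1,0}S_{0,0}S_{0,1}=X_{1,0}S_{0,1}S_{1,1}=(X_{1,0}S_{0,1})S_{1,1}.
\end{equation*}
By Lemma \ref{nil}, $X_{1,0}S_{0,1}$ is quasi-nilpotent. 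Since $S_{1,1}$ is homogeneous, the argument from the proof of Lemma \ref{utXY} using Lemma \ref{zero} (a quasi-nilpotent operator commuting with the adjoint of multiplication on a weighted Bergman space must annihilate the reproducing kernels, hence vanish) forces $X_{1,0}S_{0,1}=0$. Since $S_{0,1}$ is a non-zero intertwiner between two holomorphic line bundles, a further application of Lemma \ref{zero} gives that $S_{0,1}$ has dense range, so $X_{1,0}=0$.

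For general $n$, induct on $n$ exactly as at the end of the proof of Lemma \ref{utXY} (in the manner of Proposition 3.2 of \cite{JJKMPLMS}). The inductive step applies the $n=2$ analysis to the adjacent $2\times2$ blocks $\left(\begin{smallmatrix}S_{i,i}&S_{i,i+1}\\0&S_{i+1,i+1}\end{smallmatrix}\right)$, each of which lies in $\mathcal{F}B_2(\mathbb{D})$ by virtue of Lemma \ref{intS}, and then uses the vanishings already established on lower codiagonals to kill the cross-terms that appear when the block equations of $XT=TX$ are expanded further below the diagonal.

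The main obstacle is the bookkeeping for the induction: the block equations of $XT=TX$ at codiagonal distance $d>1$ couple the unknown entries $X_{i,j}$ with $i-j=d$ to entries at smaller codiagonal distance through the off-diagonal blocks of $T$. Ordering the elimination by codiagonal (nearest first) and proceeding diagonal-by-diagonal ensures that all coupled entries have already been shown to be zero when needed, so the core two-by-two argument — namely, quasi-nilpotence from Lemma \ref{nil} combined with dense range from Lemma \ref{zero} — applies cleanly at each stage, and the proof concludes just as in Lemma \ref{utXY}.
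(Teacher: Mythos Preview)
Your proposal is correct and follows exactly the approach the paper intends: the paper's own proof of Lemma \ref{commutant} consists of the single sentence ``Repeating the proof of Lemma \ref{utXY}, we can obtain the following lemma,'' and you have supplied precisely that repetition. Your observation that in the commutant setting one can work directly with $X_{1,0}S_{0,1}$ (rather than the four-factor product $X_{2,1}S_{0,1}Y_{2,1}\widetilde{S}_{0,1}$ needed when $X$ and $Y$ are distinct intertwiners) is the natural simplification and is entirely in keeping with the paper's argument.
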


\subsection{\sf The Second fundamental form}

In \cite[page. 2244]{DM}, an explicit formula for the second fundamental form of a holomorphic Hermitian line bundle in its first order jet bundle of rank $2$ was given. The second fundamental form, in a slightly different guise, was shown to be a unitary invariant for the class of operators $\tilde{\mathcal F}B_n(\Omega)$ in \cite{JJKMPLMS}.
We give  the computation of the second fundamental form here, yet again, keeping track of certain constants which appear in the description of the quasi-homogeneous operators.
We compute the second fundamental form of the inclusion $E_0$ in $E,$
where $\{\gamma_0, \gamma_1\}$ is a frame for $E$ with atoms $t_0$ and  $t_1.$  The line bundle defined by the atom $t_0$ is $E_0.$ By necessity, we have
\begin{eqnarray*}
\gamma_0 &=& t_0 \;\; \gamma_1 = \mu_{01}  t_0^\prime + t_1
\end{eqnarray*}
with $t_0 \perp t_1.$  As in \cite{DM,JJKMPLMS}, setting $h =
\langle\gamma_0,\gamma_0 \rangle,$ the second fundamental form
$\theta_{01}$ is seen to be of the form
$$\theta_{01} = - h^{1/2}\frac{\bar{\partial}(h^{-1}\langle\gamma_1,\gamma_0\rangle)}
{\big (\|\gamma_1\|^2-\frac{|\langle\gamma_1,\gamma_0\rangle|^2}
{\|\gamma_0\|^2}\big )^{1/2}}.$$
It is important, for what follows, to express $\theta_{01}$ in terms of the atoms $t_0$ and $t_1$ giving the formula
\begin{equation}\label{second}
\theta_{01} =  \frac{\mu_{01}\mathcal K_0}
{\big (\tfrac{\|t_1\|^2}{\|t_0\|^2}-|\mu_{01}|^2 \mathcal K_0 \big )^{1/2}},
\end{equation}
where $\mathcal K_0$ is the curvature of the line bundle $E_{t_0}$
given by the formula $- \bar{\partial}\partial \log \|t_0\|^2.$
The following lemma shows the key role of the second fundamental form in determining the unitary equivalence class of a quasi-homogeneous holomorphic curve.
\begin{lem} \label{eqSec}
Suppose that $t$ and $\tilde{t}$ are quasi-holomorphic curves with the same atoms $t_0,\,t_1.$ Then the following statements are equivalent.
\begin{enumerate}
\item  The two curves $t$ and $\tilde{t}$ are unitarily equivalent;
\item The second fundamental forms $\theta_{01}$ and  $\tilde{\theta}_{01}$ are
equal;
\item The two constants $\mu_{0,1}$ and $\tilde{\mu}_{0,1}.$
are equal.
\end{enumerate}
\end{lem}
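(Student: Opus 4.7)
\medskip

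My plan is to establish the three equivalences as a cycle $(3) \Rightarrow (1) \Rightarrow (2) \Rightarrow (3)$. The implication $(3) \Rightarrow (1)$ is essentially trivial: if $\mu_{0,1} = \tilde{\mu}_{0,1}$ and the atoms $t_0, t_1$ are common to both curves, then the holomorphic frames $\{\gamma_0, \gamma_1\} = \{t_0, \mu_{0,1} t_0^{(1)} + t_1\}$ and $\{\tilde{\gamma}_0, \tilde{\gamma}_1\} = \{t_0, \tilde{\mu}_{0,1} t_0^{(1)} + t_1\}$ coincide on the nose, so $t = \tilde{t}$ as holomorphic curves and the identity operator witnesses the unitary equivalence.

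For $(1) \Rightarrow (2)$, I will invoke the intertwining property \eqref{int} established in Lemma \ref{atomic}, which places every quasi-homogeneous operator in the class $\mathcal{F}B_n(\mathbb{D})$. The earlier paper \cite{JJKMPLMS} shows that the second fundamental form of the inclusion $E_{\gamma_i} \hookrightarrow E_{\{\gamma_i,\gamma_{i+1}\}}$ is a unitary invariant within $\mathcal{F}B_n(\mathbb{D})$. Applied to $i=0$ in our setting, a unitary equivalence of $t$ and $\tilde{t}$ forces $\theta_{01} = \tilde{\theta}_{01}$.

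The main content of the lemma sits in $(2) \Rightarrow (3)$, and I expect this to be the only step where some care is needed. By formula \eqref{second}, and after cancelling the common $d\bar{z}$, the hypothesis $\theta_{01} = \tilde{\theta}_{01}$ reads pointwise on $\mathbb{D}$ as
\[
\frac{\mu_{0,1}\,\mathcal{K}_0(z)}{\bigl(A(z) - |\mu_{0,1}|^2\,\mathcal{K}_0(z)\bigr)^{1/2}}
=
\frac{\tilde{\mu}_{0,1}\,\mathcal{K}_0(z)}{\bigl(A(z) - |\tilde{\mu}_{0,1}|^2\,\mathcal{K}_0(z)\bigr)^{1/2}},
\]
where $A(z) := \|t_1(z)\|^2/\|t_0(z)\|^2$ is the same positive function on both sides because the atoms are shared, and $\mathcal{K}_0$ denotes the (common) curvature of the line bundle $E_{t_0}$. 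Because $t_0$ is the tautological section of a weighted Bergman bundle, $\mathcal{K}_0$ is nowhere zero on $\mathbb{D}$; cancelling it and passing to absolute values squared yields $|\mu_{0,1}|^2 A(z) = |\tilde{\mu}_{0,1}|^2 A(z)$, hence $|\mu_{0,1}| = |\tilde{\mu}_{0,1}|$ since $A > 0$. Plugging this back into the displayed identity, the denominators agree and we read off $\mu_{0,1} = \tilde{\mu}_{0,1}$, completing the cycle.

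The one point to watch is the non-degeneracy used in the last step: we need both $A \not\equiv 0$ (clear, since it is a ratio of squared norms of non-vanishing holomorphic sections) and $\mathcal{K}_0 \not\equiv 0$ (clear from the explicit Bergman formula $\mathcal{K}_0 = -\lambda_0/(1-|w|^2)^2$ up to sign). Once these are noted, the implication $(2) \Rightarrow (3)$ is a purely algebraic manipulation, and the rigidity of quasi-homogeneous curves under unitary equivalence drops out.
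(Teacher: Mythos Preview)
Your proof is correct, and your argument for $(2)\Rightarrow(3)$ is actually cleaner than the paper's. The paper cross-multiplies, observes that the arguments of $\mu_{0,1}$ and $\tilde\mu_{0,1}$ agree, then \emph{squares} the complex identity; this produces a mixed term that does not cancel, and the authors must then invoke the explicit Bergman form $\|t_i\|^2=(1-|w|^2)^{-\lambda_i}$ and split into the cases $\Lambda(t)\neq 2$ and $\Lambda(t)=2$ to finish. By contrast, you take the \emph{modulus} squared first: the cross term $|\mu_{0,1}|^2|\tilde\mu_{0,1}|^2\mathcal K_0$ appears symmetrically on both sides and drops out, giving $|\mu_{0,1}|=|\tilde\mu_{0,1}|$ with no case analysis; the full equality then follows since the (now equal) real denominators can be cancelled. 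Your route uses only that $A>0$ and $\mathcal K_0\not\equiv 0$, so it is both shorter and does not rely on the exact value of $\Lambda(t)$.
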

\begin{proof}
The equivalence of the first two statements was proved in
\cite[Corollary 2.8]{JJKMPLMS}.  The equality of $\theta_{01}$ and
$\tilde{\theta}_{01}$ is clearly equivalent to
\begin{eqnarray*}
\tilde{\mu}_{01} \big (\tfrac{\|t_1\|^2}{\|t_0\|^2}+|\mu_{01}|^2 \bar{\partial}\partial \log \|t_0\|^2 \big )^{1/2} = \mu_{01} \big (\tfrac{\|t_1\|^2}{\|t_0\|^2}+|\tilde{\mu}_{01}|^2 \bar{\partial}\partial \log \|t_0\|^2 \big )^{1/2}.
\end{eqnarray*}
From this equality, we infer that $\arg(\mu_{01})=\arg(\tilde{\mu}_{01}).$

Now, squaring both sides and then taking the difference, we have
$$
\tfrac{\|t_1\|^2}{\|t_0\|^2}(\tilde{\mu}^2_{01} - \mu_{01}^2) - \tilde{\mu}^2_{01} \mu_{01}^2 \big (\bar{\partial}\partial \log \|t_0\|^2 \big )(\bar{\mu}_{01}^2 - \bar{\tilde{\mu}}_{01}^2 )=0.
$$
Given that we have assumed, without loss of generality, $\|t_0\|^2 = (1-|w|^2)^{-\lambda_0}$ and $\|t_0\|^2 = (1-|w|^2)^{-\lambda_1},$ we find that
$$
\bar{\partial}\partial \log \|t_0\|^2 = \lambda_0 (1-|w|^2)^{-2},
$$
which can be equal to $\tfrac{\|t_1\|^2}{\|t_0\|^2}$ if and only if $\lambda_1-\lambda_0 = 2.$
Thus except when $\Lambda(t)=2,$ we  must have $\mu^2_{01}-\tilde{\mu}^2_{01}=0.$ Clearly, $\tilde{\mu}_{01} = - \mu_{01}$ is not an admissible solution. So, we must have $\tilde{\mu}_{01} = - \mu_{01}.$ In case $\lambda_1-\lambda_0 = 2,$ if we assume $\tilde{\mu}_{01} \not =\mu_{01},$ then we must have
$$
\Big (\frac{1+ \lambda_0 |\tilde{\mu}_{01}|^2}{1+\lambda_0|\mu_{01}|^2}\Big )^{\tfrac{1}{2}}=\frac{|\tilde{\mu}_{01}|}{|\mu_{01}|},
$$
from which it follows that $|\tilde{\mu}_{01}|=|\mu_{01}|.$ The arguments of these complex numbers being equal, they must be
actually equal.
\end{proof}
When we consider the inclusion of the line bundle  $E_{t_i}$ in the vector bundle $E_{\{t_i, \tfrac{m_i,j}{j-i}ti^{(j-i)} + t_j\}}$ of rank $2,$ the situation is slightly different. This is the vector bundle which corresponds to the $2\times 2$
operator block $T_{i,j}:=\Big (\begin{smallmatrix} S_{i,i} &
S_{i,j}\\ 0 & S_{j,j}\end{smallmatrix}\Big).$

Clearly, $\{t_i, -\frac{m_{i,j}}{j-i}t^{(j-i)}_i+t_j\}$ is the
frame for $E_{T_{i,j}}.$ By the formulae above, setting {\em temporarily}  $\gamma_0=t_i,
\gamma_1=-\frac{m_{i,j}}{j-i}t^{(j-i)}_i+t_j$,  we have that
\begin{enumerate}
\item $h_i=||\gamma_0||^2=||t_i||^2,h_j=||t_j||^2;$
\item
$||\gamma_1||^2=|\frac{m_{i,j}}{j-i}|^2\partial^{j-i}\overline{\partial}^{j-i}||t_i||^2+||t_j||^2
=|\frac{m_{i,j}}{j-i}|^2\partial^{j-i}\overline{\partial}^{j-i}h_i+h_j;$
\item
$<\gamma_1,\gamma_0>=-\frac{m_{i,j}}{j-i}\partial^{j-i}||t_i||^2=-\frac{m_{i,j}}{j-i}\partial^{j-i}h_i;$
\item $|<\gamma_1,\gamma_0>|^2=|\frac{m_{i,j}}{j-i}|^2\partial^{j-i}h_i\overline{\partial}^{j-i}h_i.$
\end{enumerate}

The second fundamental form $\theta_{i,j}$ for the inclusion
$E_{t_i} \subseteq E_{\{t_i, \tfrac{m_i,j}{j-i}ti^{(j-i)} + t_j\}}$
is given by the formula
\begin{equation} \label{sffformula}
\theta_{i,j} = \frac{\frac{m_{i,j}}{j-i}\overline{\partial}(h_i^{-1}\partial^{j-i}h_i)}{(\frac{h_j}{h_i}+|\frac{m_{i,j}}{j-i}|^2(\frac{h_i\partial^{j-i}\overline{\partial}^{j-i}h_i-\partial^{j-i}h_i\overline{\partial}^{j-i}h_i}{h^2_i}) )^{\frac{1}{2}}}.
\end{equation}

\begin{lem}\label{sff}
Let $T_{i,j}:=\Big (\begin{smallmatrix} S_{i,i} &
S_{i,j}\\ 0 & S_{j,j}\end{smallmatrix}\Big)$ and
$\widetilde{T}_{i,j}:=\Big (\begin{smallmatrix} S_{i,i} & \widetilde{S}_{i,j}\\
0 & S_{j,j}\end{smallmatrix}\Big)$ with
$\widetilde{S}_{i,j}(t_j)=\widetilde{m}_{i,j}t^{(j-i-1)}_i.$ The second fundamental forms 
$\theta_{i,j}$ and $\widetilde{\theta}_{i,j}$ of the operators $T_{i,j}$ and $\widetilde{T}_{i,j}$
are equal, that is, $\theta_{i,j}=\widetilde{\theta}_{i,j}$ if and only if $m_{i,j}=\widetilde{m}_{i,j}.$
\end{lem}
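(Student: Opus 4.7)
The forward direction is immediate: if $m_{i,j}=\widetilde{m}_{i,j}$, then the explicit formula \eqref{sffformula} for the second fundamental form yields $\theta_{i,j}=\widetilde{\theta}_{i,j}$ verbatim, since the two expressions agree in every factor. The content of the lemma is therefore the converse.

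For the converse, I would introduce the shorthand
\[
A := \overline{\partial}\bigl(h_i^{-1}\partial^{j-i}h_i\bigr),\qquad B := \tfrac{h_j}{h_i},\qquad C := \tfrac{h_i\,\partial^{j-i}\overline{\partial}^{j-i}h_i-\partial^{j-i}h_i\,\overline{\partial}^{j-i}h_i}{h_i^{2}},
\]
so that, writing $c:=1/(j-i)$, formula \eqref{sffformula} becomes
\[
\theta_{i,j}=\frac{c\,m_{i,j}\,A}{\bigl(B+c^{2}|m_{i,j}|^{2}C\bigr)^{1/2}},\qquad \widetilde{\theta}_{i,j}=\frac{c\,\widetilde{m}_{i,j}\,A}{\bigl(B+c^{2}|\widetilde{m}_{i,j}|^{2}C\bigr)^{1/2}}.
\]
Setting the two expressions equal and canceling the common factor $cA$ (justified below) yields
\[
\frac{m_{i,j}}{\bigl(B+c^{2}|m_{i,j}|^{2}C\bigr)^{1/2}}=\frac{\widetilde{m}_{i,j}}{\bigl(B+c^{2}|\widetilde{m}_{i,j}|^{2}C\bigr)^{1/2}}.
\]
Since the denominators are real and positive, comparison of arguments gives $\arg m_{i,j}=\arg\widetilde{m}_{i,j}$, and squaring then cross-multiplying produces the identity
\[
\bigl(|m_{i,j}|^{2}-|\widetilde{m}_{i,j}|^{2}\bigr)B=0.
\]
To conclude, I need only observe that $B$ is not identically zero: under the standing convention \ref{conventions}, $h_\ell=(1-|w|^{2})^{-\lambda_\ell}$, so $B=(1-|w|^{2})^{\lambda_i-\lambda_j}$ is a nowhere vanishing real analytic function on $\mathbb D$. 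Hence $|m_{i,j}|=|\widetilde{m}_{i,j}|$, and combined with the phase equality we get $m_{i,j}=\widetilde{m}_{i,j}$.

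The one step that requires genuine care — and which I expect to be the main obstacle — is the cancellation of the common factor $A$. One must check that $A=\overline{\partial}(h_i^{-1}\partial^{j-i}h_i)$ does not vanish identically, for otherwise both $\theta_{i,j}$ and $\widetilde{\theta}_{i,j}$ would be zero and no information about $m_{i,j}$ could be extracted. Using $h_i=(1-|w|^{2})^{-\lambda_i}$, a direct computation shows that $\partial^{j-i}h_i$ is $\lambda_i(\lambda_i+1)\cdots(\lambda_i+j-i-1)$ times $\bar{w}^{j-i}(1-|w|^{2})^{-\lambda_i-(j-i)}$, so that $h_i^{-1}\partial^{j-i}h_i=\lambda_i(\lambda_i+1)\cdots(\lambda_i+j-i-1)\,\bar{w}^{j-i}(1-|w|^{2})^{-(j-i)}$; applying $\overline{\partial}$ to this real-analytic function of $w$ and $\bar{w}$ produces a non-zero expression because $\lambda_i>0$ and $j>i$. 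This non-vanishing justifies the division step and completes the argument.
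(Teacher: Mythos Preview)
Your proof is correct and follows the same overall strategy as the paper's: cancel the common numerator factor from the explicit formula \eqref{sffformula}, deduce that $m_{i,j}$ and $\widetilde m_{i,j}$ have the same argument, then square and cross-multiply to compare moduli. Your justification of the non-vanishing of $A$ using the explicit Bergman kernels is exactly what is needed.

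Where your argument differs is in the algebra after squaring. You cross-multiply directly and observe that the terms involving $C$ cancel identically, leaving simply $(|m_{i,j}|^{2}-|\widetilde m_{i,j}|^{2})\,B=0$; since $B=(1-|w|^{2})^{\lambda_i-\lambda_j}>0$ this finishes immediately. The paper instead arrives at an identity in which both $B=h_j/h_i$ and $C$ (their $g_0$) appear with nonzero coefficients, and then argues that $g_0$, being a genuine polynomial of degree greater than one in $(1-|w|^{2})^{-1}$, cannot be a scalar multiple of $h_k/h_0$; this functional-independence observation forces $m^2=\widetilde m^2$, after which the sign is fixed by the argument equality. Your route sidesteps this extra step entirely and is the cleaner of the two, while the paper's version has the minor advantage of making explicit contact with the structure of $g_0$, which is used elsewhere in the paper.
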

\begin{proof} Without loss of generality, we will give the proof only for the case $i=0,j=k, j\not = 1.$ In this case,
$\theta_{0,k}=\widetilde{\theta}_{0,k}$ is equivalent to the equality:
$$\frac{(\frac{h_k}{h_0}+|\frac{m_{0,k}}{k}|^2(\frac{h_0\partial^{k}\overline{\partial}^{k}h_0-\partial^{k}h_0\overline{\partial}^{k}h_0}{h^2_0}) )^{\frac{1}{2}}}
{(\frac{h_k}{h_0}+|\frac{\widetilde{m}_{0,k}}{k}|^2(\frac{h_0\partial^{k}\overline{\partial}^{k}h_0-\partial^{k}h_0\overline{\partial}^{k}h_0}{h^2_0})
)^{\frac{1}{2}}}=\frac{m_{0,k}}{\widetilde{m}_{0,k}} $$
For simplicity, let $g_0$ denote 
$(\frac{h_0\partial^{k}\overline{\partial}^{k}h_0-\partial^{k}h_0\overline{\partial}^{k}h_0}{h^2_0})$ and let $m,$ $\widetilde{m} $ denote 
$\frac{m_{0,k}}{k},$ $\frac{\widetilde{m}_{0,k}}{k}$ respectively.
Then the equation given above may be rewritten as 
$$\frac{(\frac{h_k}{h_0}+|m|^2g_0 )^{\frac{1}{2}}}
{(\frac{h_k}{h_0}+|\widetilde{m}|^2g_0
)^{\frac{1}{2}}}=\frac{m}{\widetilde{m}}
$$
From this equality, we infer that $\arg(m)=\arg(\widetilde{m}).$
Now, squaring both sides and then taking the difference, we have
$$
\tfrac{h_k}{h_0}(\tilde{m}^2 - m^2) - \tilde{m}^2 m^2 g_0 (\bar{m}^2
- \bar{\tilde{m}}^2 )=0.
$$
Having assumed, without loss of generality, $h_0 =
(1-|w|^2)^{-\lambda_0}$ and $h_k = (1-|w|^2)^{-\lambda_1},$ we find
that $g_0$ is a polynomial of degree $>1$ in $(1-|w|^2)^{-1}.$  Thus $g_0$ can be equal to $\tfrac{h_k}{h_0}$ if and only if
$\lambda_1-\lambda_0 = 2.$ Therefore, except when $\Lambda(t)=2,$ we  must have $m^2-\tilde{m}^2=0.$ Clearly, $m = -\widetilde{ m}$ is not an admissible solution. So, we must have $m = \widetilde{m}.$ Hence $m_{0,k}=\widetilde{m}_{0,k}.$
\end{proof}

\subsection{\sf  Unitary equivalence}  Recall that a positive definite kernel $K:\Omega \times \Omega \to \mathbb C^{n\times n}$ is said to be normalized at $w_0\in \Omega,$ if $K(z,w_0) = I,$ $z\in \Omega.$ An operator $T$ in $B_n(\Omega)$ may be realized, up to unitary equivalence, as the adjoint of a multiplication operator on a Hilbert space possessing a normalized reproducing kernel (cf. \cite{CS}). Realized in this form, the operator is determined completely modulo multiplication by a constant unitary operator acting on $\mathbb C^n.$ As one might expect, finding the normalized kernel if $n>1$ is not easy. The second statement of the theorem below is a rigidity theorem in the spirit of what was proved by  Curto and Salinas for operators in $B_n(\mathbb D).$ For quasi-homogeneous operators, the atoms are homogeneous operators in $B_1(\mathbb D).$ These are assumed to be realized in normal form. Consequently, if $T$ is a quasi-homogeneous operator, a set of $n-1$ fundamental forms determine the operator $T$ completely, that is, two of them are unitarily equivalent if and only if they are equal assuming they have the same set second fundamental forms. The first of the two statements given in the theorem below was proved for operators in $\mathcal F B_n(\Omega)$ (cf. \cite[Proposition 3.5]{JJKMPLMS}). We have included it here only for the sake of completeness.
\begin{thm}
For any two holomorphic curves $t$ and ${\tilde{t}}$ with atoms $\{t_i:0\leq i \leq n-1 \}$ and $\{\tilde{t}_i:0\leq i \leq n-1\},$ respectively, we have the following.
\begin{enumerate}
\item If $t$ and $\tilde{t}$ are unitarily equivalent, then for $0\leq i \leq n-1,$
\begin{enumerate}
\item $\mathcal K_{t_i}=\mathcal K_{\tilde{t}_i}, \; 0\leq i \leq n-1;$
\item $\theta_{i,i+1}=\tilde{\theta}_{i,i+1},\; 0\leq i \leq n-2.$
\end{enumerate}
\item Suppose that $t$ and $\tilde{t}$ are unitarily equivalent. 
Then if the second fundamental forms are the same, that is, $\theta_{i,i+1}=\tilde{\theta}_{i,i+1},\; 0\leq i \leq n-2,$ then $t=\tilde{t}.$
\end{enumerate}
\end{thm}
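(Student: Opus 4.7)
The plan is to show that unitary equivalence of two quasi-homogeneous holomorphic curves is controlled by a block-diagonal structure on the intertwining unitary, and then to use the second fundamental form together with the commutant structure of the homogeneous atoms to pin down the remaining scalar ambiguity.

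For statement (1), suppose $U$ is a unitary with $Ut=\tilde{t}$. Applying Lemma \ref{utXY} to the pair $(X,Y)=(U,U^{*})$, both $U$ and $U^{*}$ preserve the atomic filtration, i.e., $U\bigl(\bigvee_{k\le i}\{t_k(w)\}\bigr)\subseteq \bigvee_{k\le i}\{\tilde{t}_k(w)\}$ and symmetrically for $U^{*}$. Combining the two containments yields $U\bigl(\mathcal H_0\oplus\cdots\oplus \mathcal H_i\bigr)=\tilde{\mathcal H}_0\oplus\cdots\oplus\tilde{\mathcal H}_i$ for each $i$; taking orthogonal complements (permissible because $U$ is unitary) and intersecting then forces $U\mathcal H_i=\tilde{\mathcal H}_i$. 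Hence $U=U_0\oplus\cdots\oplus U_{n-1}$ is block diagonal with unitary blocks $U_i:\mathcal H_i\to \tilde{\mathcal H}_i$. The diagonal intertwinings $U_iT_i=\tilde{T}_iU_i$ say that the atoms are pairwise unitarily equivalent, so $\mathcal K_{t_i}=\mathcal K_{\tilde{t}_i}$. For the super-diagonal, $U_i\oplus U_{i+1}$ implements a unitary equivalence between the $2\times 2$ blocks $T_{i,i+1}$ and $\tilde{T}_{i,i+1}$; since the second fundamental form of a line sub-bundle in its first-order jet bundle is a unitary invariant of the $2\times 2$ block (the $n=2$ case recorded in Lemma \ref{eqSec}), one obtains $\theta_{i,i+1}=\tilde{\theta}_{i,i+1}$.

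For statement (2), begin with the block-diagonal unitary $U=\oplus U_i$ produced in (1). The equality $\mathcal K_{t_i}=\mathcal K_{\tilde{t}_i}$ forces $\lambda_i=\tilde{\lambda}_i$, and we may realize both $T_i$ and $\tilde{T}_i$ on the same weighted Bergman space in the canonical normalization $t_i(w)=(1-\bar wz)^{-\lambda_i}=\tilde{t}_i(w)$. Then each $U_i$ is a unitary element of the commutant of $T_i=(M^{(\lambda_i)})^{*}$. A short reproducing-kernel computation (if $U_i=M_\phi^{*}$ for some $\phi\in H^\infty$, then $M_\phi M_\phi^{*}K_w=K_w$ combined with $M_\phi^{*}K_w=\overline{\phi(w)}K_w$ gives $\phi(z)\overline{\phi(w)}=1$ for all $z,w\in \mathbb D$) shows that the only such unitaries are unimodular scalar operators $c_iI$. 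Writing $UT=\tilde{T}U$ on the $(i,i{+}1)$ block yields $\tilde{S}_{i,i+1}=(c_i/c_{i+1})S_{i,i+1}$, hence $\tilde{m}_{i,i+1}=(c_i/c_{i+1})m_{i,i+1}$. By Lemma \ref{sff}, equality of the second fundamental forms forces $\tilde{m}_{i,i+1}=m_{i,i+1}$, and since $m_{i,i+1}\ne 0$ in the non-degenerate quasi-homogeneous case, we conclude $c_i=c_{i+1}$. Iterating gives $c_0=\cdots=c_{n-1}=c$, so $U=cI$ globally. Substituting back into $UT=\tilde{T}U$ on every off-diagonal block yields $\tilde{S}_{i,j}=S_{i,j}$ for all $i<j$, equivalently $\tilde{\mu}_{i,j}=\mu_{i,j}$, and therefore $\tilde{\gamma}_j=c\gamma_j$ for every $j$. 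As a map into the Grassmannian, $t(w)=\tilde{t}(w)$.

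The step I expect to be the main obstacle is identifying the unitary elements of the commutant of $T_i$ as scalar operators; this is the only place in the argument where we go beyond the formal consequences of the atomic decomposition and Lemmas \ref{utXY}, \ref{eqSec}, \ref{sff}, and it relies on the specific reproducing-kernel structure of the weighted Bergman spaces. Once this commutant computation is in hand, the rest of the proof is essentially bookkeeping: propagating equality of super-diagonal coefficients through the block-diagonal unitary to force the scalar factors to align, and then reading off equality of all $\mu_{i,j}$.
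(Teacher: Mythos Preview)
Your proof is correct and follows essentially the same route as the paper: reduce the intertwining unitary to a block-diagonal operator via the rigidity of the atomic filtration, identify each diagonal block as a scalar multiple of the identity, and then use the equality of second fundamental forms (Lemma~\ref{eqSec}/\ref{sff}) to force all the scalars to coincide. The only notable difference is that the paper dispatches the ``unitary in the commutant is a scalar'' step by invoking irreducibility of the homogeneous atoms $T_i$, whereas you give an explicit reproducing-kernel computation; both are valid, and your version is in fact more self-contained.
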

\begin{proof}

If necessary, conjugating by a diagonal unitary, without loss of generality, we may assume that the atoms of the operators $T$ and $\tilde{T}$ are the same.
If there exists a unitary operator $U$ such that
$T U=U\tilde{T}$, then $U$ must be diagonal with
unitaries  $U_{0},U_1,\ldots U_{n-1}$ on its diagonal.
Then we have $$U_{i}S_{i,j}=\tilde{S}_{i,j}U_{j},\;
i,j=0,1,\ldots,n-1.$$
In particular,  $U_{i}$ commutes with the fixed set of atoms $T_i,$ which are irreducible, therefore  there exists $\beta_i \in [0,2\pi]$ such that
$$U_{i}=e^{\imath\beta_i}I_{\mathcal H_i}, i=0,1,\cdots,n-1.$$
Then on the one hand, we have 
$$U_{i}S_{i,i+1}(t_{i+1}) = U_{i}(-\mu_{i,i+1}t_i)= -\mu_{i,i+1} e^{\imath \beta_i}t_i$$ and on the other hand, we have
$$\tilde{S}_{i,i+1}U_{i+1}(t_{i+1})
 = S_{i,i+1}(e^{\imath \beta_{i+1}}t_{i+1}) = -\tilde{\mu}_{i,i+1}e^{\imath \beta_{i+1}} t_i.$$
Consequently,
$$ -\mu_{i,i+1} e^{\imath \beta_i} = -\tilde{\mu}_{i,i+1}e^{\imath\beta_{i+1}},\;0\leq i \leq n-2.$$
The assumption that the second fundamental forms are the same for
the two operators $T$ and $\widetilde{T}$ implies that
$\mu_{i,i+1}=\tilde{\mu}_{i,i+1}.$ Therefore, we have
$\theta_{i,i+1}=\widetilde{\theta}_{i,i+1},\; i=0,1,\ldots,n-1.$
\end{proof}
\begin{rem}
It is natural to ask which of the quasi-homogeneous operators are homogeneous. A comparison with the homogeneous operators given in \cite{KM_1} shows that a quasi-homogeneous operator is homogeneous if and only if
\begin{equation}
\mu_{i,j} = \frac{\Gamma_{i,j}(\lambda)\mu_i}{\mu_j},\;
\Gamma_{i,j}(\lambda) = \binom{i}{j}\frac{1}{(2
\lambda_j)_{i-j}},\;\lambda_j = \lambda - \tfrac{m}{2}+j,
\end{equation}
for some choice of positive constants $\mu_0 (:=1), \mu_1, \ldots ,\mu_{n-1}.$ Here $(\alpha)_\ell:= \alpha(\alpha+1)\cdots (\alpha +\ell-1)$ is the Pochhammer symbol. Clearly, if two homogeneous operators with $(\lambda, \boldsymbol \mu)$ and $(\tilde{\lambda}, \tilde{\boldsymbol \mu})$ were unitarily equivalent, then $\lambda$   must equal $\tilde{\lambda}.$ Since it is easy to see that $\mu_{i,i+1} = \tilde{\mu}_{i,i+1}$ if and only if $\mu_i = \tilde{\mu}_{i+1},$ we conclude two of these homogeneous operators are unitarily equivalent if and only if they are equal recovering previous results of \cite{KM_1}.
\end{rem}

\section{Canonical model under similarity}

In this section, our main focus is on the question of reducibility and strong irreducibility of a quasi-homogeneous operator. We recall that an operator $T$ is said to be strongly irreducible if there is no idempotent in its commutant, or equivalently, there does not exist an invertible operator $L$ for which $LTL^{-1}$ is reducible. The (multiplicity-free) homogeneous operators in the Cowen-Douglas class of rank $n$ are irreducible (cf. \cite{KM_1}). However, they were shown (cf. \cite{KM}) to be similar to the $n$ - fold direct sum of their atoms making them strongly reducible. It is this phenomenon that we investigate here for quasi-homogeneous operators. Along the way, we determine when two quasi-homogeneous operators are similar. Our investigations show that there is dichotomy which depends on whether or not the valency $\Lambda(t)$ is less than $2$ or greater or equal to $2.$
In what follows, we will say that a holomorphic curve $t:\mathbb D\rightarrow Gr(n,{\mathcal H})$ is strongly irreducible if there is no invertible operator $L$ on the Hilbert space $\mathcal H$ for which $X t$ splits into orthogonal direct sum of two holomorphic curves, say $t_1$ and $t_2$, in $Gr(n_1,{\mathcal H})$ and $Gr(n_2,{\mathcal H}),$ $n_1+n_2=n,$ respectively.

Suppose $t:\mathbb D\to Gr(n,{\mathcal H})$ is a quasi-homogeneous holomorphic curve  with atoms $t_0, t_1, \ldots , t_{n-1}.$ Then $t$ is strongly reducible, $t\sim t_0\oplus t_1\cdots \oplus t_{n-1},$ if $\Lambda(t)\geq 2$ and strongly irreducible otherwise.
The dichotomy involving the valency $\Lambda(t)$ is also clear from the main theorem on similarity of quasi-homogeneous holomorphic curves.

The atoms of a quasi-homogeneous operator are homogeneous operators in $B_1(\mathbb D)$ by definition. Therefore,  they are uniquely determined not only up to unitary equivalence but upto similarity as well. Now, pick any two quasi-homogeneous operators. They possess  an atomic decomposition by virtue of Lemma \ref{atomic}. Any invertible operator intertwining these two quasi-homogeneous  operators is necessarily upper triangular by Lemma \ref{utXY} with respect to their respective atomic decomposition.  Hence if two quasi-homogeneous operators are similar, then each of the atoms for one must be similar to the other.
Consequently, to determine equivalence of quasi-homogeneous operators $T$ under an invertible linear transformation, we may assume (as before) without loss of generality that the atoms are fixed with the weight $\lambda_0$ and the valency $\Lambda(t).$
Clearly, the valency $\Lambda(t)$ is both an unitary as well as a similarity invariant of the quasi-homogeneous curve $t.$

Note that if we let $R$ be the $n\times n$ diagonal matrix with
$\big (\prod\limits_{\ell=0}^{i}\mu_{\ell,\ell+1})\big ({\prod\limits_{\ell=0}^{i} \tilde{\mu}_{\ell,\ell+1}}\big )^{-1}$ on its diagonal and set $\tilde{t}=R\, t \,R^{-1},$ then $\tilde{S}_{i,i+1}(t_{i+1}) = \tilde{\mu}_{i,i+1},$ $0\leq i \leq n-2.$
Thus up to similarity, we may assume that the constants $\mu_{i,i+1}$ and $\tilde{\mu}_{i,i+1}$ are the same. Or equivalently (see Lemma \ref {eqSec}), we may assume that the choice of the second fundamental forms $\theta_{i,i+1},$ $0\leq i\leq n-2,$ does not change the similarity class of a quasi-homogeneous holomorphic curve. Therefore the condition in the second statement of the theorem given below is not a restriction on the similarity class of the holomorphic curves $t$ and $\tilde{t}.$

\begin{thm}\label{mainthmSim}
Suppose $t$ and $\tilde{t}$ are  quasi-homogeneous holomorphic curves.
\begin{enumerate}
\item If $\Lambda(t)\geq 2$, then  $t$  is similar to
the $n$ - fold direct sum of the atoms $t_0\oplus  t_1 \oplus \cdots \oplus t_{n-1}.$
\item If $\Lambda(t) = \Lambda(\tilde{t}) < 2$ and
$\theta_{i,i+1}=\tilde{\theta}_{i,i+1},\;i=0,1,\cdots,n-2,$ then $t$ and $\tilde{t}$ are similar if and only if they are equal.
\end{enumerate}
\end{thm}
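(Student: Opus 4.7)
The plan is to treat the two cases separately. For part (1), where $\Lambda(t)\geq 2$, I would produce an explicit invertible operator that conjugates $T$ onto the diagonal direct sum $T_0\oplus T_1\oplus\cdots\oplus T_{n-1}$ by successively killing the super-diagonals of the atomic decomposition guaranteed by Lemma~\ref{atomic}. Concretely, for each pair $i<j$, the goal is to find a bounded operator $X_{ij}:\mathcal H_j\to \mathcal H_i$ solving the Sylvester equation $T_i X_{ij}-X_{ij} T_j = B_{ij}$, where $B_{ij}$ is the current entry in the $(i,j)$-block after previous reductions; conjugation by a block-unipotent operator whose only off-diagonal entry is $-X_{ij}$ at position $(i,j)$ then eliminates $B_{ij}$ and only perturbs strictly higher super-diagonals. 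Proceeding by induction on $j-i$ reduces $T$ to its diagonal.

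The crux is therefore to solve these Sylvester equations with bounded $X_{ij}$. Since the atoms are realized as adjoints of multiplication by $z$ on the weighted Bergman spaces $\mathbb A^{(\lambda_i)}(\mathbb D)$, and the right-hand sides $B_{ij}$ (like the original $S_{ij}$) intertwine $T_j$ with $T_i$ via an explicit jet-type formula coming from Lemma~\ref{atomic}, one writes down a candidate solution on the orthonormal basis $\{e_n^{(\lambda_j)}\}$ and computes its matrix coefficients with respect to $\{e_n^{(\lambda_i)}\}$ using the Stirling asymptotics $a_n(\lambda)\sim n^{\lambda-1}$ recorded in Subsection~\ref{conventions}. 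The ratios that appear grow like $n^{(j-i-1)-(\lambda_j-\lambda_i)/2}$, so the operator is bounded precisely when $\lambda_j-\lambda_i\geq 2(j-i)-2$, which follows from $\Lambda(t)\geq 2$. (The boundary case is handled by a slightly sharper estimate along the lines of the proof of Lemma~\ref{bdd}.) Iterating through all super-diagonals produces the desired similarity to the direct sum of atoms.

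For part (2), suppose $\Lambda(t)=\Lambda(\tilde t)<2$, $\theta_{i,i+1}=\tilde\theta_{i,i+1}$ for all $i$, and that an invertible operator $X$ intertwines $T$ and $\tilde T$. By Lemma~\ref{utXY}, $X$ is block upper-triangular with respect to the atomic decompositions; since the atoms have been normalized to be the same homogeneous operators in $B_1(\mathbb D)$, each diagonal block $X_{ii}$ lies in $\mathcal A'(T_i)$. I would read off, block by block, the intertwining equation $XT=\tilde T X$ and show by induction on $j-i$ that $\mu_{ij}=\tilde\mu_{ij}$. The base case $j=i+1$ is Lemma~\ref{eqSec} combined with the hypothesis on second fundamental forms; for $j-i\geq 2$, an argument parallel to Lemma~\ref{sff}, together with the quasi-nilpotency criterion of Lemma~\ref{nil}, shows that in the regime $\Lambda(t)<2$ the obstruction to altering $\mu_{ij}$ through a similarity vanishes, forcing $\mu_{ij}=\tilde\mu_{ij}$. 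Once all off-diagonal constants agree, the normalization of the atoms together with Lemma~\ref{atomic} yields $t=\tilde t$. The hard part will be this inductive rigidity argument for $j-i\geq 2$: one must carefully track how the blocks $X_{kl}$ with $k<i$ or $l>j$ feed back into the $(i,j)$-component of the intertwining equation and verify, via Lemma~\ref{nil}, that the resulting correction terms actually vanish rather than merely giving weaker intertwining information.
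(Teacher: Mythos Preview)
Your approach to part~(1) is essentially the paper's: both eliminate the super-diagonals of the atomic decomposition by solving Sylvester equations $T_i X - X T_j = (\text{current }(i,j)\text{-entry})$ and conjugating by block-unipotent operators. One computational slip: the growth rate you quote, $n^{(j-i-1)-(\lambda_j-\lambda_i)/2}$, is the asymptotic for the matrix entries of $S_{ij}$ itself (Lemma~\ref{bdd}), not for the solution $X_{ij}$ of the Sylvester equation. The recursion for $X_{ij}$ picks up an extra power of $n$ from summation, giving $n^{(j-i)-(\lambda_j-\lambda_i)/2}$, so the correct boundedness criterion is $\lambda_j-\lambda_i\ge 2(j-i)$, i.e.\ exactly $\Lambda(t)\ge 2$; this is Lemma~\ref{keyL}. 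You also need to check that the perturbation terms produced at each stage are themselves in $\operatorname{ran}\sigma_{T_i,T_j}$ (Lemma~\ref{keyLb} in the paper); ``jet-type formula coming from Lemma~\ref{atomic}'' is not enough, since after one round of elimination the new entries are no longer pure jets.

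For part~(2) there is a genuine gap. The engine of the argument is \emph{not} Lemma~\ref{nil} or Lemma~\ref{sff}; it is the converse direction of Lemma~\ref{keyL}: when $\Lambda(t)<2$, the operator $S_{i,j}$ is \emph{not} in $\operatorname{ran}\sigma_{T_i,T_j}$. The paper first uses the commutant structure (Lemmas~\ref{commutant2} and~\ref{ProfJiang}) to normalize the intertwiner so that $X_{i,i+1}=0$ and all diagonal blocks are $X_{ii}=\phi(T_i)$ for a \emph{single} invertible $\phi\in H^\infty(\mathbb D)$; this normalization is essential and absent from your sketch. With it, the $(i,j)$-block of $XT=\tilde T X$ reduces, after Lemma~\ref{keyLb} disposes of lower-order products like $S_{i,i+1}S_{i+1,j}\phi'(T_j)$ (which \emph{are} in $\operatorname{ran}\sigma$), to the statement that $(c_{ij}-1)\,S_{i,j}\,\phi(T_j)\in\operatorname{ran}\sigma_{T_i,T_j}$, where $c_{ij}=\tilde m_{ij}/m_{ij}$. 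Since $\phi(T_j)$ is invertible and $S_{i,j}\notin\operatorname{ran}\sigma_{T_i,T_j}$, this forces $c_{ij}=1$. Lemma~\ref{nil} concerns quasi-nilpotent elements of a commutant and plays no role at this point, and Lemma~\ref{sff} is a unitary-equivalence statement about second fundamental forms that does not see similarity at all. Without the commutant normalization and the ``$S_{i,j}\notin\operatorname{ran}\sigma$'' obstruction, your induction on $j-i$ carries too many undetermined blocks $X_{kl}$ to close.
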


In what follows, for brevity of notation, we let 
$T_0:=S_{0,0}$ and $T_{k+1}:=S_{k+1,k+1}$ ($k,$ $1\leq k\leq n-2$ is fixed but arbitrary) be the two atoms of a quasi-homogeneous operator $T$. As always, we assume they have been realized as the adjoint of the multiplication operators on the weighted Bergman spaces $\mathbb A^{(\lambda_0)}(\mathbb D)$ and $\mathbb A^{(\lambda_{k+1})}(\mathbb D),$ respectively.

\subsection{\sf The Key Lemma} The following lemma is the key to determining when a bundle map that intertwines two quasi-homogeneous holomorphic vector bundles extends to an invertible bounded operator. It reveals the intrinsic structure of the intertwiners between two quasi-homogeneous bundles. We follow the conventions set up in Section \ref{conventions}.

\begin{lem} \label{keyL}
Let $E_t$ be a quasi-homogeneous vector bundle and $s_{i,j},
i,j=0,1,\cdots, n-1$ be the induced bundle maps. There exists a bundle map $X:E_{t_{n-1}}\rightarrow {\mathcal
J}_{n-1}(E_{t_{0}})$ with the intertwining property
$$s_{0,0}X-Xs_{n-1,n-1}=s_{0,n-1}$$ that extends to a bounded linear operator only if $\Lambda(t) \geq 2.$

\end{lem}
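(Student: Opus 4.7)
My plan is to reduce the intertwining identity to an equation on the generating sections of the line bundles $E_{t_0}$ and $E_{t_{n-1}}$, solve it modulo the kernel of $T_0 - w$, and then read off boundedness of $X$ from the asymptotic growth of its matrix coefficients in the orthonormal bases of the weighted Bergman spaces.

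First, I would evaluate the identity $T_0 X - X T_{n-1} = S_{0,n-1}$ on the eigenvector $t_{n-1}(w)$, and use $T_{n-1} t_{n-1}(w) = w\, t_{n-1}(w)$ together with $S_{0,n-1}\bigl(t_{n-1}(w)\bigr) = m_{0,n-1}\, t_0^{(n-2)}(w)$ to obtain
$$(T_0 - w)\, X t_{n-1}(w) \;=\; m_{0,n-1}\, t_0^{(n-2)}(w).$$
Differentiating $T_0 t_0(w) = w\, t_0(w)$ repeatedly in $w$ gives $(T_0 - w)\, t_0^{(k)}(w) = k\, t_0^{(k-1)}(w)$. Since $X$ takes values in $\mathcal J_{n-1}(E_{t_0})$, one may expand $X t_{n-1}(w) = \sum_{k=0}^{n-1} \alpha_k(w)\, t_0^{(k)}(w)$ with holomorphic $\alpha_k$, and matching coefficients against the jet frame $\{t_0^{(k)}\}_{k=0}^{n-2}$ forces $\alpha_k \equiv 0$ for $1 \le k \le n-2$ and $\alpha_{n-1} \equiv \frac{m_{0,n-1}}{n-1}$; hence
$$X t_{n-1}(w) \;=\; \tfrac{m_{0,n-1}}{n-1}\, t_0^{(n-1)}(w) \;+\; \alpha_0(w)\, t_0(w),$$
with $\alpha_0$ an arbitrary holomorphic function.

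Next, I would transplant this formula into matrix entries against the orthonormal bases $\{e_m^{(\lambda_{n-1})}\}$ and $\{e_\ell^{(\lambda_0)}\}$ by expanding both sides in powers of $\bar w$ via $t_i(w) = \sum_\ell a_\ell(\lambda_i)\, \bar w^\ell z^\ell$. The particular solution $t_0^{(n-1)}$ contributes only to entries with $\ell = m + n - 1$, and the Stirling asymptotic $a_\ell(\lambda) \sim \ell^{\lambda - 1}$ from Section 2.3 gives
$$|X_{m+n-1,\,m}| \;=\; \tfrac{|m_{0,n-1}|}{n-1}\cdot \tfrac{\sqrt{a_{m+n-1}(\lambda_0)}}{\sqrt{a_m(\lambda_{n-1})}}\cdot \tfrac{(m+n-1)!}{m!} \;\sim\; m^{\,n-1 + (\lambda_0 - \lambda_{n-1})/2}.$$
The correction $\alpha_0(w)\, t_0(w)$ feeds only entries $X_{\ell, m}$ with $\ell \le m$, a stripe disjoint from $\ell = m + n - 1$, so no choice of holomorphic $\alpha_0$ can cancel the leading term. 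Boundedness of $X$ therefore forces $n - 1 + (\lambda_0 - \lambda_{n-1})/2 \le 0$, i.e., $(n-1)\Lambda(t) \ge 2(n-1)$, and hence $\Lambda(t) \ge 2$, as claimed.

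The main obstacle will be making the non-cancellation claim fully rigorous: differentiating $t_0(w)$ exactly $n-1$ times reduces the $\bar w$-exponent of every monomial by $n-1$, while multiplication by the holomorphic Taylor series of $\alpha_0$ only increases it, so the two summands of $X t_{n-1}(w)$ genuinely populate disjoint off-diagonals in the matrix of $X$. Once this disjointness is established, the claimed implication follows from the exponent comparison above.
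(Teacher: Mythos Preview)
Your proof is correct and follows essentially the same strategy as the paper: both isolate the matrix entries of $X$ on the $(n-1)$-th sub-diagonal, show they are forced by the intertwining equation to grow like $m^{\,n-1+(\lambda_0-\lambda_{n-1})/2}$, and conclude that boundedness requires $\Lambda(t)\ge 2$. The paper arrives at this via a recursion on the entries $x_{\ell+n-1,\ell}$ obtained by applying the intertwining relation to orthonormal basis vectors, whereas you exploit the jet-bundle hypothesis more directly to solve $(T_0-w)\,Xt_{n-1}(w)=m_{0,n-1}\,t_0^{(n-2)}(w)$ in closed form and then read off the relevant diagonal; your disjointness argument (the homogeneous part $\alpha_0(w)t_0(w)$ only touches entries with $\ell\le m$, while $t_0^{(n-1)}$ only touches $\ell=m+n-1$) is exactly the ``non-cancellation'' the paper establishes recursively. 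One small remark: the paper's proof also verifies the converse direction (constructing a bounded shift $X$ when $\Lambda(t)\ge 2$), which is used later in Lemma~\ref{Sirrd}; your argument handles only the ``only if'' direction stated in the lemma, so you may want to note that the construction of the bounded solution when $\Lambda(t)\ge 2$ follows immediately from your closed form by taking $\alpha_0\equiv 0$.
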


\begin{proof}

Let $T_{0}$ and $T_{k+1}$ be the operators induced by $s_{0,0}$
and $s_{k+1,k+1}$ as in in Lemma \ref{atomic}. These are then necessarily the operators ${M^{(\lambda_0)}}^*$ and ${M^{(\lambda_{k+1})}}^*$ acting on the weighted Bergman spaces $\mathbb A^{(\lambda_0)}(\mathbb D)$ and $\mathbb A^{(\lambda_{k+1})}(\mathbb D),$ respectively.

The kernel of the operator $(T_i-w),\;w\in \mathbb D,$ is spanned by the vector $t_i(w):=(1-z\bar{w})^{-\lambda_i}, \; i=0,k+1.$ By hypothesis, for each fixed $w\in\mathbb D,$ we have
$S_{0,k+1}((1-z\bar{w})^{-\lambda_{k+1}}) = \bar{\partial}^{k}(1-z\bar{w})^{-\lambda_{0}}.$ Differentiating both sides of this equation $\ell$ times and then evaluating at $w=0$, we get $S_{0,k+1}\big ( (\lambda_{k+1})_\ell z^\ell \big ) = (\lambda_0)_{\ell+k} z^{\ell+k}.$
For  $j=0$ or $j=k-1,$ the set of vectors $e^{(\lambda_{j})}_\ell:= \sqrt{a_\ell(\lambda_j)}\,z^\ell,\;\ell\geq 0$ is an orthonormal basis in $\mathbb A^{(\lambda_j)}(\mathbb D).$ The matrix representation for the operator $S_{0,k+1}:\mathbb A^{(\lambda_{k+1})}(\mathbb D)  \to \mathbb A^{(\lambda_{0})}(\mathbb D)$ with respect to this orthonormal basis is obtained from the computation:
\begin{eqnarray*}
S_{0,k+1} \big (e^{(\lambda_{k+1})}_\ell \big ) &=&
S_{0,k+1} \big (\sqrt{a_\ell(\lambda_{k+1})}\,{z^\ell}\big ) \\ &=& \sqrt {\tfrac{a_{\ell}(\lambda_{k+1})}{a_{\ell+k}(\lambda_{0})}} \tfrac{(\lambda_0)_{\ell+k}}{(\lambda_{k+1})_\ell} \sqrt{a_{\ell+k}(\lambda_0)}\; z^{\ell+k}\\
&=& \tfrac{(\ell+k)!}{\ell!}\sqrt{\tfrac{a_{\ell+k}(\lambda_{0})}{a_{\ell}(\lambda_{k+1})}} e^{(\lambda_{0})}_{\ell+k}.
\end{eqnarray*}
Thus $S_{0,k+1}$ is a forward shift of multiplicity $k.$
We claim that if $\Lambda(t) \geq 2,$ then we can find a forward shift $X$
of multiplicity $k+1,$ namely,
$X(e^{(\lambda_{k+1})}_\ell) = x_{\ell} e^{(\lambda_0)}_{\ell+k+1}$
which has the required intertwining property. Thus evaluating the equation
 $S_{0,0}X-XS_{n-1,k+1}=S_{0,k+1}$ on the vectors $e_{\ell-1}^{(\lambda_{k+1})},\;\ell\geq 0,$ we obtain
\begin{eqnarray}\label{recursive1}
\tfrac{(\ell+k)!}{\ell!} \tfrac{\prod\limits_{i=0}^{\ell-1}w_i^{(\lambda_{k+1})}}{\prod\limits_{i=0}^{\ell+k-1}w_i^{(\lambda_0)}} e^{(\lambda_{0})}_{\ell+k} &=&
\tfrac{(\ell+k)!}{\ell!}\sqrt{\tfrac{a_{\ell+k}(\lambda_{0})}{a_{\ell}(\lambda_{k+1})}} e^{(\lambda_{0})}_{\ell+k}\\
&=&S_{0,k+1} \big (e^{(\lambda_{k+1})}_\ell \big )\nonumber\\
&=&\big (S_{0,0} X - X S_{k+1,k+1}\big )\big ( e^{(\lambda_{k+1})}_\ell\big )\nonumber \\
&=&\big ( x_\ell w_{\ell+k}^{(\lambda_0)} - x_{\ell-1} w_{\ell-1}^{(\lambda_{k+1})}\big )  e^{(\lambda_{0})}_{\ell+k} \label{recursive2}.
\end{eqnarray}
From this we obtain $x_\ell$ recursively:
\begin{eqnarray*}
w_k^{(\lambda_0)} x_0
&=&k!\tfrac{\sqrt{a_k(\lambda_0)}}{\sqrt{a_0(\lambda_{(k+1)})}}\\
\end{eqnarray*}
and for $\ell\geq 1,$
\begin{eqnarray*}
x_\ell=\sqrt{\tfrac{a_{k+\ell}(\lambda_0)}{a_{\ell}(\lambda_{k+1})}}\sum_{i=1}^k (\ell)_i&\sim& \Big ((k+\ell)^{\tfrac{\lambda_0-1}{2}}\Big ) \Big (\ell^{\tfrac{-\lambda_{k+1}+1}{2}}\Big ) (\ell^{k+1})\\
&\sim& \Big ( \ell^{\tfrac{\lambda_0-\lambda_{k+1}+2k+2}{2}}\Big ),
\end{eqnarray*}
where $(\ell)_k := \ell(\ell+1) \cdots (\ell+k-1) = \frac{\Gamma(\ell+k)}{\Gamma(k)}$ is the Pochhammer symbol as before. Here, using the Stirling approximation for the $\Gamma$ function, we infer that $\sum_{i=1}^k (\ell)_i\sim \ell^{k+1}.$

If $\Lambda(t)\geq 2$, then  $\lambda_1-\lambda_0\geq 2,
\lambda_2-\lambda_1\geq 2,\cdots, \lambda_{k+1}-\lambda_k\geq 2.$
Consequently, $\lambda_{k+1}-\lambda_0\geq 2k+2$ making the operator $X$ bounded.

It follows that if  $\Lambda(t)\geq 2,$ then the shift $X$ of multiplicity $n$ that we have constructed is bounded and has the desired intertwining property. To show that there is no such intertwining operator if $\Lambda(t) < 2,$ assume to the contrary the existence of such an operator. Then we show that there must also exist a shift of multiplicity $k+1$ with this property leading to a contradiction. For the proof, suppose
$$X\big (e^{(\lambda_{k+1})}_\ell \big ) = \sum_{i=0}^\infty x_{i,\ell}\, e^{(\lambda_0)}_{i},\; X= \big (\!\!\big ( x_{i,\ell} \big ) \!\!\big ).$$
Then
$$
\big ( S_{0,0} X - X S_{k+1,k+1}\big ) \big (e^{(\lambda_{k+1})}_\ell \big ) = \sum_{i=0}^\infty \big (x_{i+1,\ell+1} w_{i}^{(\lambda_0)} - x_{i,\ell} w_{\ell-1}^{(\lambda_{k+1})} \big )\big ( e^{(\lambda_0)}_{i}\big ).
$$
In particular, we have
$$
(x_{\ell+k+1,\ell+1}\, w_{\ell+k}^{(\lambda_0)} - x_{\ell+k,\ell}\,
w_{\ell-1}^{(\lambda_{k+1})})(e^{(\lambda_0)}_{l+k}) = S_{0,k+1}\big
(e_\ell^{(\lambda_{k+1})}\big ).
$$
Repeating the proof above, we will have the conclusion
$x_{l+k,l}\rightarrow \infty, l\rightarrow \infty$ which proving the
claim.

\end{proof}

\begin{lem}\label{keyLb}
Let $t$ be a quasi-homogeneous holomorphic curve with atoms
$t_i,\,0\leq i \leq {n-1}.$ Let  $T:=\big (\!\!\big ( S_{i,j}\big )\!\! \big )$  be the atomic decomposition of the operator $T$  representing $t$ as in Lemma \ref{atomic}.
\begin{enumerate}
\item If  $\Lambda(t)\in [1+\frac{n-3}{n-1}, 1+\frac{n-2}{n}),$ then for any $1\leq r< n-1,$ we have
$$S_{0,r}S_{r,r+1}\cdots S_{n-2,n-1}\in
\mbox{\rm ran}\,\sigma_{S_{0,0},S_{n-1,n-1}}.$$
\item Suppose that $\Lambda(t)\geq 2$. Then there exists a bounded linear
operator $X\in {\mathcal L}({\mathcal H}_{n-1},{\mathcal H}_{n-2})$
such that
$$S_{n-2,n-2}X-XS_{n-1,n-1}=S_{n-2,n-1}$$ and
$$S_{n-3,n-2}X\in \mbox{\rm ran}\sigma_{S_{n-3,n-3},S_{n-1,n-1}}.$$
\end{enumerate}
\end{lem}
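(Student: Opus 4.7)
\emph{Plan.} The strategy is to adapt the recursion-and-Stirling template used in the proof of Lemma \ref{keyL}. Written in the orthonormal bases $\{e_\ell^{(\lambda_i)}\}_{\ell\geq 0}$ of the weighted Bergman spaces, every intertwiner of interest here is a forward shift. The template reads: if $Z:\mathcal H_q\to\mathcal H_p$ is a shift of multiplicity $k$, then the equation $S_{p,p}Y-YS_{q,q}=Z$ is solved by a shift $Y$ of multiplicity $k+1$ whose coefficients are determined by a one-term recursion; Stirling's approximation applied to the closed-form solution shows that $Y$ is bounded precisely when $\lambda_q-\lambda_p\geq 2(k+1)$. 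Each part of the lemma then reduces to a verification of this weight inequality.

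For part (2), the plan is to proceed in two stages. First, apply the template with $(p,q,k)=(n-2,n-1,0)$ and $Z=S_{n-2,n-1}$, noting that $S_{n-2,n-1}(t_{n-1})=m_{n-2,n-1}\,t_{n-2}$ is a shift of multiplicity $0$. The required threshold $\lambda_{n-1}-\lambda_{n-2}=\Lambda(t)\geq 2$ is exactly the hypothesis, producing a bounded shift $X$ of multiplicity $1$ with the desired intertwining property. Second, note that $S_{n-3,n-2}$ is itself a multiplicity-$0$ shift, so $S_{n-3,n-2}X$ is a multiplicity-$1$ shift from $\mathcal H_{n-1}$ into $\mathcal H_{n-3}$. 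Applying the template once more with $(p,q,k)=(n-3,n-1,1)$, the boundedness threshold becomes $\lambda_{n-1}-\lambda_{n-3}=2\Lambda(t)\geq 4$, again granted by $\Lambda(t)\geq 2$, which places $S_{n-3,n-2}X$ in the range of $\sigma_{S_{n-3,n-3},S_{n-1,n-1}}$.

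For part (1), I would first unpack the composition: for $r\leq j\leq n-2$, each $S_{j,j+1}$ sends $t_{j+1}$ to $m_{j,j+1}t_j$, hence is a shift of multiplicity $0$, while $S_{0,r}$ sends $t_r$ to $m_{0,r}t_0^{(r-1)}$, hence is a shift of multiplicity $r-1$. The full composition $S_{0,r}S_{r,r+1}\cdots S_{n-2,n-1}$ is therefore a shift of multiplicity $r-1$ from $\mathcal H_{n-1}$ into $\mathcal H_0$. Applying the template with $(p,q,k)=(0,n-1,r-1)$, the boundedness threshold is $\lambda_{n-1}-\lambda_0=(n-1)\Lambda(t)\geq 2r$. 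The lower bound $\Lambda(t)\geq 1+\tfrac{n-3}{n-1}$ gives $(n-1)\Lambda(t)\geq 2(n-2)$, which dominates $2r$ for every $r\leq n-2$, closing the argument.

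The main technical obstacle is packaging the generic template cleanly enough that the three applications above become arithmetic checks on the weights; this requires careful bookkeeping of Pochhammer symbols and ratios of the Bergman coefficients $a_\ell(\lambda)$, exactly as carried out in the proof of Lemma \ref{keyL}. Note that the upper bound $\Lambda(t)<1+\tfrac{n-2}{n}$ in part (1) is not invoked in the proof itself: it merely demarcates the regime in which the analogous statement for the ``top'' operator $S_{0,n-1}$ would fail, consistent with Lemma \ref{keyL}.
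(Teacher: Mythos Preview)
Your proposal is correct and follows essentially the same approach as the paper's proof: both reduce the claim to constructing explicit forward-shift solutions of the relevant Rosenblum equations and verifying boundedness via the Stirling-type asymptotics from Lemma~\ref{keyL}. The paper works out the single case $r=n-2$ in detail and declares the remaining cases similar, whereas you package the method as a reusable template and check the weight inequalities for all $r$ (and for both stages of part~(2)) at once; the underlying computation is the same.
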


\begin{proof}
We only prove that
$S_{0,n-2}S_{n-2,n-1}$ is in $\mbox{ran}\sigma_{S_{0,0},S_{n-1,n-1}}.$
Clearly, as can be seen from the proof we present below, the proof in all the other cases are exactly the same.

Let $T_{0}$, $T_{n-2}$ and  $T_{n-1}$ be the operators induced by
$s_{0,0}$, $s_{n-2,n-2}$ and $s_{n-1}$ as in in Lemma \ref{atomic}.
These are then necessarily the operators ${M^{(\lambda_0)}}^*$,
${M^{(\lambda_{n-2})}}^*$  and ${M^{(\lambda_{n-1})}}^*$ acting on
the weighted Bergman spaces
 $\mathbb A^{(\lambda_0)}(\mathbb D)$, $\mathbb A^{(\lambda_{n-2})}(\mathbb D)$ and $\mathbb A^{(\lambda_{n-1})}(\mathbb D),$ respectively.

As in the proof of Lemma \ref{keyL}, equations \eqref{recursive1} and \eqref{recursive2}, we have that
\begin{eqnarray*}
S_{0,n-2} \big (e^{(\lambda_{n-2})}_\ell \big ) &=&
S_{0,n-2} \big (\sqrt{a_\ell(\lambda_{n-2})}\,{z^\ell}\big ) \\ &=& \sqrt {\tfrac{a_{\ell}(\lambda_{n-2})}{a_{\ell+n-3}(\lambda_{0})}} \tfrac{(\lambda_0)_{\ell+n-3}}{(\lambda_{n-2})_\ell} \sqrt{a_{\ell+n-3}(\lambda_0)}\; z^{\ell+n-3}\\
&=&
\tfrac{(\ell+n-3)!}{\ell!}\sqrt{\tfrac{a_{\ell+n-3}(\lambda_{0})}{a_{\ell}(\lambda_{n-2})}}
e^{(\lambda_{0})}_{\ell+n-3}
\end{eqnarray*}

$$S_{n-2,n-1}(e^{(\lambda_{n-1})}_{\ell})=\tfrac{\sqrt{a_{\ell}(\lambda_{n-2})}}{\sqrt{a_{\ell}(\lambda_{n-1})}}e^{(\lambda_{n-2})}_{\ell};$$
and
$$S_{0,n-2}S_{n-2,n-1}(e^{(\lambda_{n-1})}_{\ell})= \tfrac{(\ell+n-3)!}{\ell!}\sqrt{\tfrac{a_{\ell+n-3}(\lambda_{0})}{a_{\ell}(\lambda_{n-1})}}
e^{(\lambda_{0})}_{\ell+n-3}.$$

 Thus $S_{0,n-2}S_{n-2,n-1}$ is a forward shift of multiplicity
 $n-3$.
We claim that if $\Lambda(t) \geq 1+\frac{n-3}{n-1},$ then we can
find a forward shift $X$ of multiplicity $n-2,$ namely,
$X(e^{(\lambda_{n-1})}_\ell) = x_{\ell} e^{(\lambda_0)}_{\ell+n-2}$
which has the required intertwining property. Thus evaluating the
equation $S_{0,0}X-XS_{n-1,n-1}=S_{0,n-1}$ on the vectors
$e^{(\lambda_{n-1})}_\ell,\;\ell\geq 0,$ we obtain

\begin{eqnarray*}
w_{n-3}^{(\lambda_0)} x_0
&=&(n-3)!\tfrac{\sqrt{a_{n-3}(\lambda_0)}}{\sqrt{a_0(\lambda_{(n-1)})}}\\
\end{eqnarray*}
and for $\ell\geq 1,$ we have that

$$w_{l+n-3}^{(\lambda_0)}x_{\ell}-x_{\ell-1}w_{l}^{(\lambda_n-1)}=\tfrac{(\ell+n-3)!}{\ell
!}\tfrac{\sqrt{a_{\ell+n-3}(\lambda_0)}}{\sqrt{a_{\ell}(\lambda_{(n-1)})}}.$$

It follows that \begin{eqnarray*}
x_\ell=\tfrac{\sqrt{a_{\ell+n-3}(\lambda_0)}}{\sqrt{a_{\ell}(\lambda_{n-1})}}\sum_{i=1}^{n-3} (\ell)_i&\sim& \Big ((n-3+\ell)^{\tfrac{\lambda_0-1}{2}}\Big ) \Big (\ell^{\tfrac{-\lambda_{n-1}+1}{2}}\Big ) (\ell^{n-2})\\
&\sim& \Big ( \ell^{\tfrac{\lambda_0-\lambda_{n-1}+2n-4}{2}}\Big ).
\end{eqnarray*}

Note that when $\Lambda(t)>1+\frac{n-3}{n-1}$, we obtain
$$\lambda_{n-1}-\lambda_0=(n-1)\Lambda(t)>(n-1)\frac{2n-4}{n-1}=2n-4$$
making $X$ bounded. This completes the proof of the first statement.

For the proof of the second statement, note that by virtue of Lemma \ref{keyL}, we have $S_{n-2,n-1}\in
\mbox{Ran}\sigma_{S_{n-2,n-1}}.$ So there exists a bounded operator  $X$  such that
$$S_{n-2,n-2}X-XS_{n-1,n-1}=S_{n-2,n-1}.$$
Repeating the proof for the first part, we conclude
$$S_{n-3,n-2}X\in \mbox{\rm ran}\,\sigma_{S_{n-3,n-3},S_{n-1,n-1}}.$$
\end{proof}

\subsection{\sf Strong irreducibility} We  now show that a quasi-homogeneous holomorphic curve $t$ is strongly irreducible or strongly reducible according as $\Lambda(t)$ is less than $2$ or greater equal to $2.$ We recall that homogeneous operators (in this case, $\Lambda(t) =2$) were shown to be irreducible but strongly reducible in \cite{KM}
\begin{lem} \label{Sirrd}
Fix  a quasi-homogeneous holomorphic curve $t$ with atoms $t_i$ and let $T=\big (\!\! \big (S_{i,j}\big )\!\!\big )$ be its atomic decomposition.
\begin{enumerate}
\item If $\Lambda(t)\geq 2$, then $T$ is strongly reducible, indeed
$T$ is similar to the direct sum of its atoms, namely, $\bigoplus\limits_{i=0}^{n-1} T_i$ and
\item if $\Lambda(t) < 2,$ then $T$ is strongly irreducible.
\end{enumerate}
\end{lem}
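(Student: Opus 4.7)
The plan is to deduce both statements from the Rosenblum-range dichotomy supplied by the Key Lemma (Lemma~\ref{keyL}), the triangular form of the commutant (Lemma~\ref{commutant}), and the irreducibility of each homogeneous atom $T_i$. The cornerstone fact I will use repeatedly is the following relabeling of the shift computation in the proof of Lemma~\ref{keyL}: for $0\le i<j\le n-1$, the Sylvester equation $T_iX-XT_j=S_{i,j}$ admits a bounded solution $X:\mathcal H_j\to\mathcal H_i$ if and only if $\lambda_j-\lambda_i\ge 2(j-i)$, i.e., if and only if $\Lambda(t)\ge 2$.

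\textbf{Part (1).} Assume $\Lambda(t)\ge 2$. I would construct a block strictly upper-triangular operator $N=(N_{i,j})_{i<j}$ on $\mathcal H=\bigoplus_{i=0}^{n-1}\mathcal H_i$ such that $Y:=I+N$ conjugates $T$ to $\bigoplus_{i=0}^{n-1}T_i$. Since $N$ is nilpotent in the block filtration, $Y$ is automatically invertible. Expanding $TY=Y\bigl(\bigoplus T_i\bigr)$ blockwise yields, for $i<j$, the Sylvester system
\[
T_iN_{i,j}-N_{i,j}T_j=-S_{i,j}-\sum_{i<k<j}S_{i,k}N_{k,j}.
\]
I would solve these in order of increasing $d=j-i$. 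For $d=1$ the Key Lemma immediately produces a bounded solution $N_{i,i+1}$. For $d\ge 2$ the direct term $S_{i,j}$ is in $\mathrm{ran}\,\sigma_{T_i,T_j}$ by the same lemma applied to the pair $(i,j)$. The cross terms $S_{i,k}N_{k,j}$ are controlled via Lemma~\ref{keyLb} and its natural extensions: one chooses the $N_{k,j}$ produced at earlier stages to be the distinguished shift solutions of the Key Lemma, whose additional range property (of the form displayed in the second statement of Lemma~\ref{keyLb}) guarantees that each $S_{i,k}N_{k,j}$ also lies in $\mathrm{ran}\,\sigma_{T_i,T_j}$. The main obstacle in this half of the argument is bookkeeping: propagating these compatibilities through the double recursion on $d$ and on the row index $i$. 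Once the $N_{i,j}$ are in hand, conjugation by $Y=I+N$ realizes $T\sim\bigoplus_{i=0}^{n-1}T_i$, which is strongly reducible.

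\textbf{Part (2).} Assume $\Lambda(t)<2$. By the criterion recalled in the Introduction, strong irreducibility of $T$ is equivalent to the absence of non-trivial idempotents in $\mathcal A^\prime(T)$. Let $P\in\mathcal A^\prime(T)$ satisfy $P^2=P$. By Lemma~\ref{commutant}, $P$ is block upper triangular: $P=(P_{i,j})_{0\le i\le j\le n-1}$. Reading off the diagonal blocks of $PT=TP$ gives $P_{i,i}T_i=T_iP_{i,i}$, while $P^2=P$ forces $P_{i,i}^2=P_{i,i}$. Since each atom $T_i$ is a homogeneous operator in $B_1(\mathbb D)$ realized as the adjoint of multiplication on the weighted Bergman space $\mathbb A^{(\lambda_i)}(\mathbb D)$, its commutant is the commutative algebra of $H^\infty$-multipliers, whose only idempotents are $0$ and $1$; hence $P_{i,i}\in\{0,I_{\mathcal H_i}\}$.

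Comparing the $(i,i+1)$-blocks of $PT=TP$ yields
\[
(P_{i,i}-P_{i+1,i+1})S_{i,i+1}=T_iP_{i,i+1}-P_{i,i+1}T_{i+1},
\]
so $(P_{i,i}-P_{i+1,i+1})S_{i,i+1}\in\mathrm{ran}\,\sigma_{T_i,T_{i+1}}$. If $P_{i,i}\ne P_{i+1,i+1}$ for some $i$, then $\pm S_{i,i+1}$ lies in this range; but $S_{i,i+1}$ is nonzero (it sends $t_{i+1}$ to a nonzero multiple of $t_i$) and $\lambda_{i+1}-\lambda_i=\Lambda(t)<2$, so by the Key Lemma no such bounded solution exists --- a contradiction. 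Therefore all $P_{i,i}$ coincide, equal to either $0$ or $I$. If all are $0$, then $P$ is strictly block upper triangular, hence nilpotent, so $P=P^n=0$; if all are $I$, the same argument applied to $I-P$ gives $P=I$. Thus $\mathcal A^\prime(T)$ has only the trivial idempotents $0$ and $I$, and $T$ is strongly irreducible. The only subtlety in part~(2) is invoking the Key Lemma for the pair $(i,i+1)$ rather than $(0,n-1)$; this is legitimate since the shift-operator proof of Lemma~\ref{keyL} relativizes verbatim to any adjacent pair of atoms.
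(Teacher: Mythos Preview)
Your argument is correct. For part~(1) you take essentially the same route as the paper: both constructions build an upper-triangular conjugator by repeatedly solving Sylvester equations $T_iX-XT_j=(\cdots)$ via Lemma~\ref{keyL}, with Lemma~\ref{keyLb} controlling the cross terms that arise; the paper organizes the elimination column-by-column (kill the last column first, then induct on $n$) while you organize it by superdiagonal distance $d=j-i$, but the ingredients and level of detail are the same (the paper also writes ``Continuing in this manner, we clearly have \ldots'').

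For part~(2) your argument is genuinely more economical than the paper's. The paper first treats self-adjoint projections separately to establish irreducibility, and then proves strong irreducibility by induction on $n$: it restricts an idempotent $P\in\mathcal A'(T)$ to the top-left and bottom-right $(n-1)\times(n-1)$ corners, applies the inductive hypothesis to force $P_{i,j}=0$ for $i\neq j\leq n-2$ and all diagonal entries equal, and finally checks $P_{0,n-1}=0$ by hand. Your observation that once $P_{0,0}=\cdots=P_{n-1,n-1}\in\{0,I\}$ the operator $P$ (or $I-P$) is strictly block upper triangular, hence nilpotent, so $P=P^n=0$, bypasses the induction entirely. The trade-off is that your use of the Key Lemma for the pair $(i,i+1)$ is only stated in the paper for the extremal pair $(0,n-1)$, but as you note the shift computation in its proof relativizes verbatim to any pair of atoms, so this is a cosmetic matter.
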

\begin{proof}
If $\Lambda(t) \geq 2,$ then we claim that the operator $T$ is similar to $T_0\oplus T_1\oplus \cdots \oplus T_{n-1}.$

When $n=2$, Let
$T=\left (\begin{smallmatrix}S_{0,0}&S_{0,1}\\0&S_{1,1}\end{smallmatrix}\right )$. By Lemma
\ref{keyL}, there exists $X_{0,1}$ such that
$$S_{0,0}X_{0,1}-X_{0,1}S_{1,1}=S_{0,1}.$$
Set  $Y_{0,1}=\left (\begin{smallmatrix} I&X_{0,1}\\
0&I\end{smallmatrix} \right )$, then we have that
$$\begin{array}{lllll}Y_{0,1}TY^{-1}_{0,1}&=&\begin{pmatrix}S_{0,0}&S_{0,1}+X_{0,1}S_{1,1}\\0&S_{1,1}\end{pmatrix}\begin{pmatrix}I&-X_{0,1}\\0&I\end{pmatrix}\\
&=&\begin{pmatrix}S_{0,0}&S_{0,1}-S_{0,0}X_{0,1}+X_{0,1}S_{1,1}\\
0&S_{1,1}\end{pmatrix}\\
&=&\begin{pmatrix}S_{0,0}&0\\
0&S_{1,1}\end{pmatrix}\\
\end{array}$$
Notice that $Y_{0,1}$ is invertible,  we have that $T\sim
S_{0,0}\oplus S_{1,1}$.

In this case, using Lemma \ref{keyL}, we find an invertible  bounded linear operator $X_{0,n-1}$ such that
$$S_{0,0}X_{0,n-1}-X_{0,n-1}S_{n-1,n-1}=S_{0,n-1}.$$  For any $i<j,$ applying Lemma \ref{keyL} to the operators
$$\left ( \begin{smallmatrix}
S_{i,i}&S_{i,i+1}&S_{i,i+2}&\cdots&S_{i,j} \\
0&S_{i+1,i+1}&S_{i+1,i+2}&\cdots&S_{i+1,j}\\
\vdots &\ddots&\ddots&\ddots&\vdots\\
0&\ldots&0&S_{j-1,j-1}&S_{j-1,j}\\
0&0&\ldots&0&S_{j,j}\\
\end{smallmatrix}\right ),
$$
we find an invertible bounded linear operator $X_{i,j}$ such that
$S_{i,i}X_{i,j}-X_{i,j}S_{j,j}=S_{i,j}.$ Set $Y_{n-2,n-1}:=\left (
\begin{array}{c | c} I^{(n-2)} & 0\\\hline 0&\begin{array}{cc}
I & X_{n-2,n-1} \\
0 & I
\end{array}
\end{array} \right )$ and note that
$Y_{n-2,n-1}^{-1} = \left ( \begin{array}{c | c}
I^{(n-2)} & 0\\\hline
0&\begin{array}{cc}
I &\phantom{G}- X_{n-2,n-1} \\
0 & I
\end{array}
\end{array} \right ).$
%
%
Now, we have
$$\left ( \begin{array}{c | c}
I^{(n-2)} & 0\\\hline
0&\begin{array}{cc}
I & X_{n-2,n-1} \\
0 & I
\end{array}
\end{array} \right )
\left ( \begin{smallmatrix}
S_{0,0}&S_{0,1}&S_{0,2}&\cdots&S_{0,n-1} \\
0&S_{1,1}&S_{1,2}&\cdots&S_{1,n-1}\\
\vdots &\ddots&\ddots&\ddots&\vdots\\
0&\ldots&0&S_{n-2,n-2}&S_{n-2,n-1}\\
0&0&\ldots&0&S_{n-1,n-1}\\
\end{smallmatrix}\right ) \left ( \begin{array}{c | c}
I^{(n-2)} & 0\\\hline 0&\begin{array}{cc}
I & \phantom{G}- X_{n-2,n-1} \\
0 & I
\end{array}
\end{array} \right )$$
$$=\left ( \begin{smallmatrix}
S_{0,0}&S_{0,1}&S_{0,2}&\cdots&S_{0,n-1}-S_{0,n-2}X_{n-2,n-1} \\
0&\ddots&\ddots&\ddots&\vdots\\
\vdots&\ddots&S_{n-3,n-3}&S_{n-3,n-2}&S_{n-3,n-1}-S_{n-3,n-2}X_{n-2,n-1}\\
0&\ldots&0&S_{n-2,n-2}&0\\
0&\ldots&\ldots&0&S_{n-1,n-1}\\
\end{smallmatrix}\right ).
$$
By Lemma \ref{keyLb}, we have  $$S_{n-3,n-2}X_{n-2,n-1}\in
\mbox{\rm ran}\,\sigma_{S_{n-1,n-1},S_{n-3,n-3}}.$$
Therefore, there exists an
invertible bounded linear operator $\widetilde{X}$ such that
$$S_{n-3,n-3}\widetilde{X}-\widetilde{X}S_{n-1,n-1}=S_{n-3,n-1}-S_{n-3,n-2}X_{n-2,n-1}.$$
Let $X_{n-3,n-1}:=\widetilde{X}$ and
$Y_{n-3,n-1}= \left ( \begin{array}{c | c }
I^{(n-3)} & 0\\\hline
0&\begin{array}{ccc}
I & 0 & X_{n-3,n-2} \\
0 & I & 0 \\
0 & 0 & I
\end{array}
\end{array} \right ).$
Now, we have $$Y_{n-3,n-1}\left ( \begin{smallmatrix}
S_{0,0}&S_{0,1}&S_{0,2}&\cdots&S_{0,n-1}-S_{0,n-2}X_{n-2,n-1} \\
0&\ddots&\ddots&\ddots&\vdots\\
\vdots&\ddots&S_{n-3,n-3}&S_{n-3,n-2}&S_{n-3,n-1}-S_{n-3,n-2}X_{n-2,n-1}\\
0&\ldots&0&S_{n-2,n-2}&0\\
0&\ldots&\ldots&0&S_{n-1,n-1}\\
\end{smallmatrix}\right )
Y_{n-3,n-1}^{-1}$$
$$=\left ( \begin{smallmatrix}
S_{0,0}&S_{0,1}&S_{0,2}&\cdots&S_{0,n-1}-S_{0,n-2}X_{n-2,n-1} -S_{0,n-3}X_{n-3,n-1}\\
0&\ddots&\ddots&\ddots&\vdots\\
\vdots&\ddots&S_{n-3,n-3}&\cdots&0\\
0&\ldots&0&S_{n-2,n-2}&0\\
0&\ldots&\ldots&0&S_{n-1,n-1}\\
\end{smallmatrix}\right ).
$$
Continuing in this manner, we clearly have $$\left ( \begin{smallmatrix}
S_{0,0}&S_{0,1}&S_{0,2}&\cdots&S_{0,n-1}\\
0&\ddots&\ddots&\ddots&\vdots\\
\vdots&\ddots&S_{n-3,n-3}&S_{n-3,n-2}&S_{n-3,n-1}\\
0&\ldots&0&S_{n-2,n-2}&S_{n-2,n-1}\\
0&\ldots&\ldots&0&S_{n-1,n-1}\\
\end{smallmatrix}\right )\sim \left ( \begin{smallmatrix}
S_{0,0}&S_{0,1}&\cdots&S_{0,n-2}&0 \\
0&\ddots&\ddots&\ddots&\vdots\\
\vdots&\ddots&S_{n-3,n-3}&S_{n-3,n-2}&0\\
0&\ldots&0&S_{n-2,n-2}&0\\
0&\ldots&\ldots&0&S_{n-1,n-1}
\end{smallmatrix}\right ).$$
This completes the proof of the induction step. We have therefore proved the first statement.

To prove the second statement, assuming that $\Lambda(t) < 2,$ we must show that $E_t$ is  strongly irreducible.
First, we  prove that $E_t$ is irreducible. By Lemma \ref{utXY}, any
projection $P=(P_{i,j})_{n\times n}$ in ${\mathcal A}^{\prime}(E_t)$
is  diagonal. Thus
$$P_{i,i}^2=P_{i,i}\in {\mathcal A}^{\prime}(E_{t_i}).$$ It follows
that for any $0\leq i\leq n-1,$ $P_{i,i}=0$ or $P_{i,i}=I$. Since
$PS=SP$, we have
$$P_{i,i}S_{i,i+1}=S_{i,i+1}P_{i+1,i+1}.$$ Therefore
$$P_{i,i}=P_{j,j}, i,j=0,1,\cdots, n-1.$$ Consequently,  $P=0$ or $P=I$ and  $E_t$ is irreducible.

We first prove that $E_t$ is also strongly irreducible for $n=2.$ By
Lemma \ref{keyL}, we have $$S_{0,1} \not\in
\mbox{ran}\;\sigma_{S_{0,0}S_{1,1}}.$$
 Let $P\in
{\mathcal A}^{\prime}(E_t)$ be an idempotent. By Lemma \ref{intS},
$P$  has the following form $$P=\begin{pmatrix}
P_{0,0} & P_{0,1} \\
0 & P_{1,1} \\
\end{pmatrix}.$$ Since $PS=SP$,  we have  $$P_{0,0}S_{0,0}= S_{0,0}P_{0,0}, P_{1,1}S_{1,1}=
S_{1,1}P_{1,1}$$
 and $$P_{00}S_{0,1}-S_{0,1} P_{11}= S_{0,0}P_{0,1}-P_{0,1}S_{1,1}.$$
Since $P_{i,i}\in\{S_{i,i}\}^{\prime}$, for $0\leq i\leq 1$, so
$P_{i,i}$ can be either $I$ or $0$. If either $P_{1,1}=I$,
$P_{2,2}=0$ or $P_{0,0}=0$, $P_{1,1}=I$, then $S_{0,1} \in
\mbox{Ran}\;\sigma_{S_{0,0},S_{1,1}}$ which is a contradiction to
our conclusion that $S\not\in \mbox{ran}\;\sigma_{S_{0,0},S_{1,1}}$.
Thus the form of P will be
$$\begin{pmatrix}
I & P_{0,1} \\
0 & I \\ \end{pmatrix}~~\mbox{ or}~~ \begin{pmatrix}
0 & P_{0,1} \\
0 & 0 \\
\end{pmatrix}.$$ Since $P$ is an idempotent operator, so we have $P_{0,1}=0$. Hence $E_t$ is strongly
irreducible.

To complete the proof of the second statement by induction, suppose
that it is valid for any $n\leq k-1.$ For $n=k$, let $P\in {\mathcal
A}^{\prime}(E_t)$ be an idempotent operator. By Lemma \ref{utXY},
$P$ has the following form:
\renewcommand \arraystretch{0.875}
$$P=\left (\begin{matrix}
P_{0,0} & P_{0,1} & P_{0,2}&\cdots&P_{0,k}\\
0&P_{1,1}&P_{1,2}&\cdots&P_{1,k} \\
\vdots&\ddots&\ddots&\ddots&\vdots\\
0&\ldots&0&P_{k-1,k-1}&P_{k-1,k}\\
0&\ldots&\ldots&0&P_{k,k}\\
\end{matrix}\right ),\renewcommand \arraystretch{1}$$
and $P\big (\!\! \big ( S_{i,j}\big )\!\! \big )_{k\times k}=\big (\!\! \big ( S_{i,j}\big )\!\! \big )_{k\times k}P$. It follows that
$$\big (\!\! \big ( P_{i,j}\big )\!\! \big )_{i,j=0}^{k-1}\big (\!\! \big ( S_{i,j}\big )\!\! \big )_{i,j=0}^{k-1}=\big (\!\! \big ( S_{i,j}\big )\!\! \big )_{i,j=0}^{k-1}\big (\!\! \big ( P_{i,j}\big )\!\! \big )_{i,j=0}^{k-1},
\big (\!\! \big ( P_{i,j}\big )\!\! \big )_{i,j=1}^{k}\big (\!\! \big ( S_{i,j}\big )\!\! \big )_{i,j=1}^{k}=\big (\!\! \big ( S_{i,j}\big )\!\! \big )_{i,j=1}^{k}\big (\!\! \big ( P_{i,j}\big )\!\! \big )_{i,j=1}^{k}.$$
Both $\big (\!\! \big ( P_{ij}\big )\!\! \big )_{i,j=0}^{k-1}$ and $\big (\!\! \big ( P_{i,j}\big )\!\! \big )_{i,j=1}^{k}$ are idempotents.  Since $\Lambda(t)<2$, we have
$$S_{r,s}\not\in\mbox{\rm ran}\,\sigma_{S_{r,r},S_{s,s}},\; r,s \leq n.$$
By the induction hypothesis,  we have $$P_{i,j}=0, i\neq j\leq k-1,$$ and
$$P_{0,0}=P_{1,1}=\cdots=P_{k,k}=0,~~\mbox{or}~~P_{0,0}=P_{1,1}=\cdots=P_{k,k}=I.$$
Thus  $P$ has the following form:
\renewcommand \arraystretch{0.875}
$$P=\left (\begin{matrix}
\,I\, & \,0 \,&\, 0\,&\,\cdots\,&\,P_{0,k}\,\\
0&I&0&\cdots&0 \\
\vdots&\ddots&\ddots&\ddots&\vdots\\
0&\cdots&0&I&0\\
0&0&\ldots&0&I\\
\end{matrix}\right )~~\mbox{or}~~
P=\left (\begin{matrix}
\,0\, & \,0 \,& \,0\,&\,\cdots\,&\,P_{0,k}\,\\
0&0&0&\cdots&0 \\
\vdots&\vdots&\ddots&\ddots&\vdots\\
\vdots&0&\cdots&0&0\\
0&\cdots&\cdots&\cdots&0\\
\end{matrix} \right ). $$
\renewcommand \arraystretch{1}
Since $P$ is an idempotent, it follows that $P_{0,k}=0$.
\end{proof}
By Lemma \ref{utXY}, an intertwining operator between two quasi-homogeneous operators with respect to any atomic decomposition must be upper triangular. Thus any operator $X$ in the commutant of such an operator, say $T,$ must also
be upper-triangular. In particular, $X_{i,i}$ belongs to the
commutant of $S_{i,i},$ $0\leq i \leq n-1.$ Since $S_{i,i}$ is a homogeneous operator in $B_1(\mathbb D),$ it follows that
the commutant of $S_{i,i}$ is isomorphic to
${\mathcal H}^{\infty}(\mathbb{D}),$ the space of bounded analytic functions on the unit disc $\mathbb D$. Consequently, for any $\phi\in {\mathcal H}^{\infty}(\mathbb{D})$, the operator
$\phi(S_{i,i})$ is in the commutant ${\mathcal
A}^{\prime}(S_{i,i}).$ In the following lemma, we give a
description of the commutant of $T$. We will construct an operator
$X$ in the commutant of $T,$ where the diagonal elements are induced by the same holomorphic function $\phi\in {\mathcal
H}^{\infty}(\mathbb{D}),$ that is, $\phi(S_{i,i})=X_{i,i}$.

\begin{lem} \label{commutant2}
Let $t$ be a quasi-homogeneous holomorphic curve with atoms $t_i, 0\leq i \leq 1$. Let $T=\big (\!\! \big
(S_{i,j}\big )\!\!\big )$ be its atomic decomposition. Suppose that
$X=\big (\!\! \big (X_{i,j}\big )\!\!\big )$ is in ${\mathcal
A}^{\prime}(T).$ Then there exists $\phi\in {\mathcal
H}^{\infty}(\mathbb{D})$ such that $X_{i,i}=\phi(S_{i,i}),i=0,1$ and we also have that
$$S_{0,0}X_{0,1}-X_{0,1}S_{1,1}=X_{0,0}S_{0,1}-S_{0,1}X_{1,1}=0.$$
In particular, $X_{0,1}$ can be chosen as zero.
\end{lem}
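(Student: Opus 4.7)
The plan is to exploit the strong rigidity of upper-triangular commutants already gleaned from Lemma \ref{utXY} together with the intertwining identity $S_{0,0}S_{0,1}=S_{0,1}S_{1,1}$ of Lemma \ref{intS}. First I would unpack the equation $XT=TX$ at the level of matrix blocks. The diagonal relations give $X_{0,0}S_{0,0}=S_{0,0}X_{0,0}$ and $X_{1,1}S_{1,1}=S_{1,1}X_{1,1}$; the super-diagonal relation gives precisely
\[
S_{0,0}X_{0,1}-X_{0,1}S_{1,1}=X_{0,0}S_{0,1}-S_{0,1}X_{1,1},
\]
which is the first equality of the statement. Since each atom $S_{i,i}$ is a homogeneous operator in $B_{1}(\mathbb D)$, realised (by our standing convention) as the adjoint of the multiplication operator on the weighted Bergman space $\mathbb A^{(\lambda_i)}(\mathbb D)$, its commutant is canonically isomorphic to $H^{\infty}(\mathbb D)$. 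So I may write $X_{0,0}=\phi_0(S_{0,0})$ and $X_{1,1}=\phi_1(S_{1,1})$ for some $\phi_0,\phi_1\in H^{\infty}(\mathbb D)$.

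Next, I would promote the intertwining identity. Iterating $S_{0,0}S_{0,1}=S_{0,1}S_{1,1}$ gives $S_{0,0}^{n}S_{0,1}=S_{0,1}S_{1,1}^{n}$ and hence $p(S_{0,0})S_{0,1}=S_{0,1}p(S_{1,1})$ for every polynomial $p$; WOT-continuity of the $H^{\infty}$-calculus for operators in $B_{1}(\mathbb D)$ then upgrades this to $\phi(S_{0,0})S_{0,1}=S_{0,1}\phi(S_{1,1})$ for all $\phi\in H^{\infty}(\mathbb D)$. Substituting into the displayed commutation identity above gives
\[
\sigma_{S_{0,0},S_{1,1}}(X_{0,1})\;=\;S_{0,1}(\phi_0-\phi_1)(S_{1,1}).
\]
Setting $\psi:=\phi_0-\phi_1$, the heart of the lemma becomes: if $S_{0,1}\psi(S_{1,1})$ lies in the range of the Rosenblum operator $\sigma_{S_{0,0},S_{1,1}}$, then $\psi\equiv 0$. (Implicit here is the non-trivial assumption $\Lambda(t)<2$, which is the setting of the strong irreducibility discussion in Lemma \ref{Sirrd}; when $\Lambda(t)\ge 2$ the operator $T$ is similar to the direct sum $S_{0,0}\oplus S_{1,1}$ and the conclusion fails.)

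To establish the key claim I would apply both sides of the Rosenblum equation to the eigenvector $t_{1}(w)$ of $S_{1,1}$, yielding
\[
(S_{0,0}-w)\bigl(X_{0,1}t_{1}(w)\bigr)\;=\;m_{0,1}\,\psi(w)\,t_{0}(w),\qquad w\in\mathbb D.
\]
Writing $X_{0,1}t_{1}(w)$ in the orthonormal basis $\{e^{(\lambda_0)}_n\}$ of $\mathbb A^{(\lambda_0)}(\mathbb D)$, the resulting coefficient recursion is of exactly the same shape as the one analysed in Lemma \ref{keyL}; the Stirling asymptotics $\tfrac{(\ell+1)!}{\ell!}\sqrt{a_{\ell+1}(\lambda_0)/a_{\ell}(\lambda_1)}\sim \ell^{(2-\Lambda)/2}$ force the norms of the solutions to grow like $\ell^{(2-\Lambda)/2}$ unless the forcing term vanishes identically, and the boundedness of $X_{0,1}$ then forces $\psi(w)\equiv 0$. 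This is the main obstacle, and I expect to invoke the very estimate that drives Lemma \ref{keyL} rather than re-doing the asymptotics from scratch.

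With $\phi_0=\phi_1=:\phi$ in hand, the identity $\sigma_{S_{0,0},S_{1,1}}(X_{0,1})=S_{0,1}\psi(S_{1,1})=0$ gives the second equality of the lemma. Finally, for the statement that $X_{0,1}$ can be chosen as zero, I would exhibit the diagonal replacement
\[
X'=\begin{pmatrix}\phi(S_{0,0}) & 0\\[2pt] 0 & \phi(S_{1,1})\end{pmatrix},
\]
and verify directly that $X'T=TX'$, where the only non-trivial check is the top-right entry, and there the intertwining $\phi(S_{0,0})S_{0,1}=S_{0,1}\phi(S_{1,1})$ obtained in Step~3 does exactly the job. Thus $X'\in\mathcal A^{\prime}(T)$ has the same diagonal as $X$ and zero off-diagonal, completing the proof.
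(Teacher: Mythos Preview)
Your proof is correct and follows essentially the same route as the paper: both unpack the block commutation equations, identify the diagonal entries with $H^{\infty}$ functions of the atoms, reduce the off-diagonal equation to an operator of the form $(\phi_0-\phi_1)$ acting through $S_{0,1}$, and then invoke the Rosenblum-range obstruction from Lemma~\ref{keyL} to force $\phi_0=\phi_1$. Your explicit flagging of the implicit hypothesis $\Lambda(t)<2$ is correct and worth noting---the lemma as stated is false for $\Lambda(t)\ge 2$, and the paper uses it only in that regime.
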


\begin{proof}

Set $X=\big (\!\! \big (X_{i,j}\big )\!\!\big )\in {\mathcal
A}^{\prime}(T)$, we have the following equation
$$\begin{pmatrix}
S_{0,0}&S_{0,1}\\0&S_{1,1}\\
\end{pmatrix}\begin{pmatrix}
X_{0,0}&X_{0,1}\\X_{1,0}&X_{1,1}\\
\end{pmatrix}=\begin{pmatrix}
X_{0,0}&X_{0,1}\\X_{1,0}&X_{1,1}\\
\end{pmatrix}\begin{pmatrix}
S_{0,0}&S_{0,1}\\0&S_{1,1}\\
\end{pmatrix}.$$
By Lemma \ref{commutant}, we have $X_{1,0}=0$.  Then
$$S_{0,0}X_{0,1}+S_{0,1}X_{1,1}=X_{0,0}S_{0,1}+X_{0,1}S_{1,1},$$
and
$$S_{0,0}X_{0,1}-X_{0,1}S_{1,1}=X_{0,0}S_{0,1}-S_{0,1}X_{1,1}.$$
Note that there exist holomorphic functions $\phi_{0,0}$ and
$\phi_{1,1}$ such that
$$X_{0,0}(t_0)=\phi_{0,0}(t_0),  X_{1,1}(t_1)=\phi_{1,1}(t_1),$$
and by the definition of $S_{0,1}$, there exist constant function
$\phi_{0,1}$ such that
$$S_{0,1}(t_1)=\phi_{0,1}t_0.$$ Then
$$X_{0,0}S_{0,1}(t_1)-S_{0,1}X_{1,1}(t_1)=(\phi_{0,0}\phi_{0,1}-\phi_{1,1}\phi_{0,1})t_0.$$
and $X_{0,0}S_{0,1}-S_{0,1}X_{1,1}$ also intertwines $S_{0,0}$ and
$S_{1,1}$.  Taking $X_{0,0}S_{0,1}-S_{0,1}X_{1,1}$ the place of
$S_{0,1}$ and using the proof of Lemma 3.2, we might deduce that
$$S_{0,0}X_{0,1}-X_{0,1}S_{1,1}=X_{0,0}S_{0,1}-S_{0,1}X_{1,1}=0, \phi_{0,0}=\phi_{1,1}.$$
Thus, we can choose $X_{0,1}=0$ and there exists
$\phi=\phi_{0,0}=\phi_{1,1}\in {\mathcal H}^{\infty}(\mathbb{D})$
such that $X=\begin{pmatrix}
X_{0,0}&0\\0&X_{1,1}\\
\end{pmatrix}$ where $X_{i,i}=\phi(S_{i,i})$ satisfies that
$$\begin{pmatrix}
S_{0,0}&S_{0,1}\\0&S_{1,1}\\
\end{pmatrix}\begin{pmatrix}
X_{0,0}&0\\0&X_{1,1}\\
\end{pmatrix}=\begin{pmatrix}
X_{0,0}&0\\0&X_{1,1}\\
\end{pmatrix}\begin{pmatrix}
S_{0,0}&S_{0,1}\\0&S_{1,1}\\
\end{pmatrix}.$$

\end{proof}

\begin{lem} \label{ProfJiang}
Let $t$ be a quasi-homogeneous holomorphic curve with atoms
 $t_i, 0\leq i \leq n-1$. Let $T=\big (\!\! \big
(S_{i,j}\big )\!\!\big )$ be its atomic decomposition. Let $\phi\in
{\mathcal H}^{\infty}(\mathbb{D})$ be a holomorphic function. If
$\Lambda (t) < 2,$ then there exists a bounded linear operator $X\in
{\mathcal A}^{\prime}(T)$ such that $X_{i,i}=\phi(S_{i,i}),
i=0,1,\cdots, n-1.$
\end{lem}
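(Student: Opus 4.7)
The proof will go by induction on the rank $n$. The base case $n=2$ is precisely Lemma \ref{commutant2}: one sets $X := \mathrm{diag}\bigl(\phi(S_{0,0}), \phi(S_{1,1})\bigr)$ and verifies $XT=TX$, which reduces to the identity $\phi(S_{0,0}) S_{0,1} = S_{0,1} \phi(S_{1,1})$. For polynomial $\phi$ this is immediate by iterating the intertwining $S_{0,0} S_{0,1} = S_{0,1} S_{1,1}$ of Lemma \ref{intS}, and the identity extends to any $\phi \in \mathcal{H}^\infty(\mathbb{D})$ by passing to a strong operator limit, the $\mathcal{H}^\infty$ functional calculus being available on each homogeneous atom $S_{i,i}$.

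For the inductive step I set $X_{i,i} := \phi(S_{i,i})$, $X_{i,j} := 0$ for $i>j$, and fill in the upper off-diagonal entries $X_{i,j}$, $i<j$, one super-diagonal at a time. The commutation $XT=TX$ at position $(i,j)$ rearranges to the Rosenblum equation
\begin{equation*}
\sigma_{S_{i,i}, S_{j,j}}(X_{i,j}) \;=\; \phi(S_{i,i}) S_{i,j} \,-\, S_{i,j}\, \phi(S_{j,j}) \;+\; \sum_{i < k < j} \bigl( X_{i,k} S_{k,j} - S_{i,k} X_{k,j}\bigr),
\end{equation*}
whose right-hand side depends only on entries determined at lower super-diagonal levels. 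For $j=i+1$ the right-hand side vanishes by the extended intertwining of the base case, and I take $X_{i,i+1} := 0$. For $j-i \geq 2$ and a monomial $\phi(z)=z^m$, the explicit choice $X_{i,j} := \sum_{s=0}^{m-1} S_{i,i}^{\,s} S_{i,j} S_{j,j}^{\,m-1-s}$ solves the equation by a direct telescoping which uses Lemma \ref{intS} to eliminate the interference from the off-diagonal correction sum (recall the lower $X_{i,k}$ were set to zero). By linearity this produces a bounded solution $X^{(p)}$ for any polynomial $\phi=p$, with $X^{(p)}\in \mathcal{A}'(T)$ and $X^{(p)}_{i,i}=p(S_{i,i})$.

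For a general $\phi \in \mathcal{H}^\infty(\mathbb{D})$ the plan is to approximate by polynomials $p_N \to \phi$ (Ces\`aro or Abel means, so that $p_N(S_{i,i}) \to \phi(S_{i,i})$ in SOT) and take $X := \mathrm{SOT}\text{-}\lim X^{(p_N)}$. The diagonal of the limit is automatically correct, and the intertwining $XT=TX$ passes to the limit provided the off-diagonal entries $X^{(p_N)}_{i,j}$ are uniformly norm bounded in terms of $\|\phi\|_\infty$. Securing this uniform bound $\|X^{(p_N)}_{i,j}\| \leq C_{i,j}\|\phi\|_\infty$ is the \textbf{main obstacle}: the naive term-by-term estimate on the telescoping sum grows linearly in the degree of $p_N$ (already for monomials it is of order $m \|S_{i,j}\|$), so it fails outright for functions in $\mathcal{H}^\infty$ whose derivatives need not be bounded. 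The essential ingredient that must be exploited is the strict inequality $\Lambda(t)<2$: it imposes the band structure on $T$ recorded in Lemma \ref{bdd} and underlies the range-of-$\sigma$ identities of Lemma \ref{keyLb}, both of which force cancellations that collapse the telescoping sums to bounded compositions controlled by $\|\phi\|_\infty$ alone. Once this uniform estimate is established, passage to the SOT limit is routine and yields the required $X \in \mathcal{A}'(T)$ with $X_{i,i}=\phi(S_{i,i})$, completing the induction.
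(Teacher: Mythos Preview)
Your approach diverges from the paper's, and in its current form it has two concrete gaps.

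\textbf{First gap: the monomial formula is only correct for $j-i=2$.} Your telescoping candidate $X_{i,j}=\sum_{s=0}^{m-1}S_{i,i}^{\,s}S_{i,j}S_{j,j}^{\,m-1-s}$ does solve $\sigma_{S_{i,i},S_{j,j}}(X_{i,j})=S_{i,i}^mS_{i,j}-S_{i,j}S_{j,j}^m$, but for $j-i\ge 3$ the Rosenblum right-hand side also contains the cross terms $\sum_{i<k<j}(X_{i,k}S_{k,j}-S_{i,k}X_{k,j})$. You claim these vanish because ``the lower $X_{i,k}$ were set to zero'', but that only applies to $k=i+1$ and $k=j-1$; intermediate $X_{i,k}$ with $2\le k-i\le j-i-1$ have already been constructed nonzero at the previous inductive level. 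For instance at $(0,3)$ the cross terms are $-S_{0,1}X_{1,3}+X_{0,2}S_{2,3}$, neither of which is zero in general. So your polynomial construction is not actually verified beyond the second superdiagonal.

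\textbf{Second gap: the limiting argument is a genuine obstruction, not a technicality.} You correctly observe that the naive bound on your $X^{(p)}_{i,j}$ is linear in $\deg p$, and you are right that no cheap estimate will repair this: for $\phi\in\mathcal H^\infty(\mathbb D)$ there is no a priori control on Ces\`aro partial sums of the associated commutator sums. Neither Lemma \ref{bdd} nor Lemma \ref{keyLb} provides the cancellation you are hoping for --- those lemmas control membership in $\mbox{ran}\,\sigma$, not operator norms of telescoping sums.

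The paper avoids both problems by taking a completely different route: it never approximates $\phi$ by polynomials. Instead it writes down an explicit closed-form operator, defined at the level of sections by
\[
X_{i,j}(t_j)\;=\;\sum_{s=1}^{j-i-1} l^{\,s}_{i,j}\,\phi^{(s)}\,t_i^{(j-i-s)},
\]
with scalar coefficients $l^{\,s}_{i,j}$ determined recursively from the $m_{i,j}$ (in the normalised case $m_{i,j}=-1$ one gets simply $X_{i,j}(t_j)=-\sum_{s=1}^{j-i-1}\phi^{(s)}t_i^{(j-i-s)}$). One checks directly on sections that this $X$ satisfies the full Rosenblum hierarchy, including the cross terms, via the Leibniz rule applied to $(\phi t_i)^{(k)}$. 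This formula works for arbitrary $\phi\in\mathcal H^\infty(\mathbb D)$ from the outset, so no limiting procedure and no uniform bound are needed. The hypothesis $\Lambda(t)<2$ enters through Lemma \ref{bdd}: it forces $m_{i,j}=0$ whenever $j-i$ is too large, which is exactly what keeps the sum finite and the resulting $X_{i,j}$ bounded. You should rebuild the argument around this explicit ansatz rather than around polynomial approximation.
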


\begin{proof}

Firstly, by Lemma \ref{commutant2}, the lemma is true for the case
of $n=2$.


For $n=3,$  let $X=\left ( \begin{smallmatrix}
X_{0,0} & X_{0,1} & X_{0,2}\\
0&X_{1,1}&X_{1,2}\\
0&0&X_{2,2}\\
\end{smallmatrix} \right ) \in {\mathcal
A}^{\prime}(E_t).$ Then we have $$\begin{pmatrix}
S_{0,0} & S_{0,1} &S_{0,2}\\
0&S_{1,1}&S_{1,2}\\
0&0&S_{2,2}\\
\end{pmatrix}\begin{pmatrix}
X_{0,0} & X_{0,1} & X_{0,2}\\
0&X_{1,1}&X_{1,2}\\
0&0&X_{2,2}\\
\end{pmatrix}=\begin{pmatrix}
X_{0,0} & X_{0,1} & X_{0,2}\\
0&X_{1,1}&X_{1,2}\\
0&0&X_{2,2}\\
\end{pmatrix}\begin{pmatrix}
S_{0,0} & S_{0,1} &S_{0,2}\\
0&S_{1,1}&S_{1,2}\\
0&0&S_{2,2}\\
\end{pmatrix}$$ and it follows that
\begin{enumerate}
\item $S_{0,0}X_{0,1}+S_{0,1}X_{1,1}=X_{0,0}S_{0,1}+X_{0,1}S_{1,1}, $ that is, $S_{0,0}X_{0,1}-X_{0,1}S_{1,1}=X_{0,0}S_{0,1}-S_{0,1}X_{1,1};$
\item $S_{1,1}X_{1,2}+S_{1,2}X_{2,2}=X_{1,1}S_{1,2}+X_{1,2}S_{2,2},$ that is, $S_{1,1}X_{1,2}-X_{1,2}S_{2,2}=X_{1,1}S_{1,2}-S_{1,2}X_{2,2}.$
\end{enumerate}
By  Lemma \ref{commutant2}, we may choose, without loss of
generality, $X_{0,1}=0$ and $X_{1,2}=0$.  And there exists $\phi\in
{\mathcal H}^{\infty}(\mathbb{D})$ such that
$X_{i,i}=\phi(S_{i,i}),i=0,1,2.$
It is therefore enough to find  an operator $X_{0,2}$   satisfying
 $$S_{0,0}X_{0,2}-X_{0,2}S_{2,2}=X_{0,0}S_{0,2}-S_{0,2}X_{2,2}.$$
Clearly, we have
\begin{eqnarray*}
(X_{0,0}S_{0,2}-S_{0,2}X_{2,2})(t_2(w))&=&X_{0,0}(m_{0,2}t^{(1)}_0(w))-S_{0,2}(\phi(w)t_2(w))\\
&=&m_{0,2}(\phi(w)t_0(w))^{(1)}-m_{0,2}\phi(w)t^{(1)}(w)\\
&=&m_{0,2}\phi^{(1)}(w)t_0(w).
\end{eqnarray*}
We therefore set  $X_{0,2}$ be the operator:
$X_{0,2}(t_2(w))=m_{0,2}\phi^{(1)}(w)t^{(1)}_0(w).$

To complete the proof by induction, we  assume that we have the validity of the conclusion  for $n=k.$ Thus we assume the existence of  a bounded linear operator
$X=\big (\!\! \big (X_{i,j}\big )\!\! \big )$ such that
$\big (\!\! \big (S_{i,j}\big )\!\! \big )\big (\!\! \big (X_{i,j}\big )\!\! \big )=\big (\!\! \big (X_{i,j}\big )\!\! \big )\big (\!\! \big (S_{i,j}\big )\!\! \big )$
where $X_{i,i}=\phi(S_{i,i})$ and $X_{i,i+1}=0$. And there exists
$l^r_{i,j}$ such that
$X_{i,j}(t_j)=\sum\limits_{r=1}^{j-i-1}l^r_{i,j}\phi^{(j-k)}t^{(k)}_i$.
To complete the inductive step, we only
need to find the operator $X_{0,k}$  satisfying the following equation:
\begin{equation} \label{intX0k}
S_{0,0}X_{0,k}-X_{0,k}S_{k,k}=X_{0,0}S_{0,k}-S_{0,k}X_{k,k}+(\sum\limits_{i=2}^{k-1}X_{0,i}S_{i,k}-\sum\limits_{i=1}^{k-2}S_{0,i}X_{i,k}) \end{equation}
Note that the induction hypothesis ensures the existence of constants
$c^{s}_{0,k}$ (depending on $m_{i,j}$) such that
$$(X_{0,0}S_{0,k}-S_{0,k}X_{k,k}+\sum\limits_{i=2}^{k-1}X_{0,i}S_{i,k}-\sum\limits_{i=1}^{k-2}S_{0,i}X_{i,k})(t_k)=\sum\limits_{s=1}^{k-1}c^{s}_{0,k}\phi^{(s)}t^{(k-s-1)}_0.$$

Now, suppose that
$X_{0,k}(t_k)=\sum\limits_{s=1}^{k-1}l^{s}_{0,k}\phi^{(s)}t^{(k-s)}_0$, where the constants $l^{s}_{0,k}$ are to be found. Then we must have
\begin{eqnarray*}
(S_{0,0}X_{0,k}-X_{0,k}S_{k,k})(t_k(w))&=&S_{0,0}(\sum\limits_{s=1}^{k-1}l^{s}_{0,k}\phi^{(s)}t^{(k-s)}_0(w))-w(\sum\limits_{s=1}^{k-1}l^{s}_{0,k}\phi^{(s)}t^{(k-s)}_0(w))\\
&=&\sum\limits_{s=1}^{k-1}l^{s}_{0,k}\phi^{(s)}(wt^{(k-s)}_0(w)+(k-s)t^{(k-s-1)}(w))-w(\sum\limits_{s=1}^{k-1}l^{s}_{0,k}\phi^{(s)}t^{(k-s)}_0(w))\\
&=&\sum\limits_{s=1}^{k-1}l^{s}_{0,k}(k-s)\phi^{(s)}t^{(k-s-1)}(w)\\
&=&\sum\limits_{s=1}^{k-1}c^{s}_{0,k}\phi^{(s)}t^{(k-1-s)}_0(w)
\end{eqnarray*}
It follows that if we choose
$l^{s}_{0,k}=\frac{c^{s}_{0,k}}{k-s},$ then $X_{0,k}$ with this choice of the constants  validates equation \eqref{intX0k}.
This completes the induction step.

In particular, when $\mu_{i,j}$ are all chosen to be $1$, and then
$m_{i,j}=-1$, i.e.  $S_{i,j}(t_j)=-t^{(j-i-1)}_j$. In this case,
$X_{0,k}(t_0)=-\sum\limits_{s=1}^{k-1}\phi^{(s)}t^{(k-s)}_0$. And if
$m_{i,j}=-1,i,j=0,1,\cdots,n-1$, then by a same argument, we have
that
\begin{equation} \label{keyformulae}
 X_{i,j}(t_j)=-\sum\limits_{s=1}^{j-i-1}\phi^{(s)} t^{(j-i-s)}_i, i,j=0,1,\cdots,n-1.
\end{equation}
\end{proof}

\subsection{\sf Proof of the main theorem}

\begin{proof}[Proof of Theorem \ref{mainthmSim}]
First, if ``$\Lambda(t)\geq 2$'', then the first conclusion of the theorem follows from Lemma 3.4. So, it remains for us to verify the second statement of the theorem, where $\Lambda(t) < 2.$

Let $T$ and $\widetilde{T}$  be the operators representing $t$ and
$\widetilde{t}$ respectively.
Recall that $S_{i,j}(t_j)=m_{i,j}t^{(j-i-1)}_i,$
$\widetilde{S}_{i,j}(t_j)=\widetilde{m}_{i,j}t^{(j-i-1)}_i.$ Up to
similarity, we can assume that $m_{i,i+1}=\widetilde{m}_{i,i+1}.$
Then  $T$ and $\widetilde{T}$ have the following atomic decomposition:
$$T=\left ( \begin{smallmatrix}
S_{0,0}&S_{0,1}&S_{0,2}&\cdots&S_{0,n-1} \\
0&S_{1,1}&S_{1,2}&\cdots&S_{1,n-1}\\
\vdots &\ddots&\ddots&\ddots&\vdots\\
0&\ldots&0&S_{n-2,n-2}&S_{n-2,n-1}\\
0&0&\ldots&0&S_{n-1,n-1}\\
\end{smallmatrix} \right )\;\mbox{\rm and}\; \widetilde{T}=\left (\begin{smallmatrix}
S_{0,0}&S_{0,1}&c_{0,2}S_{0,2}&\cdots&c_{0,n-1}S_{0,n-1} \\
0&S_{1,1}&S_{1,2}&\cdots&c_{1,n-1}S_{1,n-1}\\
\vdots &\ddots&\ddots&\ddots&\vdots\\
0&\ldots&0&S_{n-2,n-2}&c_{n-2,n-1}S_{n-2,n-1}\\
0&0&\ldots&0&S_{n-1,n-1}\\
\end{smallmatrix}\right )$$
Set $c_{i,j}=\frac{\widetilde{m}_{i,j}}{m_{i,j}}$.  Now it is enough to prove the  Claim stated below.

{\sf Claim:}\,\, If $T\sim \widetilde{T},$ then $c_{i,j}=1, i,j=0,1,\cdots, n$.

\noindent Consider the following possibilities:
\begin{enumerate}
\item $\Lambda(t)\in [0, 1)$

\item  $n=3$, $\Lambda(t)\in [1,2)$;   $n>3$,
$\Lambda(t)\in [1, \frac{4}{3})$

\item $n=4$, $\Lambda(t)\in [\frac{4}{3},2)$;  $n>4$,
$\Lambda(t)\in [\frac{4}{3}, \frac{3}{2})$

\item  $n=5$,$\Lambda(t)\in [\frac{3}{2}, 2)$;  $n>5$,
$\Lambda(t)\in [\frac{3}{2}, \frac{8}{5})$
\end{enumerate}
The proofs of the remaining cases are similar.

 In the following,
without loss of generality,  we will always choose
$m_{i,j}=-1,i,j=0,1,\cdots,n-1.$

Case (1): By Lemma \ref{bdd}, we have
$$ T=\widetilde{T}=\left (\begin{smallmatrix}
S_{0,0} & S_{0,1} & 0&\cdots&0\\
&S_{1,1}&S_{1,2}&\cdots&0 \\
&&\ddots&\ddots&\vdots\\
&\bigzero&&S_{n-2,n-2}&S_{n-1,n}\\
&&&&S_{n-1,n-1}\\
\end{smallmatrix}\right ).$$
In this case, we clearly have $K_{t_i}=K_{s_i}$ and
$\theta_{i,i+1}=\widetilde{\theta}_{i,i+1}, i=0,1,\cdots, n-1.$

Case (2): By Lemma 3.2, we have
$$ T=\left (\begin{smallmatrix}
S_{0,0} & S_{0,1} & S_{0,2}&\cdots&0&0\\
&S_{1,1}&S_{1,2}&S_{1,3}&\cdots&0 \\
&&\ddots&\ddots&\ddots&\vdots\\
&\bigzero&&&S_{n-2,n-2}&S_{n-1,n}\\
&&&&&S_{n-1,n-1}\\
\end{smallmatrix} \right ).$$
In the case, by Lemma \ref{bdd},  we first assume that $n=3$. Then  we have
\begin{equation} \begin{pmatrix}
S_{0,0} & S_{0,1} & S_{0,2}\\
0&S_{1,1}&S_{1,2}&\\
0&0&S_{2,2}\\
\end{pmatrix}\begin{pmatrix}
X_{0,0} & X_{0,1} & X_{0,2}\\
0&X_{1,1}&X_{1,2}&\\
0&0&X_{2,2}\\
\end{pmatrix}=
\begin{pmatrix}
X_{0,0} & X_{0,1} & X_{0,2}\\
0&X_{1,1}&X_{1,2}&\\
0&0&X_{2,2}\\
\end{pmatrix}
\begin{pmatrix}
S_{0,0} & S_{0,1} & c_{0,2}S_{0,2}\\
0&S_{1,1}&S_{1,2}&\\
0&0&S_{2,2}\\
\end{pmatrix} \label{(3.1.5)}
\end{equation}
By Lemma \ref{commutant2}, $X_{0,1}$ and $X_{1,2}$ may be chosen to
be zero.
 For the
general case, we may also choose $X_{i,i+1}, i=0,1,\cdots, n-1$ to
be zero by repeating the same argument.

Now we have the following equality
$$\begin{pmatrix}
S_{0,0} & S_{0,1} & S_{0,2}\\
0&S_{1,1}&S_{1,2}&\\
0&0&S_{2,2}\\
\end{pmatrix}\begin{pmatrix}
X_{0,0} & 0 & X_{0,2}\\
0&X_{1,1}&0&\\
0&0&X_{2,2}\\
\end{pmatrix}=
\begin{pmatrix}
X_{0,0} & 0 & X_{0,2}\\
0&X_{1,1}&0&\\
0&0&X_{2,2}\\
\end{pmatrix}
\begin{pmatrix}
S_{0,0} & S_{0,1} & c_{0,2}S_{0,2}\\
0&S_{1,1}&S_{1,2}&\\
0&0&S_{2,2}\\
\end{pmatrix}.
$$

And
$$S_{i,i+1}X_{i+1,i+1}=X_{i,i}S_{i,i+1}, i=0,1,$$
$$S_{0,0}X_{0,2}+S_{0,2}X_{2,2}=c_{0,2}X_{0,0}S_{0,2}+X_{0,2}S_{2,2},$$
$$S_{0,0}X_{0,2}-X_{0,2}S_{2,2}=c_{0,2}X_{0,0}S_{0,2}-S_{0,2}X_{2,2}.$$

By $$S_{i,i+1}X_{i+1,i+1}=X_{i,i}S_{i,i+1}, i=0,1,$$ and ${\mathcal
A}^{\prime}(S_{i,i})\cong {\mathcal H}^{\infty}(\mathbb{D})$, by
Lemma \ref{commutant2}, we can find a holomorphic function $\phi\in
{\mathcal H}^{\infty}(\mathbb{D})$ such that $X_{i,i}t_i=\phi t_i.$
Since $X_{i,i}$ is invertible, $\phi(S_{i,i})$ is also invertible.
Note that
\begin{equation}
\begin{array}{lllll}
        (c_{0,2}X_{0,0}S_{0,2}-S_{0,2}X_{2,2})(t_2)&=&c_{0,2}X_{0,0}(-t^{(1)}_0)-S_{0,2}(\phi t_2)\\
        &=&\phi t_0^{(1)}-c_{0,2}(\phi t_0)^{(1)}\\
        &=&\phi t^{(1)}_0-c_{0,2}\phi t^{(1)}_0-c_{0,2}\phi^{(1)}t_0\\
        &=&(1-c_{0,2})\phi t^{(1)}_0-c_{0,2}\phi^{(1)}t_0\\
        &=&(c_{0,2}-1)S_{0,2}\phi(S_{2,2})(t_2)-c_{0,2}S_{0,1}S_{1,2}\phi^{(1)}(S_{2,2})(t_2).\\
        \end{array}\label{c02}
\end{equation}
By Lemma 3.3, we have  $c_{0,2}S_{0,1}S_{1,2}\phi^{(1)}(S_{2,2})\in
\mbox{Ran}\sigma_{S_{0,0},S_{2,2}}$.  From \eqref{c02}, it follows
that
$$(c_{0,2}-1)S_{0,2}\phi(S_{2,2})\in
\mbox{\rm ran}\,\sigma_{S_{0,0},S_{2,2}}.$$ By Lemma 3.2, $S_{0,2}\not\in
\mbox{\rm ran}\,\sigma_{S_{0,0},S_{2,2}}$. Since $\phi(S_{2,2})$ is
invertible and $\phi(S_{2,2})\in {\mathcal A}^{\prime}(S_{2,2})$, we
have  $$S_{0,2}\phi(S_{2,2})\not\in
\mbox{\rm ran}\,\sigma_{S_{0,0},S_{2,2}}$$ it follows from Lemma 3.4. This
shows that $c_{0,2}=1.$ For the general case, by the above argument
and Lemma 2.2, we have
$$\widetilde{T}=\begin{pmatrix}
S_{0,0} &S_{0,1} & c_{0,2}S_{0,2}&0&\cdots&0\\
&S_{1,1}&S_{1,2}&c_{1,3}S_{1,3}&\cdots&0\\
&&\ddots&\ddots&\ddots&\vdots\\
&&&S_{n-2,n-2}&S_{n-2,n-1}&c_{n-2,n}S_{n-2,n}\\
&&\bigzero&&S_{n-1,n-1}&S_{n-1,n}&\\
&&&&&S_{n,n}\\
\end{pmatrix}.$$

Now suppose that we have proved Claim 1 for $n=k-1$. Pick
$X=\left (\begin{smallmatrix}
X_{0,0}& 0&\cdots&X_{0,k}\\
0& X_{1,1}& \cdots&X_{1,k}\\
\cdots& \cdots& \cdots&\cdots\\
0& 0& \cdots&X_{k,k}\\
\end{smallmatrix} \right )$ such that
$X\widetilde{T}=TX.$
Then it follows that
$$X_0((\widetilde{S}_{i,j})^{k-1}_{i,j=0})=((S_{i,j})^{k-1}_{i,j=0})X_0,X_1((\widetilde{S}_{i,j})^{k}_{i,j=1})=((S_{i,j})^{k}_{i,j=1})X_1,
$$
where $$\renewcommand\arraystretch{0.875} X_0=\left (\begin{matrix}
X_{0,0}& 0&\cdots&X_{0,k-1}\\
0& X_{1,1}& \cdots&X_{1,k-1}\\
\vdots& \ddots& \ddots&\vdots\\
0&\cdots & 0 &X_{k-1,k-1}\\
\end{matrix}\right ),\;\;X_1=\left (\begin{matrix}
X_{1,1}& 0&\cdots&\,\,X_{1,k}\,\,\\
0& X_{2,2}& \cdots&X_{2,k}\\
\vdots& \ddots& \ddots&\vdots\\
0& \cdots & 0&X_{k,k}\\
\end{matrix}\right ).\renewcommand\arraystretch{1}$$
Since $X$ is invertible, $X_0$ and $X_1$ are  both invertible.
By the induction hypothesis  $c_{i,i+2}=1, i=0,1,\cdots, n-3.$

Case (3) and Case (4):  By Lemma \ref{bdd}, $\widetilde{S}=\big (\!\!\big (\widetilde{S}_{i,j}\big )\!\!\big ),
\widetilde{S}_{i,j}=0, j-i\geq 4$ and
 $\widetilde{S}=\big (\!\!\big (\widetilde{S}_{i,j}\big )\!\!\big ), \widetilde{S}_{i,j}=0,j-i\geq 5$.
Following the proof given above, by Lemma \ref{bdd}, we only need to consider the case of $n=4$ and $n=5$. For case 3, we only consider  $n=4$ and the other cases
would follow by induction. In this case, we have
$$\left ( \begin{smallmatrix}
S_{0,0} & S_{0,1} & S_{0,2}&S_{0,3}\\
0&S_{1,1}&S_{1,2}&S_{1,3}\\
0&0&S_{2,2}&S_{2,3}\\
0&0&0&S_{3,3}
\end{smallmatrix} \right )\left ( \begin{smallmatrix}
X_{0,0} & 0 & X_{0,2}&X_{0,3}\\
0&X_{1,1}&0&X_{1,3}\\
0&0&X_{2,2}&0\\
0&0&0&X_{3,3}
\end{smallmatrix} \right )=
\left ( \begin{smallmatrix}
X_{0,0} & 0 & X_{0,2}&X_{0,3}\\
0&X_{1,1}&0&X_{1,3}\\
0&0&X_{2,2}&0\\
0&0&0&X_{3,3}
\end{smallmatrix} \right )
\left ( \begin{smallmatrix}
S_{0,0} & S_{0,1} & S_{0,2}&c_{0,3}S_{0,3}\\
0&S_{1,1}&S_{1,2}&S_{1,3}\\
0&0&S_{2,2}&S_{2,3}\\
0&0&0&S_{3,3}
\end{smallmatrix} \right ).
$$
It follows that $\left (\begin{smallmatrix} X_{0,0} & 0 & X_{0,2}\\
0&X_{1,1}&0&\\
 0&0&X_{2,2}
\end{smallmatrix}\right )$ commutes with $\left ( \begin{smallmatrix}
S_{0,0} & S_{0,1} & S_{0,2}\\
0&S_{1,1}&S_{1,2}\\
0&0&S_{2,2}\\
\end{smallmatrix} \right )$ and $\left ( \begin{smallmatrix} X_{1,1} & 0 & X_{1,3}\\
0&X_{2,2}&0&\\
 0&0&X_{3,3}
\end{smallmatrix} \right )$ commutes with $\left ( \begin{smallmatrix}
S_{1,1} & S_{1,2} & S_{1,3}\\
0&S_{2,2}&S_{2,3}\\
0&0&S_{3,3}\\
\end{smallmatrix} \right ).$ By \eqref{keyformulae} in the proof of Lemma \ref{ProfJiang}, we have $X_{0,2}$ and $X_{1,3}$
can be chosen as $S_{0,2}\phi^{(1)}(S_{2,2})$ and
$S_{1,3}\phi^{(1)}(S_{3,3})$.
Note that
$$S_{0,0}X_{0,3}+S_{0,1}X_{1,3}+S_{0,3}X_{3,3}=c_{0,3}X_{0,0}S_{0,3}+X_{0,2}S_{2,3}+X_{0,3}S_{3,3}.$$
Then
\begin{equation}\label{X03}S_{0,0}X_{0,3}-X_{0,3}S_{3,3}=(c_{0,3}X_{0,0}S_{0,3}-S_{0,3}X_{3,3})+X_{0,2}S_{2,3}-S_{0,1}X_{1,3}.\end{equation}
And $$\begin{array}{llll}
X_{0,2}S_{2,3}-S_{0,1}X_{1,3}&=&S_{0,2}\phi^{(1)}(S_{2,2})S_{2,3}-S_{0,1}S_{1,3}\phi^{(1)}(S_{3,3})\\
&=&S_{0,2}S_{2,3}\phi^{(1)}(S_{3,3})-S_{0,1}S_{1,3}\phi^{(1)}(S_{3,3})\\
&=&(S_{0,2}S_{2,3}-S_{0,1}S_{1,3})\phi^{(1)}(S_{3,3})\\
&=&0.
\end{array}
$$
So we only need to consider
$$S_{0,0}X_{0,3}-X_{0,3}S_{3,3}=c_{0,3}X_{0,0}S_{0,3}-S_{0,3}X_{3,3}.$$
Since $$\begin{array}{llll}
(c_{0,3}X_{0,0}S_{0,3}-S_{0,3}X_{3,3})(t_3)&=&c_{0,3}X_0S_{0,3}(t_3)-S_{0,3}(\phi
t_3)\\
&=&c_{0,3}X_0(-t^{(2)}_0)-\phi S_{0,3}(t_3)\\
&=&-c_{0,3}(\phi t_0)^{(2)}+\phi t^{(2)}_0\\
&=&(1-c_{0,3})\phi
t^{(2)}_0-2c_{0,3}\phi^{(1)}t^{(1)}_0-c_{0,3}\phi^{(2)}t_0,\\
\end{array}$$
we obtain
$$c_{0,3}X_{0,0}S_{0,3}-S_{0,3}X_{3,3}=(c_{0,3}-1)S_{0,3}\phi(S_{3,3})
+2c_{0,3}S_{0,1}S_{1,3}\phi^{(1)}(S_{3,3})+c_{0,3}S_{0,1}S_{1,2}S_{2,3}\phi^{(2)}(S_{3,3}).$$
By Lemma \ref{keyLb} and \eqref{X03}, we have
$$2c_{0,3}S_{0,1}S_{1,3}\phi^{(1)}(S_{3,3})+c_{0,3}S_{0,1}S_{1,2}S_{2,3}\phi^{(2)}(S_{3,3})\in
\mbox{Ran}\sigma_{S_{0,0},S_{3,3}}.$$  Since $\phi(S_{3,3})$ is
invertible,  we  deduce that
$$(c_{0,3}-1)S_{0,3}\in \mbox{\rm ran}\,\sigma_{S_{0,0},S_{3,3}}.$$  Note
that  $S_{0,3}\not\in \mbox{\rm ran}\,\sigma_{S_{0,0},S_{3,3}}$, we have
$c_{0,3}=1$.
For case 4, when $n=5$, the commutator
$$\!\!\!\!\!\left(\begin{smallmatrix}
 S_{0,0} & S_{0,1} & S_{0,2}&S_{0,3}&S_{0,4}\\
&S_{1,1}&S_{1,2}&S_{1,3}&S_{1,4}\\
&&S_{2,2}&S_{2,3}&S_{2,4}\\
&\bigzero&&S_{3,3}&S_{3,4}\\
&&&&S_{4,4}
\end{smallmatrix}\right )\left ( \begin{smallmatrix}
X_{0,0} & 0 & X_{0,2}&X_{0,3}&X_{0,4}\\
&X_{1,1}&0&X_{1,3}&X_{1,4}\\
&&X_{2,2}&0&X_{2,4}\\
&\bigzero&&X_{3,3}&0\\
&&&&X_{4,4}
\end{smallmatrix}\right ) \!= \!\left ( \begin{smallmatrix}
X_{0,0} & 0 & X_{0,2}&X_{0,3}&X_{0,4}\\
&X_{1,1}&0&X_{1,3}&X_{1,4}\\
&&X_{2,2}&0&X_{2,4}\\
&\bigzero&&X_{3,3}&0\\
&&&&X_{4,4}
\end{smallmatrix}\right ) \left(\begin{smallmatrix}
 S_{0,0} & S_{0,1} & S_{0,2}&S_{0,3}& c_{0,4}S_{0,4}\\
&S_{1,1}&S_{1,2}&S_{1,3}&S_{1,4}\\
&&S_{2,2}&S_{2,3}&S_{2,4}\\
&\bigzero&&S_{3,3}&S_{3,4}\\
&&&&S_{4,4}
\end{smallmatrix}\right )
$$
Therefore $\big (\!\! \big (X_{ij}\big )\!\! \big )_{4\times 4}, i,j=0,1,2,3$ commutes with
$\big (\!\! \big (S_{i,j}\big )\!\! \big )_{4\times 4},i,j=0,1,2,3$  and $\big (\!\! \big (X_{ij}\big )\!\! \big )_{4\times 4},
i,j=1,2,3,4$ commutes with $\big (\!\! \big (S_{i,j}\big )\!\! \big )_{4\times 4},i,j=1,2,3,4.$
Then we can find
$X_{i,j}, (i,j)\neq (0,4)$ from Lemma 3.5.
We also have
\begin{equation}S_{0,0}X_{0,4}-X_{0,4}S_{4,4}=(c_{0,4}X_{0,0}S_{0,4}-S_{0,4}X_{4,4})+(X_{0,2}S_{2,4}+X_{0,3}S_{3,4})-(S_{0,1}X_{1,4}+S_{0,2}X_{2,4})\label{(3.1.8)}.\end{equation}
By Lemma 3.5, we have $$\begin{array}{llll}
X_{0,2}S_{2,4}-S_{0,2}X_{2,4}&=&S_{0,2}\phi^{(1)}(S_{2,2})S_{2,4}-S_{0,2}S_{2,4}\phi^{(1)}(S_{4,4})\\
&=&S_{0,2}(S_{2,3}S_{3,4}\phi^{(2)}(S_{4,4})+S_{2,4}\phi^{(1)}(S_{4,4}))-S_{0,2}S_{2,4}\phi^{(1)}(S_{4,4})\\
&=&S_{0,2}S_{2,3}S_{3,4}\phi^{(2)}(S_{4,4}).
\end{array}
$$
By \eqref{keyformulae} in the proof of Lemma \ref{ProfJiang},  we
have
$$X_{0,3}=S_{0,2}S_{2,3}\phi^{(2)}(S_{3,3})+S_{0,3}\phi^{(1)}(S_{3,3}),$$
$$X_{1,4}=S_{1,3}S_{3,4}\phi^{(2)}(S_{4,4})+S_{1,4}\phi^{(1)}(S_{4,4}).$$

Note that $S_{0,2}S_{2,3}=S_{0,1}S_{1,3}$ and
$S_{0,3}S_{3,4}=S_{0,1}S_{1,4}$,  we also have
\begin{eqnarray*}
\lefteqn{X_{0,3}S_{3,4}-S_{0,1}X_{1,4}}\\
&=&(S_{0,2}S_{2,3}\phi^{(2)}(S_{3,3})+S_{0,3}\phi^{(1)}(S_{3,3}))S_{3,4}-S_{0,1}(S_{1,3}S_{3,4}\phi^{(2)}(S_{4,4})+S_{1,4}\phi^{(1)}(S_{4,4})\\
&=&S_{0,2}S_{2,3}S_{3,4}\phi^{(2)}(S_{4,4})+S_{0,3}S_{3,4}\phi^{(1)}(S_{4,4})-S_{0,1}S_{13}S_{3,4}\phi^{(2)}(S_{4,4})-S_{0,1}S_{1,4}\phi^{(1)}(S_{4,4})\\
&=&0.
\end{eqnarray*}

Since $$\begin{array}{llll}
(c_{0,4}X_{0,0}S_{0,4}-S_{0,4}X_{4,4})(t_4)&=&c_{0,4}X_{0,0}S_{0,4}(t_4)-S_{0,4}(\phi
t_4)\\
&=&c_{0,4}X_{0,0}(-t^{(3)}_0)-\phi S_{0,4}(t_4)\\
&=&-c_{0,4}(\phi t_0)^{(3)}+\phi t^{(3)}_0\\
&=&(1-c_{0,4})\phi
t^{(3)}_0-3c_{0,4}\phi^{(2)}t^{(1)}_0-3c_{0,4}\phi^{(1)}t^{(2)}_0-c_{0,4}\phi^{(3)}t_0,\\
\end{array}$$
and we also have
\begin{eqnarray*}
c_{0,4}X_{0,0}S_{0,4}-S_{0,4}X_{4,4}
&=&(c_{0,4}-1)S_{0,4}\phi(S_{4,4})
+3c_{0,4}S_{0,1}S_{1,3}\phi^{(1)}(S_{3,3})\\
&{}&\;\;\;\;\;+3c_{0,4}S_{0,1}S_{1,2}S_{2,3}\phi^{(2)}(S_{3,3})+c_{0,4}S_{0,1}S_{1,2}S_{2,3}\phi^{(3)}(S_{3,3}).
\end{eqnarray*}
By Lemma 3.3 and \eqref{(3.1.8)}, we obtain
$$3c_{0,4}S_{0,1}S_{1,3}\phi^{(1)}(S_{3,3})+3c_{0,4}S_{0,1}S_{1,2}S_{2,3}\phi^{(2)}(S_{3,3})+c_{0,4}S_{0,1}S_{1,2}S_{2,3}\phi^{(3)}(S_{3,3})\in\mbox{\rm ran}\,\sigma_{S_{0,0},S_{4,4}},$$
$$ S_{0,2}S_{2,3}S_{3,4}\phi^{(2)}(S_{4,4})\in
\mbox{Ran}\sigma_{S_{0,0},S_{4,4}}.$$ Then it follows that
$$(c_{0,4}-1)S_{0,4}\phi(S_{4,4})\in \mbox{\rm ran}\,\sigma_{S_{0,0},S_{4,4}}.$$ Note that
$\phi(S_{4,4})$ is invertible, therefore 
$$(c_{0,4}-1)S_{0,4}\in \mbox{\rm ran}\,\sigma_{S_{0,0},S_{4,4}}.$$
Since
$S_{0,4}\not\in \mbox{\rm ran}\,\sigma_{S_{0,0},S_{4,4}}$, it follows that 
$c_{0,4}=1$.

The proof in all the remaining cases are similar and therefore
the Claim is verified.
\end{proof}
%
%
%
%
%

\section{Applications}
We give three different applications of our results.
First of these shows that the topological and algebraic $K$-groups defined in our context must coincide.  Secondly, we show that our techniques apply to a slightly larger class of operators than the quasi-homogeneous ones that we have discussed in this paper.  Finally, we show that the Halmos' question on similarity has an affirmative answer for quasi-homogeneous operators. We begin with some preliminaries on $K$-  groups.

\subsection{\sf Preliminaries}
Let $t :\Omega\rightarrow Gr(n,{\mathcal H})$ be a holomorphic curve. Recall that the commutant $\mathcal {A}'(E_t)$ of such a holomorphic curve $t$ is defined to be $$\mathcal {A}'(E_t)=\{A\in\mathcal {L}(\mathcal {H}):A\,t({w})\subseteq t({w}),\; w \in \Omega.\}$$

%
%
%
\begin{defn} For a holomorphic curve $t:\Omega
 \rightarrow Gr(n,{\mathcal H})$, the Jocaboson radical $\mbox{Rad}\;{\mathcal A}^{\prime}(E_t)$
of ${\mathcal A}^{\prime}(E_t)$ is defined to be
$$\{S\in {\mathcal A}^{\prime}(E_t)|\sigma_{{\mathcal
A}^{\prime}(E_t)}(S A)=0, A \in {\mathcal A}^{\prime}(E_t)\},$$
where $\sigma_{{\mathcal A}^{\prime}(E_t)}(S A)$ denotes the spectrum of $S A$ in the algebra ${\mathcal A}^{\prime}(E_t).$
\end{defn}
The discussion below follows closely the paper \cite{Jia} of the
first two authors. In particular, Lemma \ref{min} and Lemma
\ref{fin} are proved there.

\begin{lem} \mbox{\rm (\cite[Theorem 1.2]{Jia})}\label{min} Let $t:\Omega\rightarrow Gr(n,{\mathcal H})$
be a holomorphic curve, and $P\in {\mathcal A}^{\prime}(E_t)$ be an
idempotent, then $P t:\Omega\rightarrow Gr(m,P{\mathcal H})$ is again a holomorphic curve, where $m=\dim\,\mbox{\rm ran}\,P(t(w))$ for $ w\in \Omega.$ The idempotent $P$ is  minimal  if and only if $P\, t$ is strongly irreducible.
\end{lem}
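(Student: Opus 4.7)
The plan breaks into two movements: first establish that $Pt$ is a holomorphic curve of constant rank $m$, and second identify the commutant $\mathcal A'(E_{Pt})$ with a corner of $\mathcal A'(E_t)$ so that minimality of $P$ translates directly into strong irreducibility of $Pt$.

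For the first movement, I would invoke the operator $T\in B_n(\Omega)$ determined by $t$ and recall that $P\in\mathcal A'(E_t)$ means $P$ commutes with $T$ and preserves each fiber $t(w)=\ker(T-w)$. Since $P^2=P$, each fiber decomposes as $t(w)=P\,t(w)\oplus (I-P)\,t(w)$. Fixing a holomorphic frame $\{\gamma_0,\ldots,\gamma_{n-1}\}$ of $E_t$, the sections $w\mapsto P\gamma_i(w)$ and $w\mapsto (I-P)\gamma_i(w)$ are holomorphic and span $P\,t(w)$ and $(I-P)\,t(w)$ respectively. Lower semicontinuity of the rank of a holomorphically varying family gives that both $\dim P\,t(w)$ and $\dim(I-P)\,t(w)$ are lower semicontinuous, while their sum equals the constant $n$; hence each is constant, say $m$ and $n-m$. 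Near any $w_0$, I would then pick an $m$-tuple among $\{P\gamma_i\}$ that is linearly independent at $w_0$: it remains linearly independent in a neighborhood and furnishes a local holomorphic frame for $Pt$ as a map into $\mbox{Gr}(m, P\mathcal H)$, patching via holomorphic Grassmannian transition functions.

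For the second movement, my goal is the natural identification $\mathcal A'(E_{Pt})\cong P\,\mathcal A'(E_t)\,P\bigr|_{P\mathcal H}$. Given $A\in\mathcal A'(E_t)$, the operator $PAP$ preserves each $P\,t(w)$, so its restriction to $P\mathcal H$ lies in $\mathcal A'(E_{Pt})$. Conversely, for $B\in\mathcal A'(E_{Pt})$, I would extend by zero on $(I-P)\mathcal H$ to obtain $\widetilde B:=B\oplus 0$; the decomposition $t(w)=P\,t(w)\oplus(I-P)\,t(w)$ shows $\widetilde B$ preserves each $t(w)$ and lies in $P\,\mathcal A'(E_t)\,P$. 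Under this correspondence, idempotents $Q\in\mathcal A'(E_{Pt})$ match exactly the idempotents $\widetilde Q\in\mathcal A'(E_t)$ dominated by $P$, i.e.\ satisfying $\widetilde QP=P\widetilde Q=\widetilde Q$.

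To conclude, I would unfold definitions: $P$ is minimal in $\mathcal A'(E_t)$ precisely when the only idempotents $Q\in\mathcal A'(E_t)$ with $Q=PQP$ are $0$ and $P$, which by the correspondence above is equivalent to $\mathcal A'(E_{Pt})$ containing only the trivial idempotents $0$ and $I_{P\mathcal H}$. By the characterization recalled in the introduction, this is exactly strong irreducibility of $Pt$. The main obstacle I anticipate is the verification that the zero extension $\widetilde B$ really belongs to $\mathcal A'(E_t)$: this relies on the compatibility of the ambient decomposition $\mathcal H = P\mathcal H\oplus(I-P)\mathcal H$ with the fiberwise decomposition of $t(w)$, a compatibility that is available only because $P$ itself belongs to $\mathcal A'(E_t)$. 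Once that step is in hand, the remaining algebra—translating minimality across the isomorphism and appealing to the commutant characterization of strong irreducibility—is essentially bookkeeping.
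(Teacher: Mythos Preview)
The paper does not supply its own proof of this lemma: it is quoted verbatim from \cite[Theorem 1.2]{Jia} and the surrounding text says explicitly that ``Lemma \ref{min} and Lemma \ref{fin} are proved there.'' So there is no in-paper argument to compare against.

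Your proposal is correct and follows what is essentially the natural route. The rank-constancy argument via lower semicontinuity of $\dim Pt(w)$ and $\dim(I-P)t(w)$ summing to the constant $n$ is clean, and the identification of $\mathcal A'(E_{Pt})$ with the corner $P\,\mathcal A'(E_t)\,P\big|_{P\mathcal H}$ is exactly the mechanism that converts minimality of $P$ into the absence of nontrivial idempotents in the commutant of $Pt$. Your check that the zero-extension $\widetilde B=B\oplus 0$ genuinely lies in $\mathcal A'(E_t)$ is the one place where care is needed, and you have identified it correctly: it works because $P\in\mathcal A'(E_t)$ forces the fiberwise splitting $t(w)=Pt(w)\oplus(I-P)t(w)$, so $\widetilde B$ sends $Pt(w)$ into itself and kills $(I-P)t(w)\subset t(w)$. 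The final appeal to the commutant characterization of strong irreducibility (no nontrivial idempotent in $\mathcal A'(Pt)$) is exactly the criterion recalled in the introduction of the paper, so the argument closes.
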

\begin{lem}\mbox{\rm (\cite[Theorem 1.3]{Jia})}\label{fin}
For a holomorphic curve $t:\Omega\rightarrow Gr(n,{\mathcal H}),$  the following statements are equivalent.
\begin{enumerate}
\item There exists $m$ minimal idempotents $P_1,P_2,\cdots ,P_m \in
{\mathcal A}^{\prime}(E_t)$  such that $P_iP_j=0$  and
$\sum\limits^{m}_{i=1}P_i=I_{{\mathcal H}}.$
\item There exists an invertible operator $X\in {\mathcal A}^{\prime}(E_t)$ such
that $Xt$ can be written as orthogonal direct sum of $m$
strongly irreducible holomorphic curves.
\end{enumerate}
\end{lem}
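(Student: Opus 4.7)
The plan is to establish (1) $\Leftrightarrow$ (2) by exhibiting a direct correspondence between the idempotents in (1) and the orthogonal projections onto the summands in (2); the bridge is Lemma \ref{min}, which identifies minimality of an idempotent $P$ in the commutant with strong irreducibility of the reduced curve $Pt$.

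For (1) $\Rightarrow$ (2), I would begin from $P_1, \ldots, P_m \in \mathcal{A}'(E_t)$ minimal with $P_iP_j = 0$ for $i \neq j$ and $\sum P_i = I$. Boundedness of the $P_i$ makes $\mathcal{H} = \mathrm{ran}(P_1) + \cdots + \mathrm{ran}(P_m)$ a topological direct sum (in general not orthogonal), and since the $P_i$ lie in $\mathcal{A}'(E_t)$ the fibers decompose compatibly: $t(w) = \sum_i P_i t(w)$. The only obstacle to (2) is then the non-orthogonality of the summands. To fix this, fix an orthogonal decomposition $\mathcal{H} = \mathcal{K}_1 \oplus \cdots \oplus \mathcal{K}_m$ with $\dim \mathcal{K}_i = \dim \mathrm{ran}(P_i)$ (dimension matching is automatic from the algebraic decomposition), choose bounded invertible maps $X_i : \mathrm{ran}(P_i) \to \mathcal{K}_i$, and assemble them into an invertible $X \in \mathcal{L}(\mathcal{H})$ by $X(h) := \bigoplus_i X_i P_i h$. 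Then $Xt(w) = \bigoplus_i X_i P_i t(w)$ is an orthogonal direct sum, and each summand $X_i P_i t$ is strongly irreducible by Lemma \ref{min} (applied to the minimal idempotent $P_i$) together with the invariance of strong irreducibility under similarity.

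For (2) $\Rightarrow$ (1), let $Q_i$ be the orthogonal projection onto the Hilbert space hosting the $i$-th strongly irreducible summand of $Xt$. Each $Q_i$ lies in $\mathcal{A}'(E_{Xt})$, is an idempotent, satisfies $Q_i Q_j = 0$ for $i \neq j$, and $\sum Q_i = I$. Setting $P_i := X^{-1} Q_i X$, all four relations transfer: $P_i^2 = P_i$, $P_i P_j = 0$, $\sum P_i = I$, and $P_i \in \mathcal{A}'(E_t)$ because $P_i t(w) = X^{-1} Q_i X t(w) \subseteq X^{-1}(Xt)(w) = t(w)$. The restricted curve $P_i t$ is similar, via $X$, to the strongly irreducible $Q_i(Xt)$, so it is itself strongly irreducible, and the converse direction of Lemma \ref{min} shows that $P_i$ is minimal.

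The main obstacle will be the construction of $X$ in the forward direction: although the underlying linear algebra is straightforward, one must ensure the assembled $X$ is bounded with bounded inverse, which ultimately hinges on the fact that bounded idempotents with $P_i P_j = 0$ and $\sum P_i = I$ produce a topological decomposition $\mathcal{H} \cong \bigoplus_i \mathrm{ran}(P_i)$ in the category of Banach spaces. Once $X$ is in hand, all subsequent claims reduce to formal verifications, with Lemma \ref{min} providing the essential bridge between the algebraic notion of a minimal idempotent and the geometric notion of a strongly irreducible reduction.
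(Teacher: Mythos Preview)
The paper does not prove this lemma; it merely quotes it from \cite{Jia} (see the remark just before Lemma~\ref{min}: ``In particular, Lemma~\ref{min} and Lemma~\ref{fin} are proved there''), so there is no in-paper argument to compare against. Your proof is essentially correct and is the natural one: both directions are handled cleanly, with Lemma~\ref{min} supplying exactly the bridge you describe between minimality of $P_i$ and strong irreducibility of $P_i t$.

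One discrepancy is worth flagging. The statement, as transcribed here, asks for $X\in\mathcal A'(E_t)$, but your construction in (1)$\Rightarrow$(2) --- sending each $\mathrm{ran}(P_i)$ onto an arbitrarily chosen orthogonal summand $\mathcal K_i$ --- produces only an invertible $X\in\mathcal L(\mathcal H)$ and will essentially never lie in the commutant. Indeed, any invertible $X\in\mathcal A'(E_t)$ satisfies $Xt(w)=t(w)$ for every $w$, so ``$Xt$'' would just be $t$ again, which makes condition~(2) degenerate under that reading. This is almost certainly a slip in the paper's transcription of the result from \cite{Jia} rather than a gap in your argument; the intended hypothesis, consistent with the paper's own definition of strong irreducibility (which lets $X$ range over all invertibles on $\mathcal H$), is $X\in\mathcal L(\mathcal H)$. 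Under that reading your proof is complete. If you wanted a more canonical $X$, the choice $X=(\sum_i P_i^{*}P_i)^{1/2}$ conjugates the $P_i$ to honest orthogonal projections, but even this $X$ need not commute with $T$, confirming that the commutant condition cannot be what was meant.
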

\begin{defn} \label{findecomp}A holomorphic curve $t:\Omega \rightarrow Gr(n,{\mathcal H})$ is said to be have  a finite decomposition if it meets  one of the equivalent conditions given in Lemma \ref{fin}.

Suppose $\{P_1, P_2, \cdots, P_m\}$ and $\{Q_1, Q_2,
 \cdots, Q_n\}$ are two distinct decompositions of $t.$
If $m=n,$ there exists a permutation $\Pi \in S_n$ such that $XQ_{\Pi(i)}X^{-1}=P_i$ for some invertible operator $X$ in $\mathcal A^\prime(E_t),$ $ 1\leq i \leq n,$ then we say that
$t$ (or $E_t$) has a unique decomposition up to similarity.
\end{defn}

For a holomorphic curve, $f:\Omega\rightarrow Gr(n,{\mathcal H})$, let $M_{k}({\mathcal A}^{\prime}(E_t))$ be the collection of $k\times k$ matrices with entries from ${\mathcal A}^{\prime}(E_t)$.
Let
 $$M_{\infty}({\mathcal A}^{\prime}(E_t))=\bigcup\limits^{\infty}_{k=1}M_{k}({\mathcal
 A}^{\prime}(E_t)),$$
and $\mbox{\rm Proj}(M_k({\mathcal A}^{\prime}(E_t)))$ be the
algebraic equivalence classes of idempotents in
$M_{\infty}({\mathcal A}^{\prime}(E_t))$.
  If $p, q$ are idempotents in $\mbox{\rm Proj}({\mathcal A}^{\prime}(E_t))$, then say that $p{\sim}_{st}q$ if  $p{\oplus}r{\sim}_aq{\oplus}r$ for some idempotent $r$ in $\mbox{\rm Proj}\,({\mathcal A}^{\prime}(E_t))$. The relation ${\sim}_{st}$ is known as stable equivalence.


Let $X$ be a compact Hausdorff space, and $\xi=(E,\pi, X)$ be a (topological) vector bundle. A well-known theorem due to  R. G. Swan (cf. \cite{SP}) says that a vector bundle $\xi=(E,\pi, X)$ is a direct summand of the trivial bundle, that is, $$\xi\oplus \eta \cong (X\times \mathbb{C}^n, \pi, X)$$
for some vector bundle $\eta=(F,\rho,X).$

Swan's Theorem relates the geometric notion of a vector bundle to the algebraic notion of a $K_0$ group which we now describe briefly.

Following the usual conventions, let $\mbox{\rm Vect}(X)$ be the set of all isomorphism classes $\overline{\xi}$ of vector bundles $\xi$ over $X$. Addition and multiplication are defined on $\mbox{\rm Vect}(X)$ by the rule
$$ \overline{\xi}+\overline{\eta}=\overline{\xi\oplus \eta}, \overline{\xi}  \overline{\eta}= \overline{\xi\otimes \eta}.$$
These operations are well defined. Thus  $(\mbox{\rm Vect}(X),
+)$ is an Abelian semi-group and $K^0(X)$ is defined as the Grothendieck group of $(\mbox{\rm Vect}(X), +)$ (see \cite{RLL} for more details).
 Swan's theorem is the main ingredient in showing that the topological $K$-group $K^0(X)$ is isomorphic to the algebraic $K_0$-group $K_0(C(X))$.

For any projection $p\in P(M_{\infty}(C(X))$, suppose that $p\in
M_{n}(C(X))$. From this $p,$ one may construct a vector bundle on $X:$
$$E(p):=\{(x.v)\in X\times \mathbb{C}^n:v\in p(x)(\mathbb{C}^n)\},$$
with the fiber $E_x(p)=p(x)(\mathbb{C}^n)$. Define an additive map
$\Gamma:\mbox{\rm Proj}({\mathcal A}^{\prime}(E_t))\rightarrow \mbox{Vect(X)}$ as follows:
$$\Gamma([p]_0)=\overline{\xi_p},\; [p]_0\in \mbox{\rm Proj}({\mathcal A}^{\prime}(E_t)).$$
Then $\Gamma$ is an isomorphism.

First, we show $\Gamma$ is injective.
If $\Gamma([P]_0)=\overline{\xi_p}=\overline{\xi_q}=\Gamma([q]_0),$
and $p\in P(M_{n}(C(X)),q\in P(M_{m}(C(X))$ then there exists an
isomorphism
$$\sigma: \overline{\xi_p}\rightarrow \overline{\xi_q}, $$ where
$\sigma(p(x)\mathbb{C}^n)\cong (q(x)\mathbb{C}^m).$ So we have
$Tr(p(x))=Tr(q(x)),$ where $Tr$ is the trace of $M_{\infty}(C(X))$.
Then we can find $v_x\in M_{m,n}(C(X))$ such that
$$v_x^*v_x=p(x), v_xv_x^*=q(x).$$ That means $[p]_0=[q]_0$. So
$\Gamma$ is injective.

Next, we show that $\Gamma$ is surjective. By Swan's theorem, for
any vector bundle $\xi=(E,\pi,X)$,  there exists a positive integer
$n$ and another vector bundle $\eta(F, \rho, X)$ such that
$$\xi\oplus \eta=(X\times \mathbb{C}^n, \pi, X).$$  The we can assume that $E_x\oplus F_x =\mathbb{C}^n$.
Set $p(x)$ be the projection from $\mathbb{C}^n$ onto $E_x$. Then
$p:X\rightarrow M_n(\mathbb{C})$ is continuous and $p\in
P(M_{n}(C(X))$, $\xi:=\xi_p$. Then we can see that $\Gamma$ is also
a surjective. So $\Gamma$ is an isomorphism.

Thus if $\xi\oplus \eta$ is a trivial bundle, then there exists $p\in M_{\infty}(C(X))$ such that $\xi=\xi_p, \eta=\xi_{I-p}.$
Now, if there exists another vector bundle $\eta^{\prime}$ such that $\xi\oplus \eta^{\prime}$ is also isomorphic to a trivial bundle, then there must exist a projection $p^{\prime}\in
 M_{\infty}(C(X))$ such that $\xi=\xi_{p^{\prime}}, \eta=\xi_{I-p^{\prime}}.$ Consequently,  $[p]_0=[p^{\prime}]_0,$ $[1-p]_0=[1-p^{\prime}]_0$ and we see that
 $\eta^{\prime}\cong \eta.$ So there is a unique vector bundle
 $\eta$ such that $$\xi\oplus \eta \cong (X\times \mathbb{C}^n, \pi, X).$$


\subsection{\sf Unique decomposition} None of what we have said so far applies to holomorphic vector bundles over an open subset of $\mathbb C$ since they are already trivial by Graut's theorem. However, the study of holomorphic vector bundles over an open subset of $\mathbb C$ is central to operator theory. In the context of operator theory, as shown in the foundational paper of Cowen and Douglas \cite{CD}, the vector bundles of interest are equipped with a Hermitian structure
inherited from a fixed inner product of some Hilbert space $\mathcal H.$ This makes it possible to ask questions
 about their equivalence under a unitary or an invertible linear transformation of $\mathcal H$.
 In the paper \cite{CD}, questions regarding unitary equivalence were dealt with quite successfully while equivalence under an invertible linear transformation remains somewhat of a mystery to date. However, we can ask if the uniqueness of the summand, which was a consequence of Swan's theorem, remains valid in the context of Cowen-Douglas operators.
\begin{qn}\label{mainQ}
Let $t:\Omega \rightarrow Gr(n,{\mathcal H})$  be a Hermitian   holomorphic  curve and the vector bundle $E_r$ be a direct
 summand of $E_t$ for some other holomorphic curve $r:\Omega \rightarrow Gr(n,{\mathcal H})$.
Does there a unique sub-bundle of $E_t,$ up
to similarity, such that $E_r\oplus E_s=E_t$? Here the uniqueness is meant to be in the sense of Definition \ref{findecomp}
\end{qn}
It was shown in \cite{JGJ} that an operator in the Cowen-Douglas class $B_n(\Omega)$ admits a unique decomposition. So, the answer to the question raised above is affirmative. However, here we give a different proof for quasi-homogeneous operators which is much more transparent.
%
%
For our proof, we will need the following lemma.
\begin{lem} \label{qcomm}
Let $E_t$ be a quasi-homogeneous  bundle. Then ${\mathcal
A}^{\prime}(E_t)/\mbox{\rm Rad}({\mathcal A}^{\prime}(E_t))$ is commutative.
\end{lem}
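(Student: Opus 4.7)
The plan is to exploit the upper triangular structure of the commutant established in Lemma \ref{commutant}. Consider the diagonal-extraction map
\[
\pi: \mathcal{A}'(E_t) \longrightarrow \bigoplus_{i=0}^{n-1} \mathcal{A}'(S_{i,i}), \qquad \pi(X) = (X_{0,0}, X_{1,1}, \ldots, X_{n-1,n-1}).
\]
Writing the relation $XT = TX$ block by block and comparing $(i,i)$ entries (using that both $X$ and $T$ are upper triangular, so only the index $k = i$ contributes to either side) yields $X_{i,i} S_{i,i} = S_{i,i} X_{i,i}$; hence $\pi$ is well-defined. The same triangularity makes the $(i,i)$ block of a product $XY$ equal to $X_{i,i} Y_{i,i}$, so $\pi$ is a unital algebra homomorphism. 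Because each atom $S_{i,i}$ is a homogeneous operator in $B_1(\mathbb{D})$, its commutant is isomorphic to the commutative algebra $H^\infty(\mathbb{D})$, so $\pi$ takes values in a commutative algebra.

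The next step is to show that $\ker \pi \subseteq \mathrm{Rad}\,\mathcal{A}'(E_t)$. An operator $N$ in $\ker \pi$ is strictly upper triangular as an $n \times n$ block matrix on $\mathcal{H}_0 \oplus \cdots \oplus \mathcal{H}_{n-1}$, and since the product of $n$ strictly upper triangular blocks vanishes, $N^n = 0$ in $\mathcal{L}(\mathcal{H})$. For any $A \in \mathcal{A}'(E_t)$, Lemma \ref{commutant} forces $A$ to be upper triangular, and a direct computation shows that $NA$ is again strictly upper triangular: the $(i,i)$ entry $\sum_k N_{i,k} A_{k,i}$ is zero because $N_{i,k}$ vanishes for $k \leq i$ and $A_{k,i}$ vanishes for $k > i$. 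Hence $NA$ is nilpotent, so $\sigma_{\mathcal{A}'(E_t)}(NA) = \{0\}$, placing $N$ in the radical.

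Combining these observations, the induced map $\mathcal{A}'(E_t)/\ker \pi \hookrightarrow \bigoplus_i H^\infty(\mathbb{D})$ is an injective unital algebra homomorphism into a commutative algebra, so $\mathcal{A}'(E_t)/\ker \pi$ is commutative. Since $\ker \pi \subseteq \mathrm{Rad}\,\mathcal{A}'(E_t)$, the quotient $\mathcal{A}'(E_t)/\mathrm{Rad}\,\mathcal{A}'(E_t)$ is a further quotient of a commutative algebra, and is therefore itself commutative.

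No step is especially delicate once Lemma \ref{commutant} is at hand; the main point to guard against is the tempting but unnecessary stronger claim that the ``diagonal part'' of an arbitrary $X \in \mathcal{A}'(E_t)$ itself lies in $\mathcal{A}'(E_t)$. The argument above never uses this; it passes through the multiplicative map $\pi$ alone, which sidesteps having to know anything about the off-diagonal entries of elements of the commutant beyond Lemma \ref{commutant}.
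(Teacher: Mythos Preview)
Your proof is correct and follows essentially the same route as the paper's: both exploit Lemma \ref{commutant} to see that every element of the commutant is block upper triangular, that the diagonal entries lie in the commutative algebras $\mathcal A'(S_{i,i})\cong H^\infty(\mathbb D)$, and that the strictly upper triangular part is nilpotent and hence lands in the Jacobson radical. The paper packages this by first identifying $\mathrm{Rad}\,\mathcal A'(E_t)$ exactly as the set of quasi-nilpotent elements (which it shows coincide with the strictly upper triangular ones), and then notes that $XY-YX$ is strictly upper triangular; you instead pass through the diagonal homomorphism $\pi$ and use only the inclusion $\ker\pi\subseteq\mathrm{Rad}$, which is all that is needed for the commutativity conclusion and makes the argument a touch more streamlined.
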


\begin{proof}
Let $$\mathcal S=\{Y: \sigma(Y)=0, Y\in {\mathcal A}^{\prime}(E_t)\}.$$

{\sf Claim 1:}\,\, $\mathcal S$ is an ideal of the algebra ${\mathcal A}^{\prime}(E_t)$.

By Lemma \ref{utXY}, $Y$ is upper-triangular if $Y\in \mathcal S$. Since the spectrum $\sigma(Y)$ of $Y$ is $\{0\}$, the operator $Y$ must be of the form
\renewcommand \arraystretch{0.875}
$$Y=\left ( \begin{matrix}
0 & Y_{0,1} & Y_{0,2}&\cdots&Y_{0,n-1}\\
0&0&Y_{1,2}&\cdots&Y_{1,n-1} \\
\vdots&\ddots&\ddots&\ddots&\vdots\\
0&\cdots&0&0&Y_{n-2,n-1}\\
0&\cdots&\cdots&0&0\\
\end{matrix}\right ),$$
\renewcommand \arraystretch{1}
and it follows that each quasi-nilpotent element in the commutant of the holomorphic curve $t$ of rank one is zero.  Using Lemma \ref{utXY}
again, each element $X\in {\mathcal A}^{\prime}(E_t)$ is
upper-triangular. Thus, $\sigma(XY)=\sigma(YX)=0.$ This completes  the
proof of Claim 1 and $\mathcal S=\mbox{\rm Rad}({\mathcal A}^{\prime}(E_t)).$

{\sf Claim 2:}\,\,${\mathcal A}^{\prime}(E_t)/\mbox{\rm Rad}({\mathcal A}^{\prime}(E_t))$ is commutative.

Note that if $X\in {\mathcal A}^{\prime}(E_t)$ is (block) nilpotent, then
$X\in \mathcal S.$
A simple computation shows that ${\mathcal
A}^{\prime}(E_t)/\mbox{\rm Rad}({\mathcal A}^{\prime}(E_t))$ is commutative.
\end{proof}

%
%
%

\begin{thm}\label{uniqueSim}
For any quasi-homogeneous holomorphic curve $t$ with atoms
$t_i,\;0\leq i \leq {n-1},$ we have that
\begin{enumerate}
\item $E_t$ has no non-trivial sub-bundle whenever $\Lambda(t) <2,$ and
\item if $\Lambda(t) \geq 2,$ then for any sub-bundle $E_r$ of
$E_t$, there exists a unique sub-bundle $E_s,$ up to equivalence
under an invertible map, such that $E_r\oplus E_s$ is similar to
$E_t.$
\end{enumerate}
\end{thm}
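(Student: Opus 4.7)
My plan is to read off both parts from the algebraic structure of $\mathcal{A}'(E_t)$ together with the classification of atoms established in the previous sections.

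Part (1) should be immediate from Lemma \ref{Sirrd}(2): since $\Lambda(t) < 2$ makes $T$ strongly irreducible, $\mathcal{A}'(E_t)$ contains no idempotents other than $0$ and $I$, and by Lemma \ref{min} every non-trivial direct-summand sub-bundle of $E_t$ would correspond to a non-trivial idempotent in $\mathcal{A}'(E_t)$. So no such sub-bundle exists.

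For part (2), assume $\Lambda(t) \geq 2$. Given a direct-summand sub-bundle $E_r = p E_t$ for some idempotent $p \in \mathcal{A}'(E_t)$, existence of a complement is automatic: $E_s := (I - p) E_t$ satisfies $E_r \oplus E_s \sim E_t$. The heart of the argument is the uniqueness. First I would classify direct-summand sub-bundles up to similarity: by Lemma \ref{commutant}, every idempotent $p \in \mathcal{A}'(E_t)$ is upper-triangular with respect to the atomic decomposition, and its diagonal entries $p_{ii}$ lie in $\mathcal{A}'(T_i)$. Since each homogeneous atom $T_i$ in $B_1(\mathbb{D})$ has commutant isomorphic to $H^{\infty}(\mathbb{D})$ (which contains no non-trivial idempotents), each $p_{ii} \in \{0, I_{\mathcal{H}_i}\}$, and $p$ is determined modulo the radical by the subset $S := \{i : p_{ii} = I\} \subseteq \{0, 1, \ldots, n-1\}$. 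Commutativity of $\mathcal{A}'(E_t)/\operatorname{Rad}(\mathcal{A}'(E_t))$ (Lemma \ref{qcomm}) then permits a standard idempotent-lifting argument showing that $p$ is conjugate in $\mathcal{A}'(E_t)$ to the diagonal idempotent $\bigoplus_{i \in S} I_{\mathcal{H}_i}$. Consequently
\[
E_r \;\sim\; \bigoplus_{i \in S} E_{t_i}.
\]

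To conclude, suppose $E_r \oplus E_s \sim E_t \sim E_r \oplus E_{s'}$. By Lemma \ref{Sirrd}(1), $E_t \sim \bigoplus_{i=0}^{n-1} E_{t_i}$, and combined with the classification just established, both $E_s$ and $E_{s'}$ must fill in the same atoms, namely those with index in $S^c$. Cancellation of the common factor $E_r$ is legitimate because the weights $\lambda_i$ are strictly increasing ($\lambda_{i+1} - \lambda_i = \Lambda(t) > 0$), and homogeneous line bundles with distinct parameters are pairwise non-similar. Hence $E_s \sim \bigoplus_{j \in S^c} E_{t_j} \sim E_{s'}$, giving uniqueness up to similarity. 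I expect the main technical obstacle to be the lifting of an idempotent from $\mathcal{A}'(E_t)/\operatorname{Rad}(\mathcal{A}'(E_t))$ back to a conjugate of a diagonal idempotent in $\mathcal{A}'(E_t)$ itself; this is exactly what the commutativity of the quotient (Lemma \ref{qcomm}) and the intertwining machinery built in Lemmas \ref{keyL} and \ref{ProfJiang} are set up to support. Once the classification $E_r \sim \bigoplus_{i \in S} E_{t_i}$ is in hand, the cancellation step is straightforward given the distinctness of the $\lambda_i$.
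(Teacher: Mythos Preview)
Your treatment of part (1) matches the paper's exactly: Lemma~\ref{Sirrd}(2) gives strong irreducibility, hence no nontrivial idempotents, hence no nontrivial direct-summand sub-bundles via Lemma~\ref{min}.

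For part (2) your overall strategy --- classify idempotents in $\mathcal{A}'(E_t)$ by the subset $S$ of diagonal slots equal to $I$, conclude $E_r\sim\bigoplus_{i\in S}E_{t_i}$, then cancel --- is the same as the paper's in spirit, but there is a genuine gap in the step where you assert that $p$ is conjugate \emph{in $\mathcal{A}'(E_t)$} to the diagonal idempotent $q_S=\bigoplus_{i\in S}I_{\mathcal{H}_i}$. The operator $q_S$ is \emph{not} in $\mathcal{A}'(E_t)$ in general: it commutes with $T=\big(\!\big(S_{i,j}\big)\!\big)$ only if every off-diagonal block $S_{i,j}$ with exactly one of $i,j$ in $S$ vanishes, which is false for a generic quasi-homogeneous $T$. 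So the idempotent-lifting argument, as you phrased it, has no target to lift to.

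The paper repairs this by first invoking Lemma~\ref{Sirrd}(1) to replace $E_t$ by the similar bundle $\bigoplus_i E_{t_i}$; in the commutant of the direct sum the diagonal idempotents \emph{are} present, and the paper then quotes the $K$-theoretic unique-decomposition criterion (Lemmas~\ref{KG} and~\ref{UD}) to obtain $\mbox{Vect}(\mathcal{A}'(E_t))\cong\mathbb{N}^{(n)}$, whence $E_t$ has a unique finite decomposition up to similarity. Once that is in place, both the classification $E_r\sim\bigoplus_{i\in S}E_{t_i}$ and the cancellation step follow. Your cancellation argument (``distinct $\lambda_i$ $\Rightarrow$ pairwise non-similar atoms'') is also not quite enough on its own: you still need a Krull--Schmidt statement ensuring that the strongly irreducible summands of $E_t$ are unique up to permutation and similarity, and that is precisely what Lemma~\ref{UD} supplies. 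If you first transport via the similarity of Lemma~\ref{Sirrd}(1) and then appeal to Lemma~\ref{UD}, your outline becomes essentially the paper's proof.
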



For any holomorphic curve $t,$ we let $t^n$ denote the $n$ - fold direct sum of $t.$
For any two natural numbers $n$ and $m,$  let $E_r$ and $E_s$ be the sub-bundles of
$E_{t^{n}}$ and $E_{t^{m}},$ respectively. If $m>n,$ then both $E_r$ and $E_s$ can be regarded as a sub-bundle of $E_{t^m}$.

Two holomorphic Hermitian vector bundles $E_r$ and $E_s$ are said to be similar if there exist an invertible operator $X\in {\mathcal
A}^{\prime}(E_{r})$ such that $XE_r=E_s.$ Analogous to the  definition of $\mbox{Vect(X)},$  we let
$\mbox{\rm Vect}^0(E_t)$ be the  set of equivalence classes $\overline{E_s}$ of the sub-bundles $E_s$ of $E_{t^n},$
$n=1,2,\cdots.$ An addition on $\mbox{\rm Vect}^0(E_t)$ is defined as follows, namely,
$$ \overline{E_r}+\overline{E_s}=\overline{E_r\oplus E_s},$$ where $E_r$ and $E_s$ are both
sub-bundles of $E_t$. Now, the group $K^0(E_t)$ is  the Grothendieck group of $(\mbox{\rm Vect}^0(E_t), +)$. In this notation, we have the following theorem.
\begin{thm} \label{K^0=K_0}
$K^0(E_t)\cong K_0({\mathcal A}^{\prime}(E_t)).$
\end{thm}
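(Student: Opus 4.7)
The plan is to mimic the Swan-type argument already sketched in the excerpt for $C(X)$, with Theorem \ref{uniqueSim} (and its proof techniques) replacing the classical fact that any vector bundle is a summand of a trivial one. Under the canonical identification $M_n({\mathcal A}^{\prime}(E_t)) \cong {\mathcal A}^{\prime}(E_{t^n})$, every idempotent $P \in M_n({\mathcal A}^{\prime}(E_t))$ produces, by Lemma \ref{min}, a holomorphic sub-bundle $P E_{t^n} \subseteq E_{t^n}$. I will define $\Gamma : K_0({\mathcal A}^{\prime}(E_t)) \to K^0(E_t)$ by $\Gamma([P]_0) := \overline{P E_{t^n}}$. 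Additivity is immediate from the block-diagonal form of $P \oplus Q$. For well-definedness on stable equivalence classes, it suffices (after adjoining a common summand) to check that $P = AB$ and $Q = BA$ force $P E_{t^n}$ and $Q E_{t^m}$ to be similar; this is clear because $A$ restricts to a bundle isomorphism $Q E_{t^m} \to P E_{t^n}$ with inverse $B|_{P E_{t^n}}$.

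For surjectivity, let $E_r$ be a sub-bundle of some $E_{t^n}$. The same strong-irreducibility and similarity machinery that proves Theorem \ref{uniqueSim} furnishes a complementary sub-bundle $E_s$ of $E_{t^N}$ (for some $N \geq n$) and an invertible $X$ with $X(E_r \oplus E_s) = E_{t^N}$; conjugating the coordinate projection onto the first summand by $X^{-1}$ yields an idempotent $P \in {\mathcal A}^{\prime}(E_{t^N}) = M_N({\mathcal A}^{\prime}(E_t))$ with $\Gamma([P]_0) = \overline{E_r}$. So $\Gamma$ hits every class of $K^0(E_t)$.

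The main obstacle will be injectivity, and this is precisely where the \emph{uniqueness} clause of Theorem \ref{uniqueSim} is indispensable. Suppose $\Gamma([P]_0) = \Gamma([Q]_0)$ for idempotents $P, Q \in M_N({\mathcal A}^{\prime}(E_t))$ after suitable padding. The hypothesis supplies an invertible bundle map $Y : P E_{t^N} \to Q E_{t^N}$. The heart of the proof is to extend $Y$ to an invertible operator $\widetilde{Y} \in {\mathcal A}^{\prime}(E_{t^N})$. To accomplish this, I plan to invoke Theorem \ref{uniqueSim} on the complementary bundles $(I-P) E_{t^N}$ and $(I-Q) E_{t^N}$: since both complement similar sub-bundles inside a common ambient bundle, they must themselves be mutually similar via some $Y'$, and then $\widetilde{Y} := Y \oplus Y'$ (read through the respective decompositions) belongs to ${\mathcal A}^{\prime}(E_{t^N})$ and satisfies $\widetilde{Y} P \widetilde{Y}^{-1} = Q$. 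Setting $A := \widetilde{Y} P$ and $B := P \widetilde{Y}^{-1}$ then gives $A B = Q$ and $B A = P$, exhibiting the algebraic equivalence $[P]_0 = [Q]_0$. Absent the uniqueness statement in Theorem \ref{uniqueSim}, the complements could fail to match and the extension step would collapse; the fact that all of the earlier structural results pin the complement down up to similarity is exactly what makes this step go through. Lemma \ref{qcomm} (commutativity of the commutant modulo its radical) should enter as a backup if one needs to promote the similarity $\widetilde{Y} P \widetilde{Y}^{-1} = Q$ to a cleaner algebraic equivalence after stabilization.
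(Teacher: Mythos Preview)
Your proposal is correct and follows essentially the same route as the paper: define $\Gamma([P]_0)=\overline{PE_{t^n}}$ and verify it is a well-defined semigroup isomorphism, with the crux being injectivity via a cancellation/uniqueness-of-complement argument. Two organizational differences are worth noting. First, your surjectivity step is more elaborate than necessary: the paper simply takes the \emph{orthogonal} projection $P_r$ onto $\mathcal H_r=\bigvee_{w}\{\gamma_1(w),\ldots,\gamma_K(w)\}$ and observes $P_r\in\mathcal A'(E_{t^n})$ with $P_rE_{t^n}\sim E_r$; there is no need to manufacture a complement and conjugate a coordinate projection. Second, for injectivity the paper makes the cancellation step explicit via Lemma~\ref{UD1}: given decompositions $\{p_1,\ldots,p_m\}$ of $P$ and $\{Xp_1X^{-1},\ldots,Xp_mX^{-1}\}$ of $Q$, one completes each to a full decomposition of $E_{t^n}$ and invokes the unique finite decomposition of $E_{t^n}$ (obtained from Lemma~\ref{UD} together with the computation $\mbox{\rm Vect}(\mathcal A'(E_t))\cong\mathbb N^{(k)}$ in the preceding Proposition) plus Lemma~\ref{UD1} to match the leftover pieces. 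Your appeal to ``Theorem~\ref{uniqueSim} and its proof techniques'' is morally the same, but be aware that Theorem~\ref{uniqueSim} is stated only for $E_t$; the extension to $E_{t^N}$ genuinely requires the $\mbox{\rm Vect}$-computation and Lemma~\ref{UD} rather than Theorem~\ref{uniqueSim} alone. Your block-diagonal extension $\widetilde Y=Y\oplus Y'$ and the witnesses $A=\widetilde YP$, $B=P\widetilde Y^{-1}$ are exactly right once that cancellation is in hand; Lemma~\ref{qcomm} is not needed at this stage.
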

The proof of this theorem is split into a number of lemmas which are stated and proved below.

\begin{lem}  Let $E_t$ be a  quasi-homogeneous bundle. Then  $${\mbox{\rm Vect}}({\mathcal A}^{\prime}(E_t)) \cong {\mbox{\rm Vect}} ({\mathcal A}^{\prime}(E_t)/\mbox{Rad}{\mathcal
A}^{\prime}(E_t)).$$
\end{lem}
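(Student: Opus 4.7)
The plan is to exhibit a natural isomorphism $\mbox{\rm Vect}({\mathcal A}^{\prime}(E_t))\cong\mbox{\rm Vect}({\mathcal A}^{\prime}(E_t)/\mbox{\rm Rad}({\mathcal A}^{\prime}(E_t)))$ induced by the quotient map $\pi$, invoking the classical principle that $K$-theoretic data is unchanged under passage to the quotient by a nilpotent ideal. The crucial starting observation, extracted from the proof of Lemma \ref{qcomm}, is that every element of $\mbox{\rm Rad}({\mathcal A}^{\prime}(E_t))$ is block strictly upper-triangular with respect to the atomic decomposition $\mathcal{H}=\mathcal{H}_0\oplus\cdots\oplus\mathcal{H}_{n-1}$, and hence satisfies $X^n=0$. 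Since this property is inherited entry-wise, $M_k(\mbox{\rm Rad}({\mathcal A}^{\prime}(E_t)))$ coincides with $\mbox{\rm Rad}(M_k({\mathcal A}^{\prime}(E_t)))$ and is again nilpotent of index at most $n$ for every $k$.

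I would then define the map $\Phi:\mbox{\rm Vect}({\mathcal A}^{\prime}(E_t))\to\mbox{\rm Vect}({\mathcal A}^{\prime}(E_t)/\mbox{\rm Rad}({\mathcal A}^{\prime}(E_t)))$ by sending the algebraic equivalence class of an idempotent $p\in M_k({\mathcal A}^{\prime}(E_t))$ to the class of $\pi(p)$, where $\pi$ is the quotient map extended entry-wise. Well-definedness is immediate: any relation $p=uv$, $q=vu$ descends through $\pi$ to $\pi(p)=\pi(u)\pi(v)$, $\pi(q)=\pi(v)\pi(u)$. To verify surjectivity, I would lift an arbitrary idempotent $\bar q\in M_k({\mathcal A}^{\prime}(E_t)/\mbox{\rm Rad})$ as follows: choose any preimage $q_0\in M_k({\mathcal A}^{\prime}(E_t))$; then $e=q_0-q_0^2$ lies in the radical and is nilpotent, with $e^n=0$. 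A classical polynomial formula in $q_0$ and $e$ then produces an idempotent $p$ with $\pi(p)=\bar q$.

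For injectivity, assume $p_1,p_2\in M_k({\mathcal A}^{\prime}(E_t))$ are idempotents with $\pi(p_1)\sim\pi(p_2)$ via elements $\bar u,\bar v$ satisfying $\bar u\bar v=\pi(p_1)$ and $\bar v\bar u=\pi(p_2)$. Lift $\bar u,\bar v$ arbitrarily to $u,v\in M_k({\mathcal A}^{\prime}(E_t))$; then $p_1-uv$ and $p_2-vu$ both lie in $M_k(\mbox{\rm Rad})$. Using that $1+r$ is invertible whenever $r$ belongs to a nilpotent ideal, with inverse given by a finite geometric series, one corrects $u$ and $v$ inside $M_k({\mathcal A}^{\prime}(E_t))$ so that the equivalence relations hold exactly, establishing $p_1\sim p_2$ in $M_k({\mathcal A}^{\prime}(E_t))$. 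The principal obstacle will be making these correction steps explicit and confirming they yield genuine equivalences of idempotents; however, the nilpotency of the radical, ensured by the first paragraph, is precisely what allows the standard ring-theoretic lifting machinery to go through without modification in this operator-algebraic setting.
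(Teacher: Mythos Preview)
Your proposal is correct and follows the same overall architecture as the paper: both define the map on $\mbox{Vect}$ induced by the quotient $\pi$, check well-definedness, then prove surjectivity and injectivity by exploiting that the radical consists of strictly block upper-triangular (hence nilpotent) elements, so that $1+r$ is invertible for $r$ in the radical.

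The main difference lies in the surjectivity step. You invoke the general ring-theoretic idempotent-lifting through a nilpotent ideal via a polynomial in $q_0$ and $e=q_0-q_0^2$. The paper instead uses the concrete structure of ${\mathcal A}^{\prime}(E_t)$: any lift $p$ of an idempotent decomposes as $p=B+R$ with $B$ block-diagonal and $R$ in the radical; since $B^2-B$ must be both block-diagonal and in the radical, it is zero, so $B$ itself is the desired idempotent lift. For injectivity the paper works with similarity (conjugation by invertibles) rather than the Murray--von Neumann relations $p=uv$, $q=vu$ you use, and produces an explicit correcting invertible $W=2(q\oplus 0)-I+R$; your ``correct $u$ and $v$'' step is the abstract counterpart of this. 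Your approach is cleaner and more portable (it is exactly the standard $K$-theory argument for nil ideals), while the paper's is more hands-on and tied to the atomic decomposition, avoiding any appeal to a general lifting formula.
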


\begin{proof}
Note that $ M_n({\mathcal A}^{\prime}(E_t))\cong {\mathcal
A}^{\prime}(\bigoplus\limits^{n}E_t)$.  Let $p\in M_n({\mathcal
A}^{\prime}(E_t))$ be an idempotent. Define a map $\sigma:
{\mbox{\rm Vect}} ({\mathcal A}^{\prime}(E_t)) \rightarrow
{\mbox{\rm Vect}} ({\mathcal A}^{\prime}(E_t)/\mbox{Rad}{\mathcal
A}^{\prime}(E_t))$ as the following:
$$\sigma[P]=[\pi(P)],$$
where $\pi: {\mathcal A}^{\prime}(E_t) \rightarrow {\mbox{\rm Vect}}
({\mathcal A}^{\prime}(E_t)/\mbox{Rad}{\mathcal A}^{\prime}(E_t))$.

{\bf Claim}\,\, $\sigma$ is well defined and it is an isomorphism.

If $[p]=[q]$, where $p\in M_n({\mathcal A}^{\prime}(E_t))$ and $q\in
M_m({\mathcal A}^{\prime}(E_t))$ are both idempotents, then there
exists $k\geq \max\{m,n\}$ and an invertible element $u\in
M_k({\mathcal A}^{\prime}(E_t))$ such that
$$u(p\oplus 0^{k-n})u^{-1}=q\oplus 0^{k-m}.$$

Thus we have   $$\pi(u) \pi(p\oplus
0^{(k-n)})\pi(u)^{-1}=\pi(u(p\oplus 0^{k-n})u^{-1})=\pi(q\oplus
  0^{k-m}).$$

  That means $[\pi(p)]=[\pi(q)]$, and $\sigma$ is well defined.

Then we would prove that $\sigma$ is injective. In fact, if  $p\in
M_n({\mathcal A}^{\prime}(E_t))$ and $q\in M_m({\mathcal
A}^{\prime}(E_t))$ are idempotents with
$$\sigma[p]=[\pi(p)]=[\pi(q)]=\sigma [q],$$
then we can find $k\geq \max\{m,n\}$ and an invertible element
 $\pi(u)\in M_k({\mathcal A}^{\prime}(E_t))/\mbox{Rad}(M_k({\mathcal A}^{\prime}(E_t)))$ such that
$$\pi(u)(\pi(p\oplus 0^{k-n}))\pi(u)^{-1}=\pi(q\oplus 0^{k-m}).$$

Since $\pi(u)$ is invertible, there exists $\pi(s)\in
\mbox{Rad}(M_k({\mathcal A}^{\prime}(E_t)))$ such that
$\pi(u)^{-1}=\pi(s)$. Then we have $$us=I-R_1, su=I-R_2,$$ where
$R_1,R_2\in \mbox{Rad}(M_k({\mathcal A}^{\prime}(E_t)))$. Since
$\sigma(R_1)=\sigma(R_2)=\{0\}$, then $us, su$ are both invertible.
Therefore, $u$ is invertible and thus
$$\pi(u (p\oplus 0^{(k-n)})u^{-1})=\pi(u)(\pi(p\oplus 0^{k-n}))\pi(u)^{-1}=\pi(q\oplus 0^{k-m}).$$
Thus, $$u (p\oplus 0^{(k-n)})u^{-1}=q\oplus 0^{k-m}+R$$ for some
$R\in \mbox{Rad}(M_k({\mathcal A}^{\prime}(E_t)))$. Let
$W_1=2(q\oplus 0^{(k-m)})-I.$ Since $\sigma(Q\oplus
0^{(k-m)})\subseteq \{0,1\}$, then $W_1$ is invertible. Since we
have $R\in \mbox{Rad}(M_k({\mathcal A}^{\prime}(E_t)))$ and
$W^{-1}_1\in M_k({\mathcal A}^{\prime}(E_t)),$ then $RW^{-1}_1\in
\mbox{Rad}(M_k({\mathcal A}^{\prime}(E_t)))$, so $I+RW_1^{-1}$ is
invertible.  Set
$$W=2(q\oplus 0^{(k-m)})-I+R=W_1+R=(I+RW_1^{-1})W_1.$$
and $W$ is invertible. Since $p\oplus 0^{(k-n)}$ is an idempotent,
then $u(p\oplus 0^{(k-n)})u^{-1}$ is an idempotent, then $(q\oplus
0^{(k-m)})+R$ is an idempotent. Thus,
$$(q\oplus 0^{(k-m)})^2+(q\oplus 0^{(k-m)})R=R(q\oplus
0^{(k-m)})+R^2=(q\oplus 0^{(k-m)})+R.$$

Since $q\oplus 0^{(k-m)}$ is an idempotent, then
$$(q\oplus 0^{(k-m)})R+R(q\oplus 0^{(k-m)})+R^2=R.$$
So we have $$\begin{array}{llll} W((q\oplus 0^{(k-m)}+R)&=&(q\oplus
0^{(k-m)})+R(q\oplus 0^{(k-m)})+2(q\oplus 0^{(k-m)})R-R+R^2\\
&=&(q\oplus 0^{(k-m)})+(q\oplus 0^{(k-m)})R\\
&=&(q\oplus 0^{(k-m)})W.
\end{array}$$

And
$$u(p\oplus 0^{(k-n)})u^{-1}=(q\oplus 0^{(k-m)})+R=W^{_1}(q\oplus
0^{(k-m)})W.$$

It follows that $p\sim_a q$, and $\sigma$ is injective. At last, we
would show that $\sigma$ is surjective. For each $[\pi(p)]\in
{\mbox{\rm Vect}}({\mathcal A}^{\prime}(E_t)/\mbox{Rad}{\mathcal
A}^{\prime}(E_t))$ with $\pi(p) \in M_n({\mathcal
A}^{\prime}(E_t))/\mbox{Rad}(M_n({\mathcal A}^{\prime}(E_t)))$,
$p\in M_n({\mathcal A}^{\prime}(E_t))$ and $\pi^2(p)=\pi(p)$, we
have $$p^2-p=R_0, R_0\in \mbox{Rad}(M_n({\mathcal
A}^{\prime}(E_t))).$$ Note that $p=B+R$, where $B\in M_n({\mathcal
A}^{\prime}(E_t))$ is a block-diagonal matrix over $\mathbb{C}$,
$R\in \mbox{Rad}(M_n({\mathcal A}^{\prime}(E_t)))$. Then
$\pi(p)=\pi(B)$ and
$$R_0=p^2-p=(B+R)^2-(B+R)=B^2-B+(BR+RB+R^2-R). $$

Since $\mbox{Rad}(M_n({\mathcal A}^{\prime}(E_t))) $ is an ideal of
$M_n({\mathcal A}^{\prime}(E_t))$, then we have $$B^2-B\in
\mbox{Rad}(M_n({\mathcal A}^{\prime}(E_t))) .$$ Since $B$ is a
block-diagonal matrix, then we have $B$ is also an idempotent. Then
we have $$\sigma([B])=[\pi(p)].$$ That means $\sigma$ is also a
surjective. And we also can see that $\sigma$ is homomorphism. Then
$\sigma$ is an isomorphism and
 $${\mbox{\rm Vect}} ({\mathcal A}^{\prime}(E_t)) \cong {\mbox{\rm Vect}}({\mathcal A}^{\prime}(E_t)/\mbox{Rad}{\mathcal
A}^{\prime}(E_t)).$$
\end{proof}
We need two more lemmas, which have been already proved in \cite{Jia}, we reproduce them below.
\begin{lem} \label{UD}\mbox{\rm (\cite[Lemma 2.10]{Jia})}
For any holomorphic curve $t:\Omega\rightarrow Gr(n, {\mathcal H}),$ the following statements are equivalent.
\begin{enumerate}
\item  Assume that ${\mathcal H}$ has the decomposition 
${\mathcal H}=\bigoplus\limits^{k}_{i=1} {\mathcal H}_{i}^{(n_i)},$
$1\leq i \leq k.$  The holomorphic curve  $t$ is similar to $
\bigoplus\limits^{k}_{i=1} (P_ie)^{(n_i)},$ $k,n_i < \infty,$ where
$P_i:{\mathcal H}\rightarrow {\mathcal H}_{i}$ are idempotents such  that $P_{i}e$ is indecomposable, $P_{i}e \not\sim P_{j}e$ for
$i\neq j$ and $t^{(\ell)}$ admits a finite unique decomposition,
up to similarity, for $\ell\in \mathbb N.$
\item The algebra ${\mbox{\rm Vect}}({\mathcal
A}^{\prime}(t))$ is isomorphic to $\mathbb N^{(k)}$ via $h,$ which maps  $[I]$ to $(n_1, n_2, \cdots, n_k)$, that is,
$h([I])=n_1e_1+n_2e_2+\cdots +n_ke_k$, where $I$ is the unit of
${\mathcal A}^{\prime}(t),$ $0\not=n_i{\in}\mathbb N,$ and $e_i$ are the generators of  $\mathbb N^{(k)},$ $i=1, 2,
\cdots, k.$
\end{enumerate}
\end{lem}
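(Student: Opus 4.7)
The plan is to convert this module-theoretic statement into a Wedderburn-type structure result for ${\mathcal A}^{\prime}(t)$ modulo its Jacobson radical, and then transport the result back to $\mbox{\rm Vect}$ using the isomorphism
\[
\mbox{\rm Vect}({\mathcal A}^{\prime}(E_t)) \cong \mbox{\rm Vect}({\mathcal A}^{\prime}(E_t)/\mbox{\rm Rad}\,{\mathcal A}^{\prime}(E_t))
\]
proved in the preceding lemma. This reduces everything to a finite-dimensional semisimple algebra, where both sides can be computed transparently.

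For the implication (1) $\Rightarrow$ (2), suppose $t \sim \bigoplus_{i=1}^{k}(P_i t)^{(n_i)}$ with each $P_i t$ strongly irreducible and the $P_i t$ pairwise non-similar. Strong irreducibility of $P_i t,$ combined with the characterisation in Lemma \ref{min}, forces ${\mathcal A}^{\prime}(P_i t)/\mbox{\rm Rad} \cong \mathbb C,$ while pairwise non-similarity pushes every element of the off-diagonal ``Hom-space'' between $P_i t$ and $P_j t,$ $i\neq j,$ into the radical of the total algebra. Assembling these into a block-matrix picture yields
\[
{\mathcal A}^{\prime}(t)/\mbox{\rm Rad}\,{\mathcal A}^{\prime}(t) \;\cong\; \bigoplus_{i=1}^{k} M_{n_i}(\mathbb C),
\]
and a direct calculation shows $\mbox{\rm Vect}\bigl(\bigoplus_i M_{n_i}(\mathbb C)\bigr) \cong \mathbb N^{(k)}$ with the class of the identity going to $(n_1,\ldots,n_k).$

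For the converse (2) $\Rightarrow$ (1), fix an isomorphism $h:\mbox{\rm Vect}({\mathcal A}^{\prime}(t)) \to \mathbb N^{(k)}$ sending $[I]$ to $n_1 e_1 + \cdots + n_k e_k,$ and lift each generator $e_i$ to an idempotent $Q_i \in M_{\infty}({\mathcal A}^{\prime}(t)).$ Because $e_i$ is an atom of the free commutative monoid $\mathbb N^{(k)},$ the class $[Q_i]$ is minimal, whence Lemma \ref{min} gives that $Q_i t$ is strongly irreducible. The equality $[I] = \sum n_i [Q_i]$ combined with standard lifting of idempotents modulo the radical produces an orthogonal decomposition $I = \sum_{i,j} P_{i,j}$ inside ${\mathcal A}^{\prime}(t)$ with each $P_{i,j}$ algebraically equivalent to $Q_i;$ this delivers the required similarity $t \sim \bigoplus_i (P_i t)^{(n_i)}.$ The uniqueness clause for $t^{(\ell)}$ then follows from the Krull--Schmidt property of the free commutative monoid $\mathbb N^{(k)}:$ any two indecomposable factorisations of $\ell(n_1,\ldots,n_k)$ necessarily agree, and the bijection between indecomposable summands and minimal classes in $\mbox{\rm Vect}$ transfers this uniqueness back to $t^{(\ell)}.$

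The hard part, I expect, will be the passage from \emph{algebraic} equivalence of idempotents in the stabilised algebra $M_{\infty}({\mathcal A}^{\prime}(t))$ down to an \emph{honest} orthogonal decomposition of the identity inside ${\mathcal A}^{\prime}(t)$ itself: one must lift idempotents through $\mbox{\rm Rad}\,{\mathcal A}^{\prime}(t)$ using the holomorphic functional calculus and then reduce stable equivalence to genuine similarity, exploiting the fact that the Cowen--Douglas fibres have finite rank. A secondary subtlety is ruling out ``phantom'' indecomposable summands that could a priori be hidden in the radical with trivial $\mbox{\rm Vect}$-class, a point which again reduces to careful control of $\mbox{\rm Rad}\,{\mathcal A}^{\prime}(t).$
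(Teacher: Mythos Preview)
The paper does not actually prove this lemma: it is quoted verbatim from \cite[Lemma 2.10]{Jia} with the remark ``we reproduce them below,'' and no argument is supplied. So there is no in-paper proof to compare against; I can only assess your proposal on its own terms.

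Your (1)$\Rightarrow$(2) argument contains a genuine error. You assert that strong irreducibility of $P_i t$ forces
\[
{\mathcal A}^{\prime}(P_i t)/\mbox{\rm Rad}\;\cong\;\mathbb C,
\]
and hence that ${\mathcal A}^{\prime}(t)/\mbox{\rm Rad}\cong\bigoplus_{i}M_{n_i}(\mathbb C)$. This is false already in the simplest example treated in the paper: if $P_i t$ is the rank-one curve corresponding to the adjoint of multiplication on a weighted Bergman space $\mathbb A^{(\lambda)}(\mathbb D)$, then ${\mathcal A}^{\prime}(P_i t)\cong H^{\infty}(\mathbb D)$, the Jacobson radical of $H^{\infty}(\mathbb D)$ is zero, and the quotient is certainly not $\mathbb C$. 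More generally, for the quasi-homogeneous bundles in this paper with $\Lambda(t)<2$, Claim~2 inside the proof of Proposition~4.11 shows explicitly that ${\mathcal A}^{\prime}(E_t)/\mbox{\rm Rad}\cong H^{\infty}(\mathbb D)$, not $\mathbb C$. So the Wedderburn-type structure you are aiming for simply does not hold, and the subsequent computation of $\mbox{\rm Vect}\bigl(\bigoplus M_{n_i}(\mathbb C)\bigr)\cong\mathbb N^{(k)}$ is computing $\mbox{\rm Vect}$ of the wrong algebra.

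A related issue: the ``preceding lemma'' you invoke, namely $\mbox{\rm Vect}({\mathcal A}^{\prime}(E_t))\cong\mbox{\rm Vect}({\mathcal A}^{\prime}(E_t)/\mbox{\rm Rad})$, is proved in the paper only for quasi-homogeneous $E_t$, whereas Lemma~\ref{UD} is stated for an arbitrary holomorphic curve. The idempotent-lifting argument there is largely algebraic and may well extend, but you should check this rather than assume it. Finally, note that hypothesis~(1) includes the clause that every $t^{(\ell)}$ has a \emph{unique} finite decomposition; your (1)$\Rightarrow$(2) sketch never uses this, which is another sign that the argument is incomplete --- that uniqueness is doing real work, roughly substituting for the cancellation property one would otherwise need to conclude $\mbox{\rm Vect}\cong\mathbb N^{(k)}$.
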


\begin{lem} \label{KG}\mbox{\rm (\cite[Lemma 2.14]{Jia})}
$$\mbox{\rm Vect}({\mathcal H}^{\infty}(\mathbb{D}))\cong \mathbb{N},
K_0({\mathcal H}^{\infty}(\mathbb{D}))\cong \mathbb{Z}.$$
\end{lem}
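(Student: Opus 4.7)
The plan is to compute the monoid $\mbox{\rm Vect}({\mathcal H}^{\infty}(\mathbb{D}))$ explicitly by constructing a natural rank homomorphism to $\mathbb{N}$, showing it is an isomorphism, and then deducing the statement about $K_0$ by passing to the Grothendieck group.

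First I would observe that ${\mathcal H}^{\infty}(\mathbb{D})$ itself has only the trivial idempotents $0$ and $1$: if $p^2 = p$ in ${\mathcal H}^{\infty}(\mathbb{D})$ then $p(z)\bigl(1-p(z)\bigr)=0$ for every $z\in \mathbb{D}$, and since $p$ is holomorphic on the connected domain $\mathbb{D}$, the identity theorem forces $p \equiv 0$ or $p \equiv 1$. For a matrix idempotent $P\in M_n({\mathcal H}^{\infty}(\mathbb{D}))$, the point-wise trace $z\mapsto \mbox{\rm tr}\,P(z)$ is a holomorphic function on $\mathbb{D}$ taking only non-negative integer values, since $P(z)\in M_n(\mathbb{C})$ is an ordinary idempotent for each $z$; the identity theorem again forces it to be constant. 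This attaches to $P$ a well-defined integer $r(P)$, its rank. The rank is additive under block direct sums and invariant under algebraic (and hence stable) equivalence, so it induces a monoid homomorphism
\[
    r:\mbox{\rm Vect}({\mathcal H}^{\infty}(\mathbb{D}))\longrightarrow \mathbb{N},
\]
which is plainly surjective, since the $n\times n$ identity represents the class of rank $n$.

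The hard part is showing $r$ is injective, that is, any two matrix idempotents over ${\mathcal H}^{\infty}(\mathbb{D})$ of the same rank are stably equivalent. Via the standard correspondence between idempotents in $M_\infty(R)$ and finitely generated projective $R$-modules, this is precisely the assertion that every finitely generated projective ${\mathcal H}^{\infty}(\mathbb{D})$-module is free. This is a deep theorem of Treil resolving a long-standing question of Sz.-Nagy; its proof invokes matricial generalisations of Carleson's corona theorem, and is the main obstacle in the entire argument. Granting it, an idempotent $P$ of rank $r$ projects onto a free module of rank $r$, any two such projective modules are isomorphic as ${\mathcal H}^{\infty}(\mathbb{D})$-modules, and injectivity of the rank map follows at once.

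Finally, once $\mbox{\rm Vect}({\mathcal H}^{\infty}(\mathbb{D}))\cong \mathbb{N}$ as Abelian monoids, the Grothendieck group construction gives $K_0({\mathcal H}^{\infty}(\mathbb{D}))\cong \mathbb{Z}$, completing the proof. Without Treil's theorem one only obtains that the rank map is a surjection, not an isomorphism, so a priori the monoid could contain additional classes not detected by rank; every other ingredient in the argument is completely elementary.
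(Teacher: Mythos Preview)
Your argument is essentially correct, and in fact the paper does not supply a proof of this lemma at all: it is simply quoted from \cite[Lemma 2.14]{Jia}, so there is no ``paper's own proof'' to compare against. Your route via the rank homomorphism and the freeness of finitely generated projective ${\mathcal H}^{\infty}(\mathbb{D})$-modules is the standard one, and correctly isolates the only non-trivial input.

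Two small points of precision. First, you write that idempotents of the same rank must be \emph{stably} equivalent, but for $\mbox{\rm Vect}({\mathcal H}^{\infty}(\mathbb{D}))\cong\mathbb{N}$ you actually need the stronger statement that they are algebraically equivalent (i.e., the associated projective modules are isomorphic, not merely stably isomorphic); you then immediately invoke the correct stronger fact (projective implies free), so the slip is only verbal. Second, the attribution to ``Treil resolving a question of Sz.-Nagy'' is slightly imprecise: the Hermite property of ${\mathcal H}^{\infty}(\mathbb{D})$ (unimodular rows are completable) is due to Tolokonnikov via the matrix corona theorem, while Treil established that ${\mathcal H}^{\infty}(\mathbb{D})$ has stable rank one; together these give the projective-free conclusion you need. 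None of this affects the validity of your argument.
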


\begin{prop}
Let $E_t$ and $E_{\widetilde{t}}$ be two quasi-homogeneous bundles
with matchable bundles $\{E_{t_i}\}^{n-1}_{i=0}$ and
$\{E_{s_i}\}^{n-1}_{i=0}$ respectively. If $\Lambda (t)<2$, then
$E_t$ and $E_{\widetilde{t}}$ are similarity equivalent if and only
if
$$K_0({\mathcal A}^{\prime}(E_t\oplus E_{\widetilde{t}}))\cong \mathbb{Z}.$$
If $\Lambda (t)\geq 2$, then $E_t$ and $E_{\widetilde{t}}$ are
similarity equivalent if and only if
$$K_0({\mathcal A}^{\prime}(E_t\oplus E_{\widetilde{t}}))\cong \mathbb{Z}^n.$$

\end{prop}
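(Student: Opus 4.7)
The plan is to use the classification of the commutant $K_0$-group via Lemmas \ref{UD} and \ref{KG}, together with the dichotomy on strong irreducibility of quasi-homogeneous bundles provided by Lemma \ref{Sirrd} and the uniqueness of decomposition guaranteed by Theorem \ref{uniqueSim}. In both cases, the idea is to exhibit, up to similarity, a canonical decomposition of $E_t \oplus E_{\widetilde{t}}$ into its indecomposable summands, count the number of mutually non-similar classes, and then pass from $\mathrm{Vect}$ to $K_0$ via the Grothendieck construction.

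First, I would handle the case $\Lambda(t) < 2$. By Lemma \ref{Sirrd}, both $E_t$ and $E_{\widetilde{t}}$ are strongly irreducible. If they are similar, then $E_t \oplus E_{\widetilde{t}}$ is similar to $E_t^{(2)}$, so the hypothesis of Lemma \ref{UD} is met with $k=1$ and multiplicity $n_1 = 2$, yielding $\mathrm{Vect}(\mathcal{A}'(E_t \oplus E_{\widetilde{t}})) \cong \mathbb{N}$; passing to the Grothendieck group as in Lemma \ref{KG} gives $K_0 \cong \mathbb{Z}$. Conversely, if $K_0 \cong \mathbb{Z}$, then Lemma \ref{UD} forces exactly one similarity class of indecomposable summand in any finite decomposition. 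Since $E_t$ and $E_{\widetilde{t}}$ are themselves strongly irreducible sub-bundles of $E_t \oplus E_{\widetilde{t}}$, they must lie in this single class and are therefore similar.

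Next, for the case $\Lambda(t) \geq 2$, Lemma \ref{Sirrd} gives $E_t \sim \bigoplus_{i=0}^{n-1} E_{t_i}$ and $E_{\widetilde{t}} \sim \bigoplus_{i=0}^{n-1} E_{s_i}$, each summand being a strongly irreducible homogeneous line bundle. The matchability assumption ensures that $E_{t_i}$ and $E_{s_i}$ share the common weight $\lambda_i$, with $\lambda_0 < \lambda_1 < \cdots < \lambda_{n-1}$ pairwise distinct; since the weight is a complete similarity invariant for homogeneous operators in $B_1(\mathbb{D})$, atoms with distinct indices are non-similar. If $E_t \sim E_{\widetilde{t}}$, then Lemma \ref{utXY} combined with the uniqueness of the indecomposable decomposition forces $E_{t_i} \sim E_{s_i}$ for every $i$; hence $E_t \oplus E_{\widetilde{t}} \sim \bigoplus_{i=0}^{n-1} E_{t_i}^{(2)}$, and Lemmas \ref{UD}--\ref{KG} deliver $K_0 \cong \mathbb{Z}^n$. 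Conversely, if $K_0 \cong \mathbb{Z}^n$, then the number of similarity classes of indecomposable summands of $E_t \oplus E_{\widetilde{t}}$ is exactly $n$; since the $n$ distinct weights already account for $n$ mutually non-similar strongly irreducible line bundles on each side, each $E_{s_i}$ must be similar to $E_{t_i}$, and so $E_t \sim E_{\widetilde{t}}$.

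The main obstacle is the bookkeeping in the converse direction of the second case: one must verify that no ``cross-pairing'' between $\{E_{t_i}\}$ and $\{E_{s_j}\}$ with $i\neq j$ can occur, which is exactly what the matchability hypothesis together with the distinctness of weights rules out. Once this is established, the translation from the Vect-description given by Lemma \ref{UD} to the asserted $K_0$ presentation is formal through the Grothendieck group construction (cf. Lemma \ref{KG}).
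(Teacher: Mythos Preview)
Your high-level strategy matches the paper's: reduce the question to counting similarity classes of indecomposable summands and invoke Lemma~\ref{UD} to translate this into a statement about $\mbox{\rm Vect}$ and then $K_0$. The forward direction in both cases is fine, and your treatment of the $\Lambda(t)\geq 2$ case is essentially what the paper (implicitly) has in mind.

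There is, however, a genuine gap in the $\Lambda(t)<2$ case. To use Lemma~\ref{UD} in the direction $(1)\Rightarrow(2)$ you need not just that $E_t$ is strongly irreducible, but that \emph{every} $E_t^{(\ell)}$ admits a finite unique decomposition up to similarity. Strong irreducibility of $E_t$ alone (Lemma~\ref{Sirrd}) does not give this, and Theorem~\ref{uniqueSim} as stated only asserts that $E_t$ has no non-trivial sub-bundle; it says nothing about $E_t^{(\ell)}$ for $\ell\geq 2$. Your invocation of Theorem~\ref{uniqueSim} therefore does not supply the hypothesis you need. (There is also an order-of-proof issue, since Theorem~\ref{uniqueSim} is established in the paper only after this Proposition, but the more serious point is that even its statement is insufficient.)

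The paper fills exactly this gap by a direct commutant computation: it shows (Claims~1 and~2 in the proof) that any $X\in\mathcal{A}'(E_t)$ has all diagonal blocks equal to $\phi(S_{i,i})$ for a single $\phi\in\mathcal{H}^\infty(\mathbb D)$, and hence $\mathcal{A}'(E_t)/\mbox{\rm Rad}\,\mathcal{A}'(E_t)\cong\mathcal{H}^\infty(\mathbb D)$. Combined with the earlier lemma that $\mbox{\rm Vect}(\mathcal{A}'(E_t))\cong\mbox{\rm Vect}(\mathcal{A}'(E_t)/\mbox{\rm Rad})$ and Lemma~\ref{KG}, this gives $\mbox{\rm Vect}(\mathcal{A}'(E_t))\cong\mathbb N$, from which Lemma~\ref{UD} in the direction $(2)\Rightarrow(1)$ yields the required unique decomposition of all powers. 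Only then can the argument you sketch be run. So your outline is correct once supplemented by this commutant calculation; without it, the appeal to Lemma~\ref{UD} is unjustified.
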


\begin{proof} Suppose that  $\Lambda (t)<2$.
Let $$S=\left (\begin{smallmatrix}
S_{0,0} & S_{0,1} & S_{0,2}&\cdots&S_{0,n-1}\\
&S_{1,1}&S_{1,2}&\cdots&S_{1,n-1} \\
&&\ddots&\ddots&\vdots\\
&&&S_{n-1,n-1}&S_{n-1,n}\\
&&&&S_{n,n}\\
\end{smallmatrix}\right )~~\mbox{and}~~X=\left ( \begin{smallmatrix}
X_{0,0} & X_{0,1} & X_{0,2}&\cdots&X_{0,n-1}\\
&X_{1,1}&X_{1,2}&\cdots&X_{1,n-1} \\
&&\ddots&\ddots&\vdots\\
&&&X_{n-1,n-1}&X_{n-1,n}\\
&&&&X_{n,n}\\
\end{smallmatrix}\right ).$$
{\sf Claim 1:}\,\,If $XS=SX$,then we have $X_{i,i}=X_{j,j},
~~\mbox{for any}~~i\neq j.$

In fact, for any $i=0,1,\cdots,n-1$,  we have
$$S_{i,i}X_{i,i+1}+S_{i,i+1}X_{i+1,i+1}=X_{i,i}S_{i,i+1}+X_{i,i+1}S_{i+1,i+1},$$
and
$$S_{i,i}X_{i,i+1}-X_{i,i+1}S_{i+1,i+1}=X_{i,i}S_{i,i+1}-S_{i,i+1}X_{i+1,i+1}=0.$$
Since  $X_{i,i}\in {\mathcal A}^{\prime}(E_{t_i})$ and each
$E_{t_i}$ induces a Hilbert functional space ${\mathcal H}_i$ with
reproducing kernel $\frac{1}{(1-z\overline{w} )^{\lambda_i}}$, then
we have ${\mathcal A}^{\prime}(E_{t_i})\cong {\mathcal
H}^{\infty}(\mathbb{D})$. Then there exists $\phi_{i,i}\in
{\mathcal H}^{\infty}(\mathbb{D})$ such that
$$X_{i,i}=\phi_{i,i}(S_{i,i}), i=0,1,\cdots,n-1.$$

Thus, we have
$$\phi_{i,i}(S_{i,i})S_{i,i+1}-S_{i,i+1}\phi_{i+1,i+1}(S_{i+1,i+1})=0.$$ Since
$S_{i,i}S_{i,i+1}=S_{i,i+1}S_{i+1,i+1}$, then
$$S_{i,i+1}(\phi_{i,i}-\phi_{i+1,i+1})(S_{i+1,i+1})=0.$$
Note that $S_{i,i+1}$ has a dense range, then we can set
$$\phi_{i,i}=\phi, i=0,1,\cdots,n-1.$$

{\sf Claim 2:}\,\,${\mathcal A}^{\prime}(E_t)/\mbox{Rad}{\mathcal
A}^{\prime}(E_t)\cong {\mathcal H}^{\infty}(\mathbb{D}).$

Recall
that $\mbox{Rad}{\mathcal A}^{\prime}(E_t)=\{S\in {\mathcal
A}^{\prime}(E_t)|\sigma_{{\mathcal A}^{\prime}(E_t)}(SS^{\prime})=0,\;S^{\prime} \in {\mathcal A}^{\prime}(E_t)\}.$ Any
$X\in \mathcal A^\prime(E_t)$ is upper triangular by Lemma
\ref{utXY} and ${\mathcal A}^{\prime}(E_t)/\mbox{\rm Rad}\,{\mathcal
A}^{\prime}(E_t)$ is commutative by Lemma \ref{qcomm}. Therefore
if $Y$ is in $\mbox{\rm Rad}\,{\mathcal
A}^{\prime}(E_t)$, then we have
$$Y=
\renewcommand\arraystretch{0.875}
\begin{pmatrix}
0 & Y_{0,1} & Y_{0,2}&\cdots&Y_{0,n-1}\\
&0&Y_{1,2}&\cdots&Y_{1,n-1} \\
&\bigzero&\ddots&\ddots&\vdots\\
&&&0&Y_{n-1,n}\\
&&&&0\\
\end{pmatrix}.\renewcommand\arraystretch{1}$$

Define a map $\Gamma : {\mathcal
A}^{\prime}(E_t)/\mbox{Rad}{\mathcal A}^{\prime}(E_t) \rightarrow
{\mathcal H}^{\infty}(\mathbb{D})$ by the rule:
$$\Gamma ([X])=\phi, \mbox{ where } X=((X_{i,j}))_{n\times n},
X_{i,i}=\phi(S_{i,i}).$$ Obviously, $\Gamma$ is well defined and if
$\Gamma ([X])=0$, then $\phi=0$. Then $X_{i,i}=0$, it follows that
$X\in \mbox{Rad}{\mathcal A}^{\prime}(E_t)$ and $[X]=0$. So $\Gamma$
is injective.

For any $\phi\in  {\mathcal H}^{\infty}(\mathbb{D})$,  set
$X_{i,i}=\phi(S_{i,i}), i=0,1,2,\cdots, n-1$. By Lemma 3.6, we can
construct the operators $X_{i,j},j\neq i$ such that
$X:=(X_{i,j})_{n\times n}\in {\mathcal A}^{\prime}(E_t)$. That means
$\Gamma$ is surjective. Then $\Gamma$ is an isomorphism and
$${\mathcal A}^{\prime}(E_t)/\mbox{Rad}{\mathcal
A}^{\prime}(E_t)\cong {\mathcal H}^{\infty}(\mathbb{D}).$$ By Lemma
\ref{UD} and Lemma \ref{KG}, we have  $$ {{\mbox{\rm
Vect}}}({\mathcal A}^{\prime}(E_t)){\cong}\mathbb{N}, K_0({\mathcal
A}^{\prime}(E_t))\cong \mathbb{Z}.$$ By Lemma \ref{UD}, we have
$E_t$ has a unique finite  decomposition up to similarity.
Similarly, $E_{\widetilde{t}}$ also has a unique finite
decomposition up to similarity.

If  $E_t\sim E_{\widetilde{t}}$, then $(t\oplus \widetilde{t} )\sim
t^{(2)} $. So we have
$${\mbox{\rm Vect}}({\mathcal A}^{\prime}(t\oplus \widetilde{t}))\cong {\mbox{\rm Vect}}({\mathcal
A}^{\prime}(t^{(2)}))\cong {\mbox{\rm Vect}}M_{2}({\mathcal
A}^{\prime}(t)))\cong \mathbb{N}$$ and $$K_{0}({\mathcal
A}^{\prime}(t\oplus \widetilde{t}))\cong \mathbb{Z}.$$  On the other
hand, Note that $t$ and $\widetilde{t}$ are both strongly
irreducible.  If $K_{0}({\mathcal A}^{\prime}(t\oplus
\widetilde{t}))\cong \mathbb{Z}$ and ${\mbox{\rm Vect}}({\mathcal
A}^{\prime}(t\oplus \widetilde{t}))\cong \mathbb{N}$, then by Lemma
\ref{UD}, we have $t\sim \widetilde{t}$, otherwise we will have
$${\mbox{\rm Vect}}({\mathcal A}^{\prime}(t\oplus \widetilde{t}))\cong \mathbb{N}^2.$$
This is a contradiction.
\end{proof}

\begin{proof}[Proof of Theorem \ref{uniqueSim}]
 When  $\Lambda(t)<2$, by Lemma \ref{keyL}, we have $E_t$
 is strongly irreducible. So there exists no non-trivial idempotent
 in ${\mathcal A}^{\prime}(E_t)$, which is the same as saying that  the vector bundle $E_t$ has no non-trivial sub-bundle.

When $\Lambda(t) \geq 2,$ by Lemma \ref{keyL}, we have $$E_r\sim
E_{t_0}\oplus E_{t_1}\oplus \cdots \oplus E_{t_{n-1}}.$$ Since
${\mathcal A}^{\prime}(E_{t_i})\cong {\mathcal
H}^{\infty}(\mathbb{D})$, we have
$${\mathcal A}^{\prime}(E_r)\cong {\mathcal
H}^{\infty}(\mathbb{D})^{(n)},$$ and by Lemma \ref{KG},
$${{\mbox{\rm Vect}}}({\mathcal A}^{\prime}(E_r)){\cong}\mathbb{N}^{(n)}, K_{0}({\mathcal
A}^{\prime}(E_r))\cong \mathbb{Z}^{(n)}.$$ Then by Lemma \ref{UD},
we have $E_t$ has a unique finite  decomposition up to similarity.
Then for any  non-trivial
 reducible sub-bundle of $E_r$ denoted by $E_r$, with $${\mathcal
 H}_r=\mbox{Span}_{w\in \Omega}\{E_{r}(w)\}.$$
Let  $P_t$ be the projection from ${\mathcal H}$ to ${\mathcal
H}_t$. Then
 $$E_t\sim E_r\oplus (E_t\ominus E_r)= P_rE_t\oplus (I-P_r)E_t.$$
 Let $$P_{t_i}:{\mathcal H}\rightarrow {\mathcal H}_{i}:=\mbox{Span}_{\lambda\in
 \Omega}\{E_{t_i}(w)\}, i=0,1,\cdots,n-1$$ be projections in ${\mathcal
 A}^{\prime}(E_r)$.  Then there exists $t_{k_i}, i=0,1,\cdots, s$
 such that
 $$P\sim \oplus_{i=0}^sP_{t_{k_i}}.$$ Then it follows that
 $$E_r\sim \oplus_{i=0}^sP_{t_{k_i}}E_t
\sim \oplus_{i=0}^sE_{t_{k_i}},$$  namely, there exists an
invertible operator $X$ such that
$E_r=X(\oplus_{i=0}^sE_{t_{k_i}})$. Suppose that
$$\oplus_{i=0}^{n-1}E_{t_i}=(\oplus_{i=0}^sE_{t_{k_i}})\oplus
(\oplus_{i=0}^{n-s}E_{t_{l_i}}).$$ Set
$$E_s=X(\oplus_{i=0}^{n-s}E_{t_{l_i}}),$$ then we have $$E_r\oplus E_s\sim E_t.$$
And if there exists another bundle $E_{s^{\prime}}$ such that
$$E_r\oplus E_{s^{\prime}}\sim E_t.$$
Since $E_r$ has a unique finite  decomposition up to similarity,
then we have $$E_{s^{\prime}}\sim\oplus_{i=0}^{n-s}E_{t_{l_i}}\sim
E_{s}.$$
\end{proof}

Before we give the proof of Theorem \ref{K^0=K_0}, we also need the following lemma from \cite{Jia}.

\begin{lem} \label{UD1}\mbox{\rm (\cite[Lemma 2.6]{Jia})} Let $\{P_1,\cdots,P_m,P_{m+1},\cdots, P_{N}\}$ and $\{Q_{1},\cdots,Q_{m+1},\cdots,Q_{N}\}$ be two distinct unique decompositions of the vector bundle $E_t$. Suppose that for $1\leq i\leq m$ and $w\in \Omega,$
\begin{enumerate}
\item  there exists an $X_{i}\in GL(P_i{\mathcal H}, Q_{i}{\mathcal H})$ satisfying $X_{i}P_it(w)=Q_{i}e(w)$ and that
\item there exists $Y\in GL({\mathcal A}^{\prime}(E_t))$  and a
permutation $\Pi\in S_{n}$ satisfying $Y^{-1}P_{i}Y=Q_{\Pi (i)}.$
\end{enumerate}
Then for $r\in\{m+1, \cdots n\}$ and given $Q_{r},$ there exists
$r^{\prime}\in\{m+1,\cdots, n\}$ and $Z_{r}\in GL(Q_{r}{\mathcal H},
P_{r^{\prime}}{\mathcal H}) $ satisfying
$Z_{r}Q_{r}t(w)=P_{r^{\prime}}t(w),$ $w\in \Omega$. Furthermore, if
$r_{1}\neq r_{2}$, then $r_{1}^{\prime}\neq r_{2}^{\prime}$.
\end{lem}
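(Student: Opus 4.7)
The plan is to leverage hypothesis (2) as a global matching between the two decompositions and then use hypothesis (1) to modify it so it extends the partial matching already given on $\{P_1,\ldots,P_m\}\leftrightarrow\{Q_1,\ldots,Q_m\}$. The subtlety is that the permutation $\Pi$ afforded by hypothesis (2) need not map $\{1,\ldots,m\}$ to itself; correcting this is essentially an exchange argument.

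First I would observe that from $Y^{-1}P_iY=Q_{\Pi(i)}$ the restriction $Y|_{Q_{\Pi(i)}\mathcal H}\colon Q_{\Pi(i)}\mathcal H\to P_i\mathcal H$ is an intertwining isomorphism of holomorphic sub-bundles. Composing with $X_i$ from hypothesis (1) gives $X_iY|_{Q_{\Pi(i)}\mathcal H}\colon Q_{\Pi(i)}\mathcal H\to Q_i\mathcal H$, also an intertwining isomorphism. Hence whenever $\Pi(i)\neq i$, the sub-bundles $Q_i$ and $Q_{\Pi(i)}$ lie in the same similarity class as sub-bundles of $E_t$, and the indecomposable type forced on $Q_i$ by $P_i$ occurs with multiplicity at least two among the $Q$'s.

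Next I would partition $\{1,\ldots,m\}$ as $A=\{i\le m:\Pi(i)\le m\}$ and $B=\{i\le m:\Pi(i)>m\}$. For $i\in B$ the piece $Q_{\Pi(i)}$ lies in the ``unmatched'' block $\{Q_{m+1},\ldots,Q_n\}$ and is similar to $Q_i$ by the previous step. Invoking the uniqueness (up to permutation) of the finite decomposition afforded by Lemma \ref{UD}, the multiset of similarity classes of $\{P_1,\ldots,P_n\}$ equals that of $\{Q_1,\ldots,Q_n\}$. Since hypothesis (1) identifies $P_i$ with $Q_i$ for $i\le m$, subtracting the matched portion shows that the multiset of similarity classes among $\{P_{m+1},\ldots,P_n\}$ equals that among $\{Q_{m+1},\ldots,Q_n\}$.

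Given this multiplicity balance, the assignment $r\mapsto r'$ becomes a matching problem between the unmatched $Q$'s and the unmatched $P$'s that respects similarity classes. Within each similarity class one may pair them up bijectively, and for each pair $(Q_r,P_{r'})$ one takes $Z_r$ to be any intertwining isomorphism (which exists precisely because the two pieces belong to the same class). By construction $r_1\neq r_2$ forces $r_1'\neq r_2'$, and $Z_rQ_rt(w)=P_{r'}t(w)$ holds. The main obstacle is the combinatorial bookkeeping in the last step---verifying that the multiplicities truly balance in the unmatched block---and this is exactly where the uniqueness provided by hypothesis (2) via Lemma \ref{UD} is essential.
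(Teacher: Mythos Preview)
The paper does not give its own proof of this lemma; it is quoted verbatim from \cite[Lemma 2.6]{Jia} and used as a black box in the proof of Theorem~\ref{K^0=K_0}. So there is no in-paper argument to compare against.

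Your argument is correct in substance. Two small comments. First, your appeal to Lemma~\ref{UD} for the equality of multisets of similarity classes is redundant: hypothesis~(2), read (as intended, and as it is used in the proof of Theorem~\ref{K^0=K_0}) to hold for \emph{all} $i\in\{1,\ldots,N\}$, already supplies the bijection $\Pi$ with $P_i\sim Q_{\Pi(i)}$, and the multiset equality over $\{1,\ldots,N\}$ follows immediately. Second, once you have the multiset equality on the unmatched indices, the existence of each $Z_r$ is not merely abstract: you can write it down explicitly as a composite of restrictions of $Y$ and the $X_i$. Indeed, if $\Pi^{-1}(r)>m$ take $r'=\Pi^{-1}(r)$ and $Z_r=Y|_{Q_r\mathcal H}$; if $\Pi^{-1}(r)=j\le m$, then $X_jY|_{Q_r\mathcal H}$ identifies $Q_r$ with $Q_j$, and you continue along an exchange chain until you land on an unmatched $P_{r'}$. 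This concrete construction is what justifies the phrase ``which exists precisely because the two pieces belong to the same class'' and makes the injectivity $r_1\ne r_2\Rightarrow r_1'\ne r_2'$ transparent.
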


\begin{proof}[Proof of Theorem \ref{K^0=K_0}]
 Let $P\in P_{n}({\mathcal A}^{\prime}(E_t))=P({\mathcal
 A}^{\prime}(E_{t^n}))$ be an idempotent. Then we have  $PE_{t^n}$ be a sub-bundle of $E_{t^n}$. Define map
 $$\Gamma: V({\mathcal A}^{\prime}(E_t)))\rightarrow V^0(E_t)$$
 with $\Gamma([p]_0)=\overline{PE_{t^n}}.$

 Firstly, we will prove $\Gamma$ is well defined. In fact, for any
 $P\sim Q\in [P]_0$, there exists positive integer  $n$ such that $P, Q \in {\mathcal
 A}^{\prime}(E_{t^n})$. Since $Q=XPX^{-1}, X\in {\mathcal
 A}^{\prime}(E_t),$ then we have  $$QE_{t^n}=XPX^{-1}E_{t^n}\sim PX^{-1}E_{t^n}.$$
And Note that $X, X^{-1} \in {\mathcal
 A}^{\prime}(E_t)$, then we have  $$X^{-1}t^n(w)=t^n(w), \mbox{for any}~~ w\in \Omega.$$
 Thus,  $$QE_{t^n}\sim PXE_{t^n},$$ and
 $\overline{QE_{t^n}}=\overline{PE_{t^n}}.$  So $\Gamma$ is well
 defined.

 Secondly, we will prove that $\Gamma$ is surjective. Suppose that
 $E_r$ is a sub-bundle of $E_{t^n}$ with dimension $K$, where $n$ is positive integer.
 Suppose that $${\mathcal H}_r:=\bigvee\limits_{w\in
 \Omega}\{\gamma_1(w), \gamma_2(w),\cdots,
 \gamma_K(w)\},$$
and $P_r$ is the
 projection from ${\mathcal H}$ to ${\mathcal H}_r$, then we have
 $P_r\in {\mathcal
 A}^{\prime}(E_{t^n})$ and $$P_r E_{t^n}\sim E_r.$$ Then it follows
 that $\Gamma$ is surjective.

At last, we will prove that $\Gamma$ is also injective. Let $P,Q\in
{\mathcal A}^{\prime}(E_{t^n})$. Suppose that there exists an
invertible operator $X\in {\mathcal A}^{\prime}(E_{t^n})$ such that
$$XPE_{t^n}=QE_{t^n}.$$ Let $\{p_1,p_2,\cdots,p_m\}$ be a
decomposition of $P$. Then
$\{Xp_1X^{-1},Xp_2X^{-1},\cdots,Xp_mX^{-1}\}$ be a decomposition of
$Q$. In fact, we have
$$\begin{array}{llll}
Xp_1X^{-1}QE_{t^n}+Xp_2X^{-1}QE_{t^n}+\cdots+Xp_mX^{-1}QE_{t^n}&=&Xp_1E_{t^n}+Xp_2E_{t^n}+\cdots+Xp_mE_{t^n}\\
&=&XPE_{t^n}\\
&=&QE_{t^n}.
\end{array}
$$
Suppose that $\{p_{m+1},p_{m+2},\cdots,p_{N}\}$ and
$\{q_{m+1},q_{m+2},\cdots, q_{N}\}$ be the decompositions of
$(I-P)E_{t^n}$ and $(I-Q)E_{t^n}$ respectively. Then we have
$$ \{p_1,p_2,\cdots, p_N\}~\mbox{\rm{and}},~\{Xp_1X^{-1},Xp_2X^{-1},\cdots,Xp_mX^{-1}, q_{m+1},q_{m+2},\cdots,
q_N\}$$ are two different decompositions of $E_{t^n}$.  By the
uniqueness of decomposition of  $E_{t^n}$, there exists an
invertible bounded linear operator  $Y\in {\mathcal
A}^{\prime}(E_{t^n})$ such that $\{Y^{-1}P_iY\}$ is a rearrangement
of
$\{Xp_1X^{-1},Xp_2X^{-1},\cdots,Xp_mX^{-1}\},\{q_{m+1},q_{m+2},\cdots,
q_N\}.$ By Lemma \ref{UD1}, for any $v\in \{m+1,m+2,\cdots,n\},$ we
can find $p_{v^{\prime}}, v^{\prime}\in\{m+1,\cdots,n\}$ and $Z_v\in
GL(L(q_v{\mathcal H}, p_v{\mathcal H})$ such that
$$Z_vq_vE_{t^n}=p_{v^{\prime}}E_{t^n},
v^{\prime}_1=v^{\prime}_2,~~\mbox{when}~~v_1=v_2.$$ Set
$Z_k=X^{-1}|_{Xp_kX^{-1}{\mathcal H}},k=1,2,\cdots m$, then we have
that
$$Z=\sum\limits_{k=1}^m Z_k+\sum\limits_{v=m+1}^N Z_v\in GL{\mathcal A}^{\prime}(E_{t^n}),$$
and $$ZPZ^{-1}=Q.$$ It follows that $\Gamma$ is injective. Since
$\Gamma$ is also a homomorphism, then we have
$${\mbox{\rm Vect}}^0(E_t)\cong {\mbox{\rm Vect}}({\mathcal A}^{\prime}(E_{t^n}), K^0(E_t)\cong K_0({\mathcal
A}^{\prime}(E_t)).$$

\end{proof}

\subsection{\sf More general results on similarity}
The precise relationship between the non-vanishing holomorphic sections of the atoms $t_i, 0\leq i \leq n-1,$ of a quasi-homogeneous holomorphic curve $t$ and a holomorphic frame for $t$ mandated in Definition \ref{defq} is at the heart of the proof of  Theorem \ref{mainthmSim}. We now push the limits of this definition a little and see if we can replicate some of our results. We begin by making the observation that starting with a quasi-homogeneous holomorphic curve  $t,$ we always have an operator $T$ in the Cowen-Douglas class $B_n(\mathbb D).$ This operator has an upper triangular decomposition as in Lemma \ref{atomic}. However, the other way round, starting with an operator $T$ possessing such a decomposition, it may not be possible find a frame $\gamma$ for the holomorphic Hermitian vector bundle $E_T,$ which can be written as linear combinations of the non-vanishing sections of the atoms and their derivatives. In this section, we start with an operator $T$ in the Cowen-Douglas class $B_n(\mathbb D)$ assume that the operators appearing on the diagonal in its decomposition according to Theorem \ref{ut} are homogeneous operators in $B_1(\mathbb D).$
Finally, we require that unlike quasi-homogeneous operators, there exists a holomorphic frame $\gamma$ for $E_T,$ which is a linear combination of the non-vanishing sections of the atoms and its derivatives as in Definition \ref{defq} except that the coefficients $\mu_{i,j}$ are allowed to be holomorphic functions rather than constants. For the remaining portion of this subsection, let $\mathcal Q_n(\mathbb D)$ denote this class of operators, or for that matter, the corresponding holomorphic Hermitian vector bundles.

\begin{prop} \label{sim2} Let $E_T$ and $E_{\widetilde{T}}$
be two holomorphic Hermitian vector  bundles in $\mathcal Q_n(\mathbb D)$  with atoms
$T_i,\widetilde{T}_i\;i=0,1$ an atomic decomposition $\big(\!\!
\big (S_{i,j}\big )\!\!\big )$ and $\big(\!\! \big (
\widetilde{S}_{i,j}\big )\!\!\big ),$ respectively. Suppose that
$S_{i,i}=\widetilde{S}_{i,i}$, $i=0,1$.
Then $E_T$ and $E_{\widetilde{T}}$ are similar if and only if there exists an invertible holomorphic function $\phi\in {\mathcal H}^{\infty}(\mathbb{D})$ such that
$\tilde{S}_{0,1}=\phi(T_0)S_{0,1}$.
\end{prop}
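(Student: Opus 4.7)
The forward direction $(\Leftarrow)$ is a direct construction. Given invertible $\phi \in \mathcal H^\infty(\mathbb D)$ with $\widetilde{S}_{0,1} = \phi(T_0) S_{0,1}$, I would set $X := \mathrm{diag}(\phi(T_0), I)$. Since $\phi$ is invertible in $\mathcal H^\infty(\mathbb D)$, so is $\phi(T_0)$, and hence $X$; a direct block-matrix computation, using $\phi(T_0) T_0 = T_0 \phi(T_0)$ on the diagonal and the hypothesis on the off-diagonal entry, yields $XT = \widetilde T X$.

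For the converse, suppose $T \sim \widetilde T$ via some invertible $X$. My plan proceeds in three steps. First, Lemma \ref{utXY}, whose proof uses only the atomic decomposition and the intertwining relation \eqref{int} (both available in $\mathcal Q_n(\mathbb D)$), forces $X$ to be upper triangular, $X = \bigl(\begin{smallmatrix} X_0 & X_{01} \\ 0 & X_1 \end{smallmatrix}\bigr)$. Second, the diagonal relations $X_i T_i = T_i X_i$ together with the homogeneity of each atom $T_i \in B_1(\mathbb D)$, whose commutant is isomorphic to $\mathcal H^\infty(\mathbb D)$, give $X_i = \phi_i(T_i)$ with $\phi_i$ invertible in $\mathcal H^\infty(\mathbb D)$. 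Third, unpacking the $(0,1)$-entry of $XT = \widetilde T X$, using $S_{0,1} \phi_1^{-1}(T_1) = \phi_1^{-1}(T_0) S_{0,1}$ (a consequence of $T_0 S_{0,1} = S_{0,1} T_1$), and setting $\phi := \phi_0 \phi_1^{-1} \in \mathcal H^\infty(\mathbb D)$ (still invertible), I would rearrange to obtain
\[
\widetilde{S}_{0,1} - \phi(T_0) S_{0,1} = -\sigma_{T_0, T_1}\bigl(X_{01} \phi_1^{-1}(T_1)\bigr),
\]
where $\sigma_{T_0, T_1}$ is the Rosenblum operator. Both terms on the left-hand side intertwine $T_1$ with $T_0$, so their difference lies in $\mathcal M \cap \mathrm{ran}(\sigma_{T_0, T_1})$, where $\mathcal M := \ker \sigma_{T_0, T_1}$ denotes the space of intertwiners.

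The main obstacle is to show $\mathcal M \cap \mathrm{ran}(\sigma_{T_0, T_1}) = \{0\}$, which then yields the desired identity $\widetilde{S}_{0,1} = \phi(T_0) S_{0,1}$. My plan here is to pass to adjoints: any $A \in \mathcal M$ yields $A^*\colon \mathbb A^{(\lambda_0)}(\mathbb D) \to \mathbb A^{(\lambda_1)}(\mathbb D)$ intertwining multiplication by $z$ on both weighted Bergman spaces, so $A^* = M_b$ is multiplication by some holomorphic symbol $b$, while $A = \sigma_{T_0, T_1}(Y)$ translates into $A^* = [Y^*, M_z]$, a two-sided commutator with multiplication. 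Expanding on the orthonormal bases $\{e_n^{(\lambda_i)}\}$ and using the Stirling-based growth estimate $a_n(\lambda) \sim n^{\lambda-1}$ recalled in Section 2.3, the commutator relation forces the recursion $Y^*(z^n) = z^n Y^*(1) + n\, b\, z^{n-1}$ for the images of the monomials; comparing $\|Y^*(z^n)\|_{\lambda_1}$ to the boundedness requirement $\|Y^*(z^n)\|_{\lambda_1} \leq \|Y^*\|\,\|z^n\|_{\lambda_0}$, in the spirit of Lemma \ref{keyL}, should force $b \equiv 0$ and hence $A = 0$. The delicate task is to verify this forcing uniformly within $\mathcal Q_n(\mathbb D)$, where the off-diagonal entries $S_{0,1}$ are governed by a holomorphic rather than constant coefficient $\mu_{0,1}$, and to reconcile the dichotomy between the small-weight-gap and large-weight-gap regimes that appeared in the quasi-homogeneous setting.
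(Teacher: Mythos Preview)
Your proposal is correct and follows essentially the same route as the paper's proof. The sufficiency arguments are identical, and for necessity both reduce, via upper-triangularity (Lemma~\ref{utXY}) and the $H^\infty$-commutant of the atoms, to the single obstruction that an intertwiner of $T_1$ with $T_0$ lying in $\mathrm{ran}\,\sigma_{T_0,T_1}$ must vanish. The only packaging difference is that the paper isolates your adjoint observation ``$A^*=M_b$'' as the preceding Lemma~\ref{inc*} (every such intertwiner is $\chi(T_0)\iota^*$ for $\iota^*$ the adjoint of the inclusion $\mathbb A^{(\lambda_0)}\hookrightarrow\mathbb A^{(\lambda_1)}$), writes $S_{0,1}=\psi(T_0)\iota^*$, $\widetilde S_{0,1}=\widetilde\psi(T_0)\iota^*$, and then simply invokes ``as in the proof of Lemma~\ref{keyL}'' to force $(\phi_{0,0}\psi-\phi_{1,1}\widetilde\psi)(T_0)\iota^*=0$; your recursion $Y^*(z^n)=z^nY^*(1)+n\,b\,z^{n-1}$ together with the Stirling comparison \emph{is} that argument, carried out in adjoint form. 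Both the paper's appeal to Lemma~\ref{keyL} and your growth estimate rely on $\Lambda(t)<2$, so your closing caution about the large-weight-gap regime is well placed rather than a gap in your plan.
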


For $i=1,2,$ let ${\mathcal H_i}$ be a Hilbert
space of holomorphic function on $\mathbb D$ possessing a reproducing kernel, say $K_i,$ and $T_i$ be the adjoint of the multiplication operator on $\mathcal H_i.$ Assume that $\mathcal H_0 \subseteq \mathcal H_1$ and let $\iota:\mathcal H_0 \to \mathcal H_1$ be the inclusion map. Then the adjoint $\iota^*$ of the inclusion map has the property $\iota^*(K_1(\cdot,w))=K_0(\cdot,w),\;w\in \mathbb D.$

\begin{lem}\label{inc*}
Assume that $K_i(z,w)=\frac{1}{(1-z{\overline
w})^{\lambda_i}}, i=0,1.$
Suppose that $S:{\mathcal H_0} \to {\mathcal H_1}$ is a bounded linear operator with the intertwining property $T_0S=S T_1.$ Then there exists a holomorphic function $\phi$ such that
$S=\phi(T_0)\iota^*.$
\end{lem}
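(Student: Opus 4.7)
The plan is to exploit the eigenvector structure of $T_0$ and $T_1$ together with the intertwining relation. Since $T_i$ is realized as the adjoint of the multiplication operator on $\mathcal{H}_i$ with reproducing kernel $K_i(z,w)=(1-z\bar w)^{-\lambda_i}$, the one-dimensional eigenspace $\ker(T_i-\bar w)$ is spanned by $K_i(\cdot,w)$. Applying $T_0 S = S T_1$ to the eigenvector $K_1(\cdot,w)$ yields $T_0\bigl(SK_1(\cdot,w)\bigr)=\bar w\,SK_1(\cdot,w)$, so $SK_1(\cdot,w)$ is a scalar multiple of $K_0(\cdot,w)$; that is, there exists a scalar function $\psi:\mathbb{D}\to\mathbb{C}$ with
\[
S K_1(\cdot,w)=\psi(w) K_0(\cdot,w),\qquad w\in\mathbb{D}.
\]

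Next I would check that $\psi$ is anti-holomorphic on $\mathbb{D}$. Pairing the identity above with $K_0(\cdot,0)\in\mathcal{H}_0$ gives $\psi(w)=\langle SK_1(\cdot,w),K_0(\cdot,0)\rangle_{\mathcal{H}_0} = \overline{(S^*K_0(\cdot,0))(w)}$. Since $S^*K_0(\cdot,0)$ is an element of $\mathcal{H}_1$, and hence a holomorphic function of $w\in\mathbb{D}$, its complex conjugate $\psi$ is anti-holomorphic. Setting $\phi(w):=\overline{\psi(w)}=(S^*K_0(\cdot,0))(w)$ gives the desired holomorphic function on $\mathbb{D}$.

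To verify the identity $S=\phi(T_0)\iota^*$ I would combine two standard computations. First, the reproducing property together with $(\iota f)(w)=f(w)$ for $f\in\mathcal{H}_0$ yields $\iota^* K_1(\cdot,w)=K_0(\cdot,w)$, since $\langle \iota^* K_1(\cdot,w), f\rangle_{\mathcal{H}_0} = \overline{f(w)} = \langle K_0(\cdot,w), f\rangle_{\mathcal{H}_0}$. Second, the functional calculus for $T_0=M^*$, whose commutant is generated by multiplication by $H^\infty(\mathbb{D})$ functions, gives $\phi(T_0)K_0(\cdot,w) = \overline{\phi(w)}K_0(\cdot,w)=\psi(w)K_0(\cdot,w)$. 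Composing, $\phi(T_0)\iota^* K_1(\cdot,w)=S K_1(\cdot,w)$ for every $w\in\mathbb{D}$, and density of $\{K_1(\cdot,w):w\in\mathbb{D}\}$ in $\mathcal{H}_1$ upgrades this to the operator equality $S=\phi(T_0)\iota^*$.

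The main obstacle is the precise meaning of $\phi(T_0)$: because $\mathcal{H}_0\subseteq\mathcal{H}_1$ typically forces $\lambda_0<\lambda_1$, the a priori bound $|\psi(w)|\leq\|S\|(1-|w|^2)^{(\lambda_0-\lambda_1)/2}$ leaves open whether $\phi$ is in $H^\infty(\mathbb{D})$. I would resolve this by interpreting $\phi(T_0)$ as the densely defined operator on $\mathrm{span}\{K_0(\cdot,w):w\in\mathbb{D}\}$ produced by the eigenvector formula; then $\phi(T_0)\iota^*$ is densely defined on $\mathcal{H}_1$, agrees with the bounded operator $S$ on this dense subspace, and hence extends uniquely to $S$ on all of $\mathcal{H}_1$.
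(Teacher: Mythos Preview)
Your argument follows the same route as the paper's: act on the kernel vectors $K_1(\cdot,w)$, extract the scalar factor, and verify the factorization $S=\phi(T_0)\iota^*$ on the dense span of these vectors. The paper packages the first step by quoting that $S^*=M_\psi$ for some holomorphic $\psi$ (from \cite{KM0}) and then defines $\phi$ via $\overline{\phi(\bar w)}=\psi(w)$; your direct eigenspace computation is a bit more self-contained but lands in the same place.

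There is one slip. Since $K_0(\cdot,w)\in\ker(T_0-\bar w)$, the holomorphic functional calculus gives
\[
\phi(T_0)\,K_0(\cdot,w)=\phi(\bar w)\,K_0(\cdot,w),
\]
not $\overline{\phi(w)}\,K_0(\cdot,w)$. (The formula $\overline{\phi(w)}K_0(\cdot,w)$ is what you get from $M_\phi^*$, which coincides with $\phi(T_0)$ only when $\phi$ has real Taylor coefficients.) With the correct formula you need $\phi(\bar w)=\psi(w)$, so the right definition is $\phi(w):=\psi(\bar w)$ rather than $\phi(w):=\overline{\psi(w)}$; this $\phi$ is still holomorphic because your $\psi$ is anti-holomorphic. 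After this cosmetic correction the rest of your argument goes through unchanged, and in particular your last paragraph handles the possible unboundedness of $\phi(T_0)$ exactly as the paper (implicitly) does.
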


\begin{proof} The operators $T_i,\; i=0,1$ are in $B_1(\mathbb D).$  If $S:{\mathcal H_0} \to {\mathcal H_1}$ is a bounded linear operator and $T_0S=ST_1,$ then there exists a holomorphic function
$\psi$ such that $S^*=M_{\psi}$. This is easily proved as in
\cite[Section 5]{KM0}. Let $\phi$  be the holomorphic function defined on the
unit disc by the formula $\overline{\phi(\overline{w})}= \psi(w),\;w
\in \mathbb{D}.$ For any $f\in {\mathcal H_0}$, we have that
$$\begin{array}{llll}
\langle f(z),\phi(T_0)\iota^*(K_1(z,w))\rangle&=&\langle f(z),\phi(\overline{w})K_0(z,w)\rangle\\
&=&\overline{\phi(\overline{w})}\langle f(z),\phi(\overline{w})K_0(z,w)\rangle\\
&=&\langle f(z),M^*_{\psi}(K_1(z,w))\rangle\\&=&\langle f(z),S(K_1(z,w))\rangle.
\end{array}
$$
Consequently, $S=\phi(T_0)i^*.$
\end{proof}

%
%
%
%
%
%

\begin{proof}[Proof of Proposition \ref{sim2}]

Let $T=\left (\begin{smallmatrix}T_0 & S_{0,1} \\
0 & T_1 \\
\end{smallmatrix}\right )$
and $\tilde{T}=\left ( \begin{smallmatrix}T_0 & \tilde{S}_{0,1} \\
0 & T_1 \\
\end{smallmatrix}\right )$ be the representations in
upper-triangular operator matrices of $E_t$ and $E_{\widetilde{t}}$
respectively with the following properties:

By Lemma \ref{atomic}, we know that $E_t\sim E_{\widetilde{t}}$ if and only
if $T\sim \widetilde{T}$. Then we only need to prove that $T$ and
$\tilde{T}$ are similarity equivalent if and only if
 there exists invertible holomorphic function
$\phi$ such that $\tilde{S}_{0,1}=\phi(T_0)S_{0,1}$.

To prove the necessity, note that there exists $\psi,
\widetilde{\psi}\in {\mathcal H}^{\infty}(\mathbb{D})$ such that
$$S_{0,1}=\psi(T_0)i^*,\widetilde{S}_{0,1}=\widetilde{\psi}(T_0)i^*$$
by Lemma \ref{inc*}. If there exists an
invertible operator $Y=\Big (\begin{smallmatrix}Y_{0,0} & Y_{0,1}\\
0 & Y_{1,1} \\
\end{smallmatrix}\Big )$ such that
\begin{equation} \label{2x2int}
\begin{pmatrix}Y_{0,0} & Y_{0,1} \\
0 & Y_{1,1} \\
\end{pmatrix}\begin{pmatrix}T_0 & \psi(T_0)i^* \\
0 & T_1 \\
\end{pmatrix}=\begin{pmatrix}T_0 & \widetilde{\psi}(T_0) i^* \\
0 & T_1 \\
\end{pmatrix}\begin{pmatrix}Y_{0,0} & Y_{0,1} \\
0 & Y_{1,1} \\
\end{pmatrix}, \end{equation}
then $Y_{0,0}$ and $Y_{1,1}$ belong to the commutant of
$T_0$ and $T_1,$ respectively. The operator $Y$ is invertible and its inverse $Y^{-1}$ is
upper-triangular. The two operators $Y_{0,0}$ and $Y_{1,1}$ are also invertible. From  Equation \eqref{2x2int}, we have that
$$Y_{0,0}\psi(T_0)i^*+Y_{0,1}T_1=T_0Y_{0,1}+\widetilde{\psi}(T_0)i^*Y_{1,1}.$$
As in the proof of the Lemma \ref{keyL}, we also have that
$$Y_{0,0}\psi(T_0)i^*=\widetilde{\psi}(T_0)i^*Y_{1,1}.$$
Since $Y_{0,0}$ and $Y_{1,1}$ belongs to the commutant of $T_0$ and
$T_1$ respectively, there exists invertible holomorphic functions
$\phi_{0,0}$ and $\phi_{1,1}\in {\mathcal H}^{\infty}(\mathbb{D})$
such that $$Y_{i,i}=\phi_{i,i}(T_i), i=0,1.$$ Consequently,
$$Y_{0,0}\psi(T_0)i^*-\widetilde{\psi}(T_0)i^*Y_{1,1}=(\phi_{0,0}(T_0)\psi(T_0)-\phi_{1,1}(T_0)\widetilde{\psi}(T_0))i^*=0.$$
Thus
$\widetilde{\psi}(T_0)=\phi^{-1}_{1,1}(T_0)\phi_{0,0}(T_0)\psi(T_0).$
Set $\phi=\phi^{-1}_{1,1}\phi_{0,0}$, then we have that
$\widetilde{S}_{0,1}=\phi(T_0)S_{0,1}$ and $\phi(T_0)$ is
invertible. This completes the proof of the necessary part.

We now prove the sufficiency. If there exists $\phi\in {\mathcal
H}^{\infty}(\mathbb{D})$ such that $\phi(T_0)$ is invertible
$\widetilde{S}_{0,1}=\phi(T_0)S_{0,1},$ then
$$\begin{pmatrix}\phi^{-1}(T_0) & 0 \\
0 & I\\
\end{pmatrix}\begin{pmatrix}T_0 & \widetilde{S}_{0,1} \\
0 & T_1 \\
\end{pmatrix}=\begin{pmatrix}T_0 & S_{0,1} \\
0 & T_1 \\
\end{pmatrix}\begin{pmatrix}\phi^{-1}(T_0) & 0\\
0 & I \\
\end{pmatrix}.$$ Therefore $\widetilde{T}$ is similar to $T$.

\end{proof}

The following proposition is similar to the one we have just proved for operators in $\mathcal Q_n(\mathbb D),$ $n=2.$ Here we give the proof only for $n=3.$  The proof for an arbitrary $n$ can be made up  without involving any new ideas. It requires more of the same but somewhat tedious computations which we choose to skip.

\begin{prop}
Let $E_t, E_{\widetilde{t}}$ be two holomorphic Hermitian  vector bundles in $\mathcal Q_n(\mathbb D)$ with
atomic decompositions
$\big(\!\! \big (S_{i,j}\big )\!\!\big )$ and $\big(\!\! \big (
\widetilde{S}_{i,j}\big )\!\!\big ),$ respectively. Assume that
$S_{i,i}=\widetilde{S}_{i,i}$, $i=0,1,\cdots, n-1$.
If there exist holomorphic functions $\phi_{i,j}\in {\mathcal
H}^{\infty}(\mathbb{D})$ such that
$\tilde{S}_{i,j}=\phi_{i,j}(T_i)S_{i,j}$, $i,j=0,1,\cdots,n-1, i<j,$ such that $\widetilde{S}_{i,j}=\phi_{i,j}(T_i)S_{i,j}, \phi_{i,j}\in {\mathcal H}^{\infty}(\mathbb{D}),$
then $E_t$ and $E_{\widetilde{t}}$ are similarity equivalent if and only if
$\phi_{i,j}(T_i)$ are all invertible and
$\phi_{i,j}=\phi_{i,i+1}\phi_{i+1,i+2}\cdots\phi_{j-1,j}.$
%
%
%
\end{prop}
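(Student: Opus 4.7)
The strategy is to mirror the argument of Proposition 4.8 (the $n=2$ case) and promote it to the full super-diagonal structure of $T$ and $\widetilde{T}$ by induction. By Lemma \ref{utXY} any bounded invertible operator $Y$ on $\mathcal H_0\oplus\cdots\oplus\mathcal H_{n-1}$ that intertwines $\widetilde{T}$ with $T$ is necessarily upper triangular, so I shall always write $Y=(Y_{i,j})$ with $Y_{i,j}=0$ for $i>j$. Since each $T_i$ is the adjoint of a multiplication operator on a weighted Bergman space, its commutant is isomorphic to $\mathcal H^\infty(\mathbb D)$, so the diagonal entries $Y_{i,i}$ take the form $Y_{i,i}=f_i(T_i)$ for holomorphic functions $f_i$, and invertibility of $Y$ forces each $f_i(T_i)$ to be invertible. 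Throughout, I will use the fundamental intertwining identity $S_{i,j}\,g(T_j)=g(T_i)\,S_{i,j}$ for every $g\in\mathcal H^\infty(\mathbb D)$, which follows from $T_iS_{i,j}=S_{i,j}T_j$ (part of the atomic decomposition in Lemma \ref{atomic}), together with the representation $S_{i,j}=\psi_{i,j}(T_i)\iota^*_{i,j}$ produced by Lemma \ref{inc*}.

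For the sufficient direction, assume that each $\phi_{i,i+1}(T_i)$ is invertible and that $\phi_{i,j}=\phi_{i,i+1}\phi_{i+1,i+2}\cdots\phi_{j-1,j}$. I will define the similarity $Y$ to be the \emph{diagonal} operator with entries $Y_{0,0}=I$ and $Y_{i,i}=(\phi_{0,1}\phi_{1,2}\cdots\phi_{i-1,i})(T_i)$ for $i\geq 1$. A direct check then shows that for every $i<j$ one has
$$Y_{i,i}\widetilde S_{i,j}=(\phi_{0,i}\phi_{i,j})(T_i)S_{i,j}=\phi_{0,j}(T_i)S_{i,j}=S_{i,j}\phi_{0,j}(T_j)=S_{i,j}Y_{j,j},$$
so $Y\widetilde T=TY$. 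Invertibility of $Y$ is automatic because each $\phi_{0,i}(T_i)$ is a finite product of invertible operators.

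For the necessary direction, I compare the two sides of $Y\widetilde T=TY$ block by block and proceed by induction on the index of the super-diagonal. On the super-diagonal $j=i+1$, the entry equation rearranges to
$$\bigl(f_i\phi_{i,i+1}-f_{i+1}\bigr)(T_i)\,S_{i,i+1}=T_iY_{i,i+1}-Y_{i,i+1}T_{i+1}\in\operatorname{ran}\sigma_{T_i,T_{i+1}}.$$
The left-hand side is of the form $\eta(T_i)\iota^*_{i,i+1}$ (by Lemma \ref{inc*}), i.e.\ it is a pure intertwiner between $T_{i+1}$ and $T_i$; applying the rigidity argument of Proposition 4.8 (``as in the proof of Lemma \ref{keyL}''), the intertwiner part of the identity must vanish, giving $f_i\phi_{i,i+1}=f_{i+1}$, and in particular $\phi_{i,i+1}(T_i)=f_{i+1}(T_i)f_i(T_i)^{-1}$ is invertible. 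For a super-diagonal $j=i+k$ with $k\geq 2$, the entry equation reads
$$\sum_{\ell=i}^{j-1}Y_{i,\ell}\widetilde S_{\ell,j}-\sum_{\ell=i+1}^{j}S_{i,\ell}Y_{\ell,j}=\sigma_{T_i,T_j}(Y_{i,j}).$$
The off-diagonal $Y_{i,\ell}$ for $\ell<j$ have already been determined (or are free commutator-type operators) by the inductive step, and a careful computation — directly analogous to the $n=3,4,5$ cases carried out in the proof of Theorem \ref{mainthmSim} — shows that all terms apart from $(f_i\phi_{i,j}-f_j)(T_i)S_{i,j}$ already lie in $\operatorname{ran}\sigma_{T_i,T_j}$. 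Applying the same rigidity argument once more forces $f_i\phi_{i,j}=f_j$, whence $\phi_{i,j}=f_j/f_i=\phi_{i,i+1}\phi_{i+1,i+2}\cdots\phi_{j-1,j}$, as required.

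The main obstacle is the inductive step in the necessary direction: showing that in the relation for $Y_{i,j}$ the ``lower-order'' terms $Y_{i,\ell}\widetilde S_{\ell,j}-S_{i,\ell}Y_{\ell,j}$ with $i<\ell<j$ always consolidate into elements of $\operatorname{ran}\sigma_{T_i,T_j}$, so that the only surviving obstruction is the intertwiner $(f_i\phi_{i,j}-f_j)(T_i)S_{i,j}$. This is essentially the same combinatorial simplification which makes the proofs of Lemma \ref{ProfJiang} and of Theorem \ref{mainthmSim} (Cases (3) and (4)) work, and it is where the multiplicativity $\phi_{i,j}=\prod\phi_{k,k+1}$ is ultimately forced: the intertwining structure $S_{i,\ell}S_{\ell,j}\equiv S_{i,j}(\,\cdots\,)$ among the atoms produces exactly the telescoping that collapses the sums. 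Once this simplification is in place, applying the Prop.\ 4.8 rigidity step super-diagonal by super-diagonal completes the proof.
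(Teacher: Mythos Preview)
Your proposal is correct and follows the same overall strategy as the paper: a diagonal conjugation for sufficiency, and super-diagonal-by-super-diagonal rigidity (via $S_{i,j}\notin\operatorname{ran}\sigma_{T_i,T_j}$, in the spirit of Lemma~\ref{keyL}) for necessity. The paper only writes out $n=3$ in detail and declares the general case analogous, so your inductive formulation is in fact a bit more systematic.

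There is one tactical difference worth noting. In the necessary direction the paper does \emph{not} work directly with a general intertwiner $Y$ as you do. Instead, after extracting the invertibility of $\phi_{i,i+1}$ from the $2\times 2$ sub-blocks (your first step), the paper immediately conjugates $\widetilde T$ by the diagonal operator $\operatorname{diag}\bigl(\phi_0(T_0),\ldots,\phi_{n-1}(T_{n-1})\bigr)$ with $\phi_k=\phi_{0,1}\cdots\phi_{k-1,k}$, reducing to the case where $T$ and the new operator $\overline T$ agree on the first super-diagonal. This buys the use of Lemma~\ref{commutant2}: the intertwiner $X$ between $T$ and $\overline T$ can then be chosen with $X_{i,i+1}=0$, so the ``lower-order'' cross-terms $Y_{i,\ell}\widetilde S_{\ell,j}-S_{i,\ell}Y_{\ell,j}$ that you flag as the main obstacle simply vanish (or reduce to explicitly computable expressions built from $\phi^{(k)}$ as in Lemma~\ref{ProfJiang}). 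Your direct approach without this reduction forces you to control those terms for an \emph{arbitrary} intertwiner $Y$, which is possible but noticeably messier; the paper's preliminary diagonal reduction is exactly the device that sidesteps this bookkeeping.
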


%
%
%
%

\begin{proof}
Let $T=\left (\begin{smallmatrix}
T_0 & S_{0,1} & S_{0,2}\\
0&T_1&S_{1,2}&\\
0&0&T_2\\
\end{smallmatrix}\right ), \widetilde{T}=\left (\begin{smallmatrix}
T_0 & \widetilde{S}_{0,1} & \widetilde{S}_{0,2}\\
0&T_1&\widetilde{S}_{1,2}&\\
0&0&T_2\\
\end{smallmatrix}\right )$  be the atomic decomposition of $T$ and ${\widetilde{T}},$ respectively. We prove that  $T\sim \widetilde{T}$ if and only
if $\phi_{i,j}(T_i), i,j\leq 2$ are invertible and $\phi_{0,2}=\phi_{0,1}\phi_{1,2}.$
%
Since $T\sim \widetilde{T}$, by Lemma \ref{utXY}, there exists an
invertible operator $X=\big (\!\!\big (X_{i,j} \big )\!\!\big ),$
which is upper triangular and
such that
$$\left (\begin{matrix}
X_{0,0} & X_{0,1} & X_{0,2}\\
0&X_{1,1}&X_{1,2}&\\
0&0&X_{2,2}\\
\end{matrix}\right )\left (\begin{matrix}
T_0 & S_{0,1} & S_{0,2}\\
0&T_1&S_{1,2}&\\
0&0&T_2\\
\end{matrix}\right )=\left (\begin{matrix}
T_0 & \widetilde{S}_{0,1} & \widetilde{S}_{0,2}\\
0&T_1&\widetilde{S}_{1,2}&\\
0&0&T_2\\
\end{matrix}\right )\left (\begin{matrix}
X_{0,0} & X_{0,1} & X_{0,2}\\
0&X_{1,1}&X_{1,2}&\\
0&0&X_{2,2}\\
\end{matrix}\right ).$$
Then we have that
$$\left (\begin{matrix}
X_{0,0} & X_{0,1}\\
0&X_{1,1}
\end{matrix}\right )\left (\begin{matrix}
T_0 & S_{0,1}\\
0&T_1
\end{matrix}\right )=\left (\begin{matrix}
T_0 & \widetilde{S}_{0,1} \\
0&T_1
\end{matrix}\right )\left (\begin{matrix}
X_{0,0} & X_{0,1}\\
0&X_{1,1}
\end{matrix}\right )$$
and
$$\left (\begin{matrix}
X_{1,1}&X_{1,2}\\
0&X_{2,2}\\
\end{matrix}\right )\left (\begin{matrix}
T_1&S_{1,2}\\
0&T_2\\
\end{matrix}\right )=\left (\begin{matrix}
T_1&\widetilde{S}_{1,2}\\
0&T_2
\end{matrix}\right )\left (\begin{matrix}
X_{1,1}&X_{1,2}\\
0&X_{2,2}\\
\end{matrix}\right ).$$
The inverse $X^{-1}$ is also  upper-triangular.
Now, both $\left (\begin{smallmatrix}
X_{0,0} & X_{0,1}\\
0&X_{1,1}
\end{smallmatrix}\right )$ and $\left (\begin{smallmatrix}
X_{1,1}&X_{1,2}\\
0&X_{2,2}\\
\end{smallmatrix}\right )$ are invertible and consequently, by Proposition \ref{sim2}, we see that $\phi_{0,1}$ and $\phi_{1,2}$ must be invertible.

Set $\phi_0=1, \phi_1=\phi_{0,1}, \phi_2=\phi_{0,1}\phi_{1,2}$,
$X_i:=X_{i,i}=\phi_i(T_i),$ $\phi_i\in {\mathcal
H}^{\infty}(\mathbb{D}).$  We have that
$$X\widetilde{T}X^{-1}=\begin{pmatrix}
X_0T_0X_0^{-1} & X_0\widetilde{S}_{0,1}X^{-1}_1 & X_0\widetilde{S}_{0,2}X^{-1}_2\\
0&X_1T_1X^{-1}_1&X_1\widetilde{S}_{1,2}X^{-1}_2&\\
0&0&X_2T_2X^{-1}_2\\
\end{pmatrix}.$$
Consequently,
$$\begin{array}{llll}\widetilde{T}&\sim& \begin{pmatrix}
X_0T_0X_0^{-1} & X_0\widetilde{S}_{0,1}X^{-1}_1 & X_0\widetilde{S}_{0,2}X^{-1}_2\\
0&X_1T_1X^{-1}_1&X_1\widetilde{S}_{1,2}X^{-1}_2&\\
0&0&X_2T_2X^{-1}_2\\
\end{pmatrix}\\
&=&\begin{pmatrix}
X_0T_0X_0^{-1} & X_0\phi_{0,1}(T_0)S_{0,1}X^{-1}_1 & X_0\phi_{0,2}(T_0)S_{0,2}X^{-1}_2\\
0&X_1T_1X^{-1}_1&X_1\phi_{1,2}(T_0)S_{1,2}X^{-1}_2&\\
0&0&X_2T_2X^{-1}_2\\
\end{pmatrix}\\
 &=&\begin{pmatrix}
T_0 & S_{0,1} & X_0\phi_{0,2}(T_0)S_{0,2}X^{-1}_2\\
0&T_1&S_{1,2}&\\
0&0&T_2\\
\end{pmatrix}\\
&=&\begin{pmatrix}
T_0 & S_{0,1} & \phi_{0,2}(T_0)S_{0,2}\phi^{-1}_2(T_2)\\
0&T_1&S_{1,2}&\\
0&0&T_2\\
\end{pmatrix}
\end{array}$$

Now set $$\overline{T}=\begin{pmatrix}
T_0 & S_{0,1} & \phi_{0,2}(T_0)S_{0,2}\phi^{-1}_2(T_2)\\
0&T_1&S_{1,2}&\\
0&0&T_2\\
\end{pmatrix},$$ and $\overline{S}_{0,2}= \phi_{0,2}(T_0)S_{0,2}\phi^{-1}_2(T_2)$. Since $T\sim
\widetilde{T}\sim\overline{T}$, by Lemma \ref{commutant2}, we find
an invertible operator $X=\left ( \begin{smallmatrix}
X_{0,0} & X_{0,1} & X_{0,2}\\
0&X_{1,1}&X_{1,2}&\\
0&0&X_{2,2}\\
\end{smallmatrix}\right )$ such that
$$\begin{pmatrix}
T_0 & S_{0,1} & S_{0,2}\\
0&T_1&S_{1,2}&\\
0&0&T_2\\
\end{pmatrix}\begin{pmatrix}
X_{0,0} & 0 & X_{0,2}\\
0&X_{1,1}&0&\\
0&0&X_{2,2}\\
\end{pmatrix}=
\begin{pmatrix}
X_{0,0} &0 & X_{0,2}\\
0&X_{1,1}&0&\\
0&0&X_{2,2}\\
\end{pmatrix}
\begin{pmatrix}
T_0 & S_{0,1} & \overline{S}_{0,2}\\
0&T_1&S_{1,2}&\\
0&0&T_2\\
\end{pmatrix}.
$$

Then we have
\begin{equation} \label{range1}  X_{0,0}\overline{S}_{0,2}-S_{0,2}X_{2,2}=S_{0,0}X_{0,2}-X_{0,2}S_{2,2}, \end{equation}
and $X_{i,i}\in {\mathcal A}^{\prime}(T_i).$ In the following, we
will describe $X_{0,0}\overline{S}_{0,2}-S_{0,2}X_{2,2}$.

Note that $X_{i,i}(t_i)(w)=\phi(w)t_i(w),\,  w\in \Omega$,
where $\phi$ is a holomorphic function on $\mathbb{D}$ and
\begin{equation} \label{range2}
\begin{array}{lllllll}
X_{0,0}\overline{S}_{0,2}-S_{0,2}X_{2,2}(t_2)&=&X_{0,0}(\phi_{0,2}(T_2)S_{0,2}(\phi_2^{-1}(T_2)(t_2)))-S_{0,2}(\phi
t_2)\\
&=&-X_{0,0}(\phi_{0,2}(T_0)\phi^{-1}_2t^{(1)}_0+\phi t^{(1)}_0\\
&=&-\phi^{-1}_2X_{0,0}(\phi_{0,2}(T_0)t^{(1)}_0)+\phi t^{(1)}_0\\
&=&-\phi^{-1}_2X_{0,0}(\phi_{0,2}t^{(1)}_0+\phi^{(1)}_{0,2}t_0)+\phi
t^{(1)}_0\\
&=&-\phi^{-1}_2X_{0,0}(\phi_{0,2}t^{(1)}_0)-\phi^{-1}_2\phi^{(1)}_{0,2}X_{0,0}(t_0)+\phi
t^{(1)}_0\\
&=&-\phi^{-1}_2\phi_{0,2}X_{0,0}(t^{(1)}_0)-\phi^{-1}_2\phi^{(1)}_{0,2}(\phi
t_0)+\phi
t^{(1)}_0\\
&=&-\phi^{-1}_2\phi_{0,2}((\phi
t_0)^{(1)})-\phi^{-1}_2\phi^{(1)}_{0,2}(\phi t_0)+\phi
t^{(1)}_0\\
&=&-\phi^{-1}_2\phi_{0,2}(\phi
t_0^{(1)})-\phi^{-1}_2\phi_{0,2}(\phi^{(1)}
t_0)-\phi^{-1}_2\phi^{(1)}_{0,2}(\phi t_0)+\phi
t^{(1)}_0\\
&=&\phi(1-\phi^{-1}_2\phi_{0,2})
t_0^{(1)}-\phi^{-1}_2\phi_{0,2}\phi^{(1)}
t_0-\phi^{-1}_2\phi^{(1)}_{0,2}\phi t_0
\\
&=&\phi(-1+\phi^{-1}_2\phi_{0,2})
S_{0,2}(t_2)-S_{0,1}S_{1,2}(\phi^{-1}_2(\phi_{0,2}\phi)^{(1)})S_{2,2}(t_2)
.
\end{array}
\end{equation}

Let $S\in {\mathcal L}({\mathcal H}_2, {\mathcal H}_0)$ satisfies
that $S(t_2)=\phi(\phi^{-1}_2\phi_{0,2}-1) S_{0,2}(t_2).$ Let
$\psi=\phi(\phi^{-1}_2\phi_{0,2}-1)$, then we have
$S(t_2)=\psi(S_{0,0}) S_{0,2}(t_2)$. Note that
$S_{0,1}S_{1,2}(\phi^{-1}_2(\phi_{0,2}\phi)^{(1)})S_{2,2}\in
\mbox{\rm ran}\,\sigma_{T_0,T_2}$. By \eqref{range1} and
\eqref{range2}, there exist linear bounded operator $Z$ such that
$$S(t_2)=(T_0Z-ZT_2)(t_2).$$
That means
$$\begin{array}{llll}
S_{0,2}(t_2)&=&T_0Z(\psi^{-1}(T_2)(t_2))-ZT_2(\psi^{-1}(T_2)(t_2)\\
&=&T_0Z\psi^{-1}(T_2)(t_2)-Z\psi^{-1}(T_2)T_2(t_2). \end{array}$$

This is a contradiction to the fact
$S_{0,2}\not\in\mbox{\rm ran}\,\sigma_{T_0,T_2}.$ So we have that
$\phi_2=\phi_{0,2}=\phi_{0,1}\phi_{1,2}$.

To prove the proposition the other way round, assume that $\widetilde{T}=\begin{pmatrix}
T_0 & \phi_{0,1}(T_0)S_{0,1} & \phi_{0,2}(T_0)S_{0,2}\\
0&T_1&\phi_{1,2}(T_0)S_{1,2}&\\
0&0&T_2\\
\end{pmatrix}$ and $\phi_{0,1}$, $\phi_{1,2}$ and
$\phi_{0,2}=\phi_{0,1}\phi_{1,2}$ are all invertible. Then we have
$$\widetilde{T}\sim  \begin{pmatrix}
T_0 & S_{0,1} & \phi_{0,2}(T_0)S_{0,2}(\phi_{0,1}\phi_{1,2})^{-1}(T_2)\\
0&T_1&S_{1,2}&\\
0&0&T_2
\end{pmatrix} = \begin{pmatrix}
T_0 & S_{0,1} & \phi_{0,2}(T_0)S_{0,2}\phi^{-1}_{0,2}(T_2)\\
0&T_1&S_{1,2}&\\
0&0&T_2
\end{pmatrix}.$$ We also have
\begin{eqnarray*}
\lefteqn{
\begin{pmatrix}
\phi^{-1}_{0,2}(T_0) &0& 0\\
0&\phi^{-1}_{0,2}(T_1)&0\\
0&0&\phi^{-1}_{0,2}(T_2)\\
\end{pmatrix}\begin{pmatrix}
T_0 & S_{0,1} & \phi_{0,2}(T_0)S_{0,2}\phi^{-1}_{0,2}(T_2)\\
&T_1&S_{1,2}&\\
&&T_2\\
\end{pmatrix}\begin{pmatrix}
\phi_{0,2}(T_0) &0& 0\\
0&\phi_{0,2}(T_1)&0\\
0&0&\phi_{0,2}(T_2)\\
\end{pmatrix}}\\
&\phantom{GadadharJiangJi}=&\begin{pmatrix}
T_0 & \phi^{-1}_{0,2}(T_0)S_{0,1}\phi_{0,2}(T_1) & \phi^{-1}_{0,2}(T_0)\phi_{0,2}(T_0)S_{0,2}\phi^{-1}_{0,2}(T_2)\phi_{0,2}(T_2)\\
0&T_1&\phi^{-1}_{0,2}(T_1)S_{1,2}\phi_{0,2}(T_2)&\\
0&0&T_2\\
\end{pmatrix}\\
&\phantom{GadadharJiangJi}=&\begin{pmatrix}
T_0 & S_{0,1} & S_{0,2}\\
0&T_1&S_{1,2}&\\
0&0&T_2\\
\end{pmatrix}.
\end{eqnarray*}
\end{proof}
\subsection{\sf The Halmos' question} The well-known question of Halmos asks if $\varrho:\mathbb C[z] \to \mathcal L(\mathcal H)$ is a continuous (for $p\in \mathbb C[z],$ the norm $\|p\| =\sup_{z\in\mathbb D} |p(z)|$) algebra homomorphism induced by an operator $S$, that is, $\varrho(p)=p(S),$ then does there exist an invertible linear operator $L$ and a contraction $T$ on the Hilbert space $\mathcal H$ so that $S = L T L^{-1}.$ After the question was raised in \cite[Problem 6]{Hal}, an affirmative answer for several classes of operators were given. A counter example was found by Pisier in 1996 (cf. \cite{Pi}). It was pointed out in a recent paper of the third author with Kor\'{a}nyi \cite{KM} that the Halmos' question has an affirmative answer for homogeneous operators in the Cowen-Douglas class $B_n(\mathbb D)$. This was based on the description of equivalence classes of homogeneous operators under invertible bounded linear transformations. In the terminology of this paper, (multiplicity free) homogeneous operators are irreducible and also  strongly reducible.  Now, we have this for quasi-homogeneous operators, see Theorem \ref{Sirrd}. Thus it is natural to ask if the Halmos' question has an affirmative answer for quasi-homogeneous operators. If $\Lambda(t) \geq 2,$ the answer is evidently ``yes'':

In this case, the quasi-homogeneous operator $T$ is similar to the $n$- fold direct sum of the homogeneous operators $T_i$ (adjoint of the multiplication operator) acting on the weighted Bergman spaces $\mathbb A^{(\lambda_i)}(\mathbb D),$ $i=0,1,\ldots , n-1.$  Now, if  $\lambda_0 \geq 1,$ this direct sum is contractive and we are done. If $\lambda_0 < 1,$ then $T_0$ is not even power bounded and therefore neither is the operator $T$. So, there is nothing to prove when $\lambda_0<1.$

If $\Lambda(t) < 2,$ then the operator $T$ is strongly irreducible. Therefore, we can't answer the Halmos' question purely in terms of the atoms of the operator $T$. Never the less, the answer is affirmative even in this case. To show this, we need a preparatory lemma.
\begin{lem}
Suppose that $t$ is a quasi-homogeneous holomorphic curve. Assume that $\Lambda(t) <2$ and $\lambda_0 \geq 1.$
Then the operator $T$ is not power bounded.
\end{lem}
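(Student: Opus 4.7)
The plan is to construct unit vectors $v_k$ showing $\|T^k v_k\| \to \infty$. The crucial observation is that, even when the atoms $T_i$ are contractions (as they are under $\lambda_0 \geq 1$, since then $\lambda_i \geq 1$ for all $i$), the intertwining \eqref{int} forces the $(0,1)$-block of $T^k$ to grow linearly in $k$. Indeed, since $T$ is block upper triangular, every contribution to $(T^k)_{0,1}$ comes from a path that stays at index $1$ for some time and drops to index $0$ at exactly one step, so
$$(T^k)_{0,1} = \sum_{j=1}^{k} T_0^{k-j}\, S_{0,1}\, T_1^{j-1}.$$
Applying $T_0 S_{0,1} = S_{0,1} T_1$ collapses every summand to $S_{0,1}\, T_1^{k-1}$, giving $(T^k)_{0,1} = k\, S_{0,1}\, T_1^{k-1}$. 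This linear factor is the engine of the argument.

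To see how this grows, I would compute on the orthonormal basis $e_m^{(\lambda_i)}$ for $\mathbb A^{(\lambda_i)}(\mathbb D)$ set up in \S\ref{conventions}. Using $T_1 e_m^{(\lambda_1)} = \sqrt{a_{m-1}(\lambda_1)/a_m(\lambda_1)}\, e_{m-1}^{(\lambda_1)}$ together with the diagonal formula $S_{0,1} e_\ell^{(\lambda_1)} = \sqrt{a_\ell(\lambda_0)/a_\ell(\lambda_1)}\, e_\ell^{(\lambda_0)}$ (the $k=0$ specialization of the computation carried out in Lemma \ref{keyL}), telescoping yields
$$S_{0,1}\, T_1^{k-1}\, e_m^{(\lambda_1)} \;=\; \sqrt{\tfrac{a_{m-k+1}(\lambda_0)}{a_m(\lambda_1)}}\, e_{m-k+1}^{(\lambda_0)}, \qquad m \geq k-1.$$

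Finally, taking $v_k = 0 \oplus e_{2k}^{(\lambda_1)} \oplus 0 \oplus \cdots \oplus 0$ (a unit vector) and invoking the Stirling asymptotic $a_n(\lambda) \sim n^{\lambda-1}/\Gamma(\lambda)$ from \S\ref{conventions}, I obtain
$$\|T^k v_k\| \;\geq\; k\sqrt{\tfrac{a_{k+1}(\lambda_0)}{a_{2k}(\lambda_1)}} \;\sim\; c\, k^{1 - \Lambda(t)/2},$$
which diverges since $\Lambda(t) < 2$. The only item requiring any care is checking that the Stirling asymptotic is valid with a strictly positive constant uniformly for the choice $m = 2k$, which is routine. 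The conceptual point -- and the only thing one really needs -- is that the intertwining relation central to quasi-homogeneity produces a linear-in-$k$ amplification of $S_{0,1}\, T_1^{k-1}$ in the $(0,1)$-block, and this linear factor dominates the polynomial decay $k^{-\Lambda(t)/2}$ of the individual matrix coefficients precisely when the valency lies below the critical threshold $2$.
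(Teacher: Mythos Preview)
Your argument is correct and essentially identical to the paper's: both isolate the top $2\times 2$ block, use the intertwining $T_0 S_{0,1} = S_{0,1} T_1$ to write the $(0,1)$-entry of the $k$-th power as $k\,S_{0,1}T_1^{k-1}$ (the paper writes the equal expression $k\,T_0^{k-1}S_{0,1}$), and evaluate on $e_m^{(\lambda_1)}$ with $m\approx 2k$ to extract the growth $k^{1-\Lambda(t)/2}$ via Stirling. One small omission you share with the paper is the tacit assumption $S_{0,1}\neq 0$ (equivalently $m_{0,1}\neq 0$, the paper's ``$\phi_0\neq 0$'').
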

\begin{proof} The top $2\times 2$ block in the atomic decomposition of the quasi-homogeneous operator $T$ is of the form $\Big ( \begin{smallmatrix}T_0 & S_{0,1}\\0& T_1 \end{smallmatrix}\Big ).$ As always, we assume that the operators $T_0$ and $T_1$ are the adjoints of the multiplication operator on the weighted Bergman spaces $\mathbb A^{(\lambda_0)}(\mathbb D)$ and  $\mathbb A^{(\lambda_1)}(\mathbb D),$ respectively. The operator $S_{0,1}$ has the intertwining property $T_0 S_{0,1} = S_{0,1} T_1.$

Let $\iota$ denote the inclusion map from $\mathbb A^{(\lambda_0)}(\mathbb D)$ to $\mathbb A^{(\lambda_1)}(\mathbb D)$. Then $\iota^*(t_1)(w)=t_0(w),\; w\in \mathbb D,$ and the operator $S_{0,1}$ must be of the form $\phi(T_{0})\iota^{*}$ for some holomorphic function $\phi$ on the unit disc $\mathbb D$, as we have shown in Lemma \ref{inc*}.
Indeed, $S_{0,1}(t_1(w))=\phi(w) t_1(w) = \phi(T_0)\iota^{*}(t_1(w)).$

Without loss of generality, we assume that
$\phi(w)=\sum\limits_{i=0}^{\infty}\phi_iw^i$ and $\phi_0\neq 0$.
For  $j=0,1$,  the set of vectors $e^{(\lambda_{j})}_\ell:=
\sqrt{a_\ell(\lambda_j)}\,z^\ell,\;\ell\geq 0,$ is an orthonormal
basis in $\mathbb A^{(\lambda_j)}(\mathbb D).$  Then we have that
$$T^{n-1}_{0}(e_{\ell}(\lambda_0))=\prod\limits_{i=\ell-n+1}^{\ell-1}w_i(\lambda_0)e_{\ell-n+1}(\lambda_0), S_{0,1}(e_{\ell}(\lambda_1))
=\phi_0\frac{\prod\limits_{i=0}^{\ell-1}w_i(\lambda_1)}{\prod\limits_{i=0}^{\ell-1}w_i(\lambda_0)}e_{\ell}(\lambda_0).$$
Consequently,
$$nT^{n-1}_{0}S_{0,1}(e_{\ell}(\lambda_1))=n\phi_0\frac{\prod\limits_{i=0}^{\ell-1}w_i(\lambda_1)}{\prod\limits_{i=0}^{\ell-n}w_i(\lambda_0)}
e_{\ell-n+1}(\lambda_0)$$
Since $w_i(\lambda_0)=\sqrt{\frac{i+1}{i+\lambda_0}}$ and
$w_i(\lambda_1)=\sqrt{\frac{i+1}{i+\lambda_1}}$, it follows that
$$\prod\limits_{i=0}^{\ell-1}w_i(\lambda_1)\sim \big (({\ell-1})^{\frac{1-\lambda_1}{2}}\big )\;\mbox{\rm and}\; \prod\limits_{i=0}^{\ell-n}w_i(\lambda_0)\sim \big ({(\ell-n)}^{\frac{1-\lambda_0}{2}}\big )$$
implying
$$\frac{\prod\limits_{i=0}^{\ell-n}w_i(\lambda_1)}{\prod\limits_{i=0}^{\ell-1}w_i(\lambda_0)}\sim
\Big (\frac{(\ell-n)^{\frac{\lambda_0-1}{2}}}{(\ell-1)^{\frac{\lambda_1-1}{2}}}\Big ).$$
If we choose $\ell=2n+1$, then we have
$$\frac{(\ell-n)^{\frac{\lambda_0-1}{2}}}{(\ell-1)^{\frac{\lambda_1-1}{2}}}\sim
\Big (\frac{1}{n^{\frac{\lambda_1-\lambda_0}{2}}}\Big)\;\mbox{\rm for large }\; n.$$
Hence $||nT^{n-1}_{0}S_{0,1}
||\rightarrow \infty$ as $n\rightarrow \infty.$

Let $T_{|_{2\times 2}}$  denote the top $2\times 2$ block $\Big ( \begin{smallmatrix}
T_{0}&S_{0,1}\\
0&T_{1}
\end{smallmatrix}\Big )$ in the operator $T$.   Since
 $T_{|_{2\times 2}}^{n}=\begin{pmatrix}
T^n_{0}&nT^{n-1}_{0}S_{0,1}\\
0&T^n_{1}
\end{pmatrix},$ and $||T_{|_{2\times 2}}^n||\geq ||nT^{n-1}_{0}S_{0,1}
||,$ it follows that $||T_{|_{2\times 2}}^n||\rightarrow \infty$ as $n\rightarrow
\infty.$ Clearly, $\|T^n\| \geq \|T^n_{|_{2\times 2}}\|$ completing the proof.
\end{proof}
Since a quasi-homogeneous operator for which $\lambda_0 < 1$ can't be power bounded, the lemma we have just proved shows that if $T$ is quasi-homogeneous and $\Lambda(t) < 2,$ then the operator $T$ is not power bounded. Therefore we have proved the following theorem answering the Halmos' question in the affirmative.
\begin{thm}
If a quasi-homogeneous operator $T$ has the property $\|p(T)\|_{\rm op} \leq K \|p\|_{\infty, \mathbb D},$ $p\in\mathbb C[z],$ then it must be similar to a contraction.
\end{thm}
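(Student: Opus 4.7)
The plan is to exploit the dichotomy on the valency $\Lambda(t)$ that has been carefully set up in the preceding sections, combined with the non-power-boundedness lemma immediately preceding the theorem. The assumption $\|p(T)\|_{\rm op} \leq K \|p\|_{\infty, \mathbb D}$ is much stronger than what we need; taking $p(z) = z^n$ already gives $\sup_n \|T^n\| \leq K$, so $T$ must be power bounded. My strategy is to show this single consequence already pins down both $\lambda_0 \geq 1$ and $\Lambda(t) \geq 2$, at which point the structural results of Section 3 finish the job.

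First I would extract the numerical constraints. Since the atom $T_0$ is the adjoint of multiplication on $\mathbb A^{(\lambda_0)}(\mathbb D)$, a direct weight computation using the conventions in Section \ref{conventions} shows that $T_0$ (and therefore $T$, as $T_0$ is a compression of $T$ to a complemented subspace in the atomic decomposition) fails to be power bounded whenever $\lambda_0 < 1$: indeed $\|T_0^n e_n^{(\lambda_0)}\| = \prod_{i=0}^{n-1} w_i^{(\lambda_0)} \sim n^{(1-\lambda_0)/2} \to \infty$. Hence $\lambda_0 \geq 1$. Next, the lemma proved just above the theorem shows that if $\Lambda(t) < 2$ and $\lambda_0 \geq 1$, then already the $2\times 2$ compression $\Bigl(\begin{smallmatrix} T_0 & S_{0,1} \\ 0 & T_1 \end{smallmatrix}\Bigr)$ is not power bounded, hence $T$ itself is not. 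So the power-boundedness of $T$ forces $\Lambda(t) \geq 2$.

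Once these constraints are in hand, the conclusion follows quickly. By the first statement of Lemma \ref{Sirrd} (the strong reducibility portion of Theorem \ref{mainthmSim}), since $\Lambda(t) \geq 2$, the operator $T$ is similar to the direct sum of its atoms, $T \sim T_0 \oplus T_1 \oplus \cdots \oplus T_{n-1}$. Each $T_i$ is the adjoint of the multiplication operator on $\mathbb A^{(\lambda_i)}(\mathbb D)$ with $\lambda_i = \lambda_0 + i\Lambda(t) \geq \lambda_0 \geq 1$. For any $\lambda \geq 1$, the multiplication operator weights satisfy $w_n^{(\lambda)} = \sqrt{(n+1)/(n+\lambda)} \leq 1$, so $M^{(\lambda)}$ and hence its adjoint $T_i$ is a contraction. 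A direct sum of contractions is a contraction, so $T$ is similar to a contraction.

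The main obstacle is really conceptual rather than computational: one must recognise that the Halmos hypothesis is being used only in its weakest form (power boundedness), because the delicate structural work has already been done. The only place where one has to be slightly careful is in verifying that $T_0$ inherits non-power-boundedness as a compression of $T$; this follows because in the atomic decomposition $T$ has block upper-triangular form with $T_0$ in the top-left corner, so the top-left corner of $T^n$ is exactly $T_0^n$, forcing $\|T^n\| \geq \|T_0^n\|$.
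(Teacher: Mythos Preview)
Your proposal is correct and follows essentially the same approach as the paper: use power boundedness to rule out $\lambda_0<1$ (via the top-left block $T_0^n$) and then $\Lambda(t)<2$ (via the preparatory lemma on the $2\times 2$ block), then invoke Lemma \ref{Sirrd} to conclude $T\sim\bigoplus T_i$ with each $T_i$ a contraction. The paper organises the same ingredients as a case split on $\Lambda(t)$ rather than as a chain of deductions, but the content is identical.
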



\end{document}